\numberwithin{equation}{section}
\numberwithin{figure}{section}
\definecolor{darkgreen}{rgb}{0,0.5,0}
\definecolor{digit}{rgb}{0.3,0,0.5}
\definecolor{beige}{rgb}{0.9,0.9,0.7}
\definecolor{comment}{rgb}{0.8,0.3,0.3}
\let\commentfullflexible\lst@column@fullflexible
\commentfullflexible\color{comment}\rm,
\renewcommand\paragraph{\@startsection{paragraph}{4}{\z@}%
                                    {2ex \@plus1ex \@minus.2ex}%
                                    {1ex}%
                                    {\normalfont\normalsize\bfseries}}
\renewcommand\subparagraph{\@startsection{subparagraph}{5}{\z@}%
                                       {1.5ex \@plus1ex \@minus .2ex}%
                                       {-0.5em}%
                                      {\normalfont\normalsize\it}}
\renewcommand{\leq}{\leqslant}
\renewcommand{\geq}{\geqslant}
\newtheorem{theo}{Theorem}[section]
\newtheorem{prop}[theo]{Proposition}
\newtheorem{lem}[theo]{Lemma}
\newtheorem{cor}[theo]{Corollary}
\theoremstyle{definition}
\newtheorem{deftn}[theo]{Definition}
\newtheorem{rem}[theo]{Remark}
\newtheorem*{ex*}{Example}
\newcommand{\N}{\mathbb N}
\newcommand{\Z}{\mathbb Z}
\newcommand{\Zp}{\Z_p}
\newcommand{\Q}{\mathbb Q}
\newcommand{\Qp}{\Q_p}
\newcommand{\F}{\mathbb F}
\newcommand{\Fp}{\F_p}
\newcommand{\R}{\mathbb R}
\newcommand{\E}{\mathbb E}
\newcommand{\calB}{\mathcal B}
\newcommand{\calF}{\mathcal F}
\newcommand{\calH}{\mathcal H}
\newcommand{\calL}{\mathcal L}
\newcommand{\calN}{\mathcal N}
\newcommand{\calR}{\mathcal R}
\newcommand{\calT}{\mathcal T}
\newcommand{\calU}{\mathcal U}
\newcommand{\calZ}{\mathcal Z}
\newcommand{\GL}{\text{\rm GL}}
\newcommand{\spec}{\text{\rm Spec}\:}
\newcommand{\Span}{\text{\rm Span}}
\renewcommand{\min}{\text{\rm min}}
\renewcommand{\max}{\text{\rm max}}
\newcommand{\pr}{\text{\rm pr}}
\newcommand{\symm}{\text{\rm s}}
\newcommand{\id}{\text{\rm id}}
\newcommand{\ev}{\text{\rm ev}}
\newcommand{\rec}{\text{\rm rec}}
\newcommand{\val}{\text{\rm val}}
\newcommand{\abs}{\text{\rm abs}}
\newcommand{\rel}{\text{\rm rel}}
\newcommand{\Res}{\text{\rm Res}}
\newcommand{\Com}{\text{\rm Comatrix}}
\newcommand{\zealous}{\text{\rm Zeal}}
\newcommand{\Value}{\text{\rm value}}
\newcommand{\ttx}{\texttt{\rm x}\xspace}
\newcommand{\tty}{\texttt{\rm y}\xspace}
\newcommand{\ttF}{\texttt{\rm F}\xspace}
\newcommand{\nan}{\text{\rm NaN}}
\newcommand{\FP}{\text{\rm FP}}
\newcommand{\QpFP}{\Qp^{\FP}}
\newcommand{\QpNan}{\Qp'}
\newcommand{\cond}{\text{\rm cond}}
\newcommand{\softO}{\tilde O}
\newcommand{\sage}{\textsc{SageMath}~\cite{sage}\xspace}
\newcommand{\sagenoref}{\textsc{SageMath}\xspace}
\newcommand{\magma}{\textsc{Magma}~\cite{magma}\xspace}
\newcommand{\pari}{\textsc{Pari}~\cite{pari}\xspace}
\newcommand{\mathemagix}{\textsc{Mathemagix}~\cite{mathemagix}\xspace}
\title{Computations with $p$-adic numbers}
\author{Xavier Caruso}
\date\today
\begin{document}

\maketitle

\begin{abstract}
This document contains the notes of a lecture I gave at the ``Journées 
Nationales du Calcul Formel\footnote{(French) National Computer Algebra 
Days}'' (JNCF) on January 2017. The aim of the lecture was to discuss 
low-level algorithmics for $p$-adic numbers. It is divided into two main 
parts: first, we present various implementations of $p$-adic numbers and 
compare them and second, we introduce a general framework for studying 
precision issues and apply it in several concrete situations.
\end{abstract}

\setcounter{tocdepth}{2}
\tableofcontents

\section*{Introduction}

The field of $p$-adic numbers, $\Qp$, was first introduced by 
Kurt Hensel at the end of the 19th century in a short paper written in 
German~\cite{He97}. From that time, the popularity of $p$-adic numbers 
has grown without interruption throughout the 20th century. Their first 
success was materialized by the famous Hasse--Minkowski's Theorem 
\cite{Se70} that 
states that a Diophantine equation of the form $P(x_1, \ldots, x_n) = 0$ 
where $P$ is a polynomial of total degree at most $2$ has a solution over $\Q$ 
if and only if it has a solution over $\R$ and a solution over $\Qp$ for 
all prime numbers $p$. This characterization is quite interesting because 
testing whether a polynomial equation has a $p$-adic solution can be 
carried out in a very efficient way using \emph{analytic} methods just 
like over the reals.
This kind of strategy is nowadays ubiquitous in many areas of Number 
Theory and Arithmetic Geometry. After Diophantine equations, other 
typical examples come from the study of number fields: we hope deriving 
interesting information about a number field $K$ by studying carefully
all its $p$-adic incarnations $K \otimes_\Q \Qp$. The ramification of
$K$, its Galois properties, \emph{etc.} can be --- and are very often 
--- studied in this manner~\cite{Sa72,Ne99}.
The class field theory, which provides a precise description of all
Abelian extensions\footnote{An abelian extension is a Galois extension
whose Galois group is abelian.} of a given number field, is also 
formulated in this language~\cite{Ne12}.
The importance of $p$-adic numbers is so prominent today that there is 
still nowadays very active research on theories which are dedicated to 
purely $p$-adic objects: one can mention for instance the
study of $p$-adic geometry and $p$-adic cohomologies~\cite{BO78,LS07},
the theory of $p$-adic differential equations~\cite{Ke10}, Coleman's theory 
of $p$-adic integration~\cite{Co98}, the $p$-adic Hodge theory~\cite{BrCo09}, 
the $p$-adic Langlands correspondence~\cite{Be11}, the study of $p$-adic 
modular forms~\cite{Go88}, $p$-adic $\zeta$-functions~\cite{Ko84} and 
$L$-functions~\cite{Co89}, \emph{etc.} The proof of Fermat's last 
Theorem by Wiles and Taylor~\cite{Wi95,TaWi95} is stamped with many of 
these ideas and developments.

Over the last decades, $p$-adic methods have taken some importance in 
Symbolic Computation as well. For a long time, $p$-adic methods have 
been used for factoring polynomials over $\Q$~\cite{LeLeLo82}. More recently, 
there has been a wide diversification of the use of $p$-adic numbers for 
effective computations: Bostan et al.~\cite{BoGoPeSc05} used Newton sums 
for polynomials over $\Zp$ to compute composed products for polynomials 
over $\Fp$; Gaudry et al.~\cite{GaHoWeRiKo06} used $p$-adic lifting 
methods to generate genus 2 CM hyperelliptic curves; Kedlaya 
\cite{Ke01}, Lauder~\cite{La04} and many followers used $p$-adic 
cohomology to count points on hyperelliptic curves over finite fields; 
Lercier and Sirvent~\cite{LeSi08} computed isogenies between elliptic 
curves over finite fields using $p$-adic differential equations. 

The need to build solid foundations to the algorithmics of $p$-adic 
numbers has then emerged. This is however not straightforward because a 
single $p$-adic number encompasses an infinite amount of information 
(the infinite sequence of its digits) and then necessarily needs to be 
truncated in order to fit in the memory of a computer. From this point 
of view, $p$-adic numbers behave very similarly to real numbers and the 
questions that emerge when we are trying to implement $p$-adic numbers 
are often the same as the questions arising when dealing with rounding 
errors in the real setting~\cite{Mu89,DaMu97,Mu09}. The algorithmic study 
of $p$-adic numbers is then located at the frontier between Symbolic 
Computation and Numerical Analysis and imports ideas and results coming 
from both of these domains.

\subparagraph{Content and organization of this course.}

This course focuses on the low-level implementation of $p$-adic numbers 
(and then voluntarily omits high-level algorithms making use of $p$-adic 
numbers) and pursues two main objectives.
The first one is to introduce and discuss the most standard strategies 
for implementing $p$-adic numbers on computers. We shall detail three of 
them, each of them having its own spirit: (1)~the \emph{zealous arithmetic} 
which is inspired by interval arithmetic in the real setting, (2)~the 
\emph{lazy arithmetic} with its \emph{relaxed} improvement and (3)~the 
\emph{$p$-adic floating-point arithmetic}, the last two being inspired
by the eponym approaches in the real setting.

The second aim of this course is to develop a general theory giving 
quite powerful tools to study the propagation of accuracy in the 
$p$-adic world. The basic underlying idea is to linearize the situation 
(and then model the propagation of accuracy using differentials); it is 
once again inspired from classical methods in the real case. 
However, it turns out that the non-archimedean nature of $\Qp$ 
(\emph{i.e.} the fact that $\Z$ is bounded in $\Qp$) is the source of 
many simplifications which will allow us to state much more accurate 
results and to go much further in the $p$-adic setting. As an example,
we shall see that the theory of $p$-adic precision yields a general
strategy for increasing the numerical stability of any given algorithm
(assuming that the problem it solves is well-conditioned).

\medskip

This course is organized as follows. 
\S \ref{sec:padicnumbers} is devoted to the introduction of $p$-adic 
numbers: we define them, prove their main properties and discuss in
more details their place in Number Theory, Arithmetic Geometry and 
Symbolic Computation.
The presentation of the standard implementations of $p$-adic numbers 
mentioned above is achieved in \S \ref{sec:implementations}. 
A careful comparison between them is moreover proposed and supported
by many examples coming from linear algebra and commutative algebra.
Finally, in \S \ref{sec:precision}, we expose the aforementioned theory 
of $p$-adic precision. We then detail its applications: we will notably
examine many very concrete situations and, for each of them, we will
explain how the theory of $p$-adic precision helps us either in quantifying 
the qualities of a given algorithm regarding to numerical stability or,
even better, in improving them.

\subparagraph{Acknowledgments.}

This document contains the (augmented) notes of a lecture I gave at the 
``Journées Nationales du Calcul Formel\footnote{(French) 
National Computer Algebra Days}'' (JNCF) on January 2017. I heartily thank 
the organizers and the scientific committee of the JNCF for giving me 
the opportunity to give these lectures and for encouraging me to write 
down these notes. I am very grateful to Delphine Boucher, 
Nicolas Brisebarre, Claude-Pierre 
Jeannerod, Marc Mezzarobba and Tristan Vaccon for their careful reading
and their helpful comments on an earlier version of these notes.

\subparagraph{Notation.}

We use standard notation for the set of numbers: $\N$ is the set of 
natural integers (\emph{including $0$}), $\Z$ is the set of relative 
integers, $\Q$ is the set of rational numbers and $\R$ is the set of 
real numbers.
We will sometimes use the soft-$O$ notation $\softO(-)$ for writing 
complexities; we recall that, given a sequence of positive real numbers 
$(u_n)$, $\softO(u_n)$ is defined as the union of the sets $O(u_n \log^k 
u_n)$ for $k$ varying in $\N$.

Throughout this course, the letter $p$ always refers to a fixed prime 
number.

\section{Introduction to $p$-adic numbers}
\label{sec:padicnumbers}

In this first section, we define $p$-adic numbers, discuss their basic 
properties and try to explain, by selecting a few relevant examples, 
their place in Number Theory, Algebraic Geometry and Symbolic 
Computation. The presentation below is voluntarily very summarized; we 
refer the interested reader to~\cite{Am75,Go97} for a more complete 
exposition of the theory of $p$-adic numbers.

\subsection{Definition and first properties}

$p$-adic numbers are very ambivalent objects which can be thought of 
under many different angles: computational, algebraic, analytic. It 
turns out that each point of view leads to its own definition of 
$p$-adic numbers: computer scientists often prefer viewing a $p$-adic 
number as a sequence of digits while algebraists prefer speaking of 
projective limits and analysts are more comfortable with Banach spaces 
and completions. Of course all these approaches have their own interest 
and understanding the intersections between them is often the key behind 
the most important advances.

In this subsection, we briefly survey all the standard definitions of 
$p$-adic numbers and provide several mental representations in order to 
try as much as possible to help the reader to develop a good $p$-adic 
intuition.

\subsubsection{Down-to-earth definition}
\label{sssec:downtoearth}

Recall that each positive integer $n$ can be written in \emph{base $p$},
that is as a finite sum:
$$n = a_0 + a_1 p + a_2 p^2 + \cdots + a_\ell p^\ell$$
where the $a_i$'s are integers between $0$ and $p{-}1$, the so-called
\emph{digits}. This writing is moreover unique assuming that the most
significant digit $a_\ell$ does not vanish.
A possible strategy to compute the expansion in base $p$ goes as 
follows. We first compute $a_0$ by noting that it is necessarily the 
remainder in the Euclidean division of $n$ by $p$: indeed it is 
congruent to $n$ modulo $p$ and lies in the range $[0,p{-}1]$ by 
definition. Once $a_0$ is known, we compute $n_1 = \frac{n-a_0}p$, which 
is also the quotient in the Euclidean division of $n$ by $p$. Clearly
$n_1 = a_1 + a_2 p + \cdots + a_\ell p^{\ell-1}$
and we can now compute $a_1$ repeating the same strategy. 
Figure~\ref{fig:decbasis} shows a simple execution of this algorithm.

\begin{figure}
\hfill$\begin{array}{rr@{\hspace{1ex}}c@{\hspace{1ex}}r@{\hspace{1ex}}c@{\hspace{1ex}}c@{\hspace{1ex}}c@{\hspace{1ex}}c}
a_0: & 1742 & = & 248 & \times & 7 & + & \mathbf{6} \\
a_1: & 248 & = & 35 & \times & 7 & + & \mathbf{3} \\
a_2: & 35 & = & 5 & \times & 7 & + & \mathbf{0} \\
a_3: & 5 & = & 0 & \times & 7 & + & \mathbf{5} \\
\end{array}$
\hfill\null

\medskip

\hfill $\Longrightarrow$ \quad $1742 = \overline{5036}^7$\hfill\null

\caption{Expansion of $1742$ in base $7$}
\label{fig:decbasis}
\end{figure}

By definition, a \emph{$p$-adic integer} is an infinite formal sum of 
the shape:
$$x = a_0 + a_1 p + a_2 p^2 + \cdots + a_i p^i + \cdots$$
where the $a_i$'s are integers between $0$ and $p{-}1$. In other words,
a $p$-adic integer 
is an integer written in base $p$ with an infinite number of 
digits. We will sometimes alternatively write $x$ as follows:
$$x = \overline{\ldots a_i \ldots a_3 a_2 a_1 a_0}^p$$
or simply
$$x = \ldots a_i \ldots a_3 a_2 a_1 a_0$$
when no confusion can arise.
The set of $p$-adic integers is denoted by $\Z_p$. It is endowed with a 
natural structure of commutative ring. Indeed, we can add, subtract and 
multiply $p$-adic integers using the schoolbook method; note that 
handling carries is possible since they propagate on the left.
\begin{figure}
\hfill
\raisebox{-\height}{
$\begin{array}{cc@{\hspace{1ex}}c@{\hspace{1ex}}c@{\hspace{1ex}}c@{\hspace{1ex}}c@{\hspace{1ex}}c@{\hspace{1ex}}c@{\hspace{1ex}}c}
  & \ldots & 2 & 3 & 0 & 6 & 2 & 4 & 4 \\{}
+ & \ldots & 1 & 6 & 5 & 2 & 3 & 3 & 2 \\
\hline
  & \mathbf \ldots & \mathbf 4 & \mathbf 2 & \mathbf 6 & \mathbf 1 & \mathbf 6 & \mathbf 0 & \mathbf 6 
\end{array}$}
\hspace{2cm}
\raisebox{-\height}{
$\begin{array}{cc@{\hspace{1ex}}c@{\hspace{1ex}}c@{\hspace{1ex}}c@{\hspace{1ex}}c@{\hspace{1ex}}c@{\hspace{1ex}}c@{\hspace{1ex}}c}
  & \ldots & 2 & 3 & 0 & 6 & 2 & 4 & 4 \\{}
\times & \ldots & 1 & 6 & 5 & 2 & 3 & 3 & 2 \\
\hline
  & \ldots & 4 & 6 & 1 & 5 & 5 & 2 & 1 \\
  & \ldots & 2 & 2 & 5 & 0 & 6 & 5 & \\
  & \ldots & 2 & 5 & 0 & 6 & 5 & & \\
  & \ldots & 5 & 5 & 2 & 1 & & & \\
  & \ldots & 6 & 1 & 6 & & & & \\
  & \ldots & 6 & 3 & & & & & \\
  & \ldots & 4 & & & & & & \\
\cline{2-9}
  & \mathbf \ldots & \mathbf 4 & \mathbf 3 & \mathbf 2 & \mathbf 0 & \mathbf 3 & \mathbf 0 & \mathbf 1 
\end{array}$}
\hfill\null

\caption{Addition and multiplication in $\Z_7$}
\end{figure}
The ring of natural integers $\N$ appears naturally as a subring of 
$\Z_p$: it consists of $p$-adic integers $\ldots a_i \ldots a_3 a_2 a_1 
a_0$ for which $a_i = 0$ when $i$ is large enough. Note in particular 
that the integer $p$ writes $\ldots 0010$ in $\Z_p$ and more generally 
$p^n$ writes $\ldots 0010\ldots 0$ with $n$ ending zeros.
As a consequence, a $p$-adic integer is a multiple of $p^n$ if and 
only if it ends with (at least) $n$ zeros.
Remark that negative integers are $p$-adic integers as well: the 
opposite of $n$ is, by definition, the result of the subtraction 
$0{-}n$.

\bigskip

Similarly, we define a \emph{$p$-adic number} as a formal infinite sum of 
the shape:
$$x = a_{-n} p^{-n} + a_{-n+1} p^{-n+1} + \cdots + a_i p^i + \cdots$$
where $n$ is an integer which may depend on $x$. 
Alternatively, we will write:
$$x = \overline{\ldots a_i \ldots a_2 a_1 a_0\,.\, 
a_{-1} a_{-2} \ldots a_{-n}}^p$$
and, when no confusion may arise, we will freely remove the bar and the 
trailing $p$.
A $p$-adic number is then nothing but a ``decimal'' number written in 
base $p$ with an infinite number of digits before the decimal mark 
and a
finite amount of digits after the decimal mark. Addition and multiplication
extend to $p$-adic numbers as well.

The set of $p$-adic numbers is denoted by $\Q_p$. Clearly $\Q_p = 
\Z_p[\frac 1 p]$. We shall see later (\emph{cf} Proposition 
\ref{prop:invZp}, page \pageref{prop:invZp}) that $\Q_p$ is actually the 
fraction field of $\Z_p$; in particular it is a field and $\Q$, which is 
the fraction field of $\Z$, naturally embeds into $\Q_p$.

\subsubsection{Second definition: projective limits}

From the point of view of addition and multiplication, the last 
digit of a $p$-adic integer behaves like an integer modulo $p$, that is 
an element of the finite field $\Fp = \Z/p\Z$.
In other words, the application $\pi_1 : \Z_p \to \Z/p\Z$ taking a 
$p$-adic integer $x = a_0 + a_1 p + a_2 p^2 + \cdots$ to the class of 
$a_0$ modulo $p$ is a ring homomorphism.
More generally, given a positive integer $n$, the map:
$$\begin{array}{rcl}
\pi_n :\quad \Z_p & \to & \Z/p^n\Z \\
a_0 + a_1 p + a_2 p^2 + \cdots & \mapsto &
(a_0 + a_1 p + \cdots + a_{n-1} p^{n-1}) \text{ mod } p^n
\end{array}$$
is a ring homomorphism. These morphisms are compatible in the following 
sense: for all $x \in \Z_p$, we have $\pi_{n+1}(x) \equiv \pi_n(x) 
\pmod{p^n}$ (and more generally $\pi_m(x) \equiv \pi_n(x) \pmod{p^n}$ 
provided that $m \geq n$). Putting the $\pi_n$'s all together, we end
up with a ring homomorphism:
$$\begin{array}{rcl}
\pi: \quad \Z_p & \to & \varprojlim_n \Z/p^n\Z \smallskip \\
x & \mapsto & (\pi_1(x), \pi_2(x), \ldots)
\end{array}$$
where $\varprojlim_n \Z/p^n\Z$ is by definition the subring of 
$\prod_{n=1}^\infty \Z/p^n\Z$ consisting of sequences $(x_1, x_2,
\ldots)$ for which $x_{n+1} \equiv x_n \pmod{p^n}$ for all $n$: it
is called the \emph{projective limit} of the $\Z/p^n\Z$'s.

Conversely, consider a sequence $(x_1, x_2, \ldots) \in \varprojlim_n 
\Z/p^n\Z$. In a slight abuse of notation, continue to write $x_n$ for 
the unique integer of the range $\llbracket 0, p^n{-}1 \rrbracket$ 
which is congruent to 
$x_n$ modulo $p^n$ and write it in base $p$:
$$x_n = a_{n,0} + a_{n,1} p + \cdots + a_{n,n-1} p^{n-1}$$
(the expansion stops at $(n{-}1)$ since $x_n < p^n$ by construction).
The condition $x_{n+1} \equiv x_n \pmod{p^n}$ implies that 
$a_{n+1,i} = a_{n,i}$ for all $i \in \llbracket 0, n{-}1\rrbracket$.
In other words, when $i$ remains fixed, the sequence $(a_{n,i})_{n > i}$ 
is constant and thus converges to some $a_i$. Set:
$$\psi(x_1, x_2, \ldots) = \ldots a_i \ldots a_2 a_1 a_0 \in \Z_p.$$
We define this way an application $\psi : \varprojlim_n \Z/p^n\Z \to 
\Z_p$ which is by construction a left and a right inverse of $\pi$. In
other words, $\pi$ and $\psi$ are isomorphisms which are inverses of
each other.

The above discussion allows us to give an alternative definition of 
$\Z_p$, which is:
$$\Z_p = \varprojlim_n \, \Z/p^n\Z.$$
The map $\pi_n$ then corresponds to the projection onto the $n$-th factor.
This definition is more abstract and it seems more difficult to 
handle as well. However it has the enormous advantage of making the ring 
structure appear clearly and, for this reason, it is often much more
useful and powerful than the down-to-earth definition of \S 
\ref{sssec:downtoearth}.
As a typical example, let us prove the following proposition.

\begin{prop}
\label{prop:invZp}
\begin{enumerate}[(a)]
\renewcommand{\itemsep}{0pt}
\item An element $x \in \Z_p$ is invertible in $\Z_p$ if and only
if $\pi_1(x)$ does not vanish.
\item The ring $\Q_p$ is the fraction field of $\Z_p$; in particular,
it is a field.
\end{enumerate}
\end{prop}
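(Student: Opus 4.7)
For part (a), the forward implication is immediate: if $xy = 1$ in $\Z_p$, then applying the ring homomorphism $\pi_1$ yields $\pi_1(x)\pi_1(y) = 1$ in $\F_p$, so $\pi_1(x) \neq 0$. The interesting direction is the converse, and the natural tool is the projective limit description $\Z_p = \varprojlim_n \Z/p^n\Z$ established just above. The plan is to invert $x$ level by level and glue. Assume $\pi_1(x) \neq 0$; I claim then that $\pi_n(x)$ is a unit in $\Z/p^n\Z$ for every $n \geq 1$. Indeed, lifting $\pi_n(x)$ to an integer $a$, the hypothesis says $p \nmid a$, so $\gcd(a, p^n) = 1$ and $a$ is invertible modulo $p^n$. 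Pick $y_n \in \Z/p^n\Z$ with $y_n \pi_n(x) = 1$. By uniqueness of the inverse modulo $p^n$, reducing $y_{n+1}$ modulo $p^n$ gives $y_n$, so the sequence $(y_n)$ lies in $\varprojlim_n \Z/p^n\Z$ and thus defines an element $y \in \Z_p$. Since the ring isomorphism $\pi$ is componentwise, $\pi_n(xy) = 1$ for all $n$, forcing $xy = 1$.

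For part (b), I will combine (a) with the observation $\Q_p = \Z_p[\tfrac{1}{p}]$ (already noted in the excerpt) to pin down the fraction field. The key intermediate step is that every nonzero $x \in \Z_p$ factors as $x = p^v u$ with $v \in \N$ and $u \in \Z_p^\times$. Reading the digit expansion $x = \sum_{i \geq 0} a_i p^i$, let $v$ be the smallest index for which $a_v \neq 0$ (this exists because $x \neq 0$). Then $x = p^v u$ with $u = \sum_{j \geq 0} a_{v+j} p^j$, and $\pi_1(u) = a_v \neq 0$; by part (a), $u$ is a unit in $\Z_p$. Consequently $x^{-1} = p^{-v} u^{-1}$ lies in $\Z_p[\tfrac{1}{p}] = \Q_p$. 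Applied to nonzero elements of $\Z_p$, this shows $\Q_p$ contains the fraction field of $\Z_p$; conversely, every element of $\Q_p$ is of the form $x / p^n$ with $x \in \Z_p$ and therefore lies in the fraction field. Since $\Q_p$ coincides with $\Frac \Z_p$ and the latter is a field, $\Q_p$ is a field.

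The main obstacle, and really the only one requiring care, is the backward direction of (a): one must avoid trying to invert $x$ directly by manipulating its digit expansion (which is awkward because of carries) and instead work modulo $p^n$ and pass to the limit. Once (a) is in hand, part (b) is essentially bookkeeping: isolate the leading power of $p$, apply (a) to the remaining unit, and invoke $\Q_p = \Z_p[\tfrac{1}{p}]$.
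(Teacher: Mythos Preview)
Your proof is correct and follows essentially the same approach as the paper: both use the projective limit description to reduce invertibility in $\Z_p$ to invertibility modulo each $p^n$, and both establish (b) via the factorization $x = p^v u$ with $u$ a unit. Your version is slightly more explicit (constructing the compatible system of inverses $y_n$ by hand and checking both inclusions $\Q_p \subset \Frac\Z_p$ and $\Frac\Z_p \subset \Q_p$), but the underlying ideas are identical.
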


\begin{proof}
\emph{(a)}~Let $x \in \Z_p$.
Viewing $\Z_p$ as $\varprojlim_n \, \Z/p^n\Z$, we find that $x$ 
is invertible in $\Z_p$ if and only if $\pi_n(x)$ is invertible in
$\Z/p^n\Z$ for all $n$. The latest condition is equivalent to
requiring that $\pi_n(x)$ and $p^n$ are coprime for all $n$. 
Noting that $p$ is prime, this is further equivalent to the fact that 
$\pi_n(x) \text{ mod } p = \pi_1(x)$ does not vanish in $\Z/p\Z$.

\smallskip

\noindent
\emph{(b)}~By definition $\Q_p = \Z_p[\frac 1 p]$. It is then enough
to prove that any nonzero $p$-adic integer $x$ can be written as a
product $x = p^n u$ where $n$ is a nonnegative integer and $u$ is a
unit in $\Z_p$. Let $n$ be the number of zeros at the end of the 
$p$-adic expansion of $x$ (or, equivalently, the largest integer $n$
such that $\pi_n(x) = 0$). Then $x$ can be written $p^n u$ where $u$
is a $p$-adic integer whose last digit does not vanish. By the first
part of the proposition, $u$ is then invertible in $\Z_p$ and we are
done.
\end{proof}

We note that the first statement of Proposition~\ref{prop:invZp}
shows that the subset of non-invertible elements of $\Zp$ is exactly
the kernel of $\pi_1$. We deduce from this that $\Zp$ is a local ring
with maximal ideal $\ker \pi_1$.

\subsubsection{Valuation and norm}
\label{sssec:padicnorm}

We define the \emph{$p$-adic valuation} of the nonzero $p$-adic 
number 
$$x = \ldots a_i \ldots a_2 a_1 a_0\,.\, a_{-1} a_{-2} \ldots a_{-n}$$
as the smallest (possibly negative) integer $v$ for which $a_v$ 
does not vanish. We denote it $\val_p(x)$ or simply $\val(x)$ if no
confusion may arise. Alternatively $\val(x)$ 
can be defined as the largest integer $v$ such that $x \in p^v \Zp$. 
When $x = 0$, we put $\val(0) = +\infty$.
We define this way a function $\val : \Q_p \to \Z \cup \{+\infty\}$.
Writing down the computations (and remembering that $p$ is prime),
we immediately check the following compatibility properties for all
$x,y \in \Qp$:
\begin{enumerate}[(1)]
\renewcommand{\itemsep}{0pt}
\item $\val(x+y) \geq \min\big(\val(x), \val(y)\big)$,
\item $\val(xy) = \val(x) + \val(y)$.
\end{enumerate}
Note moreover that the equality $\val(x+y) 
= \min\big(\val(x), \val(y)\big)$ does hold as soon as $\val(x)
\neq \val(y)$. As we shall see later, this property reflects the
tree structure of $\Z_p$ (see \S \ref{sssec:tree}).

The \emph{$p$-adic norm} $|\cdot|_p$ is defined by
$|x|_p = p^{-\val(x)}$ for $x \in \Q_p$. In the sequel, when no
confusion can arise, we shall often write $|\cdot|$ instead of
$|\cdot|_p$. The properties (1) and (2) above immediately translate 
as follows:
\begin{enumerate}[(1)]
\renewcommand{\itemsep}{0pt}
\item[(1')] $|x+y| \leq \max\big(|x|, |y|\big)$ and equality holds
if $|x| \neq |y|$,
\item[(2')] $|xy| = |x| \cdot |y|$.
\end{enumerate}
Remark that (1') implies that $|\cdot|$ satisfies the triangular 
inequality, that is $|x+y| \leq |x| + |y|$ for all $x,y \in \Q_p$. 
It is however much stronger: we say that the $p$-adic norm is 
\emph{ultrametric} or \emph{non Archimedean}. We will see later that
ultrametricity has strong consequences on the topology of $\Q_p$ 
(see for example Corollary~\ref{cor:convseries} below) and strongly 
influences the calculus with $p$-adic (univariate and multivariate) 
functions as well (see \S \ref{sssec:preclemma}).
This is far from being anecdotic; on the contrary, this will be the 
starting point of the theory of $p$-adic precision we will
develop in \S \ref{sec:precision}.

The $p$-adic norm defines a natural distance $d$ on $\Q_p$ as follows: 
we agree that the distance between two $p$-adic numbers $x$ and $y$ is 
$|x-y|_p$. Again this distance is ultrametric in the sense that:
$$d(x,z) \leq \max\big( d(x,y), d(y,z)\big).$$
Moreover the equality holds as soon as $d(x,y) \neq d(y,z)$: all 
triangles in $\Qp$ are isosceles!
Observe also that $d$ takes its values in a proper subset of $\R^+$
(namely $\{0\} \cup \{p^n : n \in \Z\}$) whose unique accumulation point 
is $0$. This property has surprising consequences; for example, closed
balls of positive radius are also open balls and \emph{vice et versa}. 
In particular $\Zp$ is open (in $\Qp$) and compact according to the 
topology defined by the distance. From now on, we endow $\Qp$ with this 
topology.

Clearly, a $p$-adic number lies in $\Z_p$ if and only if its $p$-adic 
valuation is nonnegative, that is if and only if its $p$-adic norm is at 
most $1$. In other words, $\Z_p$ appears as the closed unit ball in 
$\Q_p$. Viewed this way, it is remarkable that it is stable under 
addition (compare with $\R$); it is however a direct consequence of the 
ultrametricity. Similarly, by Proposition \ref{prop:invZp}, a $p$-adic 
integer is invertible in $\Zp$ if and only if it has norm $1$, meaning 
that the group of units of $\Zp$ is then the unit sphere in $\Qp$. As 
for the maximal ideal of $\Z_p$, it consists of elements of positive 
valuation and then appears as the open unit ball in $\Q_p$ (which is
also the closed ball of radius $p^{-1}$).

\subsubsection{Completeness}

The following important proposition shows that $\Qp$ is nothing but the 
completion of $\Q$ according to the $p$-adic distance. In that sense, 
$\Qp$ arises in a very natural way... just as does~$\R$.

\begin{prop}
The space $\Q_p$ equipped with its natural distance is complete
(in the sense that every Cauchy sequence converges). Moreover $\Q$
is dense in $\Q_p$.
\end{prop}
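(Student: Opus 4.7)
The plan is to prove the two assertions separately, handling density first (it is essentially a restatement of the definition) and then using the projective limit description from the preceding subsection to prove completeness.

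\textbf{Density.} Given $x = a_{-n} p^{-n} + a_{-n+1} p^{-n+1} + \cdots \in \Qp$, I would consider the truncations
$$x_N = a_{-n} p^{-n} + a_{-n+1} p^{-n+1} + \cdots + a_N p^N,$$
which are finite sums of rational numbers, hence lie in $\Q$. By construction $x - x_N \in p^{N+1}\Zp$, so $|x - x_N|_p \leq p^{-(N+1)} \to 0$ as $N \to \infty$, proving that $\Q$ is dense in $\Qp$.

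\textbf{Completeness.} Let $(x_n)$ be a Cauchy sequence in $\Qp$. The first step is to reduce to the case where all $x_n$ lie in $\Zp$. Choosing $N$ such that $|x_n - x_N|_p \leq 1$ for $n \geq N$, the ultrametric inequality gives $|x_n|_p \leq \max(|x_N|_p, 1)$ for such $n$, and the finitely many initial terms are trivially bounded; thus there exists $v \in \Z$ with $\val(x_n) \geq v$ for every $n$. Replacing $(x_n)$ by $(p^{-v} x_n)$ (which is still Cauchy) I may assume $x_n \in \Zp$ for all $n$. Next, for each $k \geq 1$ the Cauchy condition provides an index $M_k$ such that $\val(x_n - x_m) \geq k$ whenever $n, m \geq M_k$; equivalently, $\pi_k(x_n) = \pi_k(x_m)$. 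Hence the sequence $(\pi_k(x_n))_n$ is eventually constant, with some stable value $z_k \in \Z/p^k\Z$. The compatibility $\pi_k(x_n) \equiv \pi_{k-1}(x_n) \pmod{p^{k-1}}$ passes to this stable value, so $z_k \equiv z_{k-1} \pmod{p^{k-1}}$, and the sequence $(z_k)_k$ defines an element $y \in \varprojlim_k \Z/p^k\Z = \Zp$. For $n \geq M_k$ we then have $\pi_k(x_n) = z_k = \pi_k(y)$, i.e., $\val(x_n - y) \geq k$, which shows $x_n \to y$.

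The conceptually delicate point is the initial reduction step: a Cauchy sequence in $\Qp$ is not a priori contained in $\Zp$, and the argument that such a sequence must nevertheless be bounded in norm relies essentially on the ultrametric inequality. Once this reduction is made, completeness becomes an almost formal consequence of the identification $\Zp \simeq \varprojlim_k \Z/p^k\Z$, and there is no need for a separate boundedness or monotonicity argument as in the archimedean setting.
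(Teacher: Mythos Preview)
Your proof is correct and follows essentially the same approach as the paper: reduce a Cauchy sequence to $\Zp$ by boundedness and rescaling, then build the limit from the eventually constant residues (you phrase this via the maps $\pi_k$ into $\Z/p^k\Z$, the paper via the digits $a_{n,i}$, but these are the same argument given the identification $\Zp \simeq \varprojlim_k \Z/p^k\Z$ already in hand). The density argument is likewise by truncation, with the paper reducing first to $\Z$ dense in $\Zp$ and you working directly in $\Qp$; both are fine.
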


\begin{proof}
We first prove that $\Qp$ is complete. Let $(u_n)_{n \geq 0}$ be
a $\Qp$-valued Cauchy sequence. It is then bounded and rescaling the
$u_n$'s by a uniform scalar, we may assume that $|u_n| \leq 1$ 
(\emph{i.e.} $u_n \in \Zp$) for all $n$. 
For each $n$, write :
$$u_n = \sum_{i=0}^\infty a_{n,i} p^i$$
with $a_{n,i} \in \{0, 1, \ldots, p{-}1\}$. Fix an integer $i_0$ and
set $\varepsilon = p^{-i_0}$. Since $(u_n)$ is a Cauchy sequence, there
exists a rank $N$ with the property that $|u_n - u_m| \leq \varepsilon$
for all $n,m \geq N$. Coming back to the definition of the $p$-adic
norm, we find that $u_n - u_m$ is divisible by $p^{i_0}$. Writing $u_n = u_m 
+ (u_n - u_m)$ and computing the sum, we get $a_{n,i} = a_{m,i}$ for all 
$i \leq i_0$. In particular the sequence $(a_{n,i_0})_{n \geq 0}$ is 
ultimately constant. Let $a_{i_0} \in \{0, 1, \ldots, p{-}1\}$ denote its 
limit.
Now define $\ell = \sum_{i=0}^\infty a_i p^i \in \Zp$ and
consider again $\varepsilon > 0$. Let $i_0$ be an integer such that
$p^{-i_0} \leq \varepsilon$. By construction, there exists a rank $N$
for which $a_{n,i} = a_i$ whenever $n \geq N$ and $i \leq i_0$. For
$n \geq N$, the difference $u_n - \ell$ is then divisible by $p^{i_0}$
and hence has norm at most $\varepsilon$. Hence $(u_n)$ converges to 
$\ell$.

We now prove that $\Q$ is dense in $\Qp$. Since $\Qp = \Zp[\frac 1 p]$, 
it is enough to prove that $\Z$ is dense in $\Zp$. Pick $a \in \Zp$ and
write $a = \sum_{i \geq 0} a_i p^i$. For a nonnegative integer $n$, set
$b_n = \sum_{i=0}^{n-1} a_i p^i$. Clearly $b_n$ is an integer and the
sequence $(b_n)_{n \geq 0}$ converges to $a$. The density follows.
\end{proof}

\begin{cor}
\label{cor:convseries}
Let $(u_n)_{n \geq 0}$ be a sequence of $p$-adic numbers. The series 
$\sum_{n \geq 0} u_n$ converges in $\Qp$ if and only if its general
term $u_n$ converges to $0$.
\end{cor}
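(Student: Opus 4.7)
The plan is to prove the two implications separately, with the reverse direction being the one where the ultrametric nature of $\Qp$ plays its key role.

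For the easy direction ($\Rightarrow$), I would assume the series $\sum_{n \geq 0} u_n$ converges and denote by $S_n = u_0 + u_1 + \cdots + u_n$ its partial sums. By definition the sequence $(S_n)$ converges to some limit $S \in \Qp$, so writing $u_n = S_n - S_{n-1}$ for $n \geq 1$ and using continuity of addition, we get $u_n \to S - S = 0$. This works identically over any normed field, so ultrametricity is not used here.

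For the nontrivial direction ($\Leftarrow$), the strategy is to show that the hypothesis $u_n \to 0$ suffices to make $(S_n)$ a Cauchy sequence; since $\Qp$ is complete (by the previous proposition), $(S_n)$ will then converge. Fix $\varepsilon > 0$; by hypothesis there exists $N$ such that $|u_k| \leq \varepsilon$ for all $k \geq N$. For integers $m > n \geq N$, I would then write
$$|S_m - S_n| \;=\; \Big| \sum_{k=n+1}^m u_k \Big| \;\leq\; \max_{n+1 \leq k \leq m} |u_k| \;\leq\; \varepsilon,$$
where the middle inequality comes from iterating the ultrametric inequality (1') established in \S\ref{sssec:padicnorm}. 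Thus $(S_n)$ is Cauchy and converges.

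The main obstacle is really just recognizing that the classical archimedean subtlety (where a series like $\sum 1/n$ diverges even though its general term tends to $0$) disappears in the $p$-adic setting precisely because the norm is ultrametric: no matter how many terms we add, the norm of the sum is bounded by the maximum of the individual norms, not by their sum. Once this is noticed, the proof reduces to an immediate application of ultrametricity and completeness.
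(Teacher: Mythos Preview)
Your proof is correct and follows essentially the same approach as the paper's: both directions are handled identically, with the reverse implication proved by using the ultrametric inequality to bound $|S_m - S_n|$ by the maximum of the $|u_k|$'s and then invoking completeness of $\Qp$. The only difference is a trivial indexing convention for the partial sums.
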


\begin{proof}
Set $s_n = \sum_{i=0}^{n-1} u_i$. Clearly $u_n = s_{n+1} - s_n$ for
all $n$. If $(s_n)$ converges to a limit $s \in \Qp$, then $u_n$ converges 
to $s-s = 0$.
We now assume that $(u_n)$ goes to $0$. We claim that
$(s_n)$ is a Cauchy sequence (and therefore converges). 
Indeed, let $\varepsilon > 0$ and pick an integer $N$ for which $|u_i| \leq
\varepsilon$ for all $i \geq N$. Given two integers $m$ and $n$
with $m > n \geq N$, we have:
$$|s_m - s_n| = \left|\sum_{i=n}^{m-1} u_i \right| \leq
\max \big(|u_n|, |u_{n+1}|, \ldots, |u_{m-1}|\big)$$
thanks to ultrametricity. Therefore $|s_m - s_n| \leq \varepsilon$
and we are done.
\end{proof}

\subsubsection{Tree representation}
\label{sssec:tree}

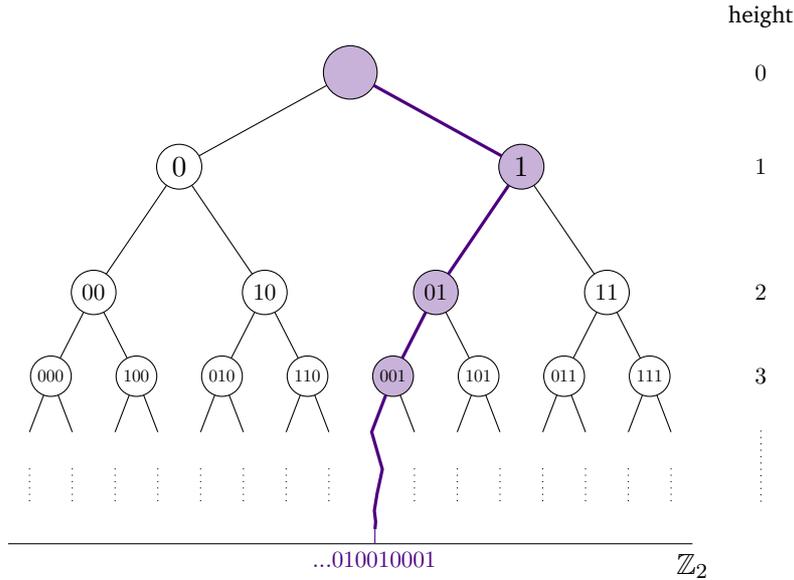
\begin{figure}
\hfill
\begin{tikzpicture}[xscale=9,yscale=-2.5]
\begin{scope}[inner sep=1mm,outer sep=0pt]
\node[scale=0.8] at (1.1, 0.2) { height };
\node[scale=0.8] at (1.1, 0.5) { $0$ };
\node[scale=0.8] at (1.1, 1) { $1$ };
\node[scale=0.8] at (1.1, 5/3) { $2$ };
\node[scale=0.8] at (1.1, 19/9) { $3$ };
\draw[thin,dotted] (1.1,2.4)--(1.1,2.8);
\node[draw,fill=digit!30,circle,inner sep=2.5mm] (void) at (1/2,0.5) { };
\node[draw,circle] (0) at (1/4,1) { $0$ };
\node[draw,fill=digit!30,circle] (1) at (3/4,1) { $1$ };
\node[scale=0.8,draw,circle] (00) at (1/8,5/3) { $00$ };
\node[scale=0.8,draw,circle] (10) at (3/8,5/3) { $10$ };
\node[scale=0.8,draw,fill=digit!30,circle] (01) at (5/8,5/3) { $01$ };
\node[scale=0.8,draw,circle] (11) at (7/8,5/3) { $11$ };
\node[scale=0.6,draw,circle] (000) at (1/16,19/9) { $000$ };
\node[scale=0.6,draw,circle] (100) at (3/16,19/9) { $100$ };
\node[scale=0.6,draw,circle] (010) at (5/16,19/9) { $010$ };
\node[scale=0.6,draw,circle] (110) at (7/16,19/9) { $110$ };
\node[scale=0.6,draw,fill=digit!30,circle] (001) at (9/16,19/9) { $001$ };
\node[scale=0.6,draw,circle] (101) at (11/16,19/9) { $101$ };
\node[scale=0.6,draw,circle] (011) at (13/16,19/9) { $011$ };
\node[scale=0.6,draw,circle] (111) at (15/16,19/9) { $111$ };
\end{scope}
\draw (void)--(0); \draw[very thick,digit] (void)--(1);
\draw (0)--(00); \draw (0)--(10);
\draw[very thick,digit] (1)--(01); \draw (1)--(11);
\draw (00)--(000); \draw (00)--(100);
\draw[very thick,digit] (01)--(001); \draw (01)--(101);
\draw (10)--(010); \draw (10)--(110);
\draw (11)--(011); \draw (11)--(111);
\draw (000)--(1/32,65/27); \draw (000)--(3/32,65/27);
\draw (100)--(5/32,65/27); \draw (100)--(7/32,65/27);
\draw (010)--(9/32,65/27); \draw (010)--(11/32,65/27);
\draw (110)--(13/32,65/27); \draw (110)--(15/32,65/27);
\draw (001)--(17/32,65/27); \draw (001)--(19/32,65/27);
\draw (101)--(21/32,65/27); \draw (101)--(23/32,65/27);
\draw (011)--(25/32,65/27); \draw (011)--(27/32,65/27);
\draw (111)--(29/32,65/27); \draw (111)--(31/32,65/27);
\draw (0,3)--(1,3);
\draw[thin,dotted] (1/32,2.6)--(1/32,2.8);
\draw[thin,dotted] (3/32,2.6)--(3/32,2.8);
\draw[thin,dotted] (5/32,2.6)--(5/32,2.8);
\draw[thin,dotted] (7/32,2.6)--(7/32,2.8);
\draw[thin,dotted] (9/32,2.6)--(9/32,2.8);
\draw[thin,dotted] (11/32,2.6)--(11/32,2.8);
\draw[thin,dotted] (13/32,2.6)--(13/32,2.8);
\draw[thin,dotted] (15/32,2.6)--(15/32,2.8);
\draw[thin,dotted] (19/32,2.6)--(19/32,2.8);
\draw[thin,dotted] (21/32,2.6)--(21/32,2.8);
\draw[thin,dotted] (23/32,2.6)--(23/32,2.8);
\draw[thin,dotted] (25/32,2.6)--(25/32,2.8);
\draw[thin,dotted] (27/32,2.6)--(27/32,2.8);
\draw[thin,dotted] (29/32,2.6)--(29/32,2.8);
\draw[thin,dotted] (31/32,2.6)--(31/32,2.8);
\draw[very thick,digit] (001)--(17/32,65/27)--(35/64,211/81)
                 --(69/128,665/243)--(137/256,2059/729)
                 --(275/512,6305/2187)--(549/1024,2.9219);
\draw[digit] (549/1024,2.9219)--(0.536,3);
\node[digit,scale=0.8,below] at (0.536,3) { $...010010001$ };
\node[below] at (1,3) { $\Z_2$ };
\end{tikzpicture}
\hfill\null

\caption{Tree representation of $\Z_2$}
\label{fig:treeZ2}
\end{figure}

Geometrically, it is often convenient and meaningful to represent $\Zp$ 
as the infinite full $p$-ary tree. In order to explain this 
representation, we need a definition.

\begin{deftn}
\label{def:intervalZp}
For $h \in \N$ and $a \in\Zp$, we set:
$$I_{h,a} = \big\{ \, x \in \Zp 
\quad \text{s.t.} \quad 
x \equiv a \pmod{p^h} \, \big\}.$$
An \emph{interval} of $\Zp$ is a subset of $\Zp$ of the form $I_{h,a}$
for some $h$ and $a$.
\end{deftn}

If $a$ decomposes in base $p$ as
$a = a_0 + a_1 p + a_2 p^2 + \cdots + a_{h-1} p^{h-1} + \cdots$,
the interval $I_{h,a}$ consists exactly of the $p$-adic integers whose
last digits are $a_{h-1} \ldots a_1 a_0$ in this order.
On the other hand, from the analytic point of view, the condition $x 
\equiv a \pmod {p^h}$ is equivalent to $|x-a| \leq p^{-h}$. Thus the
interval $I_{h,a}$ is nothing but the closed ball of centre $a$ and 
radius $p^{-h}$. Even better, the intervals of $\Zp$ are exactly the 
closed balls of $\Zp$.

Clearly $I_{h,a} = I_{h,a'}$ if and only if $a \equiv a' \pmod{p^h}$. 
In particular, given an interval $I$ of $\Zp$, there is exactly one
integer $h$ such that $I = I_{h,a}$. We will denote it by $h(I)$ and
call it the \emph{height} of $I$. We note that there exist exactly
$p^h$ intervals of $\Zp$ of height $h$ since these intervals are
indexed by the classes modulo $p^h$ (or equivalently by the sequences
of $h$ digits between $0$ and $p{-}1$).

From the topological point of view, intervals behave like $\Zp$:
they are at the same time open and compact.

\medskip

We now define the \emph{tree} of $\Zp$, denoted by $\calT(\Zp)$, as follows:
its vertices are the intervals of $\Zp$ and we put an edge $I \to J$ 
whenever $h(J) = h(I) + 1$ and $J \subset I$. 
A picture of $\calT(\Z_2)$ is represented on Figure~\ref{fig:treeZ2}.
The labels indicated on the vertices are the last $h$ digits of $a$.
Coming back to a general $p$, 
we observe that the height of an interval $I$ corresponds to the 
usual height function in the tree $\calT(\Zp)$. Moreover, given two 
intervals $I$ and $J$, the inclusion $J \subset I$ holds if and only 
if there exists a path from $I$ to $J$. 

Elements of $\Zp$ bijectively correspond to infinite paths of 
$\calT(\Zp)$ starting from the root through the following 
correspondence: an element $x \in \Zp$ is encoded by the path
$$I_{0,x} \to I_{1,x} \to I_{2,x} \to \cdots \to I_{h,x} \to \cdots.$$
Under this encoding, an infinite path of $\calT(\Zp)$ starting from the 
root 
$$I_0 \to I_1 \to I_2 \to \cdots \to I_h \to \cdots$$ 
corresponds to a uniquely determined $p$-adic integer, which is the 
unique element lying in the decreasing intersection $\bigcap_{h \in \N} 
I_h$. Concretely each new $I_h$ determines a new digit of $x$; the whole 
collection of the $I_h$'s then defines $x$ entirely.
The distance on $\Zp$ can be visualized on $\calT(\Zp)$ as well:
given $x, y \in \Zp$, we have $|x-y| = p^{-h}$ where $h$ is the height
where the paths attached to $x$ and $y$ separate.

\medskip

The above construction easily extends to $\Qp$.

\begin{deftn}
\label{def:intervalQp}
For $h \in \Z$ and $a \in\Qp$, we set:
$$I_{h,a} = \big\{ \, x \in \Qp 
\quad \text{s.t.} \quad 
|x - a| \leq p^{-h} \, \big\}.$$
A \emph{bounded interval} of $\Qp$ is a subset of $\Qp$ of the form
$I_{h,a}$ for some $h$ and $a$.
\end{deftn}

Similarly to the case of $\Zp$, a bounded interval of $\Qp$ of height 
$h$ is a subset of $\Qp$ consisting of $p$-adic numbers whose digits at 
the positions $< h$ are fixed (they have to agree with the digits
of $a$ at the same positions).

The graph $\calT(\Qp)$ is defined as follows: its vertices are the 
intervals of $\Qp$ while there is an edge $I \to J$ if $h(J) = h(I) 
+ 1$ and $J \subset I$. We draw the attention of the reader to the
fact that $\calT(\Qp)$ is a tree but it is not rooted: there does not
exist a largest bounded interval in $\Qp$.
To understand better the structure of $\calT(\Qp)$, let us define, 
for any integer $v$, the subgraph $\calT(p^{-v} \Zp)$ of $\calT(\Qp)$ 
consisting of intervals which are contained in $p^{-v}\Zp$. From the
fact that $\Qp$ is the union of all $p^{-v}\Zp$, we derive that
$\calT(\Qp) = \bigcup_{v \geq 0} \calT(p^{-v}\Zp)$. Moreover, for all
$v$, $\calT(p^{-v} \Zp)$ is a rooted tree (with root $p^{-v}\Zp$)
which is isomorphic to $\calT(\Zp)$ except that the height function is 
shifted by $-v$. The tree $\calT(p^{-v-1}\Zp)$ is thus obtained by 
juxtaposing $p$ copies of $\calT(p^{-v}\Zp)$ and linking the roots of 
them to a common parent $p^{-v}\Zp$ (which then becomes the new root).

\subsection{Newton iteration over the $p$-adic numbers}

Newton iteration is a well-known tool in Numerical Analysis for 
approximating a zero of a ``nice'' function defined on a real interval.
More precisely, given a differentiable function $f : [a,b] \to \R$, we
define a recursive sequence $(x_i)_{i \geq 0}$ by:
\begin{equation}
\label{eq:Newtoniter}
x_0 \in [a,b] \quad ; \quad 
x_{i+1} = x_i - \frac{f(x_i)}{f'(x_i)}, \, i = 0, 1, 2, \ldots
\end{equation}
Under some assumptions, one can prove that the sequence $(x_i)$ 
converges to a zero of $f$, namely $x_\infty$. Moreover the convergence is 
very rapid since, assuming that $f$ is twice differentiable, we usually 
have an inequality of the shape $|x_\infty - x_i| \leq \rho^{2^i}$ for 
some $\rho \in (0,1)$. In other words, the number of correct digits 
roughly doubles at each iteration. The Newton recurrence 
\eqref{eq:Newtoniter} has a nice geometrical interpretation as well: the 
value $x_{i+1}$ is the $x$-coordinate of the intersection point of the 
$x$-axis with the tangent to the curve $y = f(x)$ at the point $x_i$ 
(see Figure~\ref{fig:Newtoniter}).

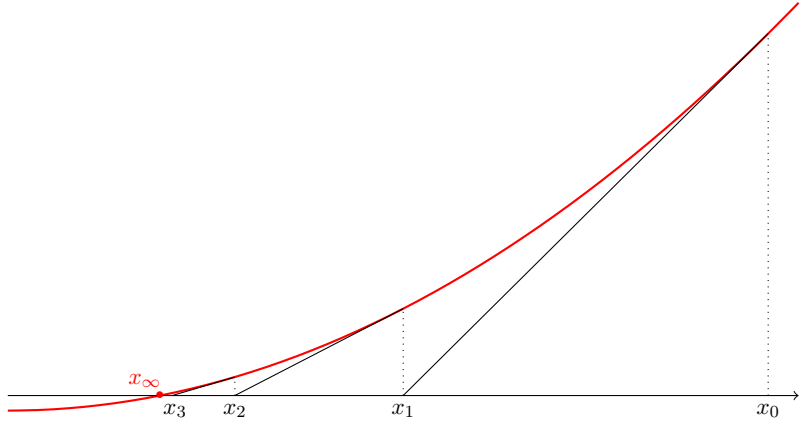
\begin{figure}
\hfill
\begin{tikzpicture}[xscale=2,yscale=0.2]
\draw[thick,red] plot[smooth] coordinates { 
  (0, -1)
  (0.20, -0.96)
  (0.40, -0.84)
  (0.60, -0.64)
  (0.80, -0.36)
  (1.0, 0)
  (1.2, 0.44)
  (1.4, 0.96)
  (1.6, 1.56)
  (1.8, 2.24)
  (2.0, 3)
  (2.2, 3.84)
  (2.4, 4.76)
  (2.6, 5.76)
  (2.8, 6.84)
  (3.0, 8.0)
  (3.2, 9.24)
  (3.4, 10.56)
  (3.6, 11.96)
  (3.8, 13.44)
  (4.0, 15)
  (4.2, 16.64)
  (4.4, 18.36)
  (4.6, 20.16)
  (4.8, 22.04)
  (5, 24)
  (5.2, 26.04)
};
\draw[->] (0,0)--(5.2,0);
\draw[dotted] (5,0)--(5,24);
\draw (5,24)--(2.6,0);
\draw[dotted] (2.6,0)--(2.6,5.76);
\draw (2.6,5.76)--(1.493,0);
\draw[dotted] (1.493,0)--(1.493,1.229);
\draw (1.493,1.229)--(1.081,0);
\node[below, scale=0.8] at (5,0) { $x_0$ };
\node[below, scale=0.8] at (2.6,0) { $x_1$ };
\node[below, scale=0.8] at (1.493,0) { $x_2$ };
\node[below, scale=0.8] at (1.1,0) { $x_3$ };
\node[red, scale=0.6] at (1,0) { $\bullet$ };
\node[red,above, scale=0.8] at (0.9,0) { $x_\infty$ };
\end{tikzpicture}
\hfill\null

\caption{Newton iteration over the reals}
\label{fig:Newtoniter}
\end{figure}

\subsubsection{Hensel's Lemma}
\label{sssec:hensel}

It is quite remarkable that the above discussion extends almost 
\emph{verbatim} when $\R$ is replaced by $\Qp$. Actually, extending the 
notion of differentiability to $p$-adic functions is quite subtle and 
probably the most difficult part. This will be achieved in \S 
\ref{sssec:classC1} (for functions of class $C^1$) and \S 
\ref{sssec:NewtonC2} (for functions of class $C^2$). For now, we prefer 
avoiding these technicalities and restricting ourselves to the simpler 
(but still interesting) case of polynomials. For this particular case, 
the Newton iteration is known as \emph{Hensel's Lemma} and already 
appears in Hensel's seminal paper~\cite{He97} in which $p$-adic 
numbers are introduced.

Let $f(X) = a_0 + a_1 X + \cdots + a_n X^n$ be a polynomial in the 
variable $X$ with coefficients in $\Qp$. Recall that the derivative 
of $f$ can be defined in a purely algebraic way as $f'(X) = a_1 + 
2 a_2 X + \cdots + n a_n X^{n-1}$.

\begin{theo}[Hensel's Lemma]
\label{th:Hensel}
Let $f \in \Zp[X]$ be a polynomial with coefficients in $\Zp$.
We suppose that we are given some $a \in \Zp$ with the property that
$|f(a)| < |f'(a)|^2$. 
Then the sequence $(x_i)_{i \geq 0}$
defined by the recurrence:
$$x_0 = a \quad ; \quad x_{i+1} = x_i - \frac{f(x_i)}{f'(x_i)}$$
is well defined and converges to $x_\infty \in \Zp$ with $f(x_\infty) = 
0$.
The rate of convergence is given by:
$$|x_\infty - x_i| \leq |f'(a)| \cdot 
\left(\frac{|f(a)|}{|f'(a)|^2}\right)^{2^i}.$$
Moreover $x_\infty$ is the unique root of $f$ in the open ball of
centre $a$ and radius $|f'(a)|$.
\end{theo}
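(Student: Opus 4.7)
The plan is to adapt the classical Newton convergence argument to the ultrametric setting, replacing the archimedean estimates by the simpler inequalities coming from properties (1') and (2') of the $p$-adic norm. The whole proof hinges on one algebraic identity: for any polynomial $f \in \Zp[X]$, the Taylor expansion at $x$ can be written as
\[ f(x+h) = f(x) + f'(x)\,h + h^{2}\,R(x,h), \qquad f'(x+h) = f'(x) + h\,S(x,h), \]
where $R, S \in \Zp[X,Y]$ are polynomials with integral coefficients (because the Taylor coefficients $\binom{n}{k}$ are integers). In particular, whenever the arguments lie in $\Zp$, the values of $R$ and $S$ have norm at most $1$.

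Set $c = |f(a)|/|f'(a)|^{2} < 1$. The heart of the proof is an induction on $i$ establishing the three statements: (i) $x_i \in \Zp$; (ii) $|f'(x_i)| = |f'(a)|$; (iii) $|f(x_i)| \leq |f'(a)|^{2} \cdot c^{2^{i}}$. For the base case $i=0$ these are tautologies. For the induction step, we note that (ii) and (iii) give $|h_i| := |f(x_i)/f'(x_i)| \leq |f'(a)| \cdot c^{2^{i}} \leq 1$, so $x_{i+1}=x_i+h_i\in\Zp$. Substituting the defining formula for $x_{i+1}$ into the Taylor expansion of $f$ kills the linear term and yields $f(x_{i+1}) = h_i^{2}\,R(x_i,h_i)$, whence $|f(x_{i+1})| \leq |h_i|^{2} \leq |f'(a)|^{2}\cdot c^{2^{i+1}}$, which is exactly (iii) at rank $i+1$. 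Similarly, the Taylor expansion of $f'$ gives $|f'(x_{i+1})-f'(x_i)|\leq |h_i|<|f'(a)|$, so by the strict form of ultrametricity (equality in (1') when norms differ) we obtain $|f'(x_{i+1})|=|f'(x_i)|=|f'(a)|$, establishing (ii) at rank $i+1$.

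Convergence and the rate then follow formally. The bound $|x_{i+1}-x_i|=|h_i|\leq |f'(a)|\,c^{2^{i}}$ tends to $0$, so by Corollary~\ref{cor:convseries} the telescoping series $\sum(x_{i+1}-x_i)$ converges in $\Zp$; let $x_\infty$ denote the limit. Ultrametricity immediately gives $|x_\infty-x_i|\leq \max_{j\geq i}|h_j|=|h_i|$, which is the announced estimate. Continuity of $f$ (it is a polynomial) combined with $|f(x_i)|\to 0$ yields $f(x_\infty)=0$, and the very first step of the iteration shows $|x_\infty-a|\leq c\,|f'(a)|<|f'(a)|$, so $x_\infty$ does lie in the open ball of centre $a$ and radius $|f'(a)|$.

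The last point is uniqueness, which is again handled via the Taylor identity. If $y$ is another root with $|y-a|<|f'(a)|$, then by ultrametricity $|y-x_\infty|<|f'(a)|$ as well. Expanding $0=f(y)=f(x_\infty)+f'(x_\infty)(y-x_\infty)+(y-x_\infty)^{2}R(x_\infty,y-x_\infty)$ and using $f(x_\infty)=0$, we get, if $y\neq x_\infty$, the equality $f'(x_\infty)=-(y-x_\infty)\,R(x_\infty,y-x_\infty)$. Taking norms and using $|R|\leq 1$ gives $|f'(a)|=|f'(x_\infty)|\leq |y-x_\infty|<|f'(a)|$, a contradiction. The step I expect to need most care is precisely the bookkeeping in the induction — in particular the use of the strict ultrametric inequality to guarantee that the norm of $f'$ remains constant along the sequence, since this is what keeps the iteration from degenerating and what makes the quadratic convergence possible.
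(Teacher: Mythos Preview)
Your proof is correct and follows essentially the same route as the paper's: the paper isolates your Taylor-remainder estimates as a separate lemma (Lemma~\ref{lem:polyC2}, using the divided powers $f^{[i]}=\tfrac{1}{i!}f^{(i)}\in\Zp[X]$ in place of your $R,S$), and then runs exactly the same induction on the pair $\big(|f'(x_i)|=|f'(a)|,\ |f(x_i)|\le |f'(a)|^2\rho^{2^i}\big)$, followed by the same convergence and uniqueness arguments. The only cosmetic addition in your write-up is the explicit inclusion of $x_i\in\Zp$ in the induction hypothesis, which the paper leaves implicit.
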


The proof of the above theorem is based on the next lemma:

\begin{lem}
\label{lem:polyC2}
Given $f \in \Zp[X]$ and $x, h \in \Zp$, we have:
\begin{enumerate}[(i)]
\item $|f(x+h) - f(x)| \leq |h|$.
\item $|f(x+h) - f(x) - h f'(x)| \leq |h|^2$.
\end{enumerate}
\end{lem}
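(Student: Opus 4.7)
The plan is to reduce both inequalities to a single algebraic observation: inside the two‑variable polynomial ring $\Zp[X,Y]$, the quantities $f(X+Y)-f(X)$ and $f(X+Y)-f(X)-Y f'(X)$ are divisible by $Y$ and $Y^2$ respectively, with quotients that still have \emph{integral} (i.e.\ $\Zp$) coefficients. Once that is established, evaluating at $(X,Y)=(x,h)$ and using the fact that any element of $\Zp$ has $p$‑adic norm at most $1$ immediately gives the two bounds.

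More precisely, I would first expand $f(X+Y)$ by applying the binomial theorem monomial by monomial: writing $f(X)=\sum_i a_i X^i$ with $a_i\in\Zp$, one has
\[
f(X+Y)=\sum_i a_i\sum_{k=0}^i \binom{i}{k} X^{i-k}Y^k.
\]
The $k=0$ term reconstitutes $f(X)$ and the $k=1$ term equals $Y f'(X)$. Since the binomial coefficients are honest integers, grouping the remaining terms yields
\[
f(X+Y)-f(X)=Y\cdot g_1(X,Y),\qquad
f(X+Y)-f(X)-Y f'(X)=Y^2\cdot g_2(X,Y),
\]
with $g_1,g_2\in\Zp[X,Y]$.

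It then suffices to observe that any polynomial with coefficients in $\Zp$ sends $\Zp\times\Zp$ into $\Zp$, because $\Zp$ is a subring of $\Qp$ (closed under sums and products). Specialising $(X,Y)=(x,h)$ with $x,h\in\Zp$ therefore gives $g_1(x,h),g_2(x,h)\in\Zp$, i.e.\ both have norm $\leq 1$. The multiplicativity of the $p$-adic norm then yields $|f(x+h)-f(x)|=|h|\cdot|g_1(x,h)|\leq|h|$ for (i), and $|f(x+h)-f(x)-h f'(x)|=|h|^2\cdot|g_2(x,h)|\leq|h|^2$ for (ii).

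There is really no obstacle here beyond checking the algebraic divisibility in $\Zp[X,Y]$; the crucial point—and the reason the statement is cleaner than its archimedean analogue—is that the Taylor remainders have coefficients given by integer binomial coefficients, so no denominators appear and the ultrametric inequality $|\cdot|\leq 1$ on $\Zp$ does all the work with no constants to track.
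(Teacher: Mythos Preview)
Your proof is correct and is essentially the same as the paper's: both expand $f(X+Y)$ via the binomial theorem and use that the resulting Taylor coefficients (the paper writes them as the divided derivatives $f^{[i]}=\frac{1}{i!}f^{(i)}$, you package them into $g_1,g_2$) lie in $\Zp[X,Y]$ because binomial coefficients are integers, then conclude by the bound $|\cdot|\leq 1$ on $\Zp$. The only difference is notational packaging.
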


\begin{proof}
For any nonnegative integer $i$, define $f^{[i]} = \frac 1{i!} \: 
f^{(i)}$ where $f^{(i)}$ stands for the $i$-th derivative of $f$.
Taylor's formula then reads:
\begin{equation}
\label{eq:taylor2}
f(x+h) - f(x) = h f'(x) + h^2 f^{[2]}(x) + \cdots + h^n f^{[n]}(x).
\end{equation}
Moreover, a direct computation shows that the coefficients of $f^{[i]}$ 
are obtained from that of $f$ by multiplying by binomial coefficients. 
Therefore $f^{[i]}$ has coefficients in $\Zp$. Hence $f^{[i]}(x) \in 
\Zp$, \emph{i.e.} $|f^{[i]}(x)| \leq 1$, for all $i$.
We deduce that each summand of the right hand
side of \eqref{eq:taylor2} has norm at most $|h|$. The first assertion
follows while the second is proved similarly.
\end{proof}

\begin{proof}[Proof of Theorem~\ref{th:Hensel}]
Define $\rho = \frac{|f(a)|}{|f'(a)|^2}$. 
We first prove by induction on $i$ the following conjunction:
$$(H_i) : \quad |f'(x_i)| = |f'(a)|
\quad \text{and} \quad |f(x_i)| \leq |f'(a)|^2 \cdot \rho^{2^i}.$$
Clearly $(H_0)$ holds. We assume now that $(H_i)$ holds for some $i 
\geq 0$. We put $h
= -\frac{f(x_i)}{f'(x_i)}$ so that $x_{i+1} = x_i + h$. We write
$f'(x_{i+1}) = \big(f'(x_{i+1}) - f'(x_i)\big) + f'(x_i)$.
Observe that the first summand $f'(x_{i+1}) - f'(x_i)$ has norm at 
most $|h|$ by the first assertion of Lemma~\ref{lem:polyC2}, while 
$|f'(x_i)| \geq |h| \cdot \rho^{-2^i} > |h|$ by the induction 
hypothesis.
The norm of $f'(x_{i+1})$ is then the maximum of the norms of the two
summands, which is $|f'(x_i)| = |f'(a)|$.
Now, applying again Lemma~\ref{lem:polyC2}, we get $|f(x_{i+1})| \leq 
|h|^2 \leq |f'(a)|^2 \cdot \rho^{2^i}$ and the induction goes.

Coming back to the recurrence defining the $x_i$'s, we get:
\begin{equation}
\label{eq:rateNewton}
|x_{i+1} - x_i| = \frac{|f(x_i)|}{|f'(x_i)|} \leq |f'(a)| \cdot 
\rho^{2^i}.
\end{equation}
By Corollary~\ref{cor:convseries}, this implies the convergence
of the sequence $(x_i)_{i \geq 0}$. Its limit $x_\infty$ is a 
solution to the equation $x_\infty = x_\infty + 
\frac{f(x_\infty)}{f'(x_\infty)}$. Thus $f(x_\infty)$ has to vanish. 
The announced rate of convergence follows from Eq.~\eqref{eq:rateNewton} 
thanks to ultrametricity.

It remains to prove uniqueness. For this, consider $y \in \Zp$
with $f(y) = 0$ and $|y - x_0| < |f'(a)|$. Since $|x_\infty - x_0|
\leq |f'(a)| \cdot \rho < |f'(a)|$, we deduce $|x_\infty - y| <
|f'(a)|$ as well. Applying Lemma~\ref{lem:polyC2} with $x = x_\infty$
and $h = y - x_\infty$, we find $|h f'(x_\infty)| \leq |h|^2$. Since 
$|h| < |f'(a)| = |f'(x_\infty)|$, this implies $|h| = 0$, 
\emph{i.e.} $x = x_\infty$. Uniqueness is proved.
\end{proof}

\begin{rem}
\label{rem:Hensel}
All conclusions of Theorem~\ref{th:Hensel} are still valid for any
sequence $(x_i)$ satisfying the weaker assumption:
$$\left|x_{i+1} - x_i + \frac{f(x_i)}{f'(x_i)}\right| 
\leq |f'(a)| \cdot \rho^{2^{i+1}}$$
(the proof is entirely similar). Roughly speaking, this stronger version 
allows us to work with approximations at \emph{each} iteration. It will 
play a quite important role for algorithmic purpose (notably in \S 
\ref{sssec:intervalNewton}) since computers cannot handle exact $p$-adic 
numbers but always need to work with truncations.
\end{rem}

\subsubsection{Computation of the inverse}
\label{sssec:inverse}

A classical application of Newton iteration is the computation of the 
inverse: for computing the inverse of a real number $c$, we introduce 
the function $x \mapsto \frac 1 x - c$ and the associated Newton scheme. 
This leads to the recurrence $x_{i+1} = 2 x_i - c x_i^2$ with initial 
value $x_0 = a$ where $a$ is sufficiently close to $\frac 1 c$.

In the $p$-adic setting, the same strategy applies (although it does not 
appear as a direct consequence of Hensel's Lemma since the mapping $x 
\mapsto \frac 1 x - c$ is \emph{not} polynomial). Anyway, let us pick 
an invertible element $c \in \Zp$, \emph{i.e.} $|c| = 1$, and define the
sequence $(x_i)_{i \geq 0}$ by:
$$x_0 = a \quad ; \quad x_{i+1} = 2 x_i - c x_i^2, \, i = 0, 1, 2, \ldots$$
where $a$ is any $p$-adic number whose last digit is the inverse modulo
$p$ of the last digit of $c$. Computing such an element $a$ reduces to
computing a modular inverse and thus is efficiently feasible.

\begin{prop}
\label{prop:Newtoninverse}
The sequence $(x_i)_{i \geq 0}$ defined above converges to $\frac 1 c 
\in \Zp$. More precisely, we have $|c x_i - 1| \leq p^{-2^i}$ for all 
$i$.
\end{prop}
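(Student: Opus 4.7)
The plan is to reduce everything to the simple algebraic identity
$$c x_{i+1} - 1 = c(2 x_i - c x_i^2) - 1 = -(c x_i - 1)^2,$$
which is the $p$-adic analogue of the standard Newton analysis for the map $x \mapsto \frac 1 x - c$. Setting $e_i = c x_i - 1$, this rewrites as $e_{i+1} = -e_i^2$, so by multiplicativity of the norm (property~(2') of \S\ref{sssec:padicnorm}) we get $|e_{i+1}| = |e_i|^2$.

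Next I would check the base case: by the choice of $a$, the last digit of $ca$ equals $1$, so $\pi_1(ca) = 1$, \emph{i.e.} $ca \equiv 1 \pmod p$. This means $|e_0| = |ca - 1| \leq p^{-1}$. A straightforward induction based on $|e_{i+1}| = |e_i|^2$ then yields $|e_i| \leq p^{-2^i}$ for every $i \geq 0$, which is the quantitative estimate in the statement.

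Finally, to deduce convergence of $(x_i)$ to $\frac 1 c$, I would use that $c$ is a unit of $\Zp$, so $|c| = 1$, and therefore
$$\Big|x_i - \tfrac 1 c\Big| \;=\; \frac{|c x_i - 1|}{|c|} \;=\; |e_i| \;\leq\; p^{-2^i}.$$
The right hand side tends to $0$, so $x_i \to \frac 1 c$ in $\Qp$ (and $\frac 1 c$ lies in $\Zp$ by Proposition~\ref{prop:invZp}(a) since $c$ is a unit). There is no real obstacle here: the whole argument rests on the one-line identity $e_{i+1} = -e_i^2$, and everything else is bookkeeping with the ultrametric norm. The only minor point worth being explicit about is invoking $|c|=1$ to pass between the estimate on $|cx_i - 1|$ and that on $|x_i - 1/c|$.
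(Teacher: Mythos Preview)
Your proof is correct and follows essentially the same route as the paper: both rest on the identity $c x_{i+1} - 1 = -(c x_i - 1)^2$, take norms to obtain $|c x_{i+1}-1| = |c x_i - 1|^2$, and conclude by induction from the base case $|ca-1| \leq p^{-1}$. You are simply a bit more explicit than the paper in spelling out the passage from $|cx_i-1| \to 0$ to $x_i \to 1/c$ via $|c|=1$.
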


\begin{proof}
We prove the last statement of the proposition by induction on $i$. By
construction of $a$, it holds for $i = 0$. Now observe that
$c x_{i+1} - 1 = 2 c x_i - c^2 x_i^2 - 1 = - (c x_i - 1)^2$.
Taking norms on both sides, we get $|c x_{i+1} - 1| = |c x_i - 1|^2$
and the induction goes this way.
\end{proof}

Quite interestingly, this method extends readily to 
the non-commutative case. Let us illustrate this by showing how it can be 
used to compute inverses of matrices with coefficients in $\Zp$. Assume
then that we are starting with a matrix $C \in M_n(\Zp)$ whose reduction
modulo $p$ is invertible, \emph{i.e.} there exists a matrix $A$ such
that $AC \equiv I_n \pmod p$ where $I_n$ is the identity matrix of 
size $n$. Note that the computation of $A$ reduces to the inversion 
of an $n \times n$ matrix over the finite field $\Fp$ and so can be 
done efficiently.
We now define the sequence $(X_i)_{i \geq 0}$ by the recurrence:
$$X_0 = A \quad ; \quad X_{i+1} = 2 X_i - X_i C X_i$$
(be careful with the order in the last term). Mimicking the proof of
Proposition~\ref{prop:Newtoninverse}, we write:
$$C X_{i+1} - I_n = 2 C X_i - C X_i C X_i - I_n = - (C X_i - I_n)^2$$
and obtain this way that each entry of $C X_i - 1$ has norm at most
$p^{2^i}$. Therefore $X_i$ converges to a matrix $X_\infty$ satisfying
$C X_\infty = I_n$, \emph{i.e.} $X_\infty = C^{-1}$ (which in particular
implies that $C$ is invertible). A similar argument works for $p$-adic 
skew polynomials and $p$-adic differential operators as well.

\subsubsection{Square roots in $\Qp$}
\label{sssec:sqrt}

Another important application of Newton iteration is the computation
of square roots. Again, the classical scheme over $\R$ applies without
(substantial) modification over $\Qp$.

Let $c$ be a $p$-adic number. If the $p$-adic valuation of $c$ is odd, 
then $c$ is clearly not a square in $\Qp$ and it does not make sense to 
compute its square root. On the contrary, if the $p$-adic valuation of 
$c$ is even, then we can write $c = p^{2v} c'$ where $v$ is an integer and 
$c'$ is a unit in $\Zp$. Computing a square root then reduces to 
computing a square root of $c'$; in other words, we may assume that $c$ 
is invertible in $\Zp$, \emph{i.e.} $|c| = 1$.

We now introduce the polynomial function $f(x) = x^2 - c$. In order to 
apply Hensel's Lemma (Theorem~\ref{th:Hensel}), we need a first rough
approximation $a$ of $\sqrt c$. Precisely, we need to find some $a \in 
\Zp$ with the property that $|f(a)| < |f'(a)|^2 = |2a|^2$. From $|c| = 
1$, the above inequality implies $|a| = 1$ as well and therefore can be
rewritten as 
$a^2 \equiv c \pmod q$ where $q = 8$ if $p = 2$ and $q = p$ otherwise.
We then first need to compute a square root of $c$ modulo $q$. If $p=2$,
this can be achieved simply by looking at the table of squares modulo $8$:
$$\begin{array}{|c||c|c|c|c|c|c|c|c|}
\hline
a & 0 & 1 & 2 & 3 & 4 & 5 & 6 & 7 \\
\hline
a^2 & 0 & 1 & 4 & 1 & 0 & 1 & 4 & 1 \\
\hline
\end{array}$$

Observe moreover that $c$ is necessarily odd (since it is assumed to be
invertible in $\Z_2$). If it is congruent to $1$
modulo $8$, $a = 1$; otherwise, there is no solution
and $c$ has no square root in $\Q_2$.
When $p > 2$, we have to compute a square root in the finite field $\Fp$
for which efficient algorithms are known~\cite[\S 1.5]{Co93}. If 
$c \text{ mod }p$ is not a square in $\Fp$, then $c$ does not admit a square root 
in $\Qp$ either.

Once we have computed the initial value $a$, we consider the recursive
sequence $(x_i)_{i \geq 0}$ defined by $x_0 = a$ and
$$x_{i+1} = x_i - \frac{f(x_i)}{f'(x_i)} = \frac 1 2  \left( x_i
+ \frac c {x_i} \right), \, i = 0, 1, 2, \ldots$$
By Hensel's Lemma, it converges to a limit $x_\infty$ which is a square
root of $c$. Moreover the rate of convergence is given by:
$$\begin{array}{r@{\hspace{0.5ex}}ll}
|x_\infty - x_i| & \leq p^{-2^i} & \text{if } p > 2 \smallskip \\
|x_\infty - x_i| & \leq 2^{-(2^i+1)} & \text{if } p = 2
\end{array}$$
meaning that the number of correct digits of $x_i$ is at least $2^i$ 
(resp. $2^i + 1$) when $p > 2$ (resp. $p = 2$).

\subsection{Similarities with formal and Laurent series}

According to the very first definition of $\Zp$ we have given (see \S 
\ref{sssec:downtoearth}), $p$-adic integers have formal 
similitudes with formal series over $\Fp$: they are both described as
infinite series with coefficients between $0$ and $p{-}1$, the main
difference being that additions and multiplications involve carries
in the $p$-adic case while they do not for formal series. 
From a more abstract point of view, the parallel between $\Zp$ and 
$\Fp[\![t]\!]$ is also apparent. For example $\Fp [\![t]\!]$ is also 
endowed with a valuation; this valuation defines a distance on 
$\Fp[\![t]\!]$ for which $\Fp[\![t]\!]$ is complete and Hensel's Lemma 
(together with Newton iteration) extends \emph{verbatim} to formal 
series. In the same fashion, the analogue of $\Qp$ is the field of 
Laurent series $\Fp(\!(t)\!)$. Noticing further that $\Fp[t]$ is dense 
in $\Fp[\![t]\!]$ and similarly that $\Fp(t)$ is dense in $\Fp(\!(t)\!)$ 
\footnote{The natural embedding $\Fp(t) \to \Fp(\!(t)\!)$ takes a 
rational function to its Taylor expansion at $0$.}, we can even 
supplement the correspondence between the characteristic $0$ and the 
characteristic $p$, ending up with the ``dictionary'' of 
Figure~\ref{fig:char0p}.

\begin{figure}
\hfill
\begin{tabular}{ccc}
char. $0$ & & char. $p$ \smallskip \\
$\Z$ & $\longleftrightarrow$ & $\Fp[t]$ \\
$\Q$ & $\longleftrightarrow$ & $\Fp(t)$ \\
$\Z_p$ & $\longleftrightarrow$ & $\Fp[\![t]\!]$ \\
$\Q_p$ & $\longleftrightarrow$ & $\Fp(\!(t)\!)$ 
\end{tabular}
\hfill\null

\caption{Similarities between characteristic $0$ and characteristic $p$}
\label{fig:char0p}
\end{figure}

Algebraists have actually managed to capture the essence of there 
resemblances in the notion of \emph{discrete valuation rings/fields} and 
\emph{completion} of those~\cite{Se62}. Recently, Scholze defined the 
notion of \emph{perfectoid} which allows us (under several additional 
assumptions) to build an actual bridge between the characteristic $0$ 
and the characteristic $p$. It is not the purpose of this lecture to go 
further in this direction; we nevertheless refer interested people to 
\cite{Sc12,Sc14,Bh14} for the exposition of the theory.

\subsubsection{The point of view of algebraic geometry}

Algebraic Geometry has an original and interesting point of view 
on the various rings and fields introduced before, showing in 
particular how $p$-adic numbers can arise in many natural problems in 
arithmetics. The underlying ubiquitous idea in algebraic geometry is
to associate to any ring $A$ a geometrical space $\spec A$ --- its 
so-called \emph{spectrum} --- on which the functions are the elements
of $A$. We will not detail here the general construction of $\spec A$
but just try to explain informally what it looks like when $A$ is 
one of the rings appearing in Figure~\ref{fig:char0p}.

Let us start with $k[t]$ where $k$ is a field (of any characteristic). 
For the simplicity of the exposition, let us assume further that $k$ is 
algebraically closed.
One thinks of elements of $k[t]$ as (polynomial) functions over $k$, 
meaning that this spectrum should be thought of as $k$ itself. $\spec 
k[t]$ is called the affine line over $k$ and is usually drawn as a 
straight line. The spectrum of $k(t)$ can be understood in similar 
terms: a rational fraction $f \in k(t)$ defines a function on $k$ as 
well, except that it can be undefined at some points. Therefore $\spec 
k(t)$ might be thought as the affine line over $k$ with a ``moving'' 
finite set of points removed. It is called the \emph{generic point} of 
the affine line.

What about $k[\![t]\!]$? If $k$ has characteristic $0$ (which we assume 
for the simplicity of the discussion), the datum of $f \in k[\![t]\!]$ 
is equivalent to the datum of the values of the $f^{(n)}(0)$'s (for $n$ 
varying in $\N$); we sometimes say that $f$ defines a function on a 
\emph{formal neighborhood} of $0$. This formal neighborhood is the 
spectrum of $k[\![t]\!]$; it should then be thought as a kind of 
thickening of the point $0 \in \spec k[t]$ which does not include any 
other point (since a formal series $f \in k[\![t]\!]$ cannot be 
evaluated in general at any other point than $0$). 
Finally $\spec k(\!(t)\!)$ is the \emph{punctured formal neighborhood} 
of $0$; it is obtained from $\spec k[\![t]\!]$ by removing the point
$0$ but not its neighborhood!

The embedding $k[t] \to k[\![t]\!]$ (resp. $k(t) \to k(\!(t)\!)$) given 
by Taylor expansion at $0$ corresponds to the restriction of a function 
$f$ defined on the affine line (resp. the generic point of the affine 
line) to the formal neighborhood (resp. the punctured formal 
neighborhood) of $0$.

Of course, one can define similarly formal neighborhoods and punctured 
formal neighborhoods around other points: for $a\in k$, the 
corresponding rings are respectively $k[\![h_a]\!]$ and $k(\!(h_a)\!)$ 
where $h_a$ is a formal variable which plays the role $t{-}a$. The 
algebraic incarnation of the restrictions to $\spec k[\![h_a]\!]$ and 
$k(\!(h_a)\!)$ are the Taylor expansions at $a$.

The above discussion is illustrated by the drawing at the top of 
Figure~\ref{fig:alggeom}; it represents more precisely the rational 
fraction $f(t) = \frac{t^2}{1-t}$ viewed as a function defined on $\spec 
k(t)$ (with $1$ as point of indeterminacy) together its Taylor expansion 
around $0$ (in blue) and $2$ (in purple) which are defined on formal 
neighborhoods.

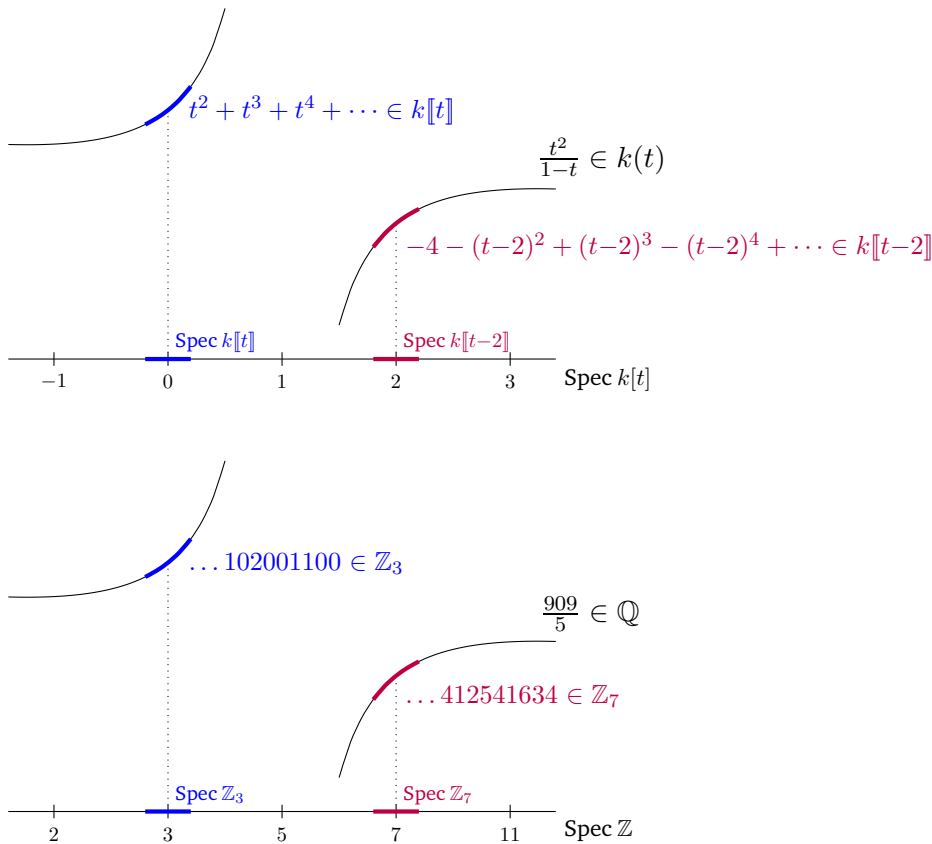
\begin{figure}
\hfill
\begin{tikzpicture}[xscale=1.5,yscale=0.3]
\draw plot[smooth] coordinates { 
  (-1.40000000000000,0.483333333333333)
  (-1.30000000000000,0.473913043478261)
  (-1.20000000000000,0.472727272727273)
  (-1.10000000000000,0.480952380952381)
  (-1.00000000000000,0.500000000000000)
  (-0.900000000000000,0.531578947368421)
  (-0.800000000000000,0.577777777777778)
  (-0.700000000000000,0.641176470588235)
  (-0.600000000000000,0.725000000000000)
  (-0.500000000000000,0.833333333333333)
  (-0.400000000000000,0.971428571428571)
  (-0.300000000000000,1.14615384615385)
  (-0.200000000000000,1.36666666666667)
  (-0.100000000000000,1.64545454545455)
  (0,2.00000000000000)
  (0.0999999999999999,2.45555555555555)
  (0.200000000000000,3.05000000000000)
  (0.300000000000000,3.84285714285714)
  (0.400000000000000,4.93333333333333)
  (0.500000000000000,6.50000000000000)
};
\draw plot[smooth] coordinates { 
  (1.50000000000000,-7.50000000000000)
  (1.60000000000000,-5.93333333333333)
  (1.70000000000000,-4.84285714285714)
  (1.80000000000000,-4.05000000000000)
  (1.90000000000000,-3.45555555555555)
  (2.00000000000000,-3.00000000000000)
  (2.10000000000000,-2.64545454545454)
  (2.20000000000000,-2.36666666666667)
  (2.30000000000000,-2.14615384615384)
  (2.40000000000000,-1.97142857142857)
  (2.50000000000000,-1.83333333333333)
  (2.60000000000000,-1.72500000000000)
  (2.70000000000000,-1.64117647058823)
  (2.80000000000000,-1.57777777777778)
  (2.90000000000000,-1.53157894736842)
  (3.00000000000000,-1.50000000000000)
  (3.10000000000000,-1.48095238095238)
  (3.20000000000000,-1.47272727272727)
  (3.30000000000000,-1.47391304347826)
  (3.40000000000000,-1.48333333333333)
};
\node[above] at (3.8,-1.483) { $\frac{t^2}{1-t} \in k(t)$ };

\draw[dotted] (0,-9)--(0,2);
\draw[ultra thick, blue] plot[smooth] coordinates { 
  (-0.200000000000000,1.36666666666667)
  (-0.100000000000000,1.64545454545455)
  (0,2.00000000000000)
  (0.0999999999999999,2.45555555555555)
  (0.200000000000000,3.05000000000000)
};
\node[blue,scale=0.9,right] at (0.1,2) { $t^2 + t^3 + t^4 + \cdots \in k[\![t]\!]$ };

\draw[dotted] (2,-9)--(2,-3);
\draw[ultra thick, purple] plot[smooth] coordinates { 
  (1.80000000000000,-4.05000000000000)
  (1.90000000000000,-3.45555555555555)
  (2.00000000000000,-3.00000000000000)
  (2.10000000000000,-2.64545454545454)
  (2.20000000000000,-2.36666666666667)
};
\node[purple,scale=0.9,below right] at (2,-3) { $-4 - (t{-}2)^2 + (t{-}2)^3 - (t{-}2)^4 + \cdots \in k[\![t{-}2]\!]$ };

\begin{scope}[yshift=-9cm]
\draw (-1.4,0)--(3.4,0);
\node[below right,scale=0.8] at (3.4,0) { $\spec k[t]$ };
\draw (-1,0.3)--(-1,-0.3);
\node[scale=0.7] at (-1,-1) { ${-}1$ };
\draw (0,0.3)--(0,-0.3);
\node[scale=0.7] at (0,-1) { $0$ };
\draw (1,0.3)--(1,-0.3);
\node[scale=0.7] at (1,-1) { $1$ };
\draw (2,0.3)--(2,-0.3);
\node[scale=0.7] at (2,-1) { $2$ };
\draw (3,0.3)--(3,-0.3);
\node[scale=0.7] at (3,-1) { $3$ };
\draw[ultra thick, blue] (-0.2,0)--(0.2,0);
\node[above right,blue,scale=0.7] at (0,0) { $\spec k[\![t]\!]$ };
\draw[ultra thick, purple] (1.8,0)--(2.2,0);
\node[above right,purple,scale=0.7] at (2,0) { $\spec k[\![t{-}2]\!]$ };
\end{scope}

\begin{scope}[yshift=-20cm]
\draw plot[smooth] coordinates { 
  (-1.40000000000000,0.483333333333333)
  (-1.30000000000000,0.473913043478261)
  (-1.20000000000000,0.472727272727273)
  (-1.10000000000000,0.480952380952381)
  (-1.00000000000000,0.500000000000000)
  (-0.900000000000000,0.531578947368421)
  (-0.800000000000000,0.577777777777778)
  (-0.700000000000000,0.641176470588235)
  (-0.600000000000000,0.725000000000000)
  (-0.500000000000000,0.833333333333333)
  (-0.400000000000000,0.971428571428571)
  (-0.300000000000000,1.14615384615385)
  (-0.200000000000000,1.36666666666667)
  (-0.100000000000000,1.64545454545455)
  (0,2.00000000000000)
  (0.0999999999999999,2.45555555555555)
  (0.200000000000000,3.05000000000000)
  (0.300000000000000,3.84285714285714)
  (0.400000000000000,4.93333333333333)
  (0.500000000000000,6.50000000000000)
};
\draw plot[smooth] coordinates { 
  (1.50000000000000,-7.50000000000000)
  (1.60000000000000,-5.93333333333333)
  (1.70000000000000,-4.84285714285714)
  (1.80000000000000,-4.05000000000000)
  (1.90000000000000,-3.45555555555555)
  (2.00000000000000,-3.00000000000000)
  (2.10000000000000,-2.64545454545454)
  (2.20000000000000,-2.36666666666667)
  (2.30000000000000,-2.14615384615384)
  (2.40000000000000,-1.97142857142857)
  (2.50000000000000,-1.83333333333333)
  (2.60000000000000,-1.72500000000000)
  (2.70000000000000,-1.64117647058823)
  (2.80000000000000,-1.57777777777778)
  (2.90000000000000,-1.53157894736842)
  (3.00000000000000,-1.50000000000000)
  (3.10000000000000,-1.48095238095238)
  (3.20000000000000,-1.47272727272727)
  (3.30000000000000,-1.47391304347826)
  (3.40000000000000,-1.48333333333333)
};
\node[above] at (3.7,-1.483) { $\frac{909}{5} \in \Q$ };

\draw[dotted] (0,-9)--(0,2);
\draw[ultra thick, blue] plot[smooth] coordinates { 
  (-0.200000000000000,1.36666666666667)
  (-0.100000000000000,1.64545454545455)
  (0,2.00000000000000)
  (0.0999999999999999,2.45555555555555)
  (0.200000000000000,3.05000000000000)
};
\node[blue,scale=0.9,right] at (0.1,2) { $\ldots102001100 \in \Z_3$ };

\draw[dotted] (2,-9)--(2,-3);
\draw[ultra thick, purple] plot[smooth] coordinates { 
  (1.80000000000000,-4.05000000000000)
  (1.90000000000000,-3.45555555555555)
  (2.00000000000000,-3.00000000000000)
  (2.10000000000000,-2.64545454545454)
  (2.20000000000000,-2.36666666666667)
};
\node[purple,scale=0.9,below right] at (2,-3) { $\ldots412541634 \in \Z_7$ };

\begin{scope}[yshift=-9cm]
\draw (-1.4,0)--(3.4,0);
\node[below right,scale=0.8] at (3.4,0) { $\spec \Z$ };
\draw (-1,0.3)--(-1,-0.3);
\node[scale=0.7] at (-1,-1) { $2$ };
\draw (0,0.3)--(0,-0.3);
\node[scale=0.7] at (0,-1) { $3$ };
\draw (1,0.3)--(1,-0.3);
\node[scale=0.7] at (1,-1) { $5$ };
\draw (2,0.3)--(2,-0.3);
\node[scale=0.7] at (2,-1) { $7$ };
\draw (3,0.3)--(3,-0.3);
\node[scale=0.7] at (3,-1) { $11$ };
\draw[ultra thick, blue] (-0.2,0)--(0.2,0);
\node[above right,blue,scale=0.7] at (0,0) { $\spec \Z_3$ };
\draw[ultra thick, purple] (1.8,0)--(2.2,0);
\node[above right,purple,scale=0.7] at (2,0) { $\spec \Z_7$ };
\end{scope}
\end{scope}
\end{tikzpicture}
\hfill \null

\caption{The point of view of algebraic geometry}
\label{fig:alggeom}
\end{figure}

\medskip

We now move to the arithmetical rings ($\Z$, $\Q$, $\Zp$ and $\Qp$) for 
which the picture is surprisingly very similar. In order to explain what 
is the spectrum of $\Z$, we first need to reinterpret $\spec k[t]$ in a 
more algebraic way. Given $f \in k[t]$, its evaluation at $a$, namely 
$f(a)$, can alternatively be defined as the remainder in the Euclidean 
division of $a$ by $t{-}a$. The affine line over $k$ then appears more 
intrinsically as the set of monic polynomials of degree $1$ over $k$ or, 
even better, as the set of irreducible monic polynomials (recall that we 
had assumed that $k$ is algebraically closed).

Translating this to the arithmetical world, we are inclined to think at 
the spectrum of $\Z$ as the set of prime numbers. Moreover, the 
evaluation of an integer $a \in \Z$ at the prime $p$ should be nothing 
but the remainder in the Euclidean division of $a$ by $p$. The integer 
$a$ defines this way a function over the spectrum of $\Z$.

A rational number $\frac a b$ can be reduced modulo $p$ for all prime 
$p$ not dividing $b$. It then defines a function on the spectrum of $\Z$ 
except that there are points of indeterminacy. $\spec \Q$ then appears as 
the generic point of $\spec \Z$ exactly as $\spec k(t)$ appeared as the
generic point of $\spec k[t]$.

Now sending an integer or a rational number to $\Qp$ is the exact
algebraic analogue of writing the Taylor expansion of a polynomial or
a rational fraction. That is the reason why $\spec \Zp$ should be
thought of as a formal neighborhood of the point $p \in \spec \Z$.
Similarly $\spec \Qp$ is the punctured formal neighborhood around $p$.

The second drawing of Figure~\ref{fig:alggeom} (which really looks like 
the first one) displays the rational number $\frac{909} 5$ viewed as a 
function over $\spec \Z$ (with $5$ as point of indeterminacy) together 
with its local $p$-adic/Taylor expansion at $p = 3$ and $p = 7$.

\subsubsection{Local-global principle}
\label{sssec:localglobal}

When studying equations where the unknown is a function, it is often 
interesting to look at local properties of a potential solution.
Typically, if we have to solve a differential equation:
$$a_d(t) y^{(d)}(t) + a_{d-1}(t) y^{(d-1)}(t) + \cdots + a_1(t) y'(t)
+ a_0 y(t) = b(t)$$
a standard strategy consists in looking for analytic solutions of the 
shape $y(t) = \sum_n c_n (t{-}a)^n$ for some $a$ lying in the base 
field. The differential equation then rewrites as a recurrence on the 
coefficients $c_n$ which sometimes can be solved. This reasoning yields
local solutions which have to be glued afterwards.

Keeping in mind the analogy between functions and integers/rationals, we 
would like to use a similar strategy for studying Diophantine equations
over $\Q$. Consider then a Diophantine equation of the shape:
\begin{equation}
\label{eq:polyeq}
P(x_1, \ldots, x_n) = 0
\end{equation}
where $P$ is a polynomial with rational coefficients and the $x_i$'s are
the unknowns. If Eq.~\eqref{eq:polyeq} has a global solution, \emph{i.e.}
a solution in $\Q^n$, then it must have local solutions everywhere,
\emph{i.e.} a solution in $\Qp^n$ for each prime number $p$. Indeed $\Q$
embeds into each $\Qp$. We are interested in the converse: assuming that
Eq.~\eqref{eq:polyeq} has local solutions everywhere, can we glue them 
in order to build a global solution? 
Unfortunately, the answer is negative in general. There is nevertheless
one remarkable case for which this principle works well.

\begin{theo}[Hasse--Minkowski]
Let $P(x_1, \ldots, x_n)$ be a multivariate polynomial. We assume that
$P$ is quadratic, \emph{i.e.} that the total degree of $P$ is $2$.
Then,
the equation \eqref{eq:polyeq} has a solution in $\Q^n$ if and only if
it has a solution in $\R^n$ and in $\Qp^n$ for all prime numbers $p$.
\end{theo}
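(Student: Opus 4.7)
The plan is to reduce to a diagonal homogeneous quadratic form and then induct on the number of variables $n$. The forward implication is trivial: any solution in $\Q^n$ is automatically a solution in $\R^n$ and in each $\Qp^n$ via the natural embeddings, so all the content lies in the converse. First I would homogenize the equation by introducing an auxiliary variable so that all monomials have total degree exactly $2$ (this is harmless because $P$ was already of degree $\le 2$); then I would diagonalize the quadratic form by a rational change of variables, which is possible since $\Q$ has characteristic $0$, and which preserves the existence of (non-trivial) zeros in $\Q$, $\R$ and each $\Qp$. The question thus reduces to: if the form $q(x) = a_1 x_1^2 + \cdots + a_n x_n^2$, with $a_i \in \Q^\times$, is isotropic over $\R$ and over every $\Qp$, is it isotropic over $\Q$?

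The cases $n=1$ (vacuous) and $n=2$ are essentially elementary: the latter amounts to saying that a rational number is a square in $\Q$ iff it is a square in $\R$ (positivity) and in every $\Qp$, which follows from unique factorization, since squareness in $\Q_p$ pins down the parity of $\val_p$ and the class of the unit part modulo $p$ (or modulo $8$ when $p=2$). The genuinely arithmetic content appears for $n=3$ (Legendre's theorem): after clearing denominators and assuming the $a_i$ squarefree and pairwise coprime, the local solvability hypothesis translates into congruence conditions stating that $-a_i a_j$ is a square modulo $|a_k|$ for each permutation $\{i,j,k\}$; a geometry-of-numbers argument (Minkowski's lattice point theorem applied to a suitable sublattice of $\Z^3$, or equivalently Legendre's classical descent) then produces a nontrivial integral zero.

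For $n = 4$, I would split $q$ as an orthogonal sum $q = q_1 \perp (-q_2)$ of two binary forms and try to exhibit a rational number $c$ represented nontrivially by both. Via Hilbert symbols $(\,\cdot\,,\,\cdot\,)_v$ (one per place $v$ of $\Q$, including $v = \infty$), this existence is controlled by the collection of local conditions at each place, and the key input is Hilbert's reciprocity law, $\prod_v (a,b)_v = 1$, which is essentially equivalent to quadratic reciprocity together with its supplementary laws. One then uses Dirichlet's theorem on primes in arithmetic progressions to produce an actual prime witnessing the compatibility of local data. Finally, for $n \geq 5$, one reduces to the preceding cases by splitting off a hyperbolic plane or a binary subform and iterating the argument: a careful bookkeeping (often phrased as strong approximation for quadratic forms) shows that only finitely many local obstructions must be checked.

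The hard part will undoubtedly be the $n=4$ step, because it is the only place where one truly needs a global gluing tool that is not itself elementary: Hilbert reciprocity is the lone mechanism that converts the infinite family of purely local data into a single global equality, and Dirichlet's theorem is non-trivial to invoke. In contrast, $n \leq 3$ is number-theoretic but self-contained, and $n \geq 5$ is a formal reduction. A secondary difficulty, invisible in the statement but present in any careful write-up, is the bookkeeping for the prime $2$ and the real place, where Hilbert symbols behave less uniformly than at odd primes.
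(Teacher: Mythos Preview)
The paper does not actually prove this theorem: immediately after the statement it writes ``We refer to~\cite{Se70} for the proof of this theorem (which is not really the purpose of this course)'' and moves on. So there is no in-paper argument to compare your proposal against.

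That said, your outline is essentially the classical proof as presented in the cited reference (Serre, \emph{Cours d'arithm\'etique}): reduce to a diagonal form, handle $n\le 2$ by elementary means, $n=3$ via Legendre's theorem, $n=4$ via Hilbert symbols together with the product formula $\prod_v (a,b)_v=1$ and Dirichlet on primes in arithmetic progressions, and then $n\ge 5$ by splitting off a binary subform and reducing inductively. Your identification of the $n=4$ step as the crux is also correct. One small point of care: the paper's statement is about a not-necessarily-homogeneous polynomial of total degree~$2$, so your homogenization step should be spelled out as passing from solutions of $P=0$ in $K^n$ to nontrivial zeros of the homogenized form in $K^{n+1}$ with $x_0\neq 0$; the equivalence with isotropy of the associated quadratic form then needs a word (Witt cancellation, or the observation that a form representing~$0$ nontrivially represents every value). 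None of this is a gap in your plan, just bookkeeping that Serre also has to do.
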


We refer to~\cite{Se70} for the proof of this theorem (which is not
really the purpose of this course).
Understanding the local-global principle beyond the case of quadratic 
polynomials has motivated a lot of research for more than 50 years.
In 1970, at the International Congress of Mathematicians in Nice, Manin 
highlighted a new obstruction of cohomological nature to the possibility of glueing local 
solutions~\cite{Ma71}. This obstruction is called nowadays the 
\emph{Brauer--Manin obstruction}\footnote{The naming comes from the fact 
this obstruction is written in the language of Brauer groups... the 
latter being defined by Grothendieck in the context we are interested 
in.}. Exhibiting situations where it can explain, on its own, the 
non-existence of rational solutions is still an active domain of 
research today, in which very recent breakthrough progresses have been 
done~\cite{HaWi16,Ha16}.

\subsection{Why should we implement $p$-adic numbers?}

We have seen that $p$-adic numbers are a wonderful mathematical object 
which might be quite useful for arithmeticians. However it is still not 
clear that it is worth implementing them in mathematical software.
The aim of this subsection is to convince the reader that it is
definitely worth it. Here are three strong arguments supporting this 
thesis:
\begin{enumerate}[(A)]
\renewcommand{\itemsep}{0pt}
\item \emph{$p$-adic numbers provide sometimes better numerical stability;}
\item \emph{$p$-adic numbers provide a solution for ``allowing for
division by $p$ in $\Fp$'';}
\item \emph{$p$-adic numbers really appear in nature.}
\end{enumerate}
In the next paragraphs (\S\S \ref{sssec:Hilbert}--\ref{sssec:nature}), 
we detail several examples illustrating the above arguments and showing 
that $p$-adic numbers appear as an essential tool in many questions 
of algorithmic nature.

\subsubsection{The Hilbert matrix}
\label{sssec:Hilbert}

The first aforementioned argument, namely the possibility using $p$-adic 
numbers to have better numerical stability, is used in several contexts 
as factorization of polynomials over $\Q$ or computation of Galois 
groups of number fields.
In order to avoid having to introduce too advanced concepts here, we 
have chosen to focus on a simpler example which was pointed out in 
Vaccon's PhD thesis \cite[\S 1.3.4]{Va15}: the computation of 
the inverse of the Hilbert matrix. Although this example is not directly 
related to the most concrete concerns of arithmeticians, it already very 
well highlights the potential of $p$-adic numbers when we are willing
to apply numerical methods to a problem which is initially stated over 
the rationals.

We recall that the Hilbert matrix of size $n$ is the square matrix $H_n$ 
whose $(i,j)$ entry is $\frac 1 {i+j-1}$ ($1 \leq i,j \leq n$). For 
example:
$$H_4 = \left( \begin{matrix}
1 & 1/2 & 1/3 & 1/4 \\
1/2 & 1/3 & 1/4 & 1/5 \\
1/3 & 1/4 & 1/5 & 1/6 \\
1/4 & 1/5 & 1/6 & 1/7 
\end{matrix}\right).$$
Hilbert matrices are famous for many reasons. One of them is that 
they are very ill-conditioned, meaning that numerical computations 
involving Hilbert matrices may lead to important numerical errors.
A typical example is the computation of the inverse of $H_n$. Let us
first mention that an exact formula giving the entries of $H_n^{-1}$
is known:
\begin{equation}
\label{eq:Hninv}
(H_n^{-1})_{i,j} = (-1)^{i+j} \cdot (i+j-1) 
\cdot \binom{n+i-1}{n-j}
\cdot \binom{n+j-1}{n-i}
\cdot \binom{i+j-2}{i-1}^2.
\end{equation}
(see for example~\cite{Ch83}).
We observe in particular than $H_n^{-1}$ has integral coefficients.

We now move to the numerical approach: we consider $H_n$ as a matrix 
with real coefficients and compute its inverse using standard Gaussian 
elimination (with choice of pivot) and IEEE floating-point arithmetics 
(with $53$ bits of precision)~\cite{IEEE08}. Here is the result we get 
with \sage for $n = 4$:
$$\small
H_4^{-1} \approx \left( \begin{matrix}
\phantom{+}15.9999999999998 & -119.999999999997 & \phantom{+}239.999999999992 & -139.999999999995 \\
-119.999999999997 & \phantom{+}1199.99999999996 & -2699.99999999989 & \phantom{+}1679.99999999993 \\
\phantom{+}239.999999999992 & -2699.99999999989 & \phantom{+}6479.99999999972 & -4199.99999999981 \\
-139.999999999995 & \phantom{+}1679.99999999993 & -4199.99999999981 & \phantom{+}2799.99999999987
\end{matrix} \right).$$
We observe that the accuracy of the computed result is acceptable but not so
high: the number of correct binary digits is about $45$ (on average 
on each entry of the matrix), meaning then that the number of incorrect 
digits is about $8$. Let us now increase the size of the matrix and observe 
how the accuracy behaves:

\begin{center}
\begin{tabular}{|c||c|c|c|c|c|c|c|c|c|c|}
\hline
{\small \begin{tabular}{@{}c@{}} size of \vspace{-0.3ex}\\the matrix\end{tabular}} 
& $5$ & $6$ & $7$ & $8$ & $9$ & $10$ & $11$ & $12$ & $13$ \\
\hline
{\small \begin{tabular}{@{}c@{}} number of \vspace{-0.3ex}\\ correct digits\end{tabular}}
& $40$ & $34$ & $28$ & $25$ & $19$ & $14$ & $9$ & $4$ & $0$ \\
\hline
\end{tabular}
\end{center}

\noindent
We see that the losses of accuracy are enormous.

\smallskip

On the other hand, let us examine now the computation goes when 
$H_n^{-1}$ is viewed as a matrix over $\Q_2$. Making experimentations in 
\sagenoref using again Gaussian elimination and the straightforward 
analogue of floating-point arithmetic 
(see \S \ref{ssec:pfloat}) with $53$ bits of precision, we 
observe the following behavior:
\begin{center}
\begin{tabular}{|c||c|c|c|c|c|c|c|c|c|c|c|c|}
\hline
{\small \begin{tabular}{@{}c@{}} size of \vspace{-0.3ex}\\the matrix\end{tabular}} 
& $5$ & $6$ & $7$ & $8$ & $9$ & $10$ & $11$ & $12$ & $13$ & $50$ & $100$ \\
\hline
{\small \begin{tabular}{@{}c@{}} number of \vspace{-0.3ex}\\ correct digits\end{tabular}}
& $52$ & $52$ & $51$ & $51$ & $51$ & $51$ & $51$ & $51$ & $51$ & $49$ & $48$ \\
\hline
\end{tabular}
\end{center}

\noindent
The computation of $H_n^{-1}$ seems quite accurate over $\Q_2$ whereas it 
was on the contrary highly inaccurate over $\R$. As a consequence, if we 
want to use numerical methods to compute the exact inverse of $H_n$ over 
$\Q$, it is much more interesting to go through the $2$-adic numbers. Of 
course this approach does not make any sense if we want a \emph{real} 
approximation of the entries of $H_n^{-1}$; in particular, if we are 
only interesting in the size of the entries of $H_n^{-1}$ (but not in
their exact values) passing through the $p$-adics is absurd since two 
integers of different sizes might be very close in the $p$-adic world.

\medskip

The phenomena occurring here are actually easy to analyze. The accuracy
of the inversion of a real matrix is governed by the 
condition number which is defined by:
$$\cond_\R(H_n) = \Vert H_n \Vert_\R \cdot \Vert H_n^{-1} \Vert_\R$$
where $\Vert \cdot \Vert_\R$ is some norm on $M_n(\R)$. 
According to Eq.~\eqref{eq:Hninv}, the entries of $H_n$ are large: 
as an example, the bottom right entry of $H_n$ is equal to $(2n-1) \cdot 
\binom{2n-2}{n-1}^2$ and thus grows exponentially fast with respect to 
$n$. As a consequence the condition number $\cond_\R(H_n)$ is quite 
large as well; this is the source of the loss of accuracy observed
for the computation of $H_n^{-1}$ over $\R$.

Over $\Qp$, one can argue similarly and consider the 
$p$-adic condition number:
$$\cond_{\Qp}(H_n) = 
  \Vert H_n \Vert_{\Qp} \cdot \Vert H_n^{-1} \Vert_{\Qp}$$
where $\Vert \cdot \Vert_{\Qp}$ is the infinite norm over $M_n(\Qp)$
(other norms do not make sense over $\Qp$ because of the ultrametricity).
Since $H_n^{-1}$ has integral coefficients, all its entries have their
$p$-adic norm bounded by $1$. Thus $\Vert H_n^{-1} \Vert_{\Qp} \leq 1$. 
As for the Hilbert matrix $H_n$ itself, its norm is equal to $p^v$ where 
$v$ is the highest power of $p$ appearing in a denominator of an entry 
of $H_n$, \emph{i.e.} $v$ is the unique integer such that $p^v \leq 2n-1 
< p^{v+1}$. Therefore $\Vert H_n \Vert_{\Qp} \leq 2n$ and 
$\cond_{\Qp}(H_n) = O(n)$. The growth of the $p$-adic condition number 
in then rather slow, explaining why the computation of $H_n^{-1}$ is 
accurate over $\Qp$.

\subsubsection{Lifting the positive characteristic}
\label{sssec:diffeq}

In this paragraph, we give more details about the mysterious sentence: 
\emph{$p$-adic numbers provide a solution for ``allowing for divisions by $p$ 
in $\Fp$''}.
Let us assume that we are working on a problem that makes sense over any 
field and always admits a unique solution (we will give examples later). 
To fix ideas, let us agree that our problem consists in designing a fast 
algorithm for computing the aforementioned unique solution. Assume 
further that such an algorithm is available when $k$ has characteristic 
$0$ but does not extend to the positive characteristic (because it
involves a division by $p$ at some point).
In this situation, one can sometimes take advantage of $p$-adic numbers to attack the 
problem over the finite field $\Fp$, proceeding concretely in three
steps as follows:
\begin{enumerate}
\renewcommand{\itemsep}{0pt}
\item we lift our 
problem over $\Qp$ (meaning that we introduce a $p$-adic instance of 
our problem whose reduction modulo $p$ is the problem we have started
with),
\item we solve the problem over $\Qp$ (which has characteristic $0$),
\item we finally reduce the solution modulo $p$. 
\end{enumerate}
The existence and uniqueness of the solution together ensure that the
solution of the $p$-adic problem is defined over $\Zp$ and reduces 
modulo $p$ to the correct answer.
Concretely, here are two significant examples where the above strategy 
was used with success: the fast computation of composed products 
\cite{BoGoPeSc05}
and the fast computation of isogenies in positive characteristic 
\cite{LeSi08,LaVa16}.

\medskip

In order to give more substance to our thesis, we briefly detail the 
example of composed products. Let $k$ be a field. Given two monic 
polynomials $P$ and $Q$ with coefficients in $k$, we recall that the 
composed product of $P$ and $Q$ is defined by:
$$P \otimes Q = \sum_\alpha \sum_\beta (X - \alpha \beta)$$
where $\alpha$ (resp. $\beta$) runs over the roots of $P$ (resp. 
of $Q$) in an algebraic closure of $k$, with the convention that a
root is repeated a number of times equal to its multiplicity. Note
that $P \otimes Q$ is always defined over $k$ since all its coefficients
are written as symmetric expressions in the $\alpha$'s and the $\beta$'s.
We address the question of the fast multiplication of composed products.

If $k$ has characteristic $0$, Dvornicich and Traverso~\cite{DvTr89} proposed
a solution based on Newton sums. Indeed, letting $S_{P,n}$ and $S_{Q,n}$ 
be the Newton sums of $P$ and $Q$ respectively:
$$S_{P,n} = \sum_\alpha \alpha^n \quad and \quad 
S_{Q,n} = \sum_\beta \beta^n.$$
It is obvious that the product $S_{P,n} \cdot S_{Q,n}$ is the $n$-th 
Newton sum of $P \otimes Q$. Therefore, if we design fast algorithms for 
going back and forth between the coefficients of a polynomial and its 
Newton sums, the problem of fast computation of the composed product 
will be solved.

Schönhage~\cite{Sc82} proposed a nice solution to the latter problem. 
It relies on a remarkable differential equation relating a polynomial
and the generating function of the sequence of its Newton sums.
Precisely, let $A$ be a monic polynomial of degree $d$ over $k$. 
We define the formal series:
$$S_A(t) = \sum_{n=0}^\infty S_{A,n+1} t^n.$$
A straightforward computation then leads to the following remarkable relation:
$$(E) : \quad A_\rec'(t) = - S_A(t) \cdot A_\rec(t)$$
where $A_\rec$ is the reciprocal polynomial of $A$ defined by $A_\rec(X) 
= X^d \cdot A(\frac 1 X)$.
If the polynomial $A$ is known, then one can compute $S_A(t)$ 
by performing a simple division in $k[\![t]\!]$. Using fast 
multiplication and Newton iteration for computing the inverse (see \S 
\ref{sssec:inverse}), this method leads to an algorithm that computes 
$S_1, S_2, \ldots, S_N$ for a cost of $\softO(N)$ operations in $k$.

Going in the other direction reduces to solving the differential equation 
$(E)$ where the unknown is now the polynomial $A_\rec$. When $k$ has 
characteristic $0$, Newton iteration can be used in this context (see 
for instance~\cite{BoChLeSaSc07}). This leads to the following 
recurrence:
$$B_1(t) = 1 \quad ; \quad
B_{i+1}(t) = B_i(t) - B_i(t) 
\int \left(\frac{B'_i(t)}{B_i(t)} + S_A(t) \right)dt$$
where the integral sign stands for the linear operator acting on 
$k[\![t]\!]$ by $t^n \mapsto \frac{t^{n+1}}{n+1}$.
One proves that $B_i(t) \equiv A_\rec(t) \pmod {t^{2^i}}$ for all 
$i$. Since $A_\rec(t)$ is a polynomial of degree $d$, it is then enough 
to compute it modulo $t^{d+1}$. Hence after $\log_2 (d{+}1)$ iterations 
of the Newton scheme, we have computed all the relevant coefficients. 
Moreover all the computations can be carried by omitting all terms of 
degree $> d$. This leads to an algorithm that computes $A_\rec$ (and 
hence $A$) for a cost of $\softO(d)$ operations in $k$.

In positive characteristic, this strategy no longer works because the 
integration involves divisions by integers. When $k$ is the finite 
field with $p$ elements\footnote{The same strategy actually extends to 
all finite fields.}, one can tackle this issue by lifting the problem
over $\Zp$, following the general strategy discussed here. First, we 
choose two polynomials $\hat P$ and $\hat Q$ with coefficients in $\Zp$ 
with the property that $\hat P$ (resp. $\hat Q$) reduces to $P$ (resp. 
$Q$) modulo $p$. Second, applying the characteristic $0$ method to
$\hat P$ and $\hat Q$, we compute $\hat P \otimes \hat Q$. Third, we
reduce the obtained polynomial modulo $p$, thus recovering $P \otimes
Q$. This strategy was concretized by the work of Bostan, Gonzalez-Vega, 
Perdry and Schost~\cite{BoGoPeSc05}.
The main difficulty in their work is to keep control on the $p$-adic 
precision we need in order to carry out the computation and be sure to 
get the correct result at the end; this analysis is quite subtle and was 
done by hand in~\cite{BoGoPeSc05}. Very general and powerful tools are now
available for dealing with such questions; they will be presented in
\S \ref{sec:precision}.

\subsubsection{$p$-adic numbers in nature}
\label{sssec:nature}

We have already seen in the introduction of this course that $p$-adic 
numbers are involved in many recent developments in Number Theory and 
Arithmetic Geometry (see also \S \ref{sssec:localglobal}). Throughout 
the 20th century, many $p$-adic objects have been defined and studied as 
well: $p$-adic modular/automorphic forms, $p$-adic differential 
equations, $p$-adic cohomologies, $p$-adic Galois representations, 
\emph{etc.} It turns out that mathematicians, more and more often, feel 
the need to make ``numerical'' experimentations on these objects for 
which having a nice implementation of $p$-adic numbers is of course a 
prerequisite.

Moreover $p$-adic theories sometimes have direct consequences in other 
areas of mathematics. A good example in this direction is the famous 
problem of counting points on algebraic curves defined over finite 
fields (\emph{i.e.} roughly speaking counting the number of solutions in 
finite fields of equations of the shape $P(X,Y) = 0$ where $P$ is 
bivariate polynomial). This question has received much attention during
the last decade because of its applications to cryptography (it serves
as a primitive in the selection of secure elliptic curves).
Since the brilliant intuition of Weil~\cite{We49} followed by the revolution 
of Algebraic Geometry conducted by Grothendieck in the 20th century 
\cite{SGA}, the approach to counting problems is now often cohomological.
Roughly speaking, if $C$ is an algebraic variety (with some additional 
assumptions) defined over a finite field $\F_q$, then the number of points
of $C$ is related to the traces of the Frobenius map acting on the
cohomology of $C$. Counting points on $C$ then ``reduces'' to computing
the Frobenius map on the cohomology. Now it turns out that traditional
algebraic cohomology theory yields vector spaces over $\Q_\ell$ (for an 
auxiliary prime number $\ell$ which does not need to be equal to the 
characteristic of the finite field we are working with).
This is the way $\ell$-adic numbers enter naturally into the scene.

\section{Several implementations of $p$-adic numbers}
\label{sec:implementations}

Now we are all convinced that it is worth implementing $p$-adic numbers, 
we need to discuss the details of the implementation. The main problem 
arising when we are trying to put $p$-adic numbers on computers is the 
precision. Indeed, remember that a $p$-adic number is defined as an 
infinite sequence of digits, so that it \emph{a priori} cannot be stored 
entirely in the memory of a computer. From this point of view, $p$-adic 
numbers really behave like real numbers; the reader should therefore not 
be surprised if he/she often detects similarities between the solutions 
we are going to propose for the implementation of $p$-adic numbers and 
the usual implementation of reals.

In this section, we design and discuss three totally different 
paradigms: zealous arithmetic (\S \ref{ssec:zealous}), lazy 
arithmetic together with the relaxed improvement (\S \ref{ssec:lazy}) and 
$p$-adic floating-point arithmetic (\S \ref{ssec:pfloat}). Each of 
these ways of thinking has of course its own advantages and 
disadvantages; we will try to compare them fairly in \S
\ref{ssec:compparadigm}.

\subsection{Zealous arithmetic}
\label{ssec:zealous}

Zealous arithmetic is by far the most common implementation of 
$p$-adic numbers in usual mathematical software: \magma, \sage and 
\pari use it for instance. It appears as the exact analogue of the 
interval arithmetic of the real setting: we replace $p$-adic numbers by 
intervals. The benefit is of course that intervals can be represented 
without errors by finite objects and then can be stored and manipulated 
by computers.

\subsubsection{Intervals and the big-$O$ notation}

Recall that we have defined in \S \ref{sssec:tree} (see Definitions 
\ref{def:intervalZp} and \ref{def:intervalQp}) the notion of interval: 
a bounded interval of $\Qp$ is by definition a closed ball lying inside
$\Qp$, \emph{i.e.} a subset of the form:
$$I_{N,a} = \big\{ \, x \in \Qp 
\quad \text{s.t.} \quad 
|x - a| \leq p^{-N} \, \big\}.$$
The condition $|x - a| \leq p^{-N}$ can be rephrased in a more algebraic 
way since it is equivalent to the condition $x \equiv a \pmod{p^N \Zp}$, 
\emph{i.e.} $x \in a + p^N \Zp$. In short $I_{N,a} = a + p^N \Zp$. In 
symbolic computation, we generally prefer using the big-$O$ notation and 
write:
$$I_{N,a} = a + O(p^N).$$
The following result is easy but fundamental for our propose.

\begin{prop}
\label{prop:intervalcomputer}
Each bounded interval $I$ of $\Qp$ may be written as:
$$I = p^v s + O(p^N)$$
where $(N,v,s)$ is either of the form $(N,N,0)$ or a triple of relative 
integers with $v < N$, $s \in [0, p^{N-v})$ and $\gcd(s,p) = 1$.
Moreover this writing is unique.

In particular, bounded intervals of $\Qp$ are representable by exact
values on computers.
\end{prop}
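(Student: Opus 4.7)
The plan is to first extract the height $N$ from the geometric structure of $I$, then split on whether or not $I$ contains $0$, and in the nontrivial case use ultrametricity to pin down $v$ intrinsically.

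First I would argue that the integer $N$ is uniquely determined by $I$: since $I$ is a closed ball of radius $p^{-N}$, the diameter of $I$ (its largest element under the distance $d$) equals $p^{-N}$, so $N = -\log_p \text{diam}(I)$ is intrinsic. This mirrors the analogous observation made earlier for intervals in $\Zp$ (the uniqueness of $h(I)$), the only difference being that here $N$ is allowed to be negative.

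Next, I would split into the two cases. If $0 \in I$, then writing $I = a + O(p^N)$ we see $a \in p^N\Zp$, so $I = 0 + O(p^N) = p^N \cdot 0 + O(p^N)$, matching the triple $(N,N,0)$. If $0 \notin I$, I would use ultrametricity: the distance from $0$ to any element $x \in I$ satisfies $d(0,x) > p^{-N}$, i.e.\ $\val(x) < N$. Moreover, for any two elements $x,y \in I$ we have $|x-y| \leq p^{-N} < |x|$, so $\val(y) = \val(x)$ by the strong equality case of the ultrametric inequality recalled in \S\ref{sssec:padicnorm}. Hence there is a single integer $v < N$ which is the common valuation of all elements of $I$; this $v$ is intrinsic to $I$. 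Picking any $a \in I$, we write $a = p^v u$ with $u \in \Zp^\times$. Then $x \in I$ iff $x = p^v u + p^N z$ for some $z \in \Zp$, iff $p^{-v}x = u + p^{N-v}z$ for some $z \in \Zp$. Reducing $u$ modulo $p^{N-v}$ to its unique representative $s \in [0, p^{N-v})$ preserves the coset, and $\gcd(s,p)=1$ because $u$ is a unit.

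For uniqueness in the nontrivial case, observe that any valid writing $I = p^v s + O(p^N)$ forces $v$ to equal the common valuation of elements of $I$ (since $s$ is coprime to $p$ and $v < N$), and then $s$ is forced to be the unique representative in $[0, p^{N-v})$ of the class of $p^{-v}a \bmod p^{N-v}$ for any $a \in I$. The trivial case $(N,N,0)$ is clearly incompatible with the nontrivial case since $0 \in I$ in the former and $0 \notin I$ in the latter. Finally, representability on a computer is immediate: the data $(N,v,s)$ is a triple of integers, and in the nontrivial case $s$ lies in an explicit finite range.

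The main (mild) obstacle is the very first step — phrasing the intrinsic characterization of $N$ cleanly — together with the care needed to ensure that the chosen normalization of $s$ (rather than merely its class mod $p^{N-v}$) really produces a unique representative; both points are handled by the ultrametric dichotomy and by restricting $s$ to $[0,p^{N-v})$ with $\gcd(s,p)=1$.
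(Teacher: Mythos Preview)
Your argument is correct and follows essentially the same approach as the paper, which dispatches the whole thing in one sentence by noting that $I_{N,a}$ depends only on the class of $a$ modulo $p^N$. You have simply unpacked that observation in full detail --- making intrinsic the height $N$, the common valuation $v$ (via ultrametricity), and the residue $s$ --- which is exactly what the paper leaves to the reader.
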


\begin{proof}
It is a direct consequence of the fact that
$I_{N,a}$ depends only on the class of $a$ modulo $p^N$.
\end{proof}

The interval $a + O(p^N)$ 
has a very suggestive interpretation if we are thinking at $p$-adic 
numbers in terms of infinite sequences of digits.
Indeed, write 
$$a = a_v p^v + a_{v+1} p^{v+1} + \cdots + a_{N-1} p^{N-1} + \cdots$$ 
with $0 \leq a_i < p$ and agree to define $a_i = 0$ for 
$i < v$. A $p$-adic number $x$ then lies in $a + O(p^N)$ if its $i$-th 
digit is $a_i$ for all $i < N$. Therefore one may think of the notation 
$a + O(p^N)$ as a $p$-adic number of the shape:
$$a_v p^v + a_{v+1} p^{v+1} + \cdots + a_{N-1} p^{N-1} + {}?\:p^N 
+{} ?\:p^{N+1} + \cdots$$
where the digits at all positions $\geq N$ are unspecified. This 
description enlightens Proposition \ref{prop:intervalcomputer} 
which should become absolutely obvious now.

\medskip

We conclude this paragraph by introducing some additional vocabulary.

\begin{deftn}
Let $I = a + O(p^N)$ be an interval. We define:

\medskip

\noindent
\begin{tabular}{c@{\hspace{1ex}}lll}
$\bullet$ &
the \emph{absolute precision} of $I$: & $\abs(I) = N$; \smallskip \\
$\bullet$ &
the \emph{valuation} of $I$: & $\val(I) = \val(a)$ & if $0 \not\in I$, \\
 && $\val(I) = N$ & otherwise; \smallskip \\
$\bullet$ &
the \emph{relative precision} of $I$: & 
\multicolumn{2}{l}{$\rel(I) = \abs(I) - \val(I)$.}
\end{tabular}
\end{deftn}

\smallskip

We remark that the valuation of $I$ is the integer $v$ of 
Proposition~\ref{prop:intervalcomputer}. It is also always the smallest 
valuation of an element of $I$. Moreover, coming back to the 
interpretation of $p$-adic numbers as infinite sequences of digits, we 
observe that the relative precision of $a + O(p^N)$ is the number of 
known digits (with rightmost zeroes omitted) of the family of $p$-adic 
numbers it represents.

\subsubsection{The arithmetics of intervals}

Since intervals are defined as subsets of $\Qp$, basic arithmetic
operations on them are defined in a straightforward way. For example, 
the sum of the intervals $I$ and $J$ is the subset of $\Qp$ consisting of 
all the elements of the form $a + b$ with $a \in I$ and $b \in J$. As with real 
intervals, it is easy to write down explicit formulas giving the results
of the basic operations performed on $p$-adic intervals.

\begin{prop}
\label{prop:arithinterval}
Let $I = a + O(p^N)$ and $I' = a' + O(p^{N'})$ be two bounded intervals
of $\Qp$. Set $v = \val(I)$ and $v' = \val(I')$. We have:
\begin{align}
I + I' & = a + a' + O\big(p^{\min(N,\,N')}\big) \label{eq:precadd} \\
I - I' & = a - a' + O\big(p^{\min(N,\,N')}\big) \label{eq:precsub} \\
I \times I' & = aa' + O\big(p^{\min(v+N',\,N+v')}\big) \label{eq:precmul} \\
I \div I' & = \frac a{a'} + O\big(p^{\min(v + N' -2v',\, N-v')}\big) \label{eq:precdiv}
\end{align}
where in the last equality, we have further assumed that $0 \not\in I'$.
\end{prop}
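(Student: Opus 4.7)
The plan is to verify each of the four identities by computing the relevant set directly, writing $I = a + p^N \Zp$ and $I' = a' + p^{N'} \Zp$ so that every operation reduces to a manipulation of $p$-adic valuations.

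The additive formulas (2.2) and (2.3) both reduce to the single identity $p^N \Zp + p^{N'} \Zp = p^{\min(N,N')} \Zp$, which is immediate upon assuming (without loss of generality) $N \leq N'$: the inclusion $\subseteq$ is clear and the reverse is obtained by taking the second summand to be $0$. For subtraction, the same holds since $p^{N'} \Zp$ is stable under negation.

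For the product formula (2.4), I would expand
\[
(a + p^N u)(a' + p^{N'} v) - aa' \;=\; a' p^N u + a p^{N'} v + p^{N+N'} uv
\]
and invoke the bounds $\val(a) \geq v$ and $\val(a') \geq v'$ (obvious in the generic case $0 \notin I, 0 \notin I'$, and in the degenerate case $0 \in I$ one sets $a = 0$, $v = N$, and similarly on the other side). Each summand then has valuation at least $\min(v + N', v' + N)$, using that $N + N' \geq \min(v+N', v'+N)$ since $v \leq N$ and $v' \leq N'$; this gives $I \cdot I' \subseteq aa' + O(p^{\min(v+N', N+v')})$. The reverse inclusion is obtained by setting $v = 0$ (respectively $u = 0$) to realize elements whose valuation matches the bound, exploiting that $a$ or $a'$ is a unit-multiple of $p^v$ or $p^{v'}$.

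The division formula (2.5) is the only one requiring genuine work, and the cleanest route is to first compute $1/I' = \frac{1}{a'} + O(p^{N'-2v'})$ and then apply (2.4) to $I$ and $1/I'$ (whose valuations are $v$ and $-v'$, and whose absolute precisions are $N$ and $N'{-}2v'$). For the inversion step, pick $y = a' + p^{N'} w \in I'$; since $0 \notin I'$ we have $v' < N'$, so ultrametricity forces $\val(y) = v'$, and the identity
\[
\frac{1}{y} - \frac{1}{a'} \;=\; \frac{-p^{N'} w}{a' y}
\]
shows $\val\bigl(\tfrac 1 y - \tfrac 1 {a'}\bigr) \geq N'-2v'$, with equality when $w$ is a unit. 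For the reverse inclusion, any target $\frac{1}{a'} + \delta$ with $\val(\delta) \geq N'-2v'$ is the inverse of $y = a'/(1 + a'\delta)$, which lies in $I'$ because $1 + a'\delta$ is a unit (since $\val(a'\delta) \geq N' - v' > 0$). The main obstacle is precisely this inversion step: one must track the valuation of $y$ carefully via the non-Archimedean property to ensure $a' y$ has valuation exactly $2v'$; everything else is bookkeeping with the already-established formulas.
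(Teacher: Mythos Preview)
Your proposal is correct and follows essentially the same route as the paper: addition and subtraction are dispatched immediately, multiplication is handled by expanding $(a+p^N u)(a'+p^{N'}v)-aa'$ and bounding each term, and division is reduced to multiplication after first establishing $1/I' = \tfrac{1}{a'} + O(p^{N'-2v'})$. The only cosmetic difference is that the paper proves the inversion identity by writing $I' = a'\bigl(1 + O(p^{N'-v'})\bigr)$ and observing that $x \mapsto x^{-1}$ is an involution on $1 + O(p^{N'-v'})$, whereas you compute $\tfrac{1}{y}-\tfrac{1}{a'}$ directly and exhibit an explicit preimage; both arguments are equivalent.
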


\begin{rem}
\label{rem:arithinterval}
Focusing on the precision, we observe that the two first 
formulae stated in Proposition \ref{prop:arithinterval} immediately
imply $\abs(I + I') = \abs(I - I') = \min(\abs(I), \, \abs(I'))$.
Concerning multiplication and division, the results look ugly at 
first glance. They are however much more pretty if we translate them 
in terms of relative precision; indeed, they become simply
$\rel(I\times I') = \rel(I \div I') = \min(\rel(I), \, \rel(I'))$
which is parallel to the case of addition and subtraction and
certainly easier to remember.
\end{rem}

\begin{proof}[Proof of Proposition~\ref{prop:arithinterval}]
The proofs of Eq.~\eqref{eq:precadd} and Eq.~\eqref{eq:precsub} are easy
and left as an exercise to the reader. We then move directly to
multiplication. Let $x \in I \times I'$. By definition, there exist
$h \in p^N \Zp$ and $h' \in p^{N'}\Zp$ such that:
$$x = (a + h) (a' + h')
= aa' + ah' + a'h + hh'.$$
Moreover, coming back to the definition of the valuation of an
interval, we find $v \leq \val(a)$. The term $ah'$ is then divisible 
by $p^{v+N'}$. Similarly $a'h$ is divisible by $p^{N+v'}$. Finally
$hh'$ is divisible by $p^{v+v'}$; it is then \emph{a fortiori} also
divisible by $p^{v+N'}$ because $v \leq N$. Thus $ah'+ a'h + hh'$
is divisible by $p^{\min(v+N',\,N+v')}$ and we have proved one
inclusion:
$$I \times I' \subset aa' + O\big(p^{\min(v+N',\,N+v')}\big).$$
Conversely, up to swapping $I$ and $I'$, we may assume that $\min
(v+N',\,N+v') = v+N'$. Up to changing $a$, we may further suppose
that $\val(a) = v$. Pick now $y \in aa' + O(p^{v+N'})$, \emph{i.e.}
$y = aa' + h$ for some $h$ divisible by $p^{v+N'}$. 
Thanks to our assumption on $a$, we have $\val(\frac h a) = \val(h) - 
\val(a) \geq v + N' - v = N'$, \emph{i.e.} $p^{N'}$ divides
$\frac h a$. Writing $y = a \times (a' + \frac h a)$ then shows 
that $y \in I \times I'$. The converse inclusion follows and
Eq.~\eqref{eq:precmul} is established.

We now assume that $0 \not\in I'$ and start the proof of 
Eq.~\eqref{eq:precdiv}. We define $1 \div I'$ as the set of 
inverses of elements of $I'$. We claim that:
\begin{equation}
\label{eq:oneoverIp}
1 \div I' = \frac 1{a'} + O(p^{N'-2v'}).
\end{equation}
In other to prove the latter relation, we write $I' = a' \times 
(1 + O(p^{N-v'}))$. We then notice that the function $x \mapsto
x^{-1}$ induces a involution from $1 + O(p^{N-v'})$ to itself; in
particular $1 \div (1 + O(p^{N-v'})) = 1 + O(p^{N-v'})$. Dividing
both sides by $a'$, we get \eqref{eq:oneoverIp}. We finally
derive that
$I \div I' = I \times \big(\frac 1{a'} + O(p^{N'-2v'})\big)$.
Eq.~\eqref{eq:precdiv} then follows from Eq.~\eqref{eq:precmul}.
\end{proof}

Using Proposition~\ref{prop:arithinterval}, it is more or less 
straightforward to implement addition, subtraction, multiplication and 
division on intervals when they are represented as triples $(N,v,s)$ as 
in Proposition~\ref{prop:intervalcomputer} (be careful however at the 
conditions on $v$ and $s$). This yields the basis of the \emph{zealous
arithmetic}. 

\subsubsection{Newton iteration}
\label{sssec:intervalNewton}

A tricky point when dealing with zealous arithmetic concerns Newton
iteration. To illustrate it, let us examine the example of the
computation of a square root of $c$ over $\Q_2$ for
$$c = 1 + 2^{3} + 2^{4} + 2^{5} + 2^{10} + 2^{13} + 2^{16} + 2^{17} + 
      2^{18} + 2^{19} + O(2^{20}).$$
We observe that $c \equiv 1 \pmod 8$. By Hensel's Lemma, $c$
has then a unique square root in $\Q_2$ which is congruent to $1$ 
modulo $4$. Let us denote it by $\sqrt c$. Following \S \ref{sssec:sqrt}, 
we compute $\sqrt c$ using Newton iteration: if $(x_i)_{i \geq 0}$ is
the recursive sequence defined by
$$x_0 = 1 \quad ; \quad x_{i+1} = \frac 1 2 \cdot \left( x_i + \frac 
c{x_i}\right), \, i = 0, 1, 2, \ldots$$
we know that $x_i$ and $\sqrt c$ share the same $(2^i + 1)$ final
digits. Here is the result we get if we compute the first $x_i$'s
using zealous arithmetic:

$$\begin{array}{lr}
x_1 = & 
 {\color{black!70}\ldots1111001001000011}{\color{purple}101} \\
x_2 = & 
 {\color{black!70}\ldots0011101000111}{\color{purple}10101} \\
x_3 = & 
 {\color{black!70}\ldots10111010}{\color{purple}001010101} \\
x_4 = & 
 {\color{black!70}\ldots}{\color{purple}0111010001010101} \\
x_5 = & 
 {\color{black!70}\ldots}{\color{purple}111010001010101} \\
x_6 = & 
 {\color{black!70}\ldots}{\color{purple}11010001010101} 
\end{array}$$

\noindent
Here the $2^i{+}1$ rightmost digits of $x_i$ (which are the correct
digits of $\sqrt c$) are colored in purple and the dots represent
the unknown digits, that is the digits which are absorbed by the
$O(-)$. We observe that the precision decreases by $1$ digit at 
each iteration; this is of course due to the division by $2$ in the 
recurrence defining the $x_i$'s.
The maximal number of correct digits is obtained for $x_4$ with 
$16$ correct digits. After this step, the result stabilizes but 
the precision continues to decrease. Combining naively zealous 
arithmetic and Newton iteration, we have then managed to compute 
$\sqrt c$ at precision $O(2^{16})$.

\medskip

It turns out that the losses of precision we have just highlighted has 
no intrinsic meaning but is just a consequence of zealous 
arithmetic. We will now explain that it is possible to slightly modify 
Newton iteration in order to completely eliminate this 
unpleasant phenomenon. The starting point of your argument is 
Remark~\ref{rem:Hensel} which tells us that Newton iteration still 
converges at the same rate to $\sqrt c$ as soon as the sequence $(x_i)$ 
satisfies the weaker condition:
$$\left|x_{i+1} - 
\frac 1 2 \cdot \left( x_i + \frac
c{x_i}\right) \right| \leq 2^{2^{i+1}+1}.$$
In other words, we have a complete freedom on the choice of the 
``non-purple'' digits of $x_{i+1}$. In particular, if some digit of 
$x_{i+1}$ was not computed because the precision on $x_i$ was too poor, 
we can just assign freely our favorite value (\emph{e.g.} $0$) to this 
digit without having to fear unpleasant consequences. Even better, we 
can assign $0$ to each ``non-purple'' digit of $x_i$ as well; this will
not affect the final result and leads to computation with smaller
integers. 
Proceeding this way, we obtain the following sequence:

$$\begin{array}{lrr}
x_1: & 
 {\color{black!70}\ldots00000000000000000}{\color{purple}101} \\
x_2: & 
 {\color{black!70}\ldots000000000000000}{\color{purple}10101} \\
x_3: & 
 {\color{black!70}\ldots00000000000}{\color{purple}001010101} \\
x_4: & 
 {\color{black!70}\ldots000}{\color{purple}10111010001010101} \\
x_5: & 
 {\color{black!70}\ldots}{\color{purple}1010111010001010101}
\end{array}$$

\noindent
and we obtain the final result with $19$ correct digits instead of $16$. 
Be very careful: we cannot assign arbitrarily the $20$-th digit of $x_5$ 
because it is a ``purple'' digit and not a ``non-purple'' one.
More precisely if we do it, we have to write it in black (of course, we 
cannot decide randomly what are the correct digits of $\sqrt c$) and a 
new iteration of the Newton scheme again loses this $20$-th digit
because of the division by $2$.
In that case, one may wonder why we did not try to assign a $21$-st digit to 
$x_4$ in order to get one more correct digit in $x_5$. This sounds like
a good idea but does not work because the input $c$ --- on which we do 
not have any freedom --- appears in the recurrence formula defining the 
$x_i$'s and is given at precision $O(2^{20})$. For this reason, each 
$x_i$ cannot be computed with a better precision than $O(2^{19})$.

In fact, we can easily convince ourselves that $O(2^{19})$ is the 
optimal precision for $\sqrt c$. Indeed $c$ can be lifted at precision
$O(2^{21})$ either to:
\begin{align*}
c_1 & = 1 + 2^{3} + 2^{4} + 2^{5} + 2^{10} + 2^{13} + 2^{16} + 2^{17} + 
        2^{18} + 2^{19} + O(2^{21}) \\
\text{or} \quad 
c_2 & = 1 + 2^{3} + 2^{4} + 2^{5} + 2^{10} + 2^{13} + 2^{16} + 2^{17} + 
        2^{18} + 2^{19} + 2^{20} + O(2^{21}).
\end{align*}
The square roots of these liftings are:
\begin{align*}
\sqrt{c_1} & = 1 + 2^2 + 2^4 + 2^6 + 2^{10} + 2^{12} + 2^{13} + 2^{14} + 
               2^{16} + 2^{18} + O(2^{20}) \\
\sqrt{c_2} & = 1 + 2^2 + 2^4 + 2^6 + 2^{10} + 2^{12} + 2^{13} + 2^{14} + 
               2^{16} + 2^{18} + 2^{19} + O(2^{20})
\end{align*}
and we now see clearly that the $19$-th digit of $\sqrt c$ is
affected by the $20$-th digit of $c$.

\subsection{Lazy and relaxed arithmetic}
\label{ssec:lazy}

The very basic idea behind lazy arithmetic is the following: in every 
day life, we are not working with all $p$-adic numbers but only with 
\emph{computable} $p$-adic numbers, \emph{i.e.} $p$-adic numbers for
which there exists a program that outputs the sequence of its digits.
A very natural idea is then to use these programs to represent $p$-adic
numbers. By operating on programs, one should then be able to perform
additions, multiplications, \emph{etc}. of $p$-adic numbers.

In this subsection, we examine further this idea. In \S 
\ref{sssec:lazynaive}, we adopt a very naive point of view, insisting on 
ideas and not taking care of doability/performances. We hope that this 
will help the reader to understand more quickly the mechanisms behind the 
notion of lazy $p$-adic numbers. We will then move to complexity 
questions and will focus on the problem of designing a framework 
allowing our algorithms to take advantage of fast multiplication 
algorithms for integers (as 
Karatsuba's algorithm, Schönhage--Strassen's algorithm 
\cite[\S 8]{GaGe03} or Fürer's algorithm and its improvements 
\cite{Fu09,HaHoLe15}). This will lead us to report on the theory of 
\emph{relaxed algorithms} introduced recently by Van der Hoeven and his
followers~\cite{Ho97,Ho02,Ho07,BeHoLe11,BeLe12,Le13}.

Lazy $p$-adic numbers have been implemented in the software \mathemagix.

\subsubsection{Lazy $p$-adic numbers}
\label{sssec:lazynaive}

\begin{deftn}
A \emph{lazy $p$-adic number} is a program \ttx that takes as
input an integer $N$ and outputs a \emph{rational number} $\ttx
(N)$ with the property that:
$$\big|\ttx(N+1) - \ttx(N)\big|_p \leq p^{-N}$$
for all $N$.
\end{deftn}

The above condition implies that the sequence $\ttx(N)$ is a Cauchy 
sequence in $\Qp$ and therefore converges. We call its limit the 
\emph{value} of $\ttx$ and denote it by $\Value(\ttx)$. Thanks to 
ultrametricity, we obtain $|\Value(\ttx) - \ttx(N)| \leq p^{-N}$ for all 
$N$. In other words, $\ttx(N)$ is nothing but an approximation of 
$\Value(\ttx)$ at precision $O(p^N)$.

The $p$-adic numbers that arise as values of lazy $p$-adic numbers
are called \emph{computable}.
We observe that there are only countably many lazy $p$-adic numbers;
the values they take in $\Qp$ then form a countable set as well. Since $\Qp$
is uncountable (it is equipotent to the set of functions $\N \to
\{0, 1, \ldots, p{-}1\}$), it then exists many uncomputable $p$-adic
numbers.

Our aim is to use lazy $p$-adic numbers to implement actual (computable) 
$p$-adic numbers. In order to do so, we need (at least) to answer the 
following two questions:
\begin{itemize}
\renewcommand{\itemsep}{0pt}
\item Given (the source code of) two lazy $p$-adic numbers \ttx and 
\tty, can we decide algorithmically whether $\Value(\ttx) = \Value(\tty)$
or not?
\item Can we lift standard operations on $p$-adic numbers at the level
of lazy $p$-adic numbers?
\end{itemize}
Unfortunately, the first question admits a negative answer; it is yet
another consequence of Turing's halting problem. For our purpose, this 
means that it is impossible to implement equality at the level of
lazy $p$-adic numbers. Inequalities however can always be detected. In
order to explain this, we need a lemma.

\begin{lem}
\label{lem:vallazy}
Let \ttx be a lazy $p$-adic number and set $x = \Value(\ttx)$.
For all integers $N$, we have:
$$\begin{array}{r@{\hspace{0.5ex}}ll}
\val_p(x) & = \val_p(\ttx(N)) & \text{if } \val_p(\ttx(N)) < N \smallskip \\
\val_p(x) & \geq N & \text{otherwise.}
\end{array}$$
\end{lem}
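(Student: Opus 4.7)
The plan is to reduce everything to the single estimate $|x - \ttx(N)|_p \leq p^{-N}$, which is already noted in the paragraph preceding the lemma and which follows directly from the defining inequality $|\ttx(N+1) - \ttx(N)|_p \leq p^{-N}$. More precisely, I would write
\[
x - \ttx(N) = \sum_{k=N}^{\infty} \bigl(\ttx(k+1) - \ttx(k)\bigr),
\]
observe that the general term has norm at most $p^{-k}$ and hence tends to $0$, so the series converges by Corollary~\ref{cor:convseries}, and then apply ultrametricity to bound the sum by the maximum of the norms of its terms, namely $p^{-N}$. Rephrased in terms of valuations, this says $\val_p\bigl(x - \ttx(N)\bigr) \geq N$.

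From here the two cases of the lemma are a direct application of the properties of the valuation recalled in \S\ref{sssec:padicnorm}. Writing $x = \ttx(N) + \bigl(x - \ttx(N)\bigr)$, property (1) gives $\val_p(x) \geq \min\bigl(\val_p(\ttx(N)), \val_p(x - \ttx(N))\bigr)$. If $\val_p(\ttx(N)) \geq N$ then both summands on the right have valuation at least $N$, yielding $\val_p(x) \geq N$, which is the second case. If instead $\val_p(\ttx(N)) < N \leq \val_p(x - \ttx(N))$, then the two valuations differ, and I would invoke the sharper statement following (1) and (2) (namely that equality holds in the ultrametric inequality when the valuations are distinct) to conclude $\val_p(x) = \val_p(\ttx(N))$, which is the first case.

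There is no real obstacle here: the only conceptual ingredient is the telescoping argument upgrading the consecutive estimate $|\ttx(N+1)-\ttx(N)|_p \leq p^{-N}$ to $|x - \ttx(N)|_p \leq p^{-N}$, and that ingredient is already provided by ultrametricity together with Corollary~\ref{cor:convseries}. Everything else is a one-line case analysis using the ``triangle equality'' for valuations.
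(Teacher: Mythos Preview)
Your proof is correct and follows essentially the same approach as the paper's: both reduce to the estimate $|x-\ttx(N)|_p\le p^{-N}$ (which the paper records just before the lemma) and then do a two-case analysis using the ultrametric ``triangle equality'' when the two summands have distinct sizes. The only cosmetic difference is that you phrase everything in terms of valuations while the paper uses norms, and you spell out the telescoping justification of $|x-\ttx(N)|_p\le p^{-N}$ a bit more explicitly.
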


\begin{proof}
Recall that the norm is related to the valuation through the formula
$|a| = p^{-\val_p(a)}$ for all $a \in \Qp$. Moreover by definition,
we know that $|x - \ttx(N)| \leq p^{-N}$. If $\val_p
(\ttx(N)) \geq N$, we derive $|\ttx(N)| \leq p^{-N}$ and thus $|x|
\leq p^{-N}$ by the ultrametric inequality.
On the contrary if $\val_p (\ttx(N)) < N$, write $x = (x - \ttx(N))
+ \ttx(N)$ and observe that the two summands have different norms.
Hence $|x| = \min\big(|x - \ttx(N)|, |\ttx(N)|) = \ttx(N)$ and we
are done.
\end{proof}

\noindent
Lemma \ref{lem:vallazy} implies that $\Value(\ttx) = \Value(\tty)$ if and only if 
$\val_p(\ttx(N) - \tty(N)) \geq N$ for all integers $N$. Thus, assuming 
that we have access to a routine $\val_p$ computing the $p$-adic 
valuation of a rational number, we can write down the function 
\texttt{is\_equal} as follows (Python style\footnote{We emphasize that all the 
procedures we are going to write are written in pseudo-code in Python 
\emph{style} but certainly not in pure Python: they definitely do not
``compile'' in Python.}):

\begin{lstlisting}
    def is_equal(x,y):
        for N in (*$\N$*):
            v = (*$\val_p$*)(x(N) - y(N))
            if v < N: return False
        return True
\end{lstlisting}

\noindent
We observe that this function stops if and only if it answers False,
\emph{i.e.} inequalities are detectable but equalities are not.

\medskip

We now move to the second question and try to define standard operations 
at the level of lazy $p$-adic numbers. Let us start with addition. 
Given two computable $p$-adic numbers $x$ and $y$ represented by the 
programs \ttx and \tty respectively, it is actually easy to build a 
third program \texttt{x\_plus\_y} representing the sum $x+y$. Here it is:

\begin{lstlisting}
    def x_plus_y(N):
        return x(N) + y(N)
\end{lstlisting}

The case of multiplication is a bit more subtle because of precision.
Indeed going back to Eq.~\eqref{eq:precmul}, we see that, in order to
compute the product $xy$ at precision $O(p^N)$, we need to know $x$
and $y$ at precision $O(p^{N-\val(y)})$ and $O(p^{N-\val(x)})$
respectively. We thus need to compute first the valuation of $x$ and
$y$. Following Proposition~\ref{lem:vallazy}, we write down the
following procedure:

\begin{lstlisting}
    def val(x):
        for N in (*$\N$*):
            v = (*$\val_p$*)(x(N))
            if v < N: return v
        return Infinity
\end{lstlisting}

\noindent
However, we observe one more time that the function \texttt{val} can 
never stop; precisely it stops if and only if $x$ does not vanish.
For application to multiplication, it is nevertheless not an issue.
Recall that we needed to know $\val(x)$ because we wanted to compute
$y$ at precision $O(p^{N-\val(x)})$. But obviously computing $y$ at
higher precision will do the job as well. Hence it is in fact enough
to compute an integer $v_x$ with the guarantee that $\val(x) \geq
v_x$. By Lemma~\ref{lem:vallazy}, an acceptable value of $v_x$ is
$\min(0, \val_p(\ttx(0)))$ (or more generally $\min(i, \val_p(\ttx(i)))$ 
for any value of $i$).
A lazy $p$-adic number whose value if $xy$ is then given by the
following program:

\begin{lstlisting}
    def x_mul_y(N):
        vx = min(0, (*$\val_p$*)(x(0))
        vy = min(0, (*$\val_p$*)(y(0))
        return x(N-vy) * y(N-vx)
\end{lstlisting}

The case of division is similar except that we cannot do the same
trick for the denominator: looking at Eq.~\eqref{eq:precdiv}, we 
conclude that we do not need a lower bound on the valuation of
the denominator but an upper bound.
This is not that surprising since we easily imagine that the division
becomes more and more ``difficult'' when the denominator gets closer 
and closer to $0$. Taking the argument to its limit, we notice that
a program which is supposed to perform a division should be able, as 
a byproduct, to detect whether the divisor is zero or not. But we 
have already seen that the latter is impossible.
We are then reduced to code the division as follows:

\begin{lstlisting}
    def x_div_y(N):
        vx = min(0, (*$\val_p$*)(x(0))
        vy = val(y)  # This step might never stop
        return x(N + vy) / y(N + 2*vy - vx)
\end{lstlisting}

\noindent
keeping in mind that the routine never stops when the denominator 
vanishes.

\subsubsection{Relaxed arithmetic}

The algorithms we have sketched in \S\ref{sssec:lazynaive} are very 
naive and inefficient. In particular, they redo many times a lot of 
computations. For example, consider the procedure \texttt{x\_plus\_y} we 
have designed previously and observe that if we had already computed the 
sum $x + y$ at precision $O(p^N)$ and we now ask for the same sum $x + 
y$ at precision $O(p^{N+1})$, the computation restarts from scratch 
without taking advantage of the fact that only one digit of the result 
remains unknown.

One option for fixing this issue consists basically in implementing a 
``sparse cache'': we decide in advance to compute and cache the 
$\ttx(N)$'s only for a few values of $N$'s (typically the powers of $2$) 
and, on the input $N$, we output $\ttx(N')$ for a ``good'' $N' \geq N$. 
Concretely (a weak form of) this idea is implemented by maintaining two 
global variables \texttt{current\_precision[x]} and 
\texttt{current\_approximation[x]} (attached to each lazy $p$-adic 
number $x$) which are always related by the equation:
\begin{center}
\tt
current\_approximation[x] = x(current\_precision[x])
\end{center}
If we are asking for $\ttx(N)$ for some $N$ which is not greater than 
\texttt{current\_approximation[x]}, we output 
\texttt{current\_precision[x]} without launching a new computation. 
If, instead, $N$ is greater than \texttt{current\_approximation[x]}, we are 
obliged to redo the computation but we take the opportunity to double
the current precision (at least). Here is the corresponding code:

\begin{lstlisting}
    def x_with_sparse_cache(N):
        if current_precision[x] < N:
            current_precision[x] = max(N, 2*current_precision[x])
            current_approximation[x] = x(current_precision[x])
        return current_approximation[x]
\end{lstlisting}

\noindent
This solution is rather satisfying but it has nevertheless several 
serious disadvantages: it often outputs too large results\footnote{Of 
course it is always possible to reduce the result modulo $p^N$ but this 
extra operation has a non-negligible cost.} (because they are too 
accurate) and often does unnecessary computations.

Another option was developed more recently first by van der Hoeven in 
the context of formal power series~\cite{Ho97,Ho02,Ho07} and then by 
Berthomieu, Lebreton, Lecerf and van der Hoeven for $p$-adic numbers 
\cite{BeHoLe11,BeLe12,Le13}. It is the so-called \emph{relaxed 
arithmetic} that we are going to expose now. For simplicity we shall 
only cover the case of $p$-adic \emph{integers}, \emph{i.e.} $\Zp$. 
Extending the theory to $\Qp$ is more or less straightforward but needs 
an additional study of the precision which makes the exposition more 
technical without real benefit.

\paragraph{Data structure and addition}

The framework in which relaxed arithmetic takes place is a bit different 
and more sophisticated than the framework we have used until now. First 
of all, relaxed arithmetic does not view a $p$-adic number as a sequence 
of more and more accurate approximations but as the sequence of its 
digits. Moreover it needs a richer data structure in order to provide
enough facilities for designing its algorithms. For this reason, it is
preferable to encode $p$-adic numbers by classes which may handle its own
internal variables and its own methods.

We first introduce the class \texttt{RelaxedPAdicInteger} which is an 
abstract class for all relaxed $p$-adic integers (\emph{i.e.} relaxed 
$p$-adic integers must be all instances of a derived class of 
\texttt{RelaxedPAdicInteger}). The class \texttt{RelaxedPAdicInteger} is 
defined as follows:

\begin{lstlisting}
    class RelaxedPAdicInteger:
        # (*\color{comment}\textrm{Variable}*)
        digits           # (*\color{comment}\textrm{list of (already computed) digits}*)

        # (*\color{comment}\textrm{Virtual method}*)
        def next(self)   # (*\color{comment}\textrm{compute the next digit and append it to the list}*)
\end{lstlisting}

\noindent
We insist on the fact that the method \texttt{next} is an internal
(private) method which is not supposed to be called outside the class.
Of course, although it does not appear above, we assume that the class 
\texttt{RelaxedPAdicInteger} is endowed with several additional public methods 
making the interface user-friendly. For instance if \ttx is a relaxed 
$p$-adic integer inheriting from \texttt{RelaxedPAdicInteger}, we shall often
use the construction $\ttx[N]$ to access to the $N$-th digit of \ttx.
In pure Python, this functionality can be implemented by adding to the
class a method \texttt{\_\_getitem\_\_} written as follows:

\begin{lstlisting}
        def __getitem__(self,N):
            n = len(self.digits)   # (*\color{comment}\textrm{Index of the first uncomputed digit}*)
            while n < N+1:         # (*\color{comment}\textrm{We compute the digits until the position \ttN}*)
                self.next()
                n += 1
            return self.digits[N]  # (*\color{comment}\textrm{We return the \ttN-th digit}*)
\end{lstlisting}

In order to highlight the flexibility of the construction, let us 
explain as a warm-up how addition is implemented in this framework.
One actually just follows the schoolbook algorithm: the $n$-th digit
of a sum is obtained by adding the $n$-th digit of the summands plus
possibly a carry. We add an additional local variable to the class in
order to store the current carry and end up with the following
implementation:

\begin{lstlisting}
    class x_plus_y(RelaxedPAdicInteger):
        # (*\color{comment}\textrm{Additional variable}*)
        carry            # (*\color{comment}\textrm{variable for storing the current carry}*)

        def next(self):  # (*\color{comment}\textrm{compute the next digit and append it to the list}*)
            n = len(self.digits)         # (*\color{comment}\textrm{index of the first uncomputed digit}*)
            s = x[n] + y[n] + self.carry # (*\color{comment}\textrm{perform the addition}*)
            self.digits[n] = s % p       # (*\color{comment}\textrm{compute and store the new digit}*)
            self.carry = s // p          # (*\color{comment}\textrm{update the carry}*)
\end{lstlisting}

\noindent
That's all. Although this does not appear in the pseudo-code above, the 
relaxed $p$-adic integers \ttx and \tty (which are supposed to be 
instances of the class \texttt{RelaxedPAdicInteger}) should be passed as 
attributes to the constructor of the class.

The subtraction is performed similarly (exercise left to the reader).

\paragraph{Multiplication}

\emph{A priori}, multiplication can be treated in a similar fashion. 
Given $x$ and $y$ two $p$-adic integers with digits $x_i$'s and $y_j$'s, 
the $n$-th digit of a product $xy$ is obtained by adding the 
contribution of all cross products $x_i y_j$ for $i+j=n$ plus a possible 
carry. This approach leads to the following straightforward 
implementation:

\begin{lstlisting}
    class x_mul_y(RelaxedPAdicInteger):
        # (*\color{comment}\textrm{Additional variable}*)
        carry            # (*\color{comment}\textrm{variable for storing the current carry}*)

        def next(self):  # (*\color{comment}\textrm{compute the next digit and append it to the list}*)
            n = len(self.digits)
            s = self.carry
            for i in 0,1,...,n:
                s += x[i] * y[n-i]
            self.digits[n] = s % p
            self.carry = s // p
\end{lstlisting}

\noindent
Nevertheless this strategy has one important defect: it does not take 
advantage of the fast multiplication algorithms for integers, that is, it 
computes the first $N$ digits of $xy$ in $O(N^2 \log p)$ 
bit operations while we would have expected only $\softO(N \log p)$.

\medskip

The relaxed multiplication algorithm from~\cite{BeHoLe11} fixes this drawback. The 
rough idea behind it is to gather digits in $x$ and $y$ as follows:
\begin{align*}
x & = x_0 + (x_1 + x_2 p) p + (x_3 + x_4 p + x_5 p^2 + x_6 p^3) p^3 + \cdots \\
y & = y_0 + (y_1 + y_2 p) p + (y_3 + y_4 p + y_5 p^2 + y_6 p^3) p^3 + \cdots
\end{align*}
and to implement a kind of block multiplication. More precisely, we 
materialize the situation by a grid drawn in the half plane (see 
Figure~\ref{fig:relaxedmul}).
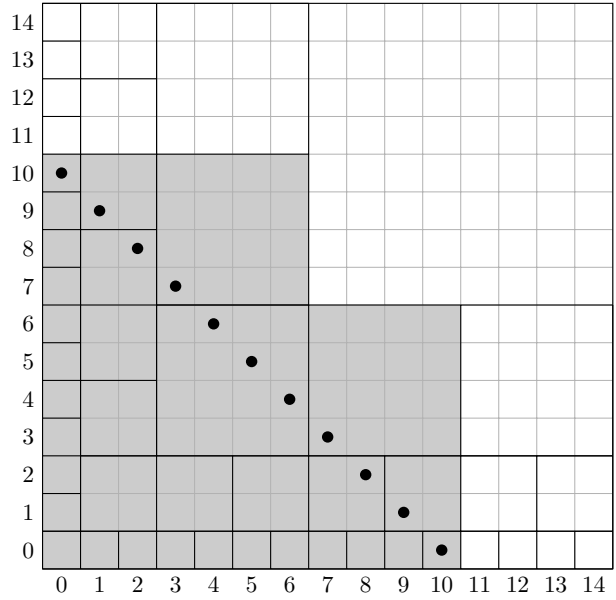
\begin{figure}
\hfill
\begin{tikzpicture}[scale=0.5]
\fill[black!20] (0,0)--(11,0)--(11,7)--(7,7)--(7,11)--(0,11)--cycle;
\draw[very thin,black!30] (0,0) grid (15,15);
\draw (0,0) rectangle (15,15);
\draw (0,0) grid (15,1);
\draw (0,1) grid (1,15);
\draw[xshift=1cm,yshift=1cm,step=2] (0,0) grid (14,2);
\draw[xshift=1cm,yshift=1cm,step=2] (0,2) grid (2,14);
\draw[xshift=3cm,yshift=3cm,step=4] (0,0) grid (12,4);
\draw[xshift=3cm,yshift=3cm,step=4] (0,4) grid (4,12);
\fill (10.5,0.5) circle (1.5mm);
\fill (9.5,1.5) circle (1.5mm);
\fill (8.5,2.5) circle (1.5mm);
\fill (7.5,3.5) circle (1.5mm);
\fill (6.5,4.5) circle (1.5mm);
\fill (5.5,5.5) circle (1.5mm);
\fill (4.5,6.5) circle (1.5mm);
\fill (3.5,7.5) circle (1.5mm);
\fill (2.5,8.5) circle (1.5mm);
\fill (1.5,9.5) circle (1.5mm);
\fill (0.5,10.5) circle (1.5mm);

\node[below,scale=0.8] at (0.5,0) { $0$ };
\node[below,scale=0.8] at (1.5,0) { $1$ };
\node[below,scale=0.8] at (2.5,0) { $2$ };
\node[below,scale=0.8] at (3.5,0) { $3$ };
\node[below,scale=0.8] at (4.5,0) { $4$ };
\node[below,scale=0.8] at (5.5,0) { $5$ };
\node[below,scale=0.8] at (6.5,0) { $6$ };
\node[below,scale=0.8] at (7.5,0) { $7$ };
\node[below,scale=0.8] at (8.5,0) { $8$ };
\node[below,scale=0.8] at (9.5,0) { $9$ };
\node[below,scale=0.8] at (10.5,0) { $10$ };
\node[below,scale=0.8] at (11.5,0) { $11$ };
\node[below,scale=0.8] at (12.5,0) { $12$ };
\node[below,scale=0.8] at (13.5,0) { $13$ };
\node[below,scale=0.8] at (14.5,0) { $14$ };
\node[left,scale=0.8] at (0,0.5) { $0$ };
\node[left,scale=0.8] at (0,1.5) { $1$ };
\node[left,scale=0.8] at (0,2.5) { $2$ };
\node[left,scale=0.8] at (0,3.5) { $3$ };
\node[left,scale=0.8] at (0,4.5) { $4$ };
\node[left,scale=0.8] at (0,5.5) { $5$ };
\node[left,scale=0.8] at (0,6.5) { $6$ };
\node[left,scale=0.8] at (0,7.5) { $7$ };
\node[left,scale=0.8] at (0,8.5) { $8$ };
\node[left,scale=0.8] at (0,9.5) { $9$ };
\node[left,scale=0.8] at (0,10.5) { $10$ };
\node[left,scale=0.8] at (0,11.5) { $11$ };
\node[left,scale=0.8] at (0,12.5) { $12$ };
\node[left,scale=0.8] at (0,13.5) { $13$ };
\node[left,scale=0.8] at (0,14.5) { $14$ };
\end{tikzpicture}
\hfill\null

\caption{Relaxed multiplication scheme}
\label{fig:relaxedmul}
\end{figure}
The coordinate on the $x$-axis (resp. the $y$-axis) corresponds to the 
integers $i$'s (resp. $j$'s) that serve as indices for the positions of 
the digits of the $p$-adic integer $x$ (resp. $y$). As for the cell 
$(i,j)$, it represents the product $x_i y_j$.
The computation of the $n$-th digit of $xy$ needs to know all the digits 
appearing on the anti-diagonal $i+j = n$; we refer again to
Figure~\ref{fig:relaxedmul} where this anti-diagonal is displayed for $n=10$.

We now pave the grid using larger and larger squares as follows. We pave 
the first row\footnote{It is of course the row corresponding to $j=1$ 
which is drawn on the bottom in Figure~\ref{fig:relaxedmul}.} and the 
first column by squares of size $1$. Once this has been done, we forget
temporarily the cells that have been already paved (\emph{i.e.} the 
cells lying on the first row or the first column), we 
group the two next lines and the two next columns together and we pave 
them with squares of size $2$. We continue this process and double the
size of the squares at each iteration. The obtained result is displayed
on Figure~\ref{fig:relaxedmul}.
This construction exhibits the following remarkable property:

\begin{lem}
\label{lem:paving}
If $(i,j)$ is a cell located on a square of size $2^\ell$ of the paving, 
then $2^\ell < i$ and $2^\ell < j$.
\end{lem}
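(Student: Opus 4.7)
I would prove the lemma by induction on $\ell$, based on an explicit parametrization of the paving. My first step would be to verify, by a short inductive argument from the construction, that after iteration $\ell$ the paved region is exactly the L-shape $\{(i,j) \in \N \times \N : \min(i,j) \leq 2^{\ell+1}-2\}$: at iteration $0$ the first row and first column are paved by size-$1$ squares, and at each subsequent iteration $\ell \geq 1$ the $2^\ell$ rows and $2^\ell$ columns with indices in the range $[2^\ell - 1,\, 2^{\ell+1}-2]$ are added, tiled by size-$2^\ell$ squares. Consequently the L-shape \emph{newly added} at iteration $\ell \geq 1$ is exactly the set of cells $(i,j)$ with $\min(i,j) \in [2^\ell - 1,\, 2^{\ell+1}-2]$.

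Next I would give an explicit description of how this L-shape is tiled. It consists of two arms meeting at a corner: a horizontal arm with $j \in [2^\ell - 1,\, 2^{\ell+1}-2]$ and $i \geq 2^\ell - 1$, and a symmetric vertical arm. The size-$2^\ell$ tiles are placed flush against the boundary of the previously paved region, so that within each arm they tile by unit translates of length $2^\ell$ along its direction. From this one can read off directly that any cell $(i,j)$ lying in a size-$2^\ell$ tile has both coordinates bounded below by the smallest index of the newly added strip, yielding at the very least $\min(i,j) \geq 2^\ell - 1$.

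The main obstacle is bridging the gap between this ``obvious'' lower bound and the strict inequality $2^\ell < i$, $2^\ell < j$ claimed in the lemma: the extra units must come from a more refined look at \emph{where} in the arms the tiles sit, in particular at how the boundary of the previously paved region interacts with the placement of the size-$2^\ell$ blocks. I would therefore pin down the coordinates of the left-most (resp.\ bottom-most) tile in each arm and verify by direct arithmetic on the boundary indices $2^\ell - 1$ and $2^{\ell+1}-2$ that the alignment of the tiles forces the stronger strict bound. None of the ingredients are deep; it is a matter of unfolding the recursive construction carefully and keeping the strict/non-strict inequalities straight throughout.
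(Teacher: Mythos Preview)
Your argument that every cell $(i,j)$ in a size-$2^\ell$ tile satisfies $i,j \geq 2^\ell - 1$ is exactly the paper's proof: the squares of size $2^\ell$ are placed, by construction, in the region where both coordinates are at least $1 + 2 + \cdots + 2^{\ell-1} = 2^\ell - 1$, and that is the whole proof. So the ``obvious'' lower bound you obtain is not a preliminary step --- it \emph{is} the content of the lemma.

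Your instinct that there is a gap between $i,j \geq 2^\ell - 1$ and the strict inequality $2^\ell < i$, $2^\ell < j$ is well-founded, but the gap cannot be closed: the statement as printed is simply wrong. For every $\ell \geq 1$ the cell $(2^\ell - 1,\, 2^\ell - 1)$ is the bottom-left corner of a size-$2^\ell$ square of the paving (and for $\ell = 0$ the cell $(0,0)$ is a size-$1$ square), yet $2^\ell < 2^\ell - 1$ is false. The paper's own proof establishes only $i,j \geq 2^\ell - 1$, and that is exactly the bound used in the application: when $(i,j)$ is a bottom-left corner on the anti-diagonal $i+j=n$, the highest digit of $x$ entering $C_{i,j}$ is $x_{i+2^\ell-1}$, and $i + 2^\ell - 1 \leq i + j = n$ follows from $2^\ell - 1 \leq j$. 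So do not try to squeeze out the extra two units; stop at $i,j \geq 2^\ell - 1$ and note that the stated inequality is a misprint.
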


\begin{proof}
The squares of size $2^\ell$ are all located by construction on the 
region where both coordinates $i$ and $j$ are greater than or equal to 
$1 + 2 + 2^2 + \cdots + 2^{\ell-1} = 2^\ell - 1$. The Lemma follows from 
this observation.
\end{proof}

\noindent
As an example, look at the cell $(7,3)$ on Figure~\ref{fig:relaxedmul}; 
it is the bottom left corner of a square of size $4$.
Let us denote by
$S_{i,j}$ the square of the paving with bottom left corner located at
position $(i,j)$. For such any $(i,j)$, we define:
$$C_{i,j}(t) = \big(x_i + t x_{i+1} + \cdots + t^\ell x_{i+2^\ell-1}\big)
\times \big(y_j + t y_{j+1} + \cdots + t^\ell y_{j+2^\ell-1}\big) \in
\Z[t]$$
where $2^\ell$ is the size of $S_{i,j}$ and $t$ is a new formal
variable.
We are now ready to explain the concrete idea behind the relaxed 
multiplication algorithm. In the \texttt{next} procedure, when we 
encounter a pair $(i,j=n{-}i)$ we distinguish between two cases:
\begin{itemize}
\renewcommand{\itemsep}{0pt}
\item if $(i,j)$ is the position of a cell located at the bottom left
corner of square of the paving, we add the contribution of $C_{i,j}(p)$
(instead of adding the sole contribution of $x_i y_j$),
\item otherwise, we do nothing (since the contribution of $x_i y_j$ 
has already been taken into account previously).
\end{itemize}
Here are several several important remarks. First, by 
Lemma~\ref{lem:paving}, the computation of the $C_{i,j}(p)$'s in the 
first step only requires the digits of $x$ and $y$ up to the position 
$n$. Second, notice that listing all the pairs $(i,j)$ fitting into 
the first case is easy. Indeed, notice that $(i,j)$ is the bottom left 
corner of a paving square of size $2^\ell$ if and only if $i$ and $j$ 
are both congruent to $2^\ell - 1$ modulo $2^\ell$ and one of them is 
actually equal to $2^\ell - 1$. Hence, for a given nonnegative integer 
$\ell$, there exist at most two such cells $(i,j)$ for which $i+j = n$ 
and they are moreover given by explicit formulas.

At that point, one may wonder why we have introduced the polynomials 
$C_{i,j}(t)$'s instead of working only with the values $C_{i,j}(p)$'s. 
The reason is technical: this modification is needed to get a good 
control on the size of the carries\footnote{Another option, used in 
\cite{BeHoLe11}, would be to build an algorithm for performing 
operations on integers written in base $p$. This can be achieved for 
instance using Kronecker substitution~\cite[Corollary~8.27]{GaGe03}.}. 
We will elaborate on this later (see Remark \ref{rem:carry} below).
By introducing the construction $\ttx[i, \ldots, j]$ for giving access
to the polynomial
$$x_i + x_{i+1} t + x_{i+2} t^2 + \cdots + x_j t^{j-i} \in \Z[t]$$
the above ideas translate into the concrete algorithm of 
Figure~\ref{fig:relaxedmulimpl}.
\begin{figure}
\begin{lstlisting}
    class x_mul_y(RelaxedPAdicInteger):
        # (*\rm\color{comment}Additional variable*)
        carry            # (*\rm\color{comment}variable in $\Z[t]$ for storing the current carry*)

        def next(self):  # (*\rm\color{comment}compute the next digit and append it to the list*)
            n = len(self.digits)
            m = n + 2; (*\rm$\ell$*) = 0; s = 0
            while m > 1:
                # (*\rm\color{comment}The contribution of the first square of size $2^\ell$*)
                s += x[2^(*\rm$\ell$*) - 1, ..., 2^((*\rm$\ell$*)+1) - 2]
                   * y[(m-1)*2^(*\rm$\ell$*) - 1, ..., m*2^(*\rm$\ell$*) - 2]
                # (*\rm\color{comment}The contribution of the second square*)
                if m > 2:   # (*\rm\color{comment}Case where the two squares are indeed not the same*)
                    s += y[2^(*\rm$\ell$*) - 1, ..., 2^((*\rm$\ell$*)+1) - 2]
                       * x[(m-1)*2^(*\rm$\ell$*) - 1, ..., m*2^(*\rm$\ell$*) - 2]
                if m (*\rm is odd*): break
                m = m // 2
                (*\rm$\ell$*) += 1
            s += self.carry
            self.digits[n] = s(0) % p
            self.carry = (s(0) // p) + (s // t)
\end{lstlisting}

\caption{An implementation of the relaxed multiplication}
\label{fig:relaxedmulimpl}
\end{figure}

We now briefly sketch its complexity analysis.
The first observation is that the while loop of the \texttt{next} method 
of Figure \ref{fig:relaxedmulimpl} is repeated until $\ell$ reaches the 
$2$-adic valuation of $n+2$. Writing $v = \val_2(n+2)$, the total 
complexity for the execution of the while loop therefore stays within: 
$$\sum_{\ell=0}^v \softO(2^\ell \log p) = \softO(2^v \log p)$$
if we use fast algorithms for polynomial multiplication~\cite[\S 8]{GaGe03}.
Moreover at the end of the while loop, the degree of $s$ remains 
bounded by $2^v$ and its coefficients have all at most $O(\log(np))$
binary digits.
The second step, which is more technical, consists in bounding the 
size of the carry. The key point is to notice that after the computation
of the $(n{-}1)$-st digit, the carry takes the form:
$$c + (c_0 + c_1 t + c_2 t^2 + \cdots + c_n t^n)$$
where the $c_k$'s come from the contribution of the cells $(i',j')$ with 
$i' + j' \geq n$ that have been already discovered (\emph{i.e.} belonging 
to a paving square $S_{i,j}$ for which $C_{i,j}(t)$ has already been
computed) and $c$ comes from the carries appearing while computing the
$t^{n-1}$-term of the product
$(x_0 + x_1 t + x_2 t^2 + \cdots + x_n t^n) \times
(y_0 + y_1 t + y_2 t^2 + \cdots + y_n t^n)$.
Once this has been noticed, we deduce that the $c_k$'s are
all bounded from above by $n\cdot (p{-}1)^2$ while:
$$c \leq \frac 1 {p^n} \sum_{i'+j' \leq n} x_{i'} y_{j'}
\leq np\cdot (p{-}1).$$
Hence the carry itself is a polynomial of degree at most $n$ and each
of its coefficients has at most $O(\log(pn))$ binary digits.

\begin{rem}
\label{rem:carry}
While the growth of the coefficients of the carry remains under 
control, its value at $p$ may --- and does --- grow exponentially fast 
with respect to $n$. That is the reason why we had to switch to 
polynomials.
\end{rem}

Noticing finally that the division by $t$ on polynomials is free (it is 
just a shift of the coefficients), we find that the last three lines 
of the \texttt{next} method displayed on Figure
\ref{fig:relaxedmulimpl} have a cost of $\softO(2^v \log(np))$ bit
operations. The total complexity of the computation of the $n$-th
digit then stays within $\softO(2^v \log(np))$ bit operations as well.

Summing up all these contributions up to $N$, we find that the relaxed 
multiplication algorithm computes the first $N$ digits of a product in 
$\softO(N \log p)$ bit operations.
We refer to~\cite{BeHoLe11} for a more precise complexity analysis (of a 
slightly different version of the algorithm we have presented here).

\paragraph{Computation of fixed points}

A quite interesting application of the relaxed approach is the 
possibility to define and compute very easily fixed points of 
contraction mappings defined over $\Zp$ (or more generally $\Zp^d$ for 
some $d$). Before going into this, we need to define the notion of
\emph{function} over relaxed $p$-adic numbers. 
Recall that a relaxed $p$-adic integer is an instance of 
\texttt{RelaxedPAdicInteger}. A function over relaxed $p$-adic numbers 
can then be defined as a class deriving from 
\texttt{RelaxedPAdicInteger} endowed with a constructor accepting \ttx 
as parameter and constructing $F(\ttx)$. Actually we have already seen
such examples: the classes \texttt{x\_plus\_y} and \texttt{x\_mul\_y}
we have designed before fit exactly in this framework.
Below is yet another easy example modeling the function $s : x \mapsto 
1 + px$.

\begin{lstlisting}
    class S(RelaxedPAdicInteger):
        def next(self):
            n = len(self.digits)
            if n == 0: self.digits[0] = 1
            else:      self.digits[n] = x[n-1]
\end{lstlisting}

\begin{deftn}
Let $F$ be a function defined over relaxed $p$-adic integers.
We say that $F$ is a \emph{contraction} if its \texttt{next} method 
computes the $n$-th digit by making only use of the first $n{-}1$ digits 
of \ttx.
\end{deftn}

\noindent
Concretely $F$ is a contraction if its \texttt{next} method never
calls $\ttx[i]$ for some $i \geq n$ when $n$ denotes (as usual) the 
position of the digit the \texttt{next} method is currently computing.
For example, the function \texttt{S} introduced above is a contraction.

\begin{rem}
If $F$ is a contraction modeling a function $f : \Zp \to \Zp$,
the first $n$ digits of $f(x)$ depends only on the first $n{-}1$ digits
of $x$. This implies that $f$ is a $\frac 1 p$-contraction in the 
usual sense:
$$\textstyle 
|f(x) - f(y)| \leq \frac 1 p \cdot |x-y|.$$
Note that the converse is not necessarily true: it may happen that $f$
is indeed a contraction but is modeled by the function $F$ which misses
this property.
\end{rem}

If $F$ is a contraction modeling a function $f : \Zp \to \Zp$, the 
Banach fixed point Theorem implies that $f$ has a unique fixed point in 
$\Zp$. Moreover this sequence is the limit of any sequence 
$(x_n)_{n\geq0}$ solution to the recurrence $x_{n+1} = f(x_n)$. This 
property translates immediately in the world of programs: it says that 
we can compute the fixed point of $f$ just by replacing each occurrence 
of \ttx by the current instance of the class (\texttt{self}) in the 
\texttt{next} function of $F$. 
As an example, applying this treatment to the function $S$, we get:

\begin{lstlisting}
    class SFixed(RelaxedPAdicInteger):
        def next(self):
            n = len(self.digits)
            if n == 0: self.digits[0] = 1
            else:      self.digits[n] = self[n-1]
\end{lstlisting}

\noindent
which is\footnote{Or more precisely: ``whose any instance is''} a 
relaxed $p$-adic number representing the fixed point of $s$, namely
$\frac 1{1-p}$.

\paragraph{Division}

The above toy example has a real interest because it can be generalized 
to handle arbitrary divisions $a \div b$ where $b$ is invertible in $\Zp$.
We assume first that $b$ is congruent to $1$ modulo $p$, \emph{i.e} $b = 
1 + pb'$ for some $b' \in \Zp$. The assumption means that the $0$-th 
digit of $b$ is 1; as for the digits of $b'$, they are those of $b$ 
shifted by one position. We introduce the affine function $f : \Zp \to 
\Zp$ defined by:
$$f(x) = a + (1-b) x = a - p b' x.$$
It is a contraction which can be modeled (using relaxed multiplication 
and relaxed subtraction) by a contraction $F$ defined over relaxed
$p$-adic integers. Applying the recipe discussed above, we then build
a new relaxed $p$-adic integer that models the unique fixed point of
$x$, which is $\frac a {1+pb'} = \frac a b$. We have therefore computed
the division of $a$ by $b$ in the relaxed world.

For a general $b$, one can proceed as follows. Let $b_0$ be the $0$-th 
digit of $b$. We compute an integer $c$ such that $b_0 c \equiv 1 \pmod 
p$ and view $c$ as a $p$-adic number. Since $b \equiv b_0 \pmod p$, we 
get $bc \equiv 1 \pmod p$ as well. Moreover the fraction $\frac a b$ 
rewrites $\frac a b = \frac{ac}{bc}$ and we can now apply the method 
above. Using this techniques, a general relaxed division reduces to one
inversion modulo $p$ and two relaxed multiplication for the preparation 
plus one more relaxed multiplication and one relaxed subtraction for 
the computation of the fixed point.

\subsection{Floating-point arithmetic}
\label{ssec:pfloat}

The most common representation of real numbers on computers is
by far floating-point arithmetic which is normalized by the
IEEE 754-2008 Standard~\cite{IEEE08,Mu09}.
Let us recall very briefly that the rough idea behind the floating-point 
arithmetic is to select a finite fixed subset $\calR$ of $\R$ 
(the so-called set of \emph{floating-point numbers}) on which we
define the operators $\oplus$ and $\otimes$ which are supposed to mimic
the usual addition and multiplication of reals.

In the $p$-adic world, this approach has an analogue which was first 
proposed (as far as we know) in a unpublished note by Kedlaya and Roe 
in 2008~\cite{KeRo08}. Unfortunately it seems that it has never been implemented 
yet\footnote{Although an implementation was very recently proposed by 
Roe for integration in \sagenoref; see 
\url{https://trac.sagemath.org/ticket/20348}.}. The aim of the 
subsection is to report on Kedlaya and Roe's proposal.

\subsubsection{Definition of $p$-adic floating-point numbers}

The construction of $p$-adic floating-point numbers depends on the 
initial choice of three positive integers $N$, $e_\min$, $e_\max$ that 
we fix now until the end of \S \ref{ssec:pfloat}. We assume $e_\min < 
e_\max$ and $N \geq 1$.
The integer $N$ is called the \emph{precision} of the system while 
$e_\min$ and $e_\max$ are respectively the minimal and the maximal 
\emph{exponent} of the system.

\begin{deftn}
A \emph{$p$-adic floating-point number} is either 
\begin{itemize}
\renewcommand{\itemsep}{0pt}
\item a $p$-adic number $x \in \Qp$ of the shape $p^e s$ with the 
conditions:
\begin{equation}
\label{eq:condpFP}
\textstyle
e_\min \leq e \leq e_\max \quad ; \quad
-\frac{p^N-1} 2 < s \leq \frac{p^N-1} 2 \quad ; \quad
\gcd(s,p) = 1
\end{equation}
\item the number $0 \in \Qp$,
\item the special symbol $\infty$,
\item the special symbol $\nan$ (not a number).
\end{itemize}
\end{deftn}

\noindent
Note that the conditions~\eqref{eq:condpFP} imply that a $p$-adic 
floating-point number $x \in \Qp$ of the shape $p^e s$ cannot be zero
and moreover that $e$ and $s$ are uniquely determined by $x$. Indeed, 
since $s$ is prime with $p$, it cannot vanish and $e$ has to be equal
to the $p$-adic valuation of $x$. It is thus determined by $x$ and,
consequently, so is $s = p^{-e} \: x$.

\begin{deftn}
Given a $p$-adic floating-point number $x$ of the shape $x = p^e s$,
\begin{itemize}
\renewcommand{\itemsep}{0pt}
\item the integer $e$ is called the \emph{exponent} of $x$, 
\item the integer $s$ is called the \emph{significand} (or sometimes
the \emph{mentissa}) of $x$.
\end{itemize}
\end{deftn}

\paragraph{Representation of $p$-adic floating-point numbers.}

We represent $p$-adic floating-point numbers on machine by pairs
of integers as follows:
\begin{itemize}
\renewcommand{\itemsep}{0pt}
\item the $p$-adic number $p^e s$ is represented by $(e,s)$,
\item the $p$-adic number $0$ is represented by $(e_\max,0)$,
\item the symbol $\infty$ is represented by $(e_\min{-}1, 1)$,
\item the symbol $\nan$ is represented by $(e_\min{-}1, 0)$.
\end{itemize}

\begin{rem}
The above definitions omit several components of the IEEE 
standard. First, they do not include subnormal numbers. These are 
meant to allow for gradual underflow to zero, which is reasonable to 
provide when working in base $2$ with relatively small $e$. However, for 
$e$ as large as the bit length of a modern machine word (\emph{i.e.} no 
less than 32), this benefit is sufficiently minor that it does not 
outweigh the added complexity needed to handle subnormal numbers 
properly.
Second, the above definitions do not include signed infinities, because $p$-adic numbers 
do not have a meaningful notion of sign. For similar reasons, the comparison
operators $<, \leq, >, \geq$ are not defined for $p$-adic numbers.
Third, they do not provide for multiple types of \nan's. For instance, in 
the IEEE specification, a distinction is made between signaling \nan's 
(those which raise an exception upon any operation) and quiet \nan's.
\end{rem}

For some computations, it may be useful to allow some $p$-adic floating-point 
numbers to be represented in more than one way. We thus define an 
operation called \emph{normalization} on the set of pairs of integers 
$(e,s)$ as follows:
\begin{itemize}
\renewcommand{\itemsep}{0pt}
\item if $e < e_\min$ and $s=0$, return $(e_\min{-}1,0)$ (which represents $\nan$);
\item if $e < e_\min$ and $s \neq 0$, return $(e_\min{-}1,1)$ (which represents $\infty$);
\item if $e > e_\max$, return $(e_\max,0) = 0$ (which represents $0$);
\item if $e_\min \leq e \leq e_\max$ and $s=0$, return $(e_\max,0) = 0$;
\item if $e_\min \leq e \leq e_\max$ and $s \neq 0$, let $k \in 
\{0,\dots,m-1\}$ be the largest integer such that $p^k$ divides $s$ and 
return the normalization of $(e+k, s')$ where $s'$ is the unique integer
such that $-\frac{p^N-1} 2< s' \leq \frac{p^N-1} 2$ and
$s' \equiv p^{-k} m \pmod{p^N}$.
\end{itemize}

\noindent
For many operations, it is safe to leave an unnormalized pair $(e,s)$ 
as long as $s \neq 0$ and $e_\min \leq e \leq e_\max - \val_p(s)$ since
$p^e s$ is then guaranteed to equal the value of the normalization of 
$(e,s)$. However, one should normalize before testing for equality.

\subsubsection{Operations on $p$-adic floats}

Let $\QpFP$ be the set of $p$-adic floating-point numbers. Define also 
$\QpNan = \Qp \sqcup \{\infty, \nan\}$. Clearly $\QpFP \subset \QpNan$.

\begin{lem}
\label{lem:roundingFP}
Given $x \in \Qp$ with $e_\min \leq \val(x) \leq e_\max$, there
exists a unique element $y \in \Qp \cap \QpFP$ such that
$|x-y| \leq |x| \cdot p^{-N}$.
\end{lem}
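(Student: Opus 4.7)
My plan is to reduce the statement to the uniqueness of a balanced integer representative modulo $p^N$ of the unit part of $x$. The key preparatory step is to write $x = p^e u$ where $e = \val(x) \in [e_\min, e_\max]$ and $u \in \Zp^\times$; this is possible because $\val(x) \geq e_\min$ is finite, so $x$ is nonzero. I will then show that the only candidates $y$ compatible with the bound $|x-y| \leq |x|\cdot p^{-N}$ are of the form $y = p^e s$ where $s$ is a representative of $u$ modulo $p^N$ lying in the admissible range for the significand.

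First I would rule out $y = 0$ directly, since it would demand $|x| \leq |x|\cdot p^{-N}$, which cannot hold as $x \neq 0$ and $N \geq 1$. For a nonzero floating-point candidate $y = p^{e'} s'$, the bound $|x-y| \leq |x|\cdot p^{-N} < |x|$ combined with the ultrametric equality case (property (1') of \S\ref{sssec:padicnorm}) forces $|y| = |x|$, hence $e' = e$. With both exponents equal to $e$, the inequality rewrites as $p^{-e}\,|u - s'|_p \leq p^{-e-N}$, that is $s' \equiv u \pmod{p^N}$ in $\Zp$. This is the whole arithmetic content of the rounding condition.

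Both existence and uniqueness then follow from the fact that the admissible range for $s$ meets every residue class modulo $p^N$ in exactly one integer. Since $u \in \Zp^\times$, its reduction modulo $p^N$ is a unit in $\Z/p^N\Z$, so the corresponding balanced representative $s$ is automatically coprime to $p$; setting $y = p^e s$ yields a bona fide element of $\QpFP \cap \Qp$ satisfying the required inequality. Uniqueness is then immediate: two candidates $s_1, s_2$ in the admissible range with $s_1 \equiv s_2 \pmod{p^N}$ must coincide.

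The argument is essentially mechanical and I expect no serious obstacle. The only piece of care needed lies in verifying that the balanced range indeed provides a complete system of residues modulo $p^N$; this should be handled uniformly for $p = 2$ and for odd $p$ according to the standard symmetric-remainder convention on the half-integer boundaries of the range.
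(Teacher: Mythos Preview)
Your proposal is correct and follows essentially the same route as the paper's proof: write $x = p^e u$ with $u$ a unit, rule out $y=0$, force the exponent of any candidate $y$ to equal $e$, and reduce everything to the existence and uniqueness of a representative of $u \pmod{p^N}$ in the balanced significand range. Your use of the ultrametric equality case to pin down $e' = e$ in one stroke is slightly slicker than the paper's two-case analysis, and you are right to flag the verification that the significand range meets each residue class exactly once as the only point requiring care.
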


\begin{proof}
We write $x = p^v \cdot x_0$ where $v = \val(x)$ and $x_0$ is
invertible in $\Zp$. Set $e = v$ and let $s$ be the unique integer
of the range $\big(\!-\frac{p^N-1} 2,  \frac{p^N-1} 2\big]$ which is
congruent to $x_0$ modulo $p^N$. Define $y = p^e s$; it is a $p$-adic
floating-point number by the assumptions of the Lemma. Moreover:
$$|x - y| = p^{-v} \cdot |x_0 - s| \leq p^{-v-N} = |x| \cdot p^{-N}.$$
The existence is proved. 

As for uniqueness, remark first that $y = 0$ cannot do the job. Assume now 
that $y' = p^{e'} s'$ is a $p$-adic floating-point number written in 
normalized form which satisfies the required inequality. First notice 
that $e' = v$ necessarily. Indeed, from $e' > v$, we would deduce $|x - 
y'| = |x| > |x| \cdot p^{-N}$. Similarly from $e_i < v$, we 
would get $|x - y'| = p^{-e_1} > |x|$.
Consequently $|x-y'| = |x| \cdot |x_0 - s'|$ and our assumption on $y'$
translates to the congruence $x_0 \equiv s' \pmod {p^N}$. Therefore we
derive $s \equiv s' \pmod{p^N}$ and then $s = s'$ since both $s$ and $s'$
have to lie in the interval $\big(\!-\frac{p^N-1} 2,  \frac{p^N-1} 2\big]$.
Hence $y = y'$ and we are done.
\end{proof}

\begin{deftn}
The \emph{rounding function} is the function 
$o : \QpNan \to \QpFP$ defined as follows:
\begin{itemize}
\renewcommand{\itemsep}{0pt}
\item if $x \in \{\infty, \nan\}$, then $o(x) = x$;
\item if $x \in \Qp$ and $\val(x) < e_\min$,
then $o(x) = \infty$ (\emph{overflow});
\item if $x \in \Qp$ and $\val(x) > e_\max$,
then $o(x) = 0$ (\emph{underflow});
\item if $x \in \Qp$ and $e_\min \leq \val(x) \leq e_\max$,
then $o(x)$ is the unique element of $\QpFP$ such that
$|x - o(x)| \leq |x| \cdot p^{-N}$.
\end{itemize}
\end{deftn}

\begin{rem}
We emphasize that overflow (resp. underflow) occurs when $e$ is 
small (resp. large) which is the exact opposite of the real case.
This is \emph{not} a typo; the reason behind that is just that, in
the $p$-adic world, $p^N$ goes to $0$ when $N$ goes to $+\infty$.
\end{rem}

\begin{rem}
Note that the uniqueness in the last case means that there is no 
analogue of the notion of a rounding mode in real arithmetic.
\end{rem}

Given any $k$-ary function $f : (\QpNan)^k \to \QpNan$, we define its 
\emph{compliant approximation} $f_\FP : (\QpFP)^k \to \QpFP$ by:
$$f_\FP(x_1, \ldots, x_k) = o \big(f(x_1, \ldots, x_k)\big).$$
We are now in position to specify the four basic arithmetic operations 
(addition, subtraction, multiplication, division) on $\QpFP$. In order 
to do so, we need first to extend them on $\QpNan$. Let us agree to set
(here $\div$ stands for the division operator):

$$\begin{array}{ll}
\forall x \in \QpNan, 
  & x \pm \nan = \nan \pm x = \nan \smallskip \\
\forall x \in \QpNan \backslash \{\nan\}, 
  & x \pm \infty = \infty \pm x = \infty \medskip \\

\forall x \in \QpNan,
  & x \times \nan = \nan \times x = \nan \smallskip \\
\forall x \in \QpNan \backslash \{0, \nan\}, 
  & x \times \infty = \infty \times x = \infty \smallskip \\
  & 0 \times \infty = \infty \times 0 = \nan \medskip \\

\forall x \in \QpNan,
  & x \div \nan = \nan \div x = \nan \smallskip \\
\forall x \in \QpNan \backslash \{0, \nan\}, 
  & x \div 0 = \infty, \quad 0 \div 0 = \nan \smallskip \\
\forall x \in \QpNan \backslash \{\infty, \nan\}, 
  & x \div \infty = 0, \quad \infty \div \infty = \nan \smallskip \\
\end{array}$$
Addition, subtraction, multiplication and division 
on floating-point $p$-adic numbers are, by definition, the operations
$+_\FP$, $-_\FP$, $\times_\FP$ and $\div_\FP$ respectively.

\smallskip

Let us examine concretely how the computation goes at the level of
$p$-adic floating-point numbers. We start with multiplication with
is the easiest operator to handle. The formula we derive immediately
from the definition is:
$$\begin{array}{r@{\hspace{0.5ex}}ll}
(p^{e_1} s_1) \: \times_\FP \: (p^{e_2} s_2)
  & = p^{e_1 + e_2} \cdot (s_1 s_2 \text{ mod } p^N)
  & \text{if } e_1 + e_2 \leq e_\max \smallskip \\
  & = 0
  & \text{otherwise}
\end{array}$$
where $a \text{ mod } p^N$ is the unique representation of $a$ modulo
$p^N$ lying in the interval $\big(\!-\frac{p^N-1} 2,  \frac{p^N-1} 2\big]$.
We emphasize moreover that if $(e_1, s_1)$ and $(e_2, s_2)$ are both
written in normalized form then so is $(e_1 + e_2, s_1 s_2 \text{ mod } p^N)$ (under the 
assumption $e_1 + e_2 \leq e_\max$). Indeed the product of two numbers
which are coprime to $p$ is coprime to $p$ as well.
The case of the division is similar:
$$\begin{array}{r@{\hspace{0.5ex}}ll}
(p^{e_1} s_1) \: \div_\FP \: (p^{e_2} s_2)
  & = p^{e_1 - e_2} \cdot (s_1 s_2^{-1} \text{ mod } p^N)
  & \text{if } e_1 - e_2 \geq e_\min \smallskip \\
  & = \infty
  & \text{otherwise.}
\end{array}$$
The cases of addition and subtraction are more subtle because they 
can introduce cancellations of the rightmost digits. When $e_1
\neq e_2$, this cannot happen and the formula writes as follows:
$$\begin{array}{r@{\hspace{0.5ex}}ll}
(p^{e_1} s_1) \: +_\FP \: (p^{e_2} s_2)
  & = p^{e_1} \cdot (s_1 + p^{e_2 - e_1} s_2 \text{ mod } p^N)
  & \text{if } e_1 < e_2 \smallskip \\
  & = p^{e_2} \cdot (p^{e_1-e_2} s_1 + s_2 \text{ mod } p^N)
  & \text{if } e_2 < e_1.
\end{array}$$
On the contrary, when $e_1 = e_2$, the naive formula
$(p^e s_1) \: +_\FP \: (p^e s_2)
  = p^e \cdot (s_1 + s_2 \text{ mod } p^N)$
might fail because $s_1 + s_2$ can have a huge valuation. Instead 
we introduce $v = \val_p(s_1 + s_2)$ and the correct formula writes:
$$\begin{array}{r@{\hspace{0.5ex}}ll}
(p^e s_1) \: +_\FP \: (p^e s_2)
  & = p^{e+v} \cdot \big(\frac{s_1 + s_2}{p^v} \text{ mod } p^N\big)
  & \text{if } v \leq e_\max - e \smallskip \\
  & = 0
  & \text{otherwise.}
\end{array}$$
As an alternative, we may prefer using non-normalized representations
of $p$-adic floating-point numbers; it is indeed an option but it
requires to be \emph{very} careful with underflows.

\subsection{Comparison between paradigms}
\label{ssec:compparadigm}

We have just presented three quite different ways of thinking of
$p$-adic numbers from the computational point of view. Of course, 
each of them has its own advantages and disadvantages regarding
flexibility, rapidity, accuracy, \emph{etc.}
In this subsection we compare them, trying as much as possible to
support our arguments with many examples.

\subsubsection{Zealous arithmetic \textsc{vs.} lazy/relaxed arithmetic}
\label{sssec:zealousVSrelaxed}

The main difference between zealous and lazy/relaxed arithmetic is
certainly that they are designed for different uses! In the zealous
perspective, the inputs are given with a certain precision which is
supposed to be fixed once for all: one can imagine for instance that
the inputs come from measures\footnote{Through I have to confess that
I have never seen a relevant physical $p$-adic measure yet.} or that
computing them requires a lot of effort and is then not an option.
So the precision on the inputs is immutable and the goal of zealous
arithmetic is to propagate the precision to the output (trying to
be as sharp as possible).

On the other hand, the point of view of lazy/relaxed arithmetic is the 
exact opposite: instead of fixing the precision on the inputs, we fix a 
\emph{target} precision and we propagate it back to the inputs. Indeed 
suppose, as an easy example, that we have to evaluate the product $xy$. In 
the lazy/relaxed world, this is done by creating and returning the 
program \texttt{x\_mul\_y} \emph{without running it}.
The execution (\emph{i.e.} the actual computation of the product) is 
postponed until somebody asks for the value of $xy$ at a certain 
precision $O(p^N)$ (through the call $\texttt{x\_mul\_y}[N]$). At that 
moment, the program runs and infers the needed precision on the inputs 
$x$ and $y$.

Except maybe in very particular situations, the point of view of 
lazy/relaxed arithmetic is certainly much closer to the needs of the 
mathematician. Indeed, it is quite rare to have to work with $p$-adic
numbers that do not come from a previous computation. Mathematicians
are more relaxed than zealous!

Apart from that, still regarding precision, the zealous and the 
lazy/relaxed approaches are both based on the arithmetic of intervals 
and, for this reason, they exhibit similar behaviors in the following 
sense. Assume that we are given a certain function $f : \Zp \to \Zp$ to 
be evaluated and define the two numbers $\ell_1$ and $\ell_2$ (depending 
\emph{a priori} on extra parameters) as follows:
\begin{itemize}
\renewcommand{\itemsep}{0pt}
\item when we ask for the value of $f(a + O(p^N))$, the zealous 
approach answers $b + O(p^{N-\ell_1})$ (loss of $\ell_1$ digits of 
precision),
\item when we ask for the value of $f(a)$ at precision $O(p^N)$, 
the lazy/relaxed technique requires the computation of $a$ at precision 
$O(p^{N+\ell_2})$.
\end{itemize}
It turns out that $\ell_1$ and $\ell_2$ do not very much depend 
neither on $a$, nor on $N$. Moreover they are almost 
equal\footnote{We refer to \S \ref{sssec:preclemmaatwork} for a 
clarification of this statement based on a theoretical study (which
is itself based on the theory of $p$-adic precision we shall develop
in \S \ref{ssec:foundpadicprec}).}. On the 
other hand, we underline that this value $\ell_1 \approx \ell_2$ is very 
dependent on the function $f$ and even on the algorithm we use for the 
evaluation. We will present and discuss this last remark in \S 
\ref{sssec:compexamples} below.

\medskip

The lazy/relaxed arithmetic has nevertheless several more or less 
annoying disadvantages. First of all, it is
certainly more difficult to implement efficiently (although the 
implementation in \mathemagix is very well polished and quite 
competitive; we refer to~\cite{BeHoLe11} for timings). It is also 
supposed to be a bit slower; for instance, the relaxed multiplication 
looses asymptotically a factor $\log N$ (for the precision $O(p^N)$) 
compared to the zealous approach.
Another more serious issue is memory. Indeed the relaxed arithmetic 
needs to keep stored all intermediate results by design. As an easy 
toy example, let us have a quick look at the following function

\begin{lstlisting}
    def nth_term(n)
        u = (*$a$*)
        for i in (*$1, 2, \ldots, n$*):
            u = (*$f$*)(u)
        return u
\end{lstlisting}

\noindent
that computes the $n$-th term of a recursive sequence defined by its 
initial value $u_0 = a$ and the recurrence $u_{i+1} = f(u_i)$. Here $a$ 
and $f$ are given data.
In zealous arithmetic, the above implementation requires to store only 
one single value of the sequence $(u_i)_{i \geq 0}$ at the same time 
(assuming that we can rely on a good garbage collector); indeed at the 
$i$-th iteration of the loop, the value of $u_i$ is computed and stored 
in the variable \texttt{u} while the previous value of \texttt{u}, 
\emph{i.e.} the value of $u_{i-1}$, is destroyed.
On the other hand, in relaxed arithmetic, the variable representing 
$u_i$ stores the definition of $u_i$, including then the value of 
$u_{i-1}$. The relaxed $p$-adic number returns by the function 
\texttt{nth\_term} is then a huge structure in which all the relaxed 
$p$-adic numbers $u_1, u_2, \ldots, u_n$ have to appear. When we will
then ask for the computation of the first $N$ digits of $u_n$, the
relaxed machinery will start to work and compute --- and store --- 
the values of all the $u_i$'s at the desired precision. This is the price 
to pay in order to be able to compute one more digit of $u_n$ without 
having to redo many calculations.

\subsubsection{Interval arithmetic \textsc{vs.} floating-point arithmetic}
\label{sssec:intervalVSfloat}

$p$-adic floating-point arithmetic has a very serious limitation for 
its use in mathematics: the results it outputs are \emph{not proved} 
and even often wrong! Precisely, the most significand digits of the 
output are very likely to be correct while the least significand digits are 
very likely to be incorrect... and we have \emph{a priori} no way to know 
which digits are correct and which ones are not.
On the other hand, at least in the real setting, interval arithmetic 
is known for its tendency to yield pessimistic enclosures. 
What about the $p$-adic 
case? At first, one might have expected that ultrametricity may help. 
Indeed ultrametricity seems to tell that rounding errors do not 
accumulate as it does in the real world. Unfortunately this simple 
rationale is too naive: in practice, $p$-adic interval arithmetic
does overestimate the losses of precision exactly as real interval
arithmetic does.

The causes are multiple and complex but some of them can be isolated. 
The first source of loss of precision comes from the situation where we 
add two $p$-adic numbers known at different precision 
(\emph{e.g.} two $p$-adic numbers of different sizes). As a toy example, 
consider the function $f : \Zp^2 \to \Zp^2$ mapping $(x,y)$ to $(x+y, 
x-y)$ (it is a similarity in the $p$-adic plane) and suppose that we
want to evaluate $f \circ f$ on the entry $x = 1 + O(p^2)$, $y = 1
+ O(p^{20})$. We then code the following function:
\begin{lstlisting}
    def F(x,y):
        return x+y, x-y
\end{lstlisting}
and call $\ttF(\ttF(\ttx,\tty))$.
Let us have a look at precision. According to 
Proposition~\ref{prop:arithinterval}, the first call $\ttF(\ttx,\tty)$ 
returns the pair $(2 + O(p^2), O(p^2))$ and the final result is then 
$(2 + O(p^2), 2 + O(p^2))$. On the other hand, observing that $f \circ f$
is the mapping taking $(x,y)$ to $(2x,2y)$. Using this, we end up with:
$$\begin{array}{r@{\hspace{0.5ex}}ll}
f \circ f(x,y) & = \big(2 + O(p^2), 2 + O(p^{20})\big) 
  & \text{if } p > 2 \smallskip \\
& = \big(2 + O(2^3), 2 + O(2^{21})\big) 
  & \text{if } p = 2
\end{array}$$
which is much more accurate. 
Interval arithmetic misses the simplification and consequently loses 
accuracy.

A second more subtle source of inaccuracy comes from the fact that 
interval arithmetic often misses \emph{gain} of precision. A very basic 
example is the computation of $px$; written this way, the absolute 
precision increases by $1$. However if, for some good 
reason\footnote{For example, imagine that all the values $x, 2x, 3x, 
\ldots, px$ are needed.}, the product $px$ does not appear explicitly 
but instead is computed by adding $x$ to itself $p$ times, 
interval arithmetic will not see the gain of precision.
It turns out that similar phenomena appear for the multiplicative 
analogue of this example, \emph{i.e.} the computation of $x^p$ for $x = 
a + O(p^N)$. For simplicity assume that $\val(x) = 0$ and $N \geq 
1$. Using again Proposition~\ref{prop:arithinterval}, we find that 
zealous arithmetic leads to the result $x^p = a^p + O(p^N)$. However, 
surprisingly, this result is not optimal (regarding precision) as 
shown by the next lemma.

\begin{lem}
\label{lem:ppower}
If $a \equiv b \pmod{p^N}$, then $a^p \equiv b^p \pmod{p^{N+1}}$.
\end{lem}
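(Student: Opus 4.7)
The strategy is a direct binomial expansion, exploiting the fact that $p$ divides $\binom{p}{k}$ for $1 \leq k \leq p-1$. Using the hypothesis, I would write $b = a + p^N h$ for some $h \in \Zp$ and expand:
$$b^p = \sum_{k=0}^{p} \binom{p}{k} \, a^{p-k} (p^N h)^k.$$
The $k=0$ term is exactly $a^p$, so the claim reduces to showing that every other term has $p$-adic valuation at least $N+1$.

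I would then handle the indices $k$ in two ranges. For $1 \leq k \leq p-1$, the classical identity $\binom{p}{k} = \frac{p}{k}\binom{p-1}{k-1}$ together with $\gcd(k,p)=1$ shows $p \mid \binom{p}{k}$, giving one factor of $p$; the factor $(p^N h)^k$ contributes valuation $\geq Nk \geq N$, so the whole summand has valuation $\geq 1 + Nk \geq N+1$. For the remaining case $k=p$, the binomial coefficient is $1$ and one argues instead by brute force: the term $(p^N h)^p$ has valuation $\geq pN$, and since we are in the regime $N \geq 1$ and $p \geq 2$, one has $pN \geq N+1$.

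Summing, $b^p - a^p$ is divisible by $p^{N+1}$, which is the desired congruence. There is no real obstacle here; the only thing worth flagging is the $k=p$ term, which does not benefit from the divisibility of the binomial coefficient and must instead rely on $pN \geq N+1$ --- this is the single place where the mild hypothesis $N \geq 1$ (implicit from the context preceding the lemma) is actually used.
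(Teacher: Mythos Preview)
Your proof is correct and follows essentially the same approach as the paper: write $b = a + p^N c$, expand $b^p$ by the binomial theorem, and check that every term beyond $a^p$ has valuation at least $N+1$. The only cosmetic difference is the case split --- the paper isolates $i=1$ (where $\binom{p}{1}=p$ gives the extra factor) and handles $i \geq 2$ via $p^{2N}$, whereas you isolate $k=p$ and handle $1 \leq k \leq p-1$ via $p \mid \binom{p}{k}$; both routes rely on $N \geq 1$ in exactly one place.
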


\begin{proof}
Write $b = a + p^Nc$ with $c \in \Zp$ and compute:
$$b^p = (a + p^Nc)^p = \sum_{i=0}^p \binom p i a^{p-i} (p^Nc)^{i}.$$
When $i \geq 2$, the corresponding term is divisible by $p^{2N}$ while 
when $i=1$, it equals $p^{N+1} a^{p-1} c$ and is therefore apparently
divisible by $p^{N+1}$. As a consequence $b^p \equiv a^p \pmod{p^{N+1}}$
as claimed.
\end{proof}

\noindent
According to Lemma \ref{lem:ppower}, the correct precision for $x^p$ is (at least) 
$O(p^{N+1})$ which is not detected by arithmetic interval. \sage fixes
this issue by an \emph{ad-hoc} implementation of the power function 
which knows about Lemma~\ref{lem:ppower}. However similar behaviors 
happen in quite a lot of different other situations and they of course 
cannot be all fixed by \emph{ad-hoc} patches.

\begin{rem}
Recall that we have seen in \S \ref{sssec:intervalNewton} that the 
computation of square roots in $\Q_2$ looses one digit of precision. 
It is absolutely coherent with the result above which says that the 
computation of squares gains one digit of precision.
\end{rem}

As a conclusion, a very common situation where $p$-adic floating-point 
arithmetic can be very helpful is that situation where the mathematician 
is experimenting, trying to understand how the new mathematical 
objects he has just introduced behave. At the moment, he does not really need 
proofs\footnote{Well, it is of course often better to have proofs... but 
it is maybe too early.}; he needs fast computations and plausible 
accuracy, exactly what $p$-adic floating-point arithmetic can offer.

\subsubsection{Comparison by everyday life examples}
\label{sssec:compexamples}

We analyze many examples and compare for each of them the accuracy we 
get with $p$-adic floating-point arithmetic on the one hand and with interval 
arithmetic (restricting ourselves to the zealous approach for 
simplicity) on the other hand. Examples are picked as basic and very
common primitives in linear algebra and commutative algebra.

\paragraph{Determinant}

Our first example concerns the computation of the determinant of a 
square matrix with entries in $\Zp$. In order to be as accurate as
possible, we shall always use a division-free algorithm (see for
instance~\cite{Bi11} or~\cite{KaVi05} and the references therein for 
faster solutions).
When $M$ is a generic matrix there is actually not so much to say:
if the entries of the matrix are given with $N$ significand digits,
the same holds for the determinant and this precision is optimal.
Nonetheless, quite interesting phenomena show up for matrices having 
a special form. 
In the sequel, we will examine the case of matrices $M$ of the form $M 
= P D Q$ where $P, Q \in \GL_d(\Zp)$ and $D$ is a diagonal matrix with 
diagonal entries $p^{a_1}, \ldots, p^{a_d}$. We assume that the three
matrices $P$, $D$ and $Q$ are given at precision $O(p^N)$ for some $N$.
Here is a concrete example (picked at random) with $p=2$, $d=4$ and
$N = 10$:
$$\begin{array}{r@{\hspace{0.5ex}}l}
\multicolumn{2}{c}{
  a_1 = 0 \quad ; \quad
  a_2 = 2 \quad ; \quad
  a_3 = 3 \quad ; \quad
  a_4 = 5} \medskip \\
P & = 
\small \left(\begin{array}{rrrr}
\ldots1111100100 & \ldots0110110101 & \ldots0101011000 & \ldots1101010001 \\
\ldots1010001101 & \ldots0110011001 & \ldots1101111000 & \ldots1010100100 \\
\ldots1011101111 & \ldots0100100111 & \ldots0000111101 & \ldots0010010001 \\
\ldots1011111001 & \ldots1000100011 & \ldots1100100110 & \ldots0111100011
\end{array}\right) \medskip \\
Q & = 
\small \left(\begin{array}{rrrr}
\ldots1010110001 & \ldots0010011111 & \ldots1010010010 & \ldots1010001001 \\
\ldots1111101111 & \ldots0111100101 & \ldots0110101000 & \ldots0111100000 \\
\ldots0111110100 & \ldots0010010101 & \ldots0000101111 & \ldots1001100010 \\
\ldots0101111111 & \ldots0101110111 & \ldots1110000011 & \ldots1110000110
\end{array}\right)
\end{array}$$
so that:
\begin{equation}
\label{eq:matrixM}
\begin{array}{r@{\hspace{0.5ex}}l}
M & = 
\small \left(\begin{array}{rrrr}
\ldots0101110000 & \ldots0011100000 & \ldots1011001000 & \ldots0011000100 \\
\ldots1101011001 & \ldots1101000111 & \ldots0111001010 & \ldots0101110101 \\
\ldots0111100011 & \ldots1011100101 & \ldots0010100110 & \ldots1111110111 \\
\ldots0000111101 & \ldots1101110011 & \ldots0011010010 & \ldots1001100001
\end{array}\right)
\end{array}
\end{equation}

\begin{rem}
Each entry of $M$ is known at precision $O(2^{10})$ as well. Indeed any 
permutation of $M$ by an element $H \in 2^{10} M_4(\Z_2)$ can be induced 
by a perturbation of $D$ by the element $P^{-1} H Q^{-1}$ which lies in 
$2^{10} M_4(\Z_2)$ as well because $P$ and $Q$ have their inverses in 
$M_4(\Z_2)$.
\end{rem}

Here are the values for the determinant of $M$ computed according to
the two strategies we want to compare:
\begin{center}
\begin{tabular}{ll}
Interval (zealous) arithmetic: & $\det M = O(2^{10})$ \smallskip \\
Floating-point arithmetic: & $\det M = 2^{10} \times \ldots0001001101$
\end{tabular}
\end{center}
We observe the interval arithmetic does not manage to decide whether
$\det M$ vanishes or not. On the other hand, the floating-point approach
outputs a result with $10$ significand digits by design; the point is
that we do not know \emph{a priori} whether these digits are correct or not.
In our particular case, we can however answer this question by computing 
$\det M$ as the product $\det P \cdot \det D \cdot \det Q = 2^{10} 
\det P \cdot \det Q$ using zealous arithmetic. We find this way:
$$\det M = 2^{10} \times \ldots 01101$$
which means that the result computed by floating-point arithmetic has (at
least) $5$ correct digits. This is actually optimal as we shall see
later in \S \ref{sssec:diffexamples}.

\paragraph{Characteristic polynomial}

We now move to the characteristic polynomial keeping first the same
matrix $M$. Here are the results we get:
$$\begin{array}{cr@{\hspace{0.2ex}}r@{\hspace{0.5ex}}l@{\hspace{0.2ex}}r@{\hspace{0.5ex}}l}
\multicolumn{6}{l}{\text{Interval (zealous) arithmetic:}}\smallskip\\
\hspace{5mm}\null
& \chi_M(X) = X^4 + {}& (\ldots 0001000010) & X^3 
      + {}& (\ldots 1000101100) & X^2 \\
  & {}+ {}& (\ldots 0011100000) & X^{\phantom{1}} 
      + {}& (\ldots 0000000000) \medskip \\
\multicolumn{6}{l}{\text{Floating-point arithmetic:}}\smallskip\\
& \chi_M(X) = X^4 + {}& (\ldots 0{\color{purple}0001000010}) & X^3 
      + {}& (\ldots 11{\color{purple}1000101100}) & X^2 \\
  & {}+ {}& (\ldots 110{\color{purple}100011100000}) & X^{\phantom{1}} 
      + {}& (2^{10} \times \ldots 00010{\color{purple}01101}) \\
\end{array}$$
where, in the second case, the correct digits are written in 
purple\footnote{We decided which digits are correct simply
by computing at higher precision (with zealous arithmetic in order
to get a guaranteed result).}.
We observe again that, although the computation is not proved 
mathematically, the floating-point arithmetic outputs more accurate
results. As we shall see later (see \S \ref{sssec:diffexamples}) 
the accuracy it gives is even optimal for this particular example.

We consider now the matrix $N = I_4 + M$ where $I_4$ is the identity
matrix of size $4$.
The computation of $\chi_N$ leads to the following results:
$$\begin{array}{cr@{\hspace{0.2ex}}r@{\hspace{0.5ex}}l@{\hspace{0.2ex}}r@{\hspace{0.5ex}}l}
\multicolumn{6}{l}{\text{Interval (zealous) arithmetic:}}\smallskip\\
\hspace{5mm}\null
& \chi_N(X) = X^4 + {}& (\ldots 0000111110) & X^3 
      + {}& (\ldots 0101101100) & X^2 \\
  & {}+ {}& (\ldots 0101001010) & X^{\phantom{1}} 
      + {}& (\ldots 0100001011) \medskip \\
\multicolumn{6}{l}{\text{Floating-point arithmetic:}}\smallskip\\
& \chi_N(X) = X^4 + {}& (\ldots0{\color{purple}0000111110}) & X^3 
      + {}& (\ldots00{\color{purple}0101101100}) & X^2 \\
  & {}+ {}& (\ldots1{\color{purple}0101001010}) & X^{\phantom{1}} 
      + {}& (\ldots {\color{purple}0100001011}) \\
\end{array}$$
On that example, we remark that interval arithmetic is as accurate
as floating-point arithmetic.
More interesting is the evaluation of $\chi_N$ at $1$, which is 
nothing but the determinant of $M$ we have already computed before.
The values we get are the following:
\begin{center}
\begin{tabular}{ll}
Interval (zealous) arithmetic: & $\chi_N(1) = O(2^{10})$ \smallskip \\
Floating-point arithmetic: & $\chi_N(1) = 0$
\end{tabular}
\end{center}
Although there is no surprise with interval arithmetic, we observe
that floating-point arithmetic now fails to produce any correct digit.

\paragraph{LU factorization}

LU factorization is a classical tool in linear algebra which serves
as primitives for many problems.
We refer to~\cite[\S 2]{AbLo04} for an introduction to the topic. 
Recall briefly that a LU factorization of a matrix $M$ is a 
decomposition of the form $M = LU$ where $L$ (resp. $U$) is a lower triangular 
(resp. upper triangular) matrix. In the sequel, we require moreover that 
the diagonal entries of $L$ are all equal to $1$.
With the normalization, one can prove that any matrix $M$ defined over a 
field admits a \emph{unique} LU factorization as soon as all its principal 
minors\footnote{Recall the $i$-th principal minor of $M$ is the 
determinant of the submatrix of $M$ obtained by selecting the first $i$
rows and first $i$ columns.} do not vanish. 

LU factorizations can be computed using standard Gaussian elimination: 
starting from $M$, we first multiply the first column by the appropriate 
scalar in order to make the top left entry equal to $1$ and use it as 
pivot to cancel all coefficients on the first line by operating on 
columns. We then get a matrix of the shape:
$$\left(\begin{matrix}
1 & 0 & \cdots & 0 \\
\star & \star & \cdots & \star \\
\vdots & \vdots & & \vdots \\
\star & \star & \cdots & \star 
\end{matrix}\right)$$
and we continue this process with the submatrix obtained by deleting
the first row and the first column. The matrix we get at the end is
the $L$-part of the LU factorization of $M$.

We propose to explore the numerical stability of LU factorization 
\emph{via} Gaussian elimination in the $p$-adic case. Let us start with 
an example. Consider first the same matrix $M$ as above and write its LU 
factorization $M = LU$ (one can check that its principal minors do not 
vanish). Performing Gaussian elimination as described above within the 
framework of zealous arithmetic on the one hand and within the framework 
of floating-point arithmetic on the other hand, we end up with the 
following matrices:
\begin{equation}
\label{eq:resultLU}
\begin{array}{cr@{\hspace{0.5ex}}l}
\multicolumn{3}{l}{\text{Interval (zealous) arithmetic:}}\smallskip\\
\hspace{5mm}\null 
& L & = 
\small \left(\begin{array}{rrr@{\hspace{2em}}r}
1 & 0 & 0 & 0 \\
2^{-4} \times \ldots001111 & 1 & 0 & 0 \\
2^{-4} \times \ldots010101 & \ldots100011 & 1 & 0 \\
2^{-4} \times \ldots001011 & \ldots010101 & \ldots110 & 1
\end{array}\right) \medskip \\
\multicolumn{3}{l}{\text{Floating-point arithmetic:}}\smallskip\\
\hspace{5mm}\null 
& L & = 
\small \left(\begin{array}{rrr@{\hspace{2em}}r}
1 & 0 & 0 & 0 \\
2^{-4} \times \ldots1010{\color{purple}001111} & 1 & 0 & 0 \\
2^{-4} \times \ldots0110{\color{purple}010101} & \ldots{0\color{purple}011100011} & 1 & 0 \\
2^{-4} \times \ldots0101{\color{purple}001011} & \ldots{\color{purple}0111010101} & 0001{\color{purple}0110110} & 1
\end{array}\right)
\end{array}
\end{equation}
As before the correct digits are displayed in purple. We remark once
again that the accuracy of $p$-adic floating-point arithmetic is better 
than that of interval arithmetic (it is actually even optimal on that
particular example as we shall see in \S \ref{sssec:diffexamples}).
More precisely, we note that the
zealous precision is sharp on the first column but the gap increases
when we are moving to the right.

\begin{rem}
It is somehow classical~\cite[\S 1.4]{Ho75} that the entries of $L$ and $U$ can 
all be expressed as the quotient of two appropriate minors of $M$ 
(Cramer-like formulas). Evaluating such expressions, it is possible to 
compute the LU factorization and stay sharp on precision within the 
zealous framework. The drawback is of course complexity since evaluating 
two determinants for each entry of $L$ and $U$ is clearly very 
time-consuming. A stable and efficient algorithm, combining the 
advantages of the two approaches, is designed in~\cite{Ca12}.
\end{rem}

\paragraph{Bézout coefficients and Euclidean algorithm}

We now move to polynomials and examine the computation of
Bézout coefficients \emph{via} the Euclidean algorithm. We pick at
random two monic polynomials $P$ and $Q$ of degree $4$ over $\Z_2$:

$$\begin{array}{r@{\hspace{0.2ex}}r@{\hspace{0.5ex}}l@{\hspace{0.2ex}}r@{\hspace{0.5ex}}l}
P = X^4 + {}& (\ldots 1101111111) & X^3 
        + {}& (\ldots 0011110011) & X^2 \\
      {}+ {}& (\ldots 1001001100) & X^{\phantom{1}} 
        + {}& (\ldots 0010111010) \medskip \\
Q = X^4 + {}& (\ldots 0101001011) & X^3 
        + {}& (\ldots 0111001111) & X^2 \\
      {}+ {}& (\ldots 0100010000) & X^{\phantom{1}} 
        + {}& (\ldots 1101000111) 
\end{array}$$

We can check that $P$ and $Q$ are coprime modulo $p$ (observe that
$P + Q \equiv 1 \pmod p$); they are therefore \emph{a fortiori} coprime
in $\Qp[X]$. By Bézout Theorem, there exist two polynomials 
$U, V \in \Z_2[X]$ such that $UP + VQ = 1$. Moreover these polynomials 
are uniquely determined if we require $\deg U, \deg V \leq 3$. Computing 
them with the extended Euclidean algorithm (without any kind of 
algorithmic optimization), we get:
$$\begin{array}{cr@{\hspace{0.2ex}}r@{\hspace{0.5ex}}l@{\hspace{0.2ex}}r@{\hspace{0.5ex}}l}
\multicolumn{6}{l}{\text{Interval (zealous) arithmetic:}}\smallskip\\
\hspace{1cm}\null
&U = {}& (\ldots 101100) & X^3 
   + {}& (\ldots 101100) & X^2 \\
&{}+ {}& (\ldots 100) & X^{\phantom{1}} 
   + {}& (\ldots 1011) \medskip \\
&V = {}& (\ldots 010100) & X^3 
   + {}& (\ldots 100100) & X^2 \\
&{}+ {}& (\ldots 100) & X^{\phantom{1}} 
   + {}& (\ldots 101) \medskip \\
\multicolumn{6}{l}{\text{Floating-point arithmetic:}}\smallskip\\
&U = {}& (\ldots 101{\color{purple}011101100}) & X^3 
   + {}& (\ldots 111{\color{purple}100101100}) & X^2 \\
&{}+ {}& (\ldots 100000{\color{purple}110100}) & X^{\phantom{1}} 
   + {}& (\ldots 01{\color{purple}10001011}) \medskip \\
&V = {}& (\ldots 010{\color{purple}100010100}) & X^3 
   + {}& (\ldots 00{\color{purple}1011100100}) & X^2 \\
&{}+ {}& (\ldots 00010{\color{purple}0111100}) & X^{\phantom{1}} 
   + {}& (\ldots 111{\color{purple}1100101})
\end{array}$$
Floating-point arithmetic provides again better accuracy, though far for 
being optimal this time. Indeed the theory of subresultants 
\cite[\S 4.1]{Wi96}
shows that the coefficients of $U$ and $V$ can be all expressed as 
quotients of some minors of the Sylvester matrix of $(P,Q)$ by the 
resultant of $P$ and $Q$, denoted hereafter by $\Res(P,Q)$. From the 
fact that $P$ and $Q$ are coprime modulo $p$, we deduce that $\Res(P,Q)$ 
does not vanish modulo $p$. Thus $\val_p(\Res(P,Q)) = 0$ and dividing by
$\Res(P,Q)$ does not decrease the absolute precision according to
Proposition~\ref{prop:arithinterval}. As a consequence, following this
path and staying within the zealous framework, we can calculate
the values of $U$ and $V$ at precision $O(2^{10})$. Here is the result
we get:
$$\begin{array}{r@{\hspace{0.2ex}}r@{\hspace{0.5ex}}
l@{\hspace{0.2ex}}r@{\hspace{0.5ex}}
l@{\hspace{0.2ex}}r@{\hspace{0.5ex}}
l@{\hspace{0.2ex}}r@{\hspace{0.5ex}}}
U = {}& (\ldots 0011101100) & X^3 
  + {}& (\ldots 0100101100) & X^2 
{}+ {}& (\ldots 1101110100) & X^{\phantom{1}} 
  + {}& (\ldots 0010001011) \medskip \\
V = {}& (\ldots 1100010100) & X^3 
  + {}& (\ldots 1011100100) & X^2 
{}+ {}& (\ldots 0110111100) & X^{\phantom{1}} 
  + {}& (\ldots 0001100101).
\end{array}$$
We observe that the latter values (in addition to being proved) are 
even more accurate that the result which was computed ``naively'' using 
floating-point arithmetic. The drawback is complexity since evaluating
many determinants requires a lot of time. 
The theory of $p$-adic precision we are going to introduce in the next
section (see \S \ref{sec:precision} and, more especially, \S
\ref{sssec:adaptiveprecision}) provides the tools for writing a
stabilized version of Euclidean algorithm that computes provably $U$ 
and $V$ at (almost) optimal precision. We refer to~\cite{Ca17} for more
details (the material developed in  \S \ref{sec:precision} is necessary 
to read this reference).

\paragraph{Polynomial evaluation and interpolation}

Polynomial evaluation and polynomial interpolation are very classical 
and useful primitives involved in many algorithms in symbolic 
computation. In this last paragraph, we examine how they behave from
the point of view of precision. We focus on a very basic (but already
very instructive) example.
We consider the following two procedures:
\begin{itemize}
\renewcommand{\itemsep}{0pt}
\item \texttt{evaluation}: it takes as input a polynomial $P \in \Qp[X]$ 
of degree at most $d$ and outputs $P(0), P(1), \ldots, P(d)$;
\item \texttt{interpolation}: it takes as input a list of values $y_0, 
\ldots, y_d \in \Qp$ and returns the interpolation polynomial $P \in 
\Qp[X]$ of degree at most $d$ such that $P(i) = y_i$ for $i \in \{0,1,\ldots, d\}$.
\end{itemize}
Algorithms (and notably fast algorithms) for these tasks abound in the
literature (see for instance~\cite[\S 10]{GaGe03}). For our propose, we choose naive algorithms: we implement
\texttt{evaluation} by evaluating separately the $P(i)$'s (using Hörner
scheme say) and we implement \texttt{interpolation} using the method
of divided differences~\cite[\S 2]{Hi56}. Under our assumptions 
(interpolation at the first integers), it turns out that it takes a 
particularly simple form that we make explicit now.

Define the \emph{difference operator} $\Delta$ on $\Qp[X]$ by $\Delta 
A(X) = A(X+1) - A(X)$. The values taken by $A$ at the integers are 
related to the values $\Delta^n A(0)$ by a simple closed formula, as
shown by the next lemma.

\begin{lem}
\label{lem:mahler}
For all polynomials $A \in \Qp[X]$ of degree at most $d$, we have:
$$A(X) = \sum_{n=0}^d \Delta^n A(0) \cdot \binom X n
\quad \text{where} \quad
\binom X n = \frac{X(X-1)\cdots (X-n+1)}{n!}.$$
\end{lem}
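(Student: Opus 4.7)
The plan is to establish the formula by exhibiting an explicit basis of $\Qp[X]_{\leq d}$ (the space of polynomials of degree at most $d$), decomposing $A$ in this basis, and then identifying the coefficients by applying $\Delta^n$ and evaluating at $0$.

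First, I would observe that $\binom{X}{n}$ is a polynomial of degree exactly $n$ with leading coefficient $\frac{1}{n!}$; in particular the family $\bigl(\binom{X}{n}\bigr)_{0 \leq n \leq d}$ is a basis of $\Qp[X]_{\leq d}$. Hence there exist unique scalars $c_0, \ldots, c_d \in \Qp$ such that
$$A(X) = \sum_{n=0}^d c_n \binom{X}{n},$$
and the goal reduces to proving $c_n = \Delta^n A(0)$ for each $n$.

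The key computation is the elementary identity $\Delta\binom{X}{n} = \binom{X}{n-1}$ for $n \geq 1$ (and $\Delta\binom{X}{0} = 0$), which follows directly from Pascal's rule $\binom{X+1}{n} = \binom{X}{n} + \binom{X}{n-1}$ applied formally. By a straightforward induction on $k$ this yields
$$\Delta^k \binom{X}{n} = \begin{cases} \binom{X}{n-k} & \text{if } k \leq n, \\ 0 & \text{if } k > n. \end{cases}$$

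Applying $\Delta^k$ to the decomposition of $A$ and evaluating at $X=0$, only the term $n=k$ survives (since $\binom{0}{m} = 0$ for $m \geq 1$ and $\binom{0}{0} = 1$). This gives $\Delta^k A(0) = c_k$, which is exactly the claim. The only subtlety worth verifying is the Pascal-type identity for the polynomial $\binom{X}{n}$ (not just for integer arguments), but this is a formal identity in $\Qp[X]$ that can be checked either by a direct algebraic manipulation of the product $X(X-1)\cdots(X-n+1)$ or, more cheaply, by noting that both sides are polynomials in $X$ agreeing on all nonnegative integers, hence equal. No genuine obstacle arises.
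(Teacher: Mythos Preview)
Your proof is correct. Both your argument and the paper's hinge on the Pascal-type identity $\Delta\binom{X}{n} = \binom{X}{n-1}$, but the overall strategies differ. The paper proceeds by induction on the degree: assuming the formula for degree $\leq d$, it sets $B(X)$ equal to the right-hand side for a polynomial $A$ of degree $d{+}1$, shows $\Delta B = \Delta A$ via the induction hypothesis applied to $\Delta A$, checks $B(0)=A(0)$, and concludes $A=B$. You instead argue directly by linear algebra: the $\binom{X}{n}$ form a basis of $\Qp[X]_{\leq d}$, so $A$ has a unique expansion $\sum c_n\binom{X}{n}$, and the operators $A\mapsto\Delta^k A(0)$ act as the dual basis, picking out $c_k$. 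Your route is arguably cleaner since it avoids the induction and makes transparent why the coefficients must be $\Delta^n A(0)$; the paper's inductive argument, on the other hand, generalizes more readily to the continuous setting mentioned in the subsequent remark on Mahler expansions.
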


\begin{proof}
We proceed by induction on $d$. When $d = 0$, the lemma is trivial.
We assume now that it holds for all polynomials $A$ of degree at most
$d$ and consider a polynomial $A \in \Qp[X]$ with $\deg A = d{+}1$.
We define $B(X) = \sum_{n=0}^d \Delta^n A(0) \cdot \binom X n$. 
Remarking that $\Delta \binom X n = 
\binom X{n-1}$ for all positive integers $n$, we derive:
$$\Delta B (X) = \sum_{n=1}^d \Delta^n A(0) \cdot \binom X {n-1}
= \sum_{n=0}^{d-1} \Delta^{n+1} A(0) \cdot \binom X n.$$
From the induction hypothesis, we deduce $\Delta B = \Delta A$ (since 
$\Delta A$ has degree at most $d$). Furthermore, going back to the
definition of $B$, we find $A(0) = B(0)$. All together, these two
relations imply $A = B$ and the induction goes.
\end{proof}

\begin{rem}
Lemma~\ref{lem:mahler} extends by continuity to all continuous
functions $f$ on $\Zp$. In this generality, it states that any
continuous function $f : \Zp \to \Zp$ can be uniquely written as
a convergent series of the shape:
$$f(x) = \sum_{n=0}^\infty a_n \cdot \binom x n$$
where the $a_n$'s lie in $\Zp$ and converge to $0$ when $i$ goes
to infinity. The $a_n$'s are moreover uniquely determined: we have
$a_n = \Delta^n f(0)$. They are called the \emph{Mahler coefficients}
of $f$. We refer to~\cite{Ma58} for much more details on this topic.
\end{rem}

From Lemma~\ref{lem:mahler}, we easily derive an algorithm for our 
interpolation problem. Given $y_0, \ldots, y_d$, we define the 
``divided'' differences $y_{n,i}$ for $0 \leq i \leq n \leq d$ by 
(decreasing) induction on $n$ by $y_{d,i} = y_i$ and $y_{n-1,i} = 
y_{n,i+1} - y_{n,i}$. These quantities can be easily computed.
Moreover, thanks to Lemma~\ref{lem:mahler}, the interpolation 
polynomial $P$ we are looking for writes
$P(X) = \sum_{n=0}^d y_{n,0} \binom x n$.
This provides an algorithm for computing the interpolation polynomial.

Let us now try to apply successively \texttt{evaluation} and
\texttt{interpolation} taking as input a random polynomial $P$ of 
degree $d=8$ with coefficients in $\Zp$ (for $p = 2$ as usual):
$$\begin{array}{r@{\hspace{0.2ex}}r@{\hspace{0.5ex}}l@{\hspace{0.2ex}}r@{\hspace{0.5ex}}l@{\hspace{0.2ex}}r@{\hspace{0.5ex}}l}
P = {}& (\ldots 0111001110) & X^8 
  + {}& (\ldots 0101010001) & X^7 
  + {}& (\ldots 1000001100) & X^6 \\
{}+ {}& (\ldots 1010001101) & X^5 
  + {}& (\ldots 1111000100) & X^4
  + {}& (\ldots 0011101101) & X^3 \\
{}+ {}& (\ldots 1010010111) & X^2
  + {}& (\ldots 0011011010) & X^{\phantom{1}} 
  + {}& (\ldots 0001011110) 
\end{array}$$
The results we get are:
$$\begin{array}{cr@{\hspace{0.2ex}}r@{\hspace{0.5ex}}l@{\hspace{0.2ex}}r@{\hspace{0.5ex}}l@{\hspace{0.2ex}}r@{\hspace{0.5ex}}l}
\multicolumn{8}{l}{\text{Interval (zealous) arithmetic:}}\smallskip\\
\hspace{1cm}\null
&    {}& (\ldots 110) & X^8 
   + {}& (\ldots 001) & X^7 
   + {}& (\ldots 100) & X^6 \\
&{}+ {}& (\ldots 101) & X^5 
   + {}& (\ldots 100) & X^4
   + {}& (\ldots 1101) & X^3 \\
&{}+ {}& (\ldots 10111) & X^2
   + {}& (\ldots 1011010) & X^{\phantom{1}} 
   + {}& (\ldots 0001011110) \medskip \\
\multicolumn{8}{l}{\text{Floating-point arithmetic:}}\smallskip\\
\hspace{1cm}\null
&    {}& (\ldots 0000{\color{purple}1001110}) & X^8 
   + {}& (\ldots 101{\color{purple}1010001}) & X^7 
   + {}& (\ldots 00101{\color{purple}0001100}) & X^6 \\
&{}+ {}& (\ldots 0{\color{purple}010001101}) & X^5 
   + {}& (\ldots 0000{\color{purple}11000100}) & X^4
   + {}& (\ldots 010{\color{purple}11011101}) & X^3 \\
&{}+ {}& (\ldots 0{\color{purple}010010111}) & X^2
   + {}& (\ldots 11{\color{purple}011011010}) & X^{\phantom{1}} 
   + {}& (\ldots 0{\color{purple}0001011110}) 
\end{array}$$
The result computed by floating-point arithmetic is again a bit more
accurate. However this observation is not valid anymore where the degree
$d$ gets larger.
\begin{figure}
$$\scriptsize
\begin{array}{r@{\hspace{0.2ex}}
r@{\hspace{0.5ex}}l@{\hspace{0.2ex}}
r@{\hspace{0.5ex}}l@{\hspace{0.2ex}}
r@{\hspace{0.5ex}}l@{\hspace{0.2ex}}r@{\hspace{0.5ex}}l}
\multicolumn{9}{l}{\text{Initial polynomial:}}\smallskip\\
     {}& (\ldots 0101101001) & X^{19}
   + {}& (\ldots 1101000011) & X^{18} 
   + {}& (\ldots 0011001110) & X^{17} 
   + {}& (\ldots 1001011010) & X^{16} \\
 {}+ {}& (\ldots 0011100111) & X^{15} 
   + {}& (\ldots 0110101110) & X^{14}
   + {}& (\ldots 0111111001) & X^{13}
   + {}& (\ldots 1011010111) & X^{12} \\
 {}+ {}& (\ldots 0100000100) & X^{11}
   + {}& (\ldots 0000110000) & X^{10}
   + {}& (\ldots 1110101010) & X^{9\phantom{0}}
   + {}& (\ldots 1111101100) & X^{8} \\
 {}+ {}& (\ldots 0100010001) & X^{7\phantom{0}}
   + {}& (\ldots 0101010000) & X^{6\phantom{0}}
   + {}& (\ldots 0111101111) & X^{5\phantom{0}}
   + {}& (\ldots 1100010011) & X^{4} \\
 {}+ {}& (\ldots 0100000001) & X^{3\phantom{0}}
   + {}& (\ldots 1000010010) & X^{2\phantom{0}}
   + {}& (\ldots 0000100000) & X^{\phantom{01}} 
   + {}& (\ldots 0001111110) \medskip \\
\multicolumn{9}{l}{\text{Interval (zealous) arithmetic:}}\smallskip\\
       & O(2^{-6}) & X^{19}
   + {}& O(2^{-6}) & X^{18}
   + {}& O(2^{-6}) & X^{17}
   + {}& O(2^{-5}) & X^{16} \\
 {}+ {}& O(2^{-6}) & X^{15}
   + {}& O(2^{-6}) & X^{14}
   + {}& O(2^{-6}) & X^{13}
   + {}& O(2^{-5}) & X^{12} \\
 {}+ {}& O(2^{-6}) & X^{11}
   + {}& O(2^{-6}) & X^{10}
   + {}& O(2^{-6}) & X^{9\phantom{0}}
   + {}& O(2^{-5}) & X^{8} \\
 {}+ {}& O(2^{-4}) & X^{7\phantom{0}}
   + {}& O(2^{-3}) & X^{6\phantom{0}}
   + {}& O(2^{-2}) & X^{5\phantom{0}}
   + {}& O(2^{-1}) & X^{4} \\
 {}+ {}& (\ldots 1) & X^{3\phantom{0}}
   + {}& (\ldots 010) & X^{2\phantom{0}}
   + {}& (\ldots 100000) & X^{\phantom{01}} 
   + {}& (\ldots 0001111110) \medskip \\
\multicolumn{9}{l}{\text{Floating-point arithmetic:}}\smallskip\\
     {}& (2^{-3} \times \ldots 1110011011) & X^{19}
   + {}& (2^{-5} \times \ldots 0000000011) & X^{18} 
   + {}& (2^{-3} \times \ldots 0001011111) & X^{17} 
   + {}& (2^{-5} \times \ldots 1100111101) & X^{16} \\ 
 {}+ {}& (\ldots 11111100110) & X^{15}
   + {}& (2^{-4} \times \ldots 0110100011) & X^{14} 
   + {}& (2^{-2} \times \ldots 0000010011) & X^{13} 
   + {}& (2^{-4} \times \ldots 1010001101) & X^{12} \\ 
 {}+ {}& (2^{-3} \times \ldots 0010000011) & X^{11}
   + {}& (2^{-5} \times \ldots 0100101111) & X^{10} 
   + {}& (2^{-3} \times \ldots 0000110011) & X^{9\phantom{0}} 
   + {}& (2^{-5} \times \ldots 1010101001) & X^{8} \\ 
 {}+ {}& (2^{-2} \times \ldots 0010000101) & X^{7\phantom{0}}
   + {}& (2^{-2} \times \ldots 1101100111) & X^{6\phantom{0}} 
   + {}& (\ldots 110{\color{purple}1101111}) & X^{5\phantom{0}} 
   + {}& (2^{-1} \times \ldots 0011100111) & X^{4} \\ 
 {}+ {}& (\ldots 01011101{\color{purple}01}) & X^{3\phantom{0}}
   + {}& (\ldots 11011101{\color{purple}010}) & X^{2\phantom{0}} 
   + {}& (\ldots 000000001{\color{purple}100000}) & X^{\phantom{01}}
   + {}& (\ldots 0{\color{purple}0001111110})
\end{array}$$

\caption{Evaluation and re-interpolation of a polynomial of degree $19$
over $\Z_2$}
\label{fig:evalinterpol}
\end{figure}
Figure~\ref{fig:evalinterpol} shows an example with a polynomial
(picked at random) of degree $d = 19$. We see that almost all digits
are incorrect, many coefficients have negative valuations, \emph{etc.}
For this problem, floating-point arithmetic is then not well suited.

One may wonder whether another algorithm, specially designed for 
stability, would lead to better results. Unfortunately, the answer is 
negative: we shall see later (in \S \ref{sssec:diffexamples}) that 
the problem of polynomial evaluation and interpolation is \emph{very} 
ill-conditioned in the $p$-adic setting, so that numerical methods are 
ineffective.

\section{The art of tracking $p$-adic precision}
\label{sec:precision}

In many examples presented in \S \ref{sssec:compexamples}, we have often 
recognized similar behaviors: interval arithmetic often overestimates 
the losses of precision while floating-point arithmetic provides more 
accurate --- but unproved --- results. Understanding precisely the
origin of these phenomena is a quite stimulating question (that has
been widely studied in the real setting).

Recently Caruso, Roe and Vaccon~\cite{CaRoVa14} proposed a general theory for 
dealing with precision in the $p$-adic setting and this way provided
powerful tools for attacking the aforementioned question. Their rough 
idea was to develop an analogue of interval arithmetic in higher 
dimensions. In other words, instead of attaching a precision $O(p^N)$ to 
each $p$-adic variable, they group variables and attach to the 
collection of all of them a \emph{unique} \emph{global} precision datum 
materialized by an ``ellipsoid'' in some normed $p$-adic vector space. 
The magic of ultrametricity then operates: ellipsoids are rather easy to 
deal with and behave very well with respect to tracking of precision.

In this section, we report on Caruso, Roe and Vaccon's work. 
\S \ref{ssec:foundpadicprec} is dedicated to the foundations of their 
theory; it is mostly of mathematical nature and deals with $p$-adic 
analysis in several variables. It culminates with the statement (and
the proof) of the precision Lemma (Theorem \ref{theo:preclemma}).
Some first applications are discussed in \S \ref{ssec:optimalprec}
where the precision Lemma is used for finding the maximal precision
one can expect in many concrete situations. Finally we propose in
\S \ref{ssec:latticearith} general methods for reaching this optimal 
precision and apply them in concrete cases.

\subsection{Foundations of the theory of $p$-adic precision}
\label{ssec:foundpadicprec}

The aforementioned theory of $p$-adic precision is based on a single 
result of $p$-adic analysis --- the so-called \emph{precision Lemma} --- 
controlling how ellipsoids transform under mappings of class $C^1$. In 
fact the terminology ``ellipsoid'' is not quite appropriate to the 
$p$-adic setting (though very suggestive for the comparison with the 
real setting) because vector spaces over $\Qp$ are not equipped with 
some $L^2$-norm. Mathematicians then prefer using the term ``lattice'' 
because, as we shall see below, the $p$-adic lattices behave like usual 
$\Z$-lattices in $\R$-vector spaces.

This subsection is organized as follows. We first introduce the notion 
of $p$-adic lattice (\S\S 
\ref{sssec:lattices}--\ref{sssec:complattices}) together with the 
necessary material of $p$-adic analysis (\S \ref{sssec:classC1}). After 
this preparation, \S \ref{sssec:preclemma} is devoted to the precision 
Lemma: we state it and prove it. Applications to $p$-adic precision 
will be discussed in the next subsections (\S \ref{ssec:optimalprec}
and \S \ref{ssec:latticearith}).

\subsubsection{Lattices in finite-dimensional $p$-adic vector spaces}
\label{sssec:lattices}

Let $E$ be a finite-dimensional vector space over $\Qp$.
A (ultrametric) \emph{norm} on $E$ is a mapping $\Vert \cdot \Vert_E :
E \to \R^+$ satisfying the usual requirements:
\begin{enumerate}[(i)]
\renewcommand{\itemsep}{0pt}
\item $\Vert x \Vert_E = 0$ if and only if $x = 0$;
\item $\Vert \lambda x \Vert_E = |\lambda| \cdot \Vert x \Vert_E$;
\item \label{item:triangleineq}
$\Vert x + y \Vert_E \leq \max\big( \Vert x \Vert_E, 
\Vert y \Vert_E \big)$.
\end{enumerate}
Here $x$ and $y$ refer to elements of $E$ while $\lambda$ refers
to a scalar in $\Qp$. We notice that, without further assumption, 
one can prove that equality holds in \eqref{item:triangleineq} as
soon as $\Vert x \Vert_E \neq \Vert y \Vert_E$: all triangles are 
isosceles in all normed $p$-adic vector spaces! Indeed, by symmetry, we 
may assume $\Vert x \Vert_E < \Vert y \Vert_E$. Now we remark that from 
$\Vert x + y \Vert_E < \Vert y \Vert_E$, we would deduce:
$$\Vert y \Vert_E 
  \leq \max\big(\Vert x+y\Vert_E, \Vert {-}x \Vert_E\big) 
  = \max\big(\Vert x+y\Vert_E, \Vert x \Vert_E\big) 
  < \Vert y \Vert_E$$
which is a contradiction. Our assumption was then absurd, meaning that 
$\Vert x + y \Vert_E = \Vert y \Vert_E$.

Given a real number $r$, we let $B_E(r)$ denote the closed ball in $E$ 
of centre $0$ and radius $r$, \emph{i.e.}:
$$B_E(r) = \big\{\, x \in E \,\text{ s.t. } \Vert x \Vert_E \leq r 
\,\big\}.$$
It is worth remarking that $B_E(r)$ is a module over $\Zp$. Indeed,
on the one hand, multiplying by a scalar in $\Zp$ does not increase
the norm (since elements of $\Zp$ have norm at most $1$) and, on the 
other hand, the ultrametric triangular inequality implies that $E$ is 
stable under addition. Balls in $p$-adic vector spaces have then two
faces: one is analytic and one is algebraic.
Being able to switch between these two points of view is often very
powerful.

The very basic (but still very important) example of a normed 
$\Qp$-vector space is $\Qp^d$ itself endowed with the infinite norm
defined by
$\Vert (x_1, x_2, \ldots, x_d) \Vert_\infty 
= \max \big(|x_1|, |x_2|, \ldots, |x_d|\big)$ for $x_1, \ldots, x_d
\in \Qp$.
The unit ball of $\big(\Qp^d, \Vert \cdot \Vert_\infty\big)$ is $\Zp^d$
and, more generally, $B_{\Qp^d}(r) = p^n \Zp^d$ where $n$ is the unique
relative integer defined by $p^{-n} \leq r < p^{-(n{-}1)}$. We notice
that they are indeed $\Zp$-modules.
Other standard norms over $\R^d$ do not have a direct $p$-adic analogue
since they violate the ultrametric triangular inequality.

\medskip

The general notion of lattice is modeled on balls:

\begin{deftn}
\label{def:latticeana}
Let $E$ be a finite-dimensional vector space over $\Qp$.
A \emph{lattice} in $E$ is a subset of $E$ of the form $B_E(1)$ 
for some $p$-adic norm on $E$.
\end{deftn}

\noindent
Remark that if $H$ is a lattice in $E$ and $a$ is a non zero scalar
in $\Qp$, then $aH$ is a lattice in $E$ as well. Indeed if $H$ is
the closed unit ball for some norm $\Vert \cdot \Vert_E$, then $aH$ 
is the closed unit ball for $\frac 1{|a|} \Vert \cdot \Vert_E$.
More generally, if $H \subset E$ is a lattice and $f : E \to E$ is
a bijective linear transformation, then $f(H)$ is also a lattice.
A consequence of Proposition \ref{prop:nakayama} below is that all
lattices take the form $f(H_0)$ where $H_0$ is a fixed lattice and
$f$ vary in $\GL(E)$.

Lattices might be thought of as ``ellipsoids'' centered at $0$. 
They form a class of special neighborhoods of $0$ that we will use 
afterwards to model the precision (see \S \ref{ssec:latticearith}).
From this perspective, they will 
appear as the natural generalization to higher dimension of the notion 
of bounded interval (centered at $0$) of $\Qp$ we have introduced in \S 
\ref{sssec:tree} and already widely used in \S \ref{ssec:zealous} to 
model precision in zealous arithmetic.

Figure~\ref{fig:lattice} shows a possible picture of a lattice drawn in 
the $p$-adic plane $\Qp^2$ (endowed with the infinite norm). 
Nevertheless, we need of course to be very careful with such 
representations because the topology of $\Qp$ has nothing to do with the 
topology of the paper sheet (or the screen). In particular, it is quite 
difficult to reflect ultrametricity.

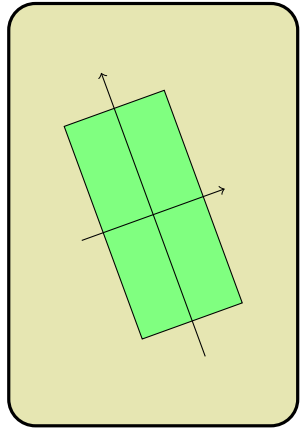
\begin{figure}
\hfill
\begin{tikzpicture}
\begin{scope}[beige, rounded corners=10pt]
\fill (0.1,0.2) rectangle (3.9,5.8);
\end{scope}

\begin{scope}[xshift=2cm,yshift=3cm,rotate=20]
\draw[black,fill=green!50] (-0.7,-1.5) rectangle (0.7,1.5);
\draw[black,->] (-1,0)--(1,0);
\draw[black,->] (0,-2)--(0,2);
\end{scope}

\begin{scope}[very thick, rounded corners=10pt]
\draw (0.1,0.2) rectangle (3.9,5.8);
\end{scope}
\end{tikzpicture}
\hfill \null

\caption{Picture of a lattice in the ultrametric world}
\label{fig:lattice}
\end{figure}

\medskip

Here is another purely algebraic definition of lattices which justifies
the wording (compare with the case of $\Z$-lattice in $\R$-vector
spaces).

\begin{deftn}
\label{def:latticealg}
Let $E$ be a finite-dimensional vector space over $\Qp$.
A \emph{lattice} in $E$ is a $\Zp$-module generated by a basis of $E$ 
over $\Qp$.
\end{deftn}

\noindent
The fact that a lattice in the sense of Definition \ref{def:latticealg}
is a lattice in the sense of Definition \ref{def:latticeana} is rather
easy: if $(e_1, \ldots, e_d)$ is a basis of $E$ over $\Qp$, the 
$\Zp$-span of the $e_i$'s is the closed unit ball for the norm 
$\Vert \cdot \Vert_E$ defined by:
$$\Vert \lambda_1 e_i + \cdots + \lambda_d e_d \Vert_E =
\max\big(|\lambda_1|, \ldots, |\lambda_d|\big).$$
As for the converse, it follows from the next proposition.

\begin{prop}
\label{prop:nakayama}
Let $E$ be a $d$-dimensional normed vector space over $\Qp$.
There exists a basis $(e_1, \ldots, e_d)$ of $E$ over $\Qp$ such 
that $B_E(1)$ is the $\Zp$-module generated by the $e_i$'s.
\end{prop}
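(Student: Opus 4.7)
The plan is to show that $B_E(1)$ is a free $\Zp$-module of rank exactly $d$; any $\Zp$-basis of $B_E(1)$ will then automatically be the desired $\Qp$-basis of $E$. So the whole argument is to upgrade the analytic description of $B_E(1)$ (``closed unit ball'') to an algebraic structure theorem.

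First I would observe that $B_E(1)$ is naturally a $\Zp$-submodule of $E$: stability under $\Zp$-scaling comes from $|\lambda| \leq 1$ for $\lambda \in \Zp$, and stability under addition comes from the ultrametric inequality. Moreover $B_E(1)$ spans $E$ over $\Qp$, since for any nonzero $x \in E$ we can find $n$ large enough so that $|p^n| \cdot \Vert x\Vert_E \leq 1$, i.e.\ $p^n x \in B_E(1)$. Next I would fix an arbitrary $\Qp$-basis $(f_1,\dots,f_d)$ of $E$ and compare $\Vert\cdot\Vert_E$ with the sup-norm $\Vert\cdot\Vert_\infty$ attached to this basis. The classical equivalence of norms on finite-dimensional $\Qp$-vector spaces (proved by induction on $d$, crucially using the completeness of $\Qp$ to guarantee that hyperplanes are closed) gives constants $c_1, c_2 > 0$ with $c_1 \Vert x\Vert_\infty \leq \Vert x\Vert_E \leq c_2 \Vert x\Vert_\infty$ for all $x \in E$. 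Choosing $N$ large enough, this forces $B_E(1) \subset p^{-N}\Zp f_1 \oplus \cdots \oplus p^{-N}\Zp f_d$.

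Now the structure theorem takes over. Since $\Zp$ is a PID (in fact a DVR), any submodule of a free $\Zp$-module of rank $d$ is itself free of rank at most $d$; hence $B_E(1)$ is free of some rank $r \leq d$. But $B_E(1)$ spans $E$ over $\Qp$, so $r \geq d$, giving $r = d$. Pick a $\Zp$-basis $(e_1,\dots,e_d)$ of $B_E(1)$. Any $\Qp$-linear relation $\sum \lambda_i e_i = 0$ can be rescaled by a power of $p$ to put all coefficients in $\Zp$, so $\Zp$-independence of the $e_i$ yields $\Qp$-independence; as there are $d$ of them, they form a basis of $E$ over $\Qp$, and by construction $B_E(1) = \Zp e_1 \oplus \cdots \oplus \Zp e_d$.

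The one genuinely non-trivial ingredient is the equivalence of norms invoked above; that is the main obstacle since one must not just quote it but be aware that it relies on the completeness of $\Qp$, without which all subsequent boundedness and finite-generation arguments would collapse. Everything else is a routine combination of the ultrametric inequality with the elementary divisor theorem over the PID $\Zp$.
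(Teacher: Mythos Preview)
Your argument is correct but follows a genuinely different route from the paper. The paper gives a self-contained Nakayama-style proof: it sets $L=B_E(1)$, picks an $\Fp$-basis of $L/pL$, lifts it to $L$, proves $\Qp$-linear independence by clearing denominators and reducing mod $p$, and then shows generation of $L$ by a successive-approximation argument (writing any $x\in L$ modulo $p^nL$ and passing to the limit). No structure theorem over a PID and no equivalence of norms are invoked; in fact, equivalence of norms is \emph{derived} afterwards as a corollary of this very proposition.

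Your approach instead imports two outside tools: the equivalence of all norms on a finite-dimensional $\Qp$-space, and the elementary divisor theorem over the PID $\Zp$. This is shorter and more conceptual if those tools are taken as known, and you rightly flag that the equivalence of norms must be proved independently (via the standard induction using completeness to close hyperplanes) rather than quoted from the paper's later corollary, which would be circular. The trade-off is clear: the paper's argument is elementary and yields equivalence of norms as a by-product, while yours is cleaner bookkeeping at the cost of front-loading a nontrivial analytic lemma and the PID structure theorem.
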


\begin{proof}
Set $L = B_E(1)$. We have already seen that $L$ is a module over $\Zp$; 
thus the quotient $L/pL$ makes sense and is a vector space over $\Fp = 
\Z/p\Z$. Consider $(\bar e_i)_{i \in I}$ a basis of $L/pL$. (We shall 
see later that $L/pL$ is finite dimensional of dimension $d$ over $\Fp$ 
but, for now, we do not know this and we do not assume anything on the 
set $I$ indexing the basis.) For all $i \in I$, consider $e_i \in L$
which reduces to $\bar e_i$ modulo $p$.

We first claim that the family $(e_i)_{i \in I}$ (where $e_i$ is
considered as an element
of $E$) is free over $\Qp$. Indeed consider a relation of the form
$\sum_{i \in I} \lambda_i e_i = 0$ with $\lambda_i \in \Qp$ and
$\lambda_i = 0$ for almost all $i$. Assume by contradiction that
this relation is non trivial. Then $v = \min_i \, \val_p(\lambda_i)$ is
finite. Up to multiplying the $\lambda_i$'s by $p^{-v}$, we may assume 
that $\lambda_i \in \Zp$ for all $i \in I$ and that there exists
at least one index $i$ for which $\lambda_i$ does not reduce to $0$
modulo $p$.
Now reducing our dependency relation modulo $p$, we get $\sum_{i \in I} 
\bar \lambda_i \bar e_i = 0$ where $\bar \lambda_i \in \Fp$ is the class 
of $\lambda_i$ modulo $p$. Since the family $(\bar e_i)_{i \in I}$ is free 
over $\Fp$ by construction, we derive $\bar \lambda_i = 0$ which
contradicts our assumption.

It follows from the freedom of the $e_i$'s that $I$ is finite of 
cardinality at most $d$. Let us then write $I = \{1, 2, \ldots, d'\}$ 
with $d' \leq d$. We now prove that $(e_1, \ldots, e_{d'})$ generates $L$ 
over $\Zp$. Let then $x \in L$. Using that the $\bar e_i$'s generate 
$L/pL$, we find $p$-adic integers $\lambda_{i,1}$ ($0 \leq i \leq d'$)
such that:
\begin{align}
x & \equiv \lambda_{1,1} e_1 + 
\lambda_{2,1} e_2 + \cdots + \lambda_{d',1} e_{d'} \pmod{pL} \nonumber\\
\text{\emph{i.e.}} \quad 
x & = \lambda_{1,1} e_1 +
\lambda_{2,1} e_2 + \cdots + \lambda_{d',1} e_{d'} + p x_1
\label{eq:xmodp}
\end{align}
for some $x_1 \in L$. Applying the same reasoning to $x_1$ and 
re-injecting in Eq.~\eqref{eq:xmodp}, we end up with an equality of
the form $x = \lambda_{1,2} e_1 +
\lambda_{2,2} e_2 + \cdots + \lambda_{d',2} e_{d'} + p x_2$
with $x_2 \in L$ and $\lambda_{i,2} \equiv \lambda_{i,1} \pmod p$
for all $i$. Continuing this process we construct $d'$ sequences 
$(\lambda_{i,n})_{n \geq 0}$ with the property that
$$x \equiv \lambda_{1,n} e_1 + 
\lambda_{2,n} e_2 + \cdots + \lambda_{d',n} e_{d'} \pmod{p^nL}$$
and $\lambda_{i,n+1} \equiv \lambda_{i,n} \pmod {p^n}$ for all $n$
and $i$. The latter congruence shows that, for all $i$, the sequence 
$(\lambda_{i,n})_{n \geq 0}$ is Cauchy and then converges
to some $\lambda_i \in \Zp$. Passing to the limit, we find that these 
$\lambda_i$'s furthermore satisfy $x = \lambda_1 e_1 + \cdots + 
\lambda_{d'} e_{d'}$.

It now remains to prove that the $e_i$'s generate $E$ over $\Qp$.
For this, remark that any vector $x \in E$ can be written as $x =
p^v y$ with $\Vert y \Vert_E \leq 1$, \emph{i.e.} $y \in B_E(1)$.
By what we have done before, we know that $y$ can be written as a
linear combination of the $e_i$'s. Hence the same holds for $x$.
\end{proof}

\begin{rem}
Proposition~\ref{prop:nakayama} is a particular case of Nakayama's
Lemma (which is a classical result of commutative
algebra). We refer to~\cite{Ei95} for a general introduction to 
commutative algebra including an exposition of Nakayama's
Lemma in a much more general context.
\end{rem}

Proposition~\ref{prop:nakayama} has other important consequences. It 
shows for instance that the inclusion of a lattice $H$ in the ambient 
space $E$ is homeomorphic to the inclusion of $\Zp^d$ in $\Qp^d$ 
(where $d$ is the dimension of $E$). In particular lattices are all
open and compact at the same time.

Three other consequences are enumerated in the next corollary.

\begin{cor}
\label{cor:complete}
\begin{enumerate}[(i)]
\renewcommand{\itemsep}{0pt}
\item
All finite dimension normed vector spaces over $\Qp$ are complete.
\item
All norms over a given finite dimension vector space over $\Qp$
are equivalent.
\item
All linear applications between finite dimensional vector spaces over 
$\Qp$ are continuous.
\end{enumerate}
\end{cor}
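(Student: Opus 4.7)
The plan is to reduce all three statements to the model case $(\Qp^d, \|\cdot\|_\infty)$ via Proposition~\ref{prop:nakayama}.

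First I will establish a core equivalence: if $E$ is a $d$-dimensional normed $\Qp$-vector space with norm $\|\cdot\|_E$ and $(e_1,\ldots,e_d)$ is the basis supplied by Proposition~\ref{prop:nakayama}, so that $B_E(1) = \bigoplus_{i=1}^d \Zp e_i$, then $\|\cdot\|_E$ is equivalent to the sup norm $\|x\|_\infty := \max_i |\lambda_i|$ (for $x = \sum_i \lambda_i e_i$). The key observation is that by homogeneity $B_E(p^{-n}) = p^n B_E(1) = \bigoplus_i p^n \Zp e_i$ for every $n \in \Z$, so the condition $\|x\|_E \leq p^{-n}$ is literally the condition $\|x\|_\infty \leq p^{-n}$. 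Since $\|\cdot\|_\infty$ attains values only in $\{0\}\cup p^\Z$, one immediately derives the sandwich $p^{-1}\|x\|_\infty < \|x\|_E \leq \|x\|_\infty$ for all $x \neq 0$.

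From this, each of the three assertions follows quickly. For~(i), the finite product $(\Qp^d, \|\cdot\|_\infty)$ is complete since $\Qp$ is so (by the Proposition of \S\ref{sssec:padicnorm}), and the basis $(e_i)$ supplies an isometric linear isomorphism $(\Qp^d,\|\cdot\|_\infty) \to (E,\|\cdot\|_\infty)$; completeness then transfers to $(E,\|\cdot\|_E)$ because equivalent norms share Cauchy sequences and limits. For~(ii), given two norms $\|\cdot\|_1,\|\cdot\|_2$ on $E$, the core statement applied to each supplies bases $(e_i^{(1)})$ and $(e_j^{(2)})$ with corresponding sup norms equivalent to $\|\cdot\|_1$ and $\|\cdot\|_2$ respectively; and any two sup norms, even associated to different bases, are themselves equivalent, the change-of-basis matrix having finitely many (hence uniformly bounded) entries, which combined with the ultrametric inequality produces the required two-sided comparison. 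Finally for~(iii), fix a basis $(e_i)$ of the source $E$ and set $C = \max_i \|u(e_i)\|_F$; then for $x = \sum_i \lambda_i e_i$ ultrametricity gives $\|u(x)\|_F \leq C\,\|x\|_\infty$, and (ii) upgrades this to $\|u(x)\|_F \leq C'\,\|x\|_E$, so $u$ is bounded and hence continuous.

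The only step with any real content is the core equivalence, and even there the argument is essentially bookkeeping: Proposition~\ref{prop:nakayama} identifies $B_E(1)$ with the lattice $\bigoplus \Zp e_i$, scaling by $p^n$ transports inclusions of lattices into comparisons of balls, and the discrete value set of $\|\cdot\|_\infty$ seals the two-sided estimate. I do not foresee a genuine obstacle.
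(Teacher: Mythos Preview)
Your proposal is correct and follows essentially the same approach as the paper. The paper's proof also invokes Proposition~\ref{prop:nakayama} to obtain a basis $(e_1,\ldots,e_d)$ with $B_E(1)=\bigoplus_i \Zp e_i$, then establishes the same ball identity $B_E(p^{-n})=\bigoplus_i p^n\Zp e_i$ (your ``core equivalence'' is just a tidier packaging of this as the sandwich $p^{-1}\|x\|_\infty<\|x\|_E\leq\|x\|_\infty$), and uses it to reduce completeness to that of $\Qp$ coordinatewise; the paper then declares (ii) and (iii) ``easy and left to the reader,'' which you have filled in correctly.
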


\begin{proof}
Let $E$ be a finite dimension normal vector space over $\Qp$. Use
Proposition \ref{prop:nakayama} to 
pick a basis $e_1, \ldots, e_d$ of $E$ whose $\Zp$-span
is $B_E(1)$. We claim that an element $x \in E$ lies in the ball
$B_E(p^{-n})$ if and only if all its coordinates on the basis $(e_1,
\ldots, e_d)$ are divisible by $p^n$. Indeed the latter assertion
implies easily the former by factoring out $p^n$. Conversely, let
$x \in B_E(p^{-n})$. Then $p^{-n} x$ has norm at most $1$ and thus 
can be written as
$p^{-n} x = \lambda_1 e_1 + \cdots + \lambda_d e_d$
with $\lambda_i \in \Zp$ for all $i$. Multiplying by $p^n$ on both 
side and identifying coefficients, we get the claim.

Let $(x_n)_{n \geq 0}$ be a Cauchy sequence with values in $E$. For
all $n$, write $x_n = \lambda_{n,1} e_1 + \cdots + \lambda_{n,d} e_d$
with $\lambda_{n,i} \in \Qp$. It follows from the result we have 
proved in the previous paragraph that the sequences $(\lambda_{n,i})
_{n \geq 0}$ are Cauchy for all $i$. They thus converge and hence so 
does $(x_n)_{n \geq 0}$. This proves (i).

The two other assertions are easy (after Proposition 
\ref{prop:nakayama}) and left to the reader.
\end{proof}

\subsubsection{Computation with lattices}
\label{sssec:complattices}

The algebraic side of lattices provides the tools for representing and 
manipulating $p$-adic lattices on computers (at least if the underlying 
vector space $E$ is reasonable) in a quite similar fashion as usual
integral lattices are represented and manipulated \emph{via} integral
matrices.

For simplicity, let us expose the theory in the case where $E = \Qp^d$ 
(endowed with the infinite norm). We then represent a lattice $H \subset 
E$ by the matrix whose row vectors form a 
basis of $L$ (in the sense of Definition \ref{def:latticealg}). 
Equivalently, we can take the matrix, in the canonical basis, of the 
linear transformation $f$ mentioned just above Definition~\ref{def:latticeana}.

For example, if $d = 4$ and $H$ is generated by the vectors:
\begin{align*}
e_1 & = 
\big(\ldots0101110000,\, \ldots0011100000,\, \ldots1011001000,\, \ldots0011000100\big) \\
e_2 & = 
\big(\ldots1101011001,\, \ldots1101000111,\, \ldots0111001010,\, \ldots0101110101\big) \\
e_3 & = 
\big(\ldots0111100011,\, \ldots1011100101,\, \ldots0010100110,\, \ldots1111110111\big) \\
e_4 & = 
\big(\ldots0000111101,\, \ldots1101110011,\, \ldots0011010010,\, \ldots1001100001\big)
\end{align*}
we build the matrix
\begin{equation}
\label{eq:examplelattice}
\begin{array}{r@{\hspace{0.5ex}}l}
M & = 
\small \left(\begin{array}{rrrr}
\ldots0101110000 & \ldots0011100000 & \ldots1011001000 & \ldots0011000100 \\
\ldots1101011001 & \ldots1101000111 & \ldots0111001010 & \ldots0101110101 \\
\ldots0111100011 & \ldots1011100101 & \ldots0010100110 & \ldots1111110111 \\
\ldots0000111101 & \ldots1101110011 & \ldots0011010010 & \ldots1001100001
\end{array}\right)
\end{array}
\end{equation}

\begin{rem}
By convention, our vectors will always be row vectors.
\end{rem}

\noindent
The matrix $M$ we obtain this way is rather nice but we can further 
simplify it using Hermite reduction~\cite[\S 2.4]{Co93}. 
In the $p$-adic setting, Hermite reduction takes the following form.

\begin{theo}[$p$-adic Hermite normal form]
\label{theo:hermitered}
Any matrix $M \in \GL_d(\Qp)$ can be uniquely written as a product
$M = U A$ where $U \in \GL_d(\Zp)$ and $A$ has the shape:
$$A = \left(\begin{matrix}
p^{n_1} & a_{1,2} & \cdots & \cdots & a_{1,d} \\
0 & p^{n_2} & \ddots & & \vdots \\
\vdots & \ddots & \ddots & \ddots & \vdots \\
\vdots & & \ddots & p^{n_{d-1}} & a_{d-1,d} \\
0 & \cdots & \cdots & 0 & p^{n_d} 
\end{matrix} \right)$$
where the $n_i$'s are relative integers and the $a_{i,j}$'s are 
rational numbers of the form $a_{i,j} = \frac {b_{i,j}}{p^{v_{i,j}}}$
with $0 \leq b_{i,j} < p^{n_j + v_{i,j}}$.

The matrix $A$ is called the \emph{Hermite normal form} of $M$.
\end{theo}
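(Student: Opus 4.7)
The plan is to prove existence constructively by a $p$-adic Gaussian elimination, and uniqueness by a lattice-theoretic argument that reconstructs the Hermite form intrinsically from the row span of $M$.

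For existence, I induct on $d$. In the first column of $M$, let $n_1$ be the minimum $p$-adic valuation of any entry; a row swap (an element of $\GL_d(\Zp)$) brings an entry of this valuation into position $(1,1)$, and rescaling row $1$ by the inverse of its unit part normalizes the pivot to $p^{n_1}$. Since every remaining entry of column $1$ has valuation at least $n_1$, its ratio with the pivot lies in $\Zp$, so further $\Zp$-row operations clear the rest of column $1$. Applying the induction hypothesis to the lower-right $(d{-}1)\times(d{-}1)$ block yields an upper triangular matrix with diagonal $(p^{n_1},\ldots,p^{n_d})$. To force each off-diagonal entry $a_{i,j}$ into the canonical range, I then proceed row by row and, within each row $i$, column by column from $j = i{+}1$ to $j = d$: subtract an appropriate $\Zp$-multiple of row $j$ (whose leading entry is $p^{n_j}$) from row $i$ so that the new $a_{i,j}$ becomes the prescribed representative of its class modulo $p^{n_j}\Zp$. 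Because row $j$ has zeros at positions $1,\ldots,j{-}1$, this operation does not disturb any entry $a_{i,m}$ with $m < j$ already normalized, and together these operations assemble into a single $U \in \GL_d(\Zp)$ with $M = UA$.

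For uniqueness, the key observation is that the $\Zp$-lattice $L \subset \Qp^d$ spanned by the rows of $M = UA$ is unchanged by left-multiplication with $U \in \GL_d(\Zp)$, so $L$ depends only on $M$. Define the descending chain $L_k = \{v \in L : v_1 = \cdots = v_{k-1} = 0\}$; a direct computation exploiting the shape of $A$ shows that the $k$-th coordinate projection identifies $L_k/L_{k+1}$ with $p^{n_k}\Zp$, pinning down each $n_k$ intrinsically. The $k$-th row of $A$ is then distinguished as the unique element of the coset $\{v \in L_k : v_k = p^{n_k}\}$ whose further coordinates obey the prescribed ranges: going through the coordinates $j = k{+}1, k{+}2, \ldots, d$ in order, one sees that the $j$-th coordinate of a coset representative is fixed only modulo $p^{n_j}\Zp$ (by the freedom to add an element of $L_{j+1}$), and the canonical form singles out exactly one representative per such coset. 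This pins down $A$ uniquely, and hence $U = MA^{-1}$ as well.

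The main obstacle, shared by both parts, is the combinatorial fact that each class modulo $p^n\Zp$ in $\Qp$ admits exactly one representative of the form $b/p^v$ with $b \in \Z$, $0 \leq b < p^{n+v}$, and $v$ taken minimal (so that $v = 0$ or $p \nmid b$). This is proved by inspecting the $p$-adic expansion of any element of $\Qp$ and truncating it between positions $\val_p(\cdot)$ and $n{-}1$; it is what makes the normalization step of the existence part land in the prescribed range, and simultaneously what makes the canonical-representative argument in the uniqueness part truly single-valued.
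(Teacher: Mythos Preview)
Your proof is correct and, for the existence half, follows exactly the row-echelonization strategy the paper uses (the paper only illustrates it on a worked numerical example rather than writing out the general induction; your inductive formulation is the natural formalization). The one cosmetic difference is that the paper normalizes the above-diagonal entries column by column while you go row by row from left to right; both orders work for the reason you state, namely that row $j$ has zeros in positions $1,\ldots,j{-}1$, so adjusting $a_{i,j}$ via row $j$ does not disturb any already-normalized $a_{i,m}$ with $m<j$.

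For the uniqueness half, the paper gives no argument at all, so your lattice-theoretic proof --- recovering each $n_k$ from the filtration $L_k = \{v \in L : v_1 = \cdots = v_{k-1} = 0\}$ via the isomorphism $L_k/L_{k+1}\cong p^{n_k}\Zp$, and then identifying row $k$ of $A$ as the unique element of $\{v \in L_k : v_k = p^{n_k}\}$ whose remaining coordinates lie in the canonical ranges --- is a genuine addition and a clean one. Your explicit isolation of the ``main obstacle'' (that each class in $\Qp/p^n\Zp$ has a unique representative $b/p^v$ with $0\le b<p^{n+v}$ and $v$ minimal) is also something the paper leaves implicit.
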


\begin{rem}
The left multiplication by the matrix $U$ corresponds to operations on
the rows of $M$, that are operations on the vectors $e_i$.
The invertibility of $U$ over $\Zp$ ensures that the row vectors of
$A$ continue to generate the lattice $H$ we have started with.
\end{rem}

The proof of Theorem~\ref{theo:hermitered} is constructive and can be 
done by row-echelonizing the matrix $M$. Instead of writing it down for 
a general $M$, let us just show how it works on the example 
\eqref{eq:examplelattice}. We first select in the first column an entry 
with minimal valuation, we then move it to the top left corner by swapping rows 
and we normalize it so that it becomes a power of $p$ by rescaling the 
first row by the appropriate invertible element of $\Zp$.
In our example, one can for instance choose the second entry of the 
first column. After swap and renormalization, we get:
$$\small \left(\begin{array}{rrrr}
               1 & \ldots1110011111 & \ldots0011011010 & \ldots1101111101 \\
\ldots0101110000 & \ldots0011100000 & \ldots1011001000 & \ldots0011000100 \\
\ldots0111100011 & \ldots1011100101 & \ldots0010100110 & \ldots1111110111 \\
\ldots0000111101 & \ldots1101110011 & \ldots0011010010 & \ldots1001100001
\end{array}\right)$$
We now use the top left entry as pivot to clear the other entries of
the first column, obtaining this way the new matrix:
$$\small \left(\begin{array}{rrrr}
1 & \ldots1110011111 & \ldots0011011010 & \ldots1101111101 \\
0 & \ldots0001010000 & \ldots0101101000 & \ldots0100010100 \\
0 & \ldots0111101000 & \ldots0101011000 & \ldots1100100000 \\
0 & \ldots1010010000 & \ldots0011100000 & \ldots0011011000
\end{array}\right)$$
Remark that these row operations do not affect the $\Zp$-span
of the row vectors (\emph{i.e.} they correspond to a transformation
matrix $U$ which lies in $\GL_d(\Zp)$).
We continue this process with the $3 \times 3$ matrix obtained by erasing the first row
and the first column. Since we have forgotten the first row, the smallest
valuation of an entry of the second column is now $3$ and the corresponding
entry is located on the third row. We then swap the second and the third
rows, rescale the (new) second row in order to put $2^3$ on the diagonal
and use this value $2^3$ as pivot to cancel the remaining entries on the
second column. After these operations, we find:
$$\small \left(\begin{array}{rrrr}
1 & \ldots1110011111 & \ldots0011011010 & \ldots1101111101 \\
0 & 2^3 & \ldots0000111000 & \ldots0110100000 \\
0 &   0 & \ldots1100111000 & \ldots0011010100 \\
0 &   0 & \ldots1011110000 & \ldots0001011000
\end{array}\right)$$
Iterating again one time this process, we arrive at:
$$\small \left(\begin{array}{rrrr}
1 & \ldots1110011111 & \ldots0011011010 & \ldots1101111101 \\
0 & 2^3 & \ldots0000111000 & \ldots0110100000 \\
0 &   0 & 2^3 & \ldots000001100 \\
0 &   0 &   0 & \ldots111110000
\end{array}\right)$$
Interestingly, observe that the precision on the last two entries of the
last column has decreased by one digit. This is due to the fact that the
pivot $2^3$ was not the element with the smallest valuation on its
\emph{row}. This loss of precision may cause troubles only when the 
initial matrix $M$ is known at precision $O(p^N)$ with $N \leq \max_i\, 
n_i$; in practice such a situation very rarely happen and will never
appear in this course. 

The next step of Hermite reduction is the normalization of the bottom
right entry:
$$\small \left(\begin{array}{rrrr}
1 & \ldots1110011111 & \ldots0011011010 & \ldots1101111101 \\
0 & 2^3 & \ldots0000111000 & \ldots0110100000 \\
0 &   0 & 2^3 & \ldots000001100 \\
0 &   0 &   0 & 2^4
\end{array}\right)$$
It remains now to clean up the upper triangular part of the matrix. For this, we
proceed again column by column by using the pivots on the diagonal. Of
course there is nothing to do for the first column. We now reduce the 
$(1,2)$ entry modulo $2^3$ which is the pivot located on the same column.
In order to do so, we add to the first row the appropriate multiple 
of the second row. We obtain this way the new matrix:
$$\small \left(\begin{array}{rrrr}
1 &   7 & \ldots1110110010 & \ldots0010011101 \\
0 & 2^3 & \ldots0000111000 & \ldots0110100000 \\
0 &   0 & 2^3 & \ldots000001100 \\
0 &   0 &   0 & 2^4
\end{array}\right)$$
We emphasize that the $(1,2)$ entry of the matrix is now \emph{exact}!
Repeating this procedure several times we end up finally with
$$\small \left(\begin{array}{rrrr}
1 &   7 &   2 &   5 \\
0 & 2^3 &   0 &  12 \\
0 &   0 & 2^3 &  12 \\
0 &   0 &   0 & 2^4
\end{array}\right)$$
which is the expected Hermite normal form.

\begin{rem}
The algorithmic problem of computing the Hermite normal form has been 
widely studied over the integers and we now know much more efficient 
algorithms (taking advantage of fast matrix multiplication) to solve it 
\cite{KaVi05}; these algorithms extend without trouble to the case 
of $\Zp$ (with the same complexity). Other algorithms specially 
designed for the $p$-adic setting are also available~\cite{Ca12}.
\end{rem}

Here is a remarkable corollary of Theorem~\ref{theo:hermitered}
(compare with Proposition \ref{prop:intervalcomputer}):

\begin{cor}
Lattices in $\Qp^d$ are representable on computers by exact data.
\end{cor}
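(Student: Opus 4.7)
The plan is to use Theorem \ref{theo:hermitered} to assign a canonical ``fingerprint'' to each lattice that can be stored as a finite list of integers. First, I would reduce the problem to one about matrices: by Definition \ref{def:latticealg}, a lattice $H \subset \Qp^d$ is the $\Zp$-span of some $\Qp$-basis $(e_1, \ldots, e_d)$, so it is encoded by the matrix $M \in \GL_d(\Qp)$ whose rows are the $e_i$'s. Two matrices $M, M'$ encode the same lattice if and only if $M' = UM$ for some $U \in \GL_d(\Zp)$: the change-of-basis matrix between two $\Zp$-bases of the same $\Zp$-module must have entries in $\Zp$, and so must its inverse, which forces $U \in \GL_d(\Zp)$.

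Second, I would apply Theorem \ref{theo:hermitered} to obtain a canonical representative. For any $M \in \GL_d(\Qp)$ encoding $H$, the theorem produces a unique factorization $M = U A$ with $U \in \GL_d(\Zp)$ and $A$ in Hermite normal form. If $M' = VM$ is another basis matrix for $H$ with $V \in \GL_d(\Zp)$, then $M' = (VU)A$ is again a valid Hermite factorization of $M'$, and uniqueness forces the Hermite normal form of $M'$ to be the same matrix $A$. Hence $A$ is a well-defined invariant of the lattice $H$, and $H \mapsto A$ gives a bijection between lattices in $\Qp^d$ and matrices in Hermite normal form.

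Third, I would verify that such an $A$ is indeed a finite datum. Its diagonal entries are parametrized by the $d$ relative integers $n_1, \ldots, n_d$. Each off-diagonal entry $a_{i,j} = b_{i,j}/p^{v_{i,j}}$ with $i < j$ is parametrized by the pair $(v_{i,j}, b_{i,j})$, where $b_{i,j}$ lies in the finite range $[0, p^{n_j + v_{i,j}})$. So $A$ is encoded by $d + 2 \cdot \binom{d}{2}$ integers of controlled size, which is exact and finite.

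No step here is genuinely difficult once Theorem \ref{theo:hermitered} is in hand; the only delicate point is the first paragraph's assertion that two bases of a lattice differ by an element of $\GL_d(\Zp)$, because this is what ensures that the Hermite normal form is an invariant of $H$ and not merely of the particular basis. Everything else is bookkeeping, so the corollary follows almost immediately from the uniqueness clause of the Hermite reduction theorem.
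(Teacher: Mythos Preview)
Your proposal is correct and follows exactly the route the paper intends: the paper states this corollary immediately after Theorem~\ref{theo:hermitered} without giving any explicit proof, simply presenting it as a direct consequence of the existence and uniqueness of the Hermite normal form. Your argument spells out precisely the details the paper leaves implicit, namely that two basis matrices for the same lattice differ by an element of $\GL_d(\Zp)$ and hence share the same Hermite normal form, whose entries are rational numbers and thus exact data.
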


Operations on lattices can be performed on Hermite forms without 
difficulty. The sum of two lattices, for example, is computed by 
concatenating the two corresponding Hermite matrices and re-echelonizing. 
In a similar fashion, one can compute the image of a lattice under a 
surjective\footnote{Surjectivity ensures that the image remains a 
lattice in the codomain.} linear mapping $f$: we apply $f$ to each 
generator and echelonize the (possibly rectangular) matrix obtained this 
way.

\subsubsection{A small excursion into $p$-adic analysis}
\label{sssec:classC1}

Another important ingredient we will need is the notion of 
differentiable functions in the $p$-adic world. All the material 
presented in this section is classical. We refer to~\cite[\S I.4]{Sc11} 
and~\cite{Co10} for a much more detailed and systematic exposition.

\paragraph{The case of univariate functions}

The notion of differentiability at a given point comes with no surprise: 
given an open subset $U \subset \Qp$, $x \in U$ and a function $f : U 
\to \Qp$, we say that $f$ is differentiable at $x$ if $\frac{f(x) - 
f(y)} {x-y}$ has a limit when $y$ tends to $x$. 
When $f$ is differentiable at $x$, we define:
$$f'(x) = \lim_{y \to x} \, \frac{f(x) - f(y)} {x-y}.$$
Alternatively one can define the derivative using Taylor expansion: one
says that $f$ is differentiable at $x$ is there exists $f'(x) \in \Qp$
for which:
\begin{equation}
\label{eq:defdiff}
\big|f(y) - f(x) - (y{-}x) f'(x)\big| = o\big(|y{-}x|\big)
\quad \text{for } y \to x.
\end{equation}
Usual formulas for differentiating sums, products, composed functions,
\emph{etc.} extend \emph{verbatim} to the $p$-adic case.

The fact that $\Qp$ is (highly) disconnected has however unpleasant 
consequences. For instance, the vanishing of $f'$ on an interval does
not imply the constancy of $f$. Indeed, consider as an example the
indicator function of $\Zp$: it is a locally constant function defined 
on $\Qp$. It is then differentiable with zero derivative but it
is not constant on $\Qp$.
One can cook up even worse examples. Look for instance at the 
function $f : \Zp \to \Zp$ defined by:
$$a_0 + a_1 p + \cdots + a_n p^n + \cdots \quad \mapsto \quad
a_0 + a_1 p^2 + \cdots + a_n p^{2n} + \cdots$$
where the $a_i$'s are the digits, \emph{i.e.} they lie in the range
$[0,p{-}1]$. A straightforward computation shows that $|f(x) - f(y)| 
= |x-y|^2$, \emph{i.e.} $f$ is $2$-Hölder. In particular, $f$ is
differentiable everywhere and $f'$ vanishes on $\Zp$. Yet $f$ is 
apparently injective and \emph{a fortiori} not constant on any
interval.

\medskip

The notion of $p$-adic function of class $C^1$ is more subtle. Indeed 
the notion of differentiable function with continuous derivative is of 
course well defined but not that interesting in the $p$-adic setting. 
Precisely, this definition is too weak and encompasses many 
``irregular'' functions. A more flexible definition is the
following.

\begin{deftn}
\label{def:classC1}
Let $U$ be an open subset of $\Qp$ and $f : U \to \Qp$ be a function.
We say that $f$ is of class $C^1$ on $U$ if there exists a function
$f' : U \to \Qp$ satisfying the following property: for all $a \in U$ 
and all $\varepsilon > 0$, there exists a neighborhood $U_{a,\varepsilon}
\subset U$ of $a$ on which:
\begin{equation}
\label{eq:defC1uni}
\big|f(y) - f(x) - (y{-}x) f'(a)\big| \leq  \varepsilon \cdot 
|y{-}x|.
\end{equation}
\end{deftn}

Given a function $f : U \to \Qp$ of class $C^1$ on $U$, it is clear
that $f$ is differentiable on $U$ and that the function $f'$ appearing in
Definition \ref{def:classC1} has to be the derivative of $f$. Moreover
$f'$ is necessarily continuous on $U$. Indeed consider $a \in U$ and
$\varepsilon > 0$. Let $b \in U_{a,\varepsilon}$. 
The intersection $U = U_{a,\varepsilon} \cap U_{b,\varepsilon}$ is 
open and not empty. It then contains at least two different points,
say $x$ and $y$. It follows from the definition that:
\begin{align*}
\big|f(y) - f(x) - (y{-}x) f'(a)\big| 
 & \leq \varepsilon \cdot |y-x| \smallskip \\ 
\text{and} \quad
\big|f(y) - f(x) - (y{-}x) f'(b)\big| 
 & \leq \varepsilon \cdot |y-x|. 
\end{align*}
Combining these inequalities, we derive
$\big| f'(b) - f'(a) \big| \leq \varepsilon$
which proves the continuity of $f'$ at $x$.
In the real setting, the continuity of $f'$ implies conversely the inequality 
\eqref{eq:defC1uni}\footnote{Indeed, by the mean value theorem we can write 
$f(y) - f(x) = (y-x)f'(g(x,y))$ for some $g(x,y) \in [x,y]$. Thus $|f(y) 
- f(x) - (y-x) f'(a)| = |y-x| \cdot |f'(g(x,y)) - f'(a)|$ and the 
conclusion follows from Heine--Cantor Theorem.} but this implication
fails in the $p$-adic world. A counter-example is given by the function
$f : \Zp \to \Qp$ defined by:
\begin{align*}
x & 
= a_v p^v + a_{v+1} p^{v+1} + \cdots + a_{2v} p^{2v} + a_{2v+1} p^{2v+1} + \cdots \\
\mapsto \quad f(x) & 
= a_v p^{2v} + \big(a_{2v} p^{2v} + a_{2v+1} p^{2v+2} + a_{2v+2} p^{2v+4} + \cdots \big)
\end{align*}
where the $a_i$'s are integers between $0$ and $p{-}1$ with $a_v \neq 0$.
One checks that $|f(x)| \leq |x|^2$ and $|f(x) - f(y)| = 
\frac{|x-y|^2}{|x|^2}$ if $|x-y| \leq |x|^2 < 1$. These inequalities 
ensure that $f$ is differentiable on $\Zp$ and that its derivative 
vanishes everywhere (and thus is continuous). On the other hand when 
$|x-y| = |x|^2 < 1$, we have $|f(x) - f(y)| = |x - y|$, showing that $f$ 
cannot be of class $C^1$.

\begin{rem}
\label{rem:altdefC1uni}
Alternatively, following~\cite{Co10}, one may define the notion of class 
$C^1$ for a function $f : U \to F$ by requiring the existence of a 
covering $(U_i)_{i \in I}$ of $U$ and of \emph{continuous} real-valued 
functions $\varepsilon_i : \R^+ \to \R^+$ with 
$\varepsilon_i(0) = 0$ such that:
\begin{equation}
\label{eq:defC1uni2}
\forall i \in I, \quad \forall x, y \in U_i, \quad 
\big|f(y) - f(x) - (y{-}x) f'(x)\big| \leq  |y{-}x| \cdot
\varepsilon_i\big(|y{-}x|\big)
\end{equation}
(compare with \eqref{eq:defC1uni}). In brief, a function $f$ is of class 
$C^1$ when the estimation \eqref{eq:defdiff} is \emph{locally uniform} 
on $x$. When the domain $U$ is compact (\emph{e.g.} $U = \Zp$), one 
can remove the word ``locally'', \emph{i.e.} one can forget about the 
covering and just take $I = \{\star\}$ and $U_\star = U$.
\end{rem}

\begin{proof}[Proof of the equivalence between the two definitions]
Assume first that $f$ satisfies the definition of Remark
\ref{rem:altdefC1uni}. Let $a \in U$ and $\varepsilon > 0$. We have
$a \in U_i$ for some $i$. Let $\delta$ be a positive real number such 
that $\varepsilon_i < \varepsilon$ on the interval $[0, \delta)$.
Define $U_{a, \varepsilon}$ as the intersection of $U_i$ with the
open ball of centre $a$ and radius $\delta$. On $U_{a,\varepsilon}$,
the estimation:
\begin{equation}
\label{eq:C1equiv}
\big|f(y) - f(x) - (y{-}x) f'(x)\big| \leq  \varepsilon \cdot 
|y{-}x|
\end{equation}
holds. It remains then to relate $f'(x)$ to $f'(a)$. In order to
do so, we write:
\begin{align*}
\big|f(x) - f(a) - (x{-}a) f'(a)\big| & \leq \varepsilon\cdot |x{-}a| \\
\big|f(a) - f(x) - (a{-}x) f'(x)\big| & \leq \varepsilon\cdot |a{-}x|.
\end{align*}
Combining these inequalities, we obtain $|f'(x) - f'(a)| \leq 
\varepsilon$. Re-injecting this new input in \eqref{eq:C1equiv}, 
we finally get \eqref{eq:defC1uni} as desired.

Conversely, assume that $f$ is of class $C^1$ in the sense of Definition
\ref{def:classC1}. Since the problem is local on the domain, we may
assume that $U$ is an open ball. Up to translating and rescaling $f$,
one may further suppose without loss of generality that $U = \Zp$. In
particular, note that $U$ is compact. We define the function 
$\varepsilon$ on $(0,\infty)$ by:
$$\varepsilon(\delta) = \sup_{\substack{x,y \in \Zp \\ 0 < |y{-}x| \leq
\delta}} \frac{|f(y)-f(x)-f'(x)(y{-}x)|}{|y{-}x|}.$$
Compacity ensures that the supremum is finite, so that $\varepsilon$
is well defined. We have to show that $\varepsilon$ goes to $0$ when 
$\delta$ goes to $0$. Let $\varepsilon' > 0$. By assumption, for all
$a \in \Zp$, there exists an open neighborhood $U_{a,\varepsilon'}$ of 
$a$ on which $\frac{|f(y)-f(x)-f'(x)(y{-}x)|}{|y{-}x|} \leq \varepsilon'$.
Up to shrinking $U_{a,\varepsilon'}$, one may assume that
$U_{a,\varepsilon'} = a + p^{n_a} \Zp$ for some positive integer $n_a$.
Now observe that the family of all $U_{a,\varepsilon'}$ when $a$ varies 
is a covering of $\Zp$; by compacity, one can extract from it a finite 
subcovering $(U_{a_i,\varepsilon'})_{1 \leq i \leq m}$ that continues 
to cover $\Zp$. If $n$ denotes the supremum of the $n_{a_i}$'s ($1 
\leq i \leq m$), we thus derive $\varepsilon(\delta) \leq \varepsilon'$
for $\delta \leq p^{-n}$ and we are done.
\end{proof}

\paragraph{The case of multivariate functions}

Let $E$ and $F$ be two finite dimensional normed vector spaces over
$\Qp$. We denote by $\calL(E,F)$ the space of $\Qp$-linear mappings 
from $E$ to $F$. The definition of differentiability and ``of class
$C^1$'' is mimicked from the univariate case.

\begin{deftn}
Let $U$ be an open subset of $E$.

A function $f : U \to F$ is \emph{differentiable} at the point 
$x \in U$ if there exists a linear mapping $df_x \in \calL(E,F)$
such that:
$$\Vert f(y) - f(x) - df_x(y{-}x)\Vert_F = o\big(\Vert y{-}x\Vert_E\big)
\quad \text{when} \quad y \to x.$$

A function $f : U \to F$ is \emph{of class $C^1$} on $U$ if there exists 
a function $df : U \to \calL(E,F)$, $x \mapsto df_x$ satisfying the 
following property: for all $v \in U$ and all $\varepsilon > 0$, there 
exists a neighborhood $U_{a,\varepsilon} \subset U$ of $a$ on which:
\begin{equation}
\label{eq:defC1}
\Vert f(y) - f(x) - df_a(y{-}x)\Vert_F \leq  \varepsilon \cdot 
\Vert y{-}x\Vert_E.
\end{equation}
\end{deftn}

Of course, if $f$ is of class $C^1$ on $U$, it is differentiable at 
every point $x \in U$. When $E = \Qp^n$ and $F = \Qp^m$ (or more
generally when $E$ and $F$ are equipped with distinguished bases)
the matrix of the linear mapping $df_x$ is
the Jacobian matrix $J(f)_x$ defined by:
$$J(f)_x = 
\left(\frac{\partial f_j}{\partial x_i}(x)\right)_
{1 \leq i \leq n, \, 1 \leq j \leq m}$$
where $x_1, \ldots, x_n$ are the coordinates of $E$ and $f_1, \ldots,
f_m$ are the components of $f$.

\begin{rem}
\label{rem:altdefC1multi}
Similarly to the univariate case (see Remark \ref{rem:altdefC1uni}), a function 
$f : U \to F$ is of class $C^1$ if and only if there exist a covering 
$(U_i)_{i \in I}$ of $U$ and some \emph{continuous} real-valued
functions $\varepsilon_i : \R^+ \to \R^+$ with $\varepsilon_i(0) = 0$ 
such that:
\begin{equation}
\label{eq:defC1multi2}
\forall i \in I, \quad \forall x, y \in U_i, \quad
\Vert f(y) - f(x) - (y{-}x) f'(x)\Vert_F \leq  \Vert y{-}x\Vert_E \cdot
\varepsilon_i\big(\Vert y{-}x\Vert_E \big).
\end{equation}
and one can just take $I = \{\star\}$ and $U_\star = U$ when $U$ is compact.
\end{rem}

\subsubsection{The precision Lemma}
\label{sssec:preclemma}

The precision Lemma is a result of $p$-adic analysis controlling how 
lattices transform under sufficiently regular mappings. Before stating 
it, we need a definition.

\begin{deftn}
\label{def:roundedsmall}
Let $E$ be a normed vector space over $\Qp$. 
Given $\rho \in (0,1]$ and $r > 0$, we say that a lattice $H \subset E$ 
is $\rho$-rounded, $r$-small if
$B_E(\rho r) \subset H \subset B_E(r)$.
\end{deftn}

When $E = \Qp^d$ (endowed with the infinite norm), one can determine 
$\rho$ and $r$ satisfying the conditions of Definition 
\ref{def:roundedsmall} by looking at any matrix $M$ representing the 
lattice $H$ (see \S \ref{sssec:complattices}). Indeed, if $n$ is an 
integer for which the matrix $p^n M$ has all its entries in $\Zp$, the 
lattice $p^n H$ is included in $\Zp^n = B_E(1)$, meaning that $H \subset 
B_E(p^n)$. Similarly $B_E(p^{-m}) \subset H$ whenever $m$ is an integer 
such that $p^m M^{-1} \in M_d(\Zp)$.
Therefore if $n$ is the smallest valuation of an entry of $M$ and $m$
is the smallest valuation of an entry of $M^{-1}$, the corresponding
lattice $H$ is $(p^{n+m})$-bounded, $p^n$-small.

\begin{theo}[Precision Lemma]
\label{theo:preclemma}
Let $E$ and $F$ be two finite-dimensional $p$-adic normed vector spaces
and let $f : U \rightarrow F$ be a function of class $C^1$ defined on an 
open subset $U$ of $E$.

Let $v \in U$ be such that $df_v$ is surjective.
Then, for all $\rho \in (0, 1]$, there exists a positive real number 
$\delta$ such that, for any $r \in (0,\delta)$, any lattice 
$\rho$-bounded $r$-small lattice $H$ satisfies:
\begin{equation}
\label{eq:preclemma}
f(v+H) = f(v) + df_v(H).
\end{equation}
\end{theo}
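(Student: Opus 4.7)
The plan is to prove the two inclusions of \eqref{eq:preclemma} separately. The forward inclusion $f(v+H) \subset f(v) + df_v(H)$ should fall out of the $C^1$ linearization estimate combined with a $p$-adic open mapping argument; the reverse inclusion $f(v) + df_v(H) \subset f(v+H)$ is the heart of the statement and, following the template of Hensel's Lemma (Theorem~\ref{th:Hensel} and Remark~\ref{rem:Hensel}), will be reduced to a Newton iteration whose iterates I must confine to $H$.

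Before starting, I would assemble two ingredients. First, Definition~\ref{def:classC1} applied at $v$ produces, for any prescribed $\eta > 0$, a neighborhood $U_{v,\eta}$ of $v$ on which
$$\Vert f(y) - f(x) - df_v(y-x)\Vert_F \;\leq\; \eta\cdot \Vert y - x\Vert_E.$$
Second, since $df_v$ is $\Qp$-linear and surjective between finite-dimensional normed spaces, I can split off a complement of $\ker df_v$ in $E$ to obtain a $\Qp$-linear section $s\colon F \to E$ with $df_v \circ s = \mathrm{id}_F$, which is bounded by Corollary~\ref{cor:complete}(iii): there is $c > 0$ with $\Vert s(y)\Vert_E \leq c\,\Vert y\Vert_F$. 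In particular $df_v\bigl(B_E(\rho r)\bigr) \supset B_F(\rho r / c)$ for every $r > 0$. I then fix $\eta$ small enough that $c\eta < \rho$ and choose $\delta > 0$ so that $v + B_E(\delta) \subset U_{v,\eta}$.

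For a given $r \in (0,\delta)$ and a $\rho$-rounded $r$-small lattice $H$, the forward inclusion is routine: for $h \in H \subset B_E(r)$ the error $f(v+h) - f(v) - df_v(h)$ has $F$-norm at most $\eta r \leq \rho r/c$, hence lies in $B_F(\rho r/c) \subset df_v(B_E(\rho r)) \subset df_v(H)$, and adding $df_v(h) \in df_v(H)$ delivers $f(v+h) - f(v) \in df_v(H)$.

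The reverse inclusion --- the main obstacle --- requires more care. Given $w \in df_v(H)$ I pick $h_0 \in H$ with $df_v(h_0) = w$ and define the Newton iterates $h_{n+1} = h_n - s\bigl(\phi(h_n)\bigr)$ where $\phi(h) = f(v+h) - f(v) - w$. Because $df_v \circ s = \mathrm{id}_F$, the identity $df_v(h_{n+1} - h_n) = -\phi(h_n)$ holds, so
$$\phi(h_{n+1}) \;=\; f(v+h_{n+1}) - f(v+h_n) - df_v(h_{n+1}-h_n),$$
and the $C^1$ estimate yields $\Vert\phi(h_{n+1})\Vert_F \leq c\eta \cdot \Vert\phi(h_n)\Vert_F$; since $c\eta < 1$, the quantity $\phi(h_n)$ decays geometrically, and combined with $\Vert h_{n+1}-h_n\Vert_E \leq c\,\Vert\phi(h_n)\Vert_F$ the sequence $(h_n)$ is Cauchy and converges (Corollary~\ref{cor:complete}(i)) to a limit $h$ satisfying $\phi(h) = 0$, that is $f(v+h) = f(v) + w$. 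The only remaining point --- and the one where $\rho$-roundedness is essential --- is that $h \in H$; here ultrametricity is decisive, collapsing the sum of successive differences into a maximum:
$$\Vert h - h_0\Vert_E \;\leq\; \sup_n \Vert h_{n+1} - h_n\Vert_E \;\leq\; c\,\Vert\phi(h_0)\Vert_F \;\leq\; c\eta\, r \;<\; \rho r,$$
so $h - h_0 \in B_E(\rho r) \subset H$ and therefore $h \in H$. The same bound certifies that every $v + h_n$ stays inside $v + B_E(\delta)$, so the $C^1$ estimate is valid at each step, closing the argument.
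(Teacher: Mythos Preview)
Your proof is correct and follows essentially the same Newton-iteration strategy as the paper's: both establish the forward inclusion by absorbing the $C^1$ error into $df_v(B_E(\rho r)) \subset df_v(H)$, and both prove the reverse inclusion by iteratively correcting an initial preimage, using ultrametricity to keep all iterates inside $H$. The only cosmetic difference is that you fix once and for all a bounded linear section $s$ of $df_v$ to produce the correction terms, whereas the paper selects at each step an arbitrary preimage of controlled norm; your constant $c$ and the paper's constant $C$ play identical roles, and your condition $c\eta < \rho$ is exactly the paper's $\varepsilon < \rho/C$.
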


Unfortunately the precision Lemma is a bit technical; understanding its 
precise content is then probably not easy at first glance. In order to 
help the reader, let us say that the most important part of the 
precision Lemma is the conclusion, namely Eq.~\eqref{eq:preclemma}. This 
equation explains how $f$ transforms a ``shifted'' lattice (\emph{i.e.} 
a lattice translated by some vectors) and teaches us that $f$ transforms 
a shifted lattice into another shifted lattice! From people coming from 
the real world, this result should appear as really amazing: in the real 
case, the image of an ellipsoid under a function $f$ is in general 
definitely \emph{not} another ellipsoid (unless $f$ is affine). In the 
$p$-adic case, this happens for any function $f$ of class $C^1$ with 
\emph{surjective differential} (this assumption is important) and almost 
any lattice $H$ (the assumptions on $H$ are actually rather weak though 
technical). This is the magic of ultrametricity.

Below, we give the proof of the precision Lemma and discuss several 
extensions. We advise the reader who is more interested by applications 
(than by computations with $\varepsilon$) to skip the end of the 
subsection and go directly to \S \ref{ssec:optimalprec}, page 
\pageref{ssec:optimalprec}.

\begin{proof}[Proof of Theorem~\ref{theo:preclemma}]
Without loss of generality, we may assume $v=0$ and $f(0)=0$. 
From the surjectivity of $df_0$, we derive that there exists a positive 
constant $C$ such that $B_F(1) \subset df_0(B_E(C))$.
Pick $\varepsilon < \frac\rho C$, and choose a positive real number 
$\delta$ for which
\begin{equation}
\label{eq:C1preclemma}
\Vert f(b) - f(a) - df_0(b{-}a) \Vert_F \leq 
\varepsilon \cdot \Vert b{-}a\Vert_E
\end{equation}
on the ball $B_E(\delta)$.
Let $r \in (0, \delta)$. We suppose that $H$ is a lattice with
$B_E(\rho r) \subset H \subset B_E(r).$
We seek to show that $f$ maps $H$ surjectively onto $df_0 (H)$. We first 
prove that $f(H) \subset df_0 (H)$. Suppose $x \in H$. Applying
Eq.~\eqref{eq:C1preclemma} with $a = 0$ and $b = x$, we get
$\Vert f(x)-df_0(x) \Vert_F \leq \varepsilon \Vert x \Vert_E$.
Setting $y=f(x)-df_0(x)$, we have $\Vert y \Vert_F \leq
\varepsilon r$. The definition of $C$ implies that $B_F(\varepsilon r)
\subset df_0 (B_E(\rho r))$. Thus there exists $x' \in B_E(\rho r)$ such 
that $df_0 (x') =y$. Then $f(x)= df_0 (x-x') \in df_0 (H)$.

We now prove surjectivity. Let $y \in df_0 (H)$. Let $x_0 \in H$
be such that $y = df_0 (x_0)$. We inductively define two sequences
$(x_n)$ and $(z_n)$ by the following cross requirements:
\begin{itemize}
\renewcommand{\itemsep}{0pt}
\item $z_n$ is an element of $E$ satisfying $df_0(z_n) = y -
f(x_n)$ and $\Vert z_n \Vert_E \leq C \cdot \Vert y - f(x_n) \Vert_F$, and
\item $x_{n+1}=x_n+z_n$.
\end{itemize}
For convenience, let us also define $x_{-1} = 0$ and $z_{-1}=x_0.$ We claim that the
sequences $(x_n)$ and $(z_n)$ are well defined and take their values in
$H$. We do so by induction, assuming that $x_{n-1}$ and $x_n$ belong to $H$
and showing that $z_n$ and $x_{n+1}$ do as well. Noticing that
\begin{equation}
\label{eq:mainlemma}
\begin{aligned}
y - f(x_n) &= f(x_{n-1}) + df_0(z_{n-1}) - f(x_n) \\
&= f(x_{n-1}) - f(x_n) - df_0(x_{n-1} - x_n)
\end{aligned}
\end{equation}
we deduce using differentiability that
$\Vert y - f(x_n) \Vert_F \leq \varepsilon \cdot \Vert x_n - x_{n-1}
\Vert_E$.
Since we are assuming that $x_{n-1}$ and $x_n$ lie in $H \subset
B_E(r)$, we find $\Vert y - f(x_n) \Vert \leq \varepsilon r$. Thus
$\Vert z_n \Vert \leq C \cdot \varepsilon r < \rho r$ and then
$z_n \in H$. From the relation $x_{n+1} = x_n + z_n$, we finally
deduce $x_{n+1} \in H$.

Using \eqref{eq:mainlemma} and differentiability at $0$ once more,
we get
$$\Vert y  - f(x_n) \Vert
\leq \varepsilon \cdot \Vert z_{n-1} \Vert \leq \varepsilon C \cdot \Vert
y - f(x_{n-1}) \Vert,$$
for all $n > 0$.  Therefore, $\Vert y - f(x_n) \Vert = O(a^n)$ and
$\Vert z_n \Vert = O(a^n)$ for $a = \varepsilon C < \rho \leq 1$.
These conditions show that $(x_n)_{n \geq 0}$ is a Cauchy sequence, which converges 
since $E$ is complete (see Corollary \ref{cor:complete}). Write $x$ for the 
limit of the $x_n$; we have $x \in H$ because $H$ is closed (see again
Corollary \ref{cor:complete}). Moreover, 
$f$ is continuous on $H \subseteq U_\varepsilon$ since it is 
differentiable, and thus $y=f(x)$.
\end{proof}

For the applications we have in mind, Theorem \ref{theo:preclemma} is 
actually a bit too weak because it is not effective: the value of 
$\delta$ is not explicit whereas we will often need it in concrete 
applications. In the most general case, it seems difficult to say much 
more about $\delta$. Nevertheless, there are many cases of interest 
where the dependence on $\delta$ can be made explicit.
The simplest such case is that of multivariate polynomials and is
covered by the next proposition.

\begin{prop}
\label{prop:preclemmapoly}
Let $f : \Qp^n \to \Qp^m$ be a function whose coordinates are all 
multivariate polynomials with coefficients in $\Zp$. Let $v \in \Zp^n$ 
such that $df_v$ is surjective. Then Eq.~\eqref{eq:preclemma} holds as 
soon as $H \subset B_E(r)$ and $B_F(p r^2) \subset df_v(H)$ for some 
positive real number $r$.
\end{prop}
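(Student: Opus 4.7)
The plan is to adapt the proof of Theorem~\ref{theo:preclemma}, replacing the abstract $C^1$ estimate by the sharp quadratic Taylor bound from Lemma~\ref{lem:polyC2} in its multivariate form: if $f$ has $\Zp$-polynomial coordinates, then for any $x, h \in \Qp^n$ with $x, x+h \in \Zp^n$, one has
$$\Vert f(x+h) - f(x) - df_x(h)\Vert_F \leq \Vert h\Vert_E^2,$$
and since $df_x - df_0$ has entries polynomial in $x$ that vanish at $x=0$, $\Vert (df_x - df_0)(h)\Vert_F \leq \Vert x\Vert_E \cdot \Vert h\Vert_E$ whenever $\Vert x\Vert_E \leq 1$. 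After translating so that $v = 0$ and $f(v) = 0$ (which preserves $\Zp$-polynomiality since $v \in \Zp^n$), I would first remark that the hypotheses force $r \leq 1/p$: since the Jacobian $df_0$ has coefficients in $\Zp$, $df_0(H) \subset B_F(r)$; combined with $B_F(pr^2) \subset df_0(H)$ this gives $pr^2 \leq r$. In particular $H \subset \Zp^n$, so the Taylor estimates above apply uniformly on $H$.

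The easy inclusion $f(H) \subset df_0(H)$ follows from the observation that for $x \in H$, $\Vert f(x) - df_0(x)\Vert_F \leq \Vert x\Vert_E^2 \leq r^2 \leq pr^2$, hence $f(x) - df_0(x) \in B_F(pr^2) \subset df_0(H)$. For the reverse inclusion, I would reproduce the iterative scheme of Theorem~\ref{theo:preclemma}: given $y \in df_0(H)$, choose $x_0 \in H$ with $df_0(x_0) = y$, and set $x_{n+1} = x_n + z_n$ where $z_n \in H$ is a preimage of $y - f(x_n)$ under $df_0$. The crucial technical step, and the place where the factor $p$ in the hypothesis is used, is a quantitative lifting: for every $w \in F$ with $\Vert w\Vert_F \leq pr^2$ there exists $z \in H$ with $df_0(z) = w$ and $\Vert z\Vert_E \leq \Vert w\Vert_F/(pr)$. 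Up to replacing the norm on $F$ by an equivalent one taking values in $|\Qp|$ (allowed by Proposition~\ref{prop:nakayama}), this follows by scaling: if $\Vert w\Vert_F = p^{-k} \cdot pr^2$ with $k \geq 0$, then $w \in p^k B_F(pr^2) \subset p^k df_0(H) = df_0(p^k H)$, so a preimage can be chosen in $p^k H \subset B_E(p^{-k} r)$, and indeed $p^{-k} r = \Vert w\Vert_F/(pr)$.

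Combining the Taylor bound $\Vert y - f(x_{n+1})\Vert_F = \Vert f(x_{n+1}) - f(x_n) - df_0(z_n)\Vert_F \leq r\Vert z_n\Vert_E$ (using both Taylor estimates above, since $x_n \in \Zp^n$) with the lifting bound $\Vert z_n\Vert_E \leq \Vert y - f(x_n)\Vert_F/(pr)$ yields $\Vert z_{n+1}\Vert_E \leq \Vert z_n\Vert_E/p$, so the $z_n$'s decay geometrically. Completeness of $E$ (Corollary~\ref{cor:complete}) and closedness of $H$ then ensure that $(x_n)$ converges to some $x \in H$, and continuity of $f$ gives $f(x) = y$. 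The main obstacle is the quantitative lifting estimate; once it is in place, the Newton iteration converges cleanly, and the factor $p$ in the hypothesis $B_F(pr^2) \subset df_v(H)$ is precisely what produces the $\frac 1 p$-contraction needed for convergence.
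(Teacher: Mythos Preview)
Your argument is correct. Both you and the paper plug the quadratic Taylor estimate for $\Zp$-polynomials into the Newton iteration of Theorem~\ref{theo:preclemma}, but you organize the reduction differently. The paper first composes $f$ with a linear map $\varphi:\Qp^n\to\Qp^n$ taking $B_E(1)$ onto a rescaled copy of $H$; this replaces $H$ by a standard ball, so that the lifting step in the iteration becomes trivial, and then invokes Lemma~\ref{lem:multipolyC1} (the bound $\Vert g(b)-g(a)-dg_0(b{-}a)\Vert\leq\max(\Vert a\Vert,\Vert b\Vert)\cdot\Vert b{-}a\Vert$) in place of the abstract $C^1$ estimate. You instead keep $H$ arbitrary and compensate with your ``quantitative lifting'' $\Vert z\Vert_E\leq\Vert w\Vert_F/(pr)$, obtained by scaling inside $H$; your two Taylor bounds together are equivalent to Lemma~\ref{lem:multipolyC1}. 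The paper's change of variables is slightly cleaner in that it absorbs the lifting lemma into one linear substitution, while your approach makes the role of the factor $p$ (producing the $1/p$-contraction) more visibly tied to the hypothesis $B_F(pr^2)\subset df_v(H)$. Your preliminary observation that the hypotheses force $r\leq 1/p$, hence $H\subset\Zp^n$, is a nice point that the paper leaves implicit in its rescaling.
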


\begin{rem}
The case where $f$ is a multivariate polynomial with coefficients in
$\Qp$ reduces to the above proposition by multiplying $f$ by an
appropriate constant. Similarly, if the point $v$ does not lie in
$\Zp^n$ but in $\Qp^n$, we can shift and rescale the polynomial $f$ 
in order to place ourselves within the scope of application of
Proposition \ref{prop:preclemmapoly}.
\end{rem}

The proof of Proposition \ref{prop:preclemmapoly} is based on the
following Lemma.

\begin{lem}
\label{lem:multipolyC1}
Let $g : \Qp^n \to \Qp^m$ be a function whose coordinates are all
multivariate polynomials with coefficients in $\Zp$. Then
$$\Vert g(b) - g(a) - dg_0(b{-}a) \Vert \leq \max 
  \big(\Vert a \Vert, \Vert b \Vert \big) \cdot \Vert b{-}a \Vert$$
for all $a,b \in \Zp^n$.
\end{lem}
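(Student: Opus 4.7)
The plan is to reduce immediately to the scalar-valued case and then expand $g$ monomially around $0$, using the explicit form of multivariate Taylor differences.

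First I would reduce to the case $m=1$: since $F = \Qp^m$ is equipped with the infinity norm, the stated inequality holds as soon as it holds coordinate by coordinate. So I assume $g$ is a single polynomial $g(x) = \sum_\alpha a_\alpha x^\alpha$ with $a_\alpha \in \Zp$, where $\alpha$ ranges over multi-indices.

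Next I would compute $g(b) - g(a) - dg_0(b-a)$ directly. The constant term cancels; the linear term of $g$, namely $\sum_{|\alpha|=1} a_\alpha x^\alpha$, produces exactly $dg_0(x)$, hence cancels against the third piece. What remains is
\[
g(b) - g(a) - dg_0(b-a) \;=\; \sum_{|\alpha| \geq 2} a_\alpha \,(b^\alpha - a^\alpha).
\]
The main work is then to bound $|b^\alpha - a^\alpha|$ for each $|\alpha| \geq 2$. I would do this by telescoping: setting $P_i = a_1^{\alpha_1}\cdots a_i^{\alpha_i}\,b_{i+1}^{\alpha_{i+1}}\cdots b_n^{\alpha_n}$, one has $P_0 = b^\alpha$ and $P_n = a^\alpha$, so
\[
b^\alpha - a^\alpha \;=\; \sum_{i=1}^n a_1^{\alpha_1}\cdots a_{i-1}^{\alpha_{i-1}} \,\bigl(b_i^{\alpha_i} - a_i^{\alpha_i}\bigr)\,b_{i+1}^{\alpha_{i+1}}\cdots b_n^{\alpha_n},
\]
and then $b_i^{\alpha_i} - a_i^{\alpha_i} = (b_i - a_i)\bigl(b_i^{\alpha_i-1} + b_i^{\alpha_i-2}a_i + \cdots + a_i^{\alpha_i-1}\bigr)$. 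Each resulting summand is a product of $(b_i - a_i)$ with a polynomial in the $a_j$'s and $b_j$'s of total degree $|\alpha|-1$. Taking norms (all $a_j, b_j \in \Zp$, so their norms are at most $\max(\Vert a\Vert, \Vert b\Vert) \leq 1$), each summand has norm at most $\max(\Vert a\Vert, \Vert b\Vert)^{|\alpha|-1} \cdot \Vert b-a\Vert$, and the same bound holds for the full sum by ultrametricity.

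Finally, since $|a_\alpha| \leq 1$ and $|\alpha|-1 \geq 1$ while $\max(\Vert a\Vert, \Vert b\Vert) \leq 1$, we have
\[
\bigl|a_\alpha (b^\alpha - a^\alpha)\bigr| \;\leq\; \max\bigl(\Vert a\Vert,\Vert b\Vert\bigr) \cdot \Vert b - a\Vert
\]
for every $|\alpha|\geq 2$. One last application of the ultrametric inequality to the sum over $\alpha$ yields the lemma. The only potential obstacle is writing the telescoping factorization cleanly, but it is purely combinatorial; all the analytic content is packaged in the single observation that $a, b \in \Zp^n$ forces each monomial factor to have norm at most $\max(\Vert a\Vert,\Vert b\Vert)$.
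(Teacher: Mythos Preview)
Your proof is correct and follows essentially the same approach as the paper: reduce to $m=1$, isolate the contribution of monomials of degree $\geq 2$ (the paper does this ``by linearity'' reducing to a single monomial, you keep the sum over $\alpha$ --- this is cosmetic), then telescope $b^\alpha - a^\alpha$ variable by variable and factor each $b_i^{\alpha_i} - a_i^{\alpha_i}$ as $(b_i - a_i)$ times a geometric-type sum. Your bookkeeping of the exponent $|\alpha|-1$ is slightly more explicit than the paper's, but the argument is the same.
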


\begin{proof}
We may assume $m = 1$. By linearity, we may further assume that
$g(x_1, \ldots, x_n) = x_1^{\alpha_1} \cdots x_n^{\alpha_n}$. If the
sum of the $\alpha_i$'s is at most $1$, the quantity $g(b) - g(a) - 
dg_0(b{-}a)$ vanishes and the lemma is clear. Otherwise the differential
$dg_0$ vanishes and we write:
$$\begin{array}{l}
g(b_1, \ldots, b_n) - g(a_1, \ldots, a_n) \smallskip \\
\hspace{2em}
=  (b_1^{\alpha_1} - a_1^{\alpha_1}) b_2^{\alpha_2} \cdots b_n^{\alpha_n} 
 + a_1^{\alpha_1} (b_2^{\alpha_2} - a_2^{\alpha_2}) b_3^{\alpha_3} \cdots b_n^{\alpha_n} 
 + \cdots 
 + a_1^{\alpha_1} \cdots a_{n-1}^{\alpha_{n-1}} (a_n^{\alpha_n} - b_n^{\alpha_n}).
\end{array}$$
Noting that $b_i^{\alpha_i} - a_i^{\alpha_i} = 
(b_i - a_i) \sum_{k=0}^{\alpha_i-1} b_i^k a_i^{\alpha_i-1-k}$ and
remembering that $\alpha_1 + \cdots + \alpha_n \geq 2$, we get 
the announced result by applying the ultrametric triangular
inequality.
\end{proof}

\begin{proof}[Proof of Proposition \ref{prop:preclemmapoly}]
Let $H$ be a lattice satisfying the assumptions of the proposition. 
Let $n$ be the largest relative integer for which $p^n \leq r$. 
Then $B_E(r) = B_E(p^n)$ and $B_F(p^{2n+1}) \subset B_F(p r^2)$, so 
that $H \subset B_E(p^n)$ and $B_F(p^{2n+1}) \subset df_v(H)$.

Set $H' = p^n H$; clearly $H'$ is a lattice contained in $B_E(1)$. Let 
$\varphi : \Qp^n \to \Qp^n$ be a linear mapping taking bijectively 
$B_E(1)$ to $H'$. Consider the composite $g = f \circ (v + \varphi)$. 
From $H' \subset B_E(1)$, we deduce that the entries of the matrix of 
$\varphi$ (in the canonical basis) lie in $\Zp$. Since moreover $v \in
\Zp^n$, we deduce that all the coordinates of $g$ are given by
multivariate polynomials with coefficients in $\Zp$. Furthermore, we
have:
\begin{align*}
f(v+H) & = f(v + p^{-n} H') = f(v + \varphi(B_E(p^n)) = g(B_E(p^n)) \\
\text{and}\quad
f(v) + df_v(H) & = f(v) + df_v \circ \varphi(B_E(p^n)) = g(0) + dg_0(B_E(p^n)).
\end{align*}
We then have to prove that $g(B_E(p^n)) = g(0) + dg_0(B_E(p^n))$
knowing that $g$ is a multivariate polynomial function with coefficients
in $\Zp$ and $B_F(p^{n+1}) \subset dg_0(B_E(1))$. This can be done by
following the lines of the proof of Theorem \ref{theo:preclemma} and
refining Eq.~\eqref{eq:C1preclemma} by the estimation of Lemma
\ref{lem:multipolyC1}.
\end{proof}

Other results concerning more general $f$ and building on the machinery 
of Newton polygons are available in the literature. We refer the reader 
to~\cite{CaRoVa14} for the case of locally analytic functions and to 
\cite{CaRoVa15} for the case of solutions of partial differential equations of 
order $1$.

\subsection{Optimal precision and stability of algorithms}
\label{ssec:optimalprec}

The precision Lemma (Theorem \ref{theo:preclemma}) provides the 
mathematical tools for finding the intrinsic optimal loss/gain of 
precision of a given problem and then for studying the stability 
of numerical $p$-adic algorithms.

\subsubsection{The precision Lemma at work}
\label{sssec:preclemmaatwork}

Consider an ``abstract $p$-adic problem'' encoded by a mathematical 
function $\varphi : U \to F$ where $U$ is an open subset in a finite 
dimensional $p$-adic vector space $E$ and $F$ is another finite
dimensional $p$-adic vector space.
The modelling is straightforward: the domain $U$ is the space of 
inputs, the codomain $F$ is the space of outputs and $\varphi$ is 
the mapping sending an input to the expected output.
Many concrete problems fit into this framework: for instance $\varphi$ 
can be the function mapping a pair of $p$-adic numbers to their sum
or their product, but it can also be the function mapping a polynomial
(of a given degree) to its derivative, a matrix (of given dimensions) 
to its inverse, a family of multivariate polynomials (of given size and 
degrees) to its reduced Gröbner basis, \emph{etc.}

In order to apply the precision Lemma, we shall assume that $\varphi$ is 
of class $C^1$. This hypothesis is not restrictive at all in practice 
since in many common cases, the function $\varphi$ has much more 
regularity than that: it is often obtained by composing sums, products, 
divisions and if-statements so that it is locally given by 
multivariate rational fractions.
We suppose furthermore that $E$ and $F$ are endowed with distinguished 
bases $(e_1, \ldots, e_n)$ and $(f_1, \ldots, f_m)$ respectively. This
choice models the way the elements of $E$ and $F$ are represented on the 
computer. For instance if $E = \Qp[X]_{< n}$ (the vector space of
polynomials of degree less than $n$) and polynomials are represented
internally by the list of their coefficients, we will just choose the
canonical basis. On the contrary, if polynomials are represented as
linear combinations of the shape:
$$\lambda_0 + \lambda_1 X + \lambda_2 X(X{-}1) + \cdots +
\lambda_{n-1} X (X{-}1) \cdots (X{-}n{+}2)$$
then the distinguished basis we will choose is $(1, X, X(X{-}1), \ldots,
X (X{-}1) \cdots (X{-}n{+}2))$.

\paragraph{Context of zealous arithmetic}

Recall that, in the zealous point of view, $p$-adic numbers are 
modeled by intervals of the form $a + O(p^N)$. As a consequence the 
input of the problem we are studying will not be an actual element of 
$U$ but a quantity of the form:
$$\big(a_1 + O\big(p^{N_1}\big) \big) \cdot e_1 + 
\big(a_2 + O\big(p^{N_2}\big) \big) \cdot e_2 + \cdots +
\big(a_n + O\big(p^{N_n}\big) \big) \cdot e_n$$
which can be rewritten as the shifted lattice $v + H$ with:
\begin{align*}
v & = a_1 e_1 + a_2 e_2 + \cdots + a_n e_n \\
H & = \Span_{\Zp} \big( p^{N_1} e_1, \, p^{N_2} e_2, \cdots, \,
p^{N_n} e_n \big).
\end{align*}
Similarly the output takes the form:
$$\big(b_1 + O\big(p^{M_1}\big) \big) \cdot f_1 + 
\big(b_2 + O\big(p^{M_2}\big) \big) \cdot f_2 + \cdots +
\big(b_m + O\big(p^{M_m}\big) \big) \cdot f_m$$
where the $b_j$'s are the coordinates of $\varphi(v)$ and then do not 
depend (fortunately) on the algorithm we are using to evaluate 
$\varphi$. On the other hand, the $M_j$'s may --- and do --- depend on
it. The question we address here can be formulated as follows: what is 
the maximal value one can expect for the $M_j$'s? In other words: what 
is the optimal precision one can expect on the output (in terms of the 
initial precision we had on the inputs)?

The precision Lemma provides a satisfying answer to this question.
Indeed, for $j \in \{1, \ldots, m\}$, let $\pr_{F,j} : F \to \Qp$ be
the linear mapping taking a vector to its $j$-th coordinate (in the
distinguished basis $(f_1, \ldots, f_m)$ we have fixed once for all). 
From the precision Lemma applied to the composite $\varphi_j = \pr_{F,j} 
\circ \varphi$, we get:
\begin{equation}
\label{eq:preclemmazealous}
\varphi_j(v+H) =
\varphi_j(v) + d\varphi_{j,v}(H) = b_j + d\varphi_{j,v}(H) 
\end{equation}
as soon as $d\varphi_{j,v}$ is surjective (and $H$ satisfies some
additional mild assumptions that we will ignore for now). 
Under these assumptions, Eq.~\eqref{eq:preclemmazealous} holds and 
the equality signs in it show that the optimal precision $O(p^{M_j})$ we 
were looking for is nothing but $d\varphi_{j,v}(H)$; note that the 
latter is a lattice in $\Qp$ and then necessarily takes the form 
$p^{M_j}\Zp = O(p^{M_j})$ for some relative integer $M_j$.
Note moreover that the surjectivity of $d\varphi_{j,v}$ is equivalent
to its non-vanishing (since $d\varphi_{j,v}$ takes its value in a
one-dimensional vector space). 
Furthermore, by the chain rule, we have $d\varphi_{j,v} = \pr_{F,j} 
\circ d\varphi_v$ because $\pr_{F,j}$ is linear and then agrees with
its differential at any point.

Finding the optimal precision $O(p^{M_j})$ can then be done along the
following lines: we first compute the Jacobian matrix $J(\varphi)_v$
of $\varphi$ at the point $v$ and form the product:
\begin{equation}
\label{eq:jacobianzealous}
A = \left( \begin{matrix}
p^{N_1} & & \\ & \ddots & \\ & & p^{N_n}
\end{matrix}\right) \cdot J(\varphi)_v
\end{equation}
whose rows generate $d\varphi_{j,v}(H)$. The integer $M_j$ appears
then as the smallest valuation of an entry of the $j$-th column of
$A$ (unless this column vanishes in which case the precision Lemma
cannot be applied). Of course, the computation of the Jacobian matrix 
can be boring and/or time-consuming; we shall see however in
\S \ref{sssec:diffexamples} below that, in many situations, simple
mathematical arguments provide closed formulas for $J(\varphi)_v$,
avoiding this way their direct computation.

\subparagraph{The notion of diffused digits of precision.}

One important remark is that the lattice:
$$H' = \Span_{\Zp} \big( p^{M_1} f_1, \, p^{M_2} f_2, \cdots, \,
p^{M_m} e_m \big) \subset F$$
(for the $M_j$'s we have constructed) does satisfy $f(v+H) \subset f(v) 
+ H'$ but does \emph{not} satisfy $f(v+H) = f(v) + H'$ in general.
In other words, although we cannot expect more correct digits on each 
component of the output separately, the precision on the output is 
globally not optimal. Below is a very simple example illustrating this 
point.

\begin{ex*}
Assume that we want to evaluate an affine polynomial at the two points 
$0$ and $p$. The function $\varphi$ modeling this problem is:
$$\varphi : \Qp[X]_{\leq 1} \to \Qp^2, 
\quad P(X) = a X + b \mapsto (P(0), P(p)) = (b,\,ap{+}b).$$
It is a linear function so its Jacobian is easy to compute. We find
$J(\varphi) = \Big( {\small \begin{matrix} 0 & p \\ 1 & 1
\end{matrix}} \Big)$.
We assume furthermore that the inputs $a$ and $b$ are both given at
precision $O(p^N)$. We then find that the optimal precision on $P(0)$ and 
$P(p)$ is $O(p^N)$ as well. Nevertheless the determinant of $J(\varphi)$
is apparently $p$; hence $J(\varphi)$ is not invertible in $M_2(\Zp)$ 
and its row vectors generate a lattice which is strictly smaller than 
$\Zp^2$. 
What does happen concretely? The main observation is that the difference 
$P(p) - P(0) = ap$ can be computed at precision $O(p^{N+1})$ because
of the multiplication by $p$. There is thus one more digit that we know
but this digit is ``diffused'' between $P(0)$ and $P(p)$ and only appears
when we look at the difference.
\end{ex*}

The phenomenon enlighten in the above example is quite general: \emph{we 
cannot in general reach the optimal precision by just giving individual 
precision on each coordinate of the output (we say that some digits of 
precision are diffused over several coordinates)... but this becomes 
always\footnote{At least when the precision Lemma applies...} possible 
after a suitable base change (which recombines the diffused digits of 
precision into actual digits).} In order to make this sentence more
precise, we 
first formulate a rigorous definition concerning the diffusion of the 
digits of precision.

\begin{deftn}
\label{def:diffuseddigits}
Let $F$ be a finite dimensional vector space over $\Qp$ with basis
$(f_1, \ldots, f_m)$.

\smallskip

\noindent
We say that a lattice $H$ is \emph{diagonal} with respect to 
$(f_1, \ldots, f_m)$ if it has the shape
$H = \Span_{\Zp} \big(p^{\nu_1} f_1, \ldots, p^{\nu_m} f_m\big)$ for
some relative integers $\nu_1, \ldots, \nu_m$.

\smallskip

\noindent
The \emph{number of diffused digits} (with respect to $(f_1, \ldots,
f_m)$) of a given lattice $H$ is the 
logarithm in base $p$ of the index of $H$ in the smallest diagonal
lattice containing $H$.
\end{deftn}

Roughly speaking, the number of diffused digits of a lattice $H$ with 
respect to the basis $(f_1, \ldots, f_m)$ quantifies the quality of the 
basis for expressing the precision encoded by $H$: if $H$ has 
no diffused digit, it is represented by a diagonal matrix and the 
precision splits properly into each coordinate. On the contrary, if 
$H$ has diffused digits, writing the precision in the system of 
coordinates associated with $(f_1, \ldots, f_m)$ leads to non optimal 
results.
We remark furthermore that there always exists a basis in which the 
number of diffused digits of a given lattice $H$ is zero: it suffices to 
take a basis whose $\Zp$-span is $H$ (such a basis always exists by 
Proposition \ref{prop:nakayama}).

Before going further, let us emphasize that, given a set of generators 
of $H$, it is not difficult to compute effectively its number of 
diffused digits of precision. Indeed, let $M$ be a matrix whose row
vectors span $H$.
Pick in addition a diagonal lattice $H' = \Span_{\Zp} \big(p^{\nu_1} f_1, \ldots, 
p^{\nu_m} f_m\big)$ with $H \subset H'$. By projecting on the $j$-th 
coordinate we see that $\nu_j$ is at least the smallest valuation $v_j$ of 
an entry of the $j$-th column of $M$. Conversely, one easily checks that 
the diagonal lattice $\Span_{\Zp} \big(p^{v_1} f_1, \ldots, p^{v_m} 
f_m\big)$ contains $H$. Thus the smallest diagonal matrix containing $H$ 
is $\Span_{\Zp} \big(p^{v_1} f_1, \ldots, p^{v_m} f_m\big)$. It follows 
that the number of diffused digits of precision of $H$ with respect to 
the basis $(f_1, \ldots, f_m)$ is given by:
\begin{equation}
\label{eq:numberdiffused}
(v_1 + v_2 + \cdots + v_m) - \val_p(\det M).
\end{equation}
If $M$ is in Hermite normal form (see \S \ref{sssec:complattices}) this 
quantity vanishes if and only if $M$ is indeed diagonal.

\medskip

We now go back to our initial question: we want to analyze the optimal 
precision for the computation of $\varphi(v)$ for $v$ is given at 
precision $H = \Span_{\Zp} \big( p^{N_1} e_1, \, p^{N_2} e_2, \cdots, \, 
p^{N_n} e_n \big)$. The precision Lemma applied with $\varphi$ itself
(assuming that the hypotheses of the precision Lemma are fulfilled
of course) yields the equality:
$$\varphi(v+H) = \varphi(v) + d\varphi_v(H).$$
The number of diffused digits of precision resulting from the computation 
of $\varphi(v)$ is then the number of diffused digits of the lattice
$d\varphi_v(H)$. If we are working in the basis $(f_1, \ldots, f_n)$,
this number can be large, meaning that writing down the precision in 
this basis can be weak. However, if we are allowing a change of basis on 
the codomain, we can always reduce the number of diffused digits to $0$; 
in such a basis, the precision can be made optimal!

\paragraph{Context of lazy/relaxed arithmetic}

As we have explained in \S \ref{sssec:zealousVSrelaxed}, the point of 
view of lazy/relaxed arithmetic differs from the zealous one on the 
following point: instead of fixing the precision on the input and 
propagating it to the outputs, it fixes a target precision and infers 
from it the precision needed on the inputs.
Of course, in order to save time and memory, we would like to avoid 
the computation of unnecessary digits. Regarding precision, the
question then becomes: what is the minimal number of digits we have
to compute on the inputs in order to ensure a correct output at the
requested precision?

This question translates into our precision language as follows: we fix 
some relative integers $M_1, \ldots, M_m$ and look for relative
integers $N_1, \ldots, N_n$ for which:
\begin{equation}
\label{eq:lazypreclemma}
\varphi(v + H) \subset \varphi(v) + H'
\end{equation}
with $H = \Span_{\Zp} \big(p^{N_1} f_1, \ldots, p^{N_n} e_n\big)$
(the unknown) and 
$H' = \Span_{\Zp} \big(p^{M_1} f_1, \ldots, p^{M_m} f_m\big)$.
Assuming that the precision Lemma applies for $\varphi$ and $H$, 
Eq.~\eqref{eq:lazypreclemma} is equivalent to $d\varphi_v(H) \subset H'$,
\emph{i.e.} $H \subset d\varphi_v^{-1}(H')$. The problem then reduces
to find the largest diagonal lattice sitting in $d\varphi_v^{-1}(H')$.

The comparison with the zealous situation is instructive: in the zealous 
case, one had the lattice $d \varphi_v(H_{\zealous})$ and we were 
looking for the smallest diagonal lattice containing it. The problem is
now ``reversed'' and can actually be related to the zealous problem
using duality. Before going further, we need to recall basic facts about
duality and lattices.

\medskip

Given $V$ a finite dimensional $\Qp$-vector space, we define its 
\emph{dual} $V^\star$ as the space of $\Qp$-linear forms on $V$. We
recall that there is a canonical isomorphism between $V$ and its
bidual $V^{\star\star}$; it maps an element $x \in V$ to the linear
form taking $\ell \in V^\star$ to $\ell(x)$. 
We recall also that each basis $(v_1, \ldots, v_d)$ of $V$ has a
dual basis $(v_1^\star, \ldots, v_d^\star)$ defined by $v_j^\star(v_i)
= 1$ if $i = j$ and $v_j^\star(v_i) = 0$ otherwise.

If $L \subset V$ is any $\Zp$-module, its dual $L^\star$ is defined by 
as the subset of $V^\star$ consisting of linear forms $\ell : V \to \Qp$ 
for which $\ell(L) \subset \Zp$. Clearly $L^\star$ is a sub-$\Zp$-module 
of $V^\star$. One can check that the construction $L \mapsto L^\star$ is 
involutive (\emph{i.e.} $L^{\star\star} = L$), order-reversing 
(\emph{i.e.} if $L_1 \subset L_2$ then $L_2^\star \subset L_1^\star$) 
and preserves diagonal lattices (\emph{i.e.} if $L$ is a diagonal 
lattice with respect to some basis then $L^\star$ is a diagonal lattice 
with respect to the dual basis).

\begin{rem}
\label{rem:duallattice}
If $L$ is a lattice, so is $L^\star$. More precisely, if
$(v_1, \ldots, v_d)$ is a basis of $V$ and $M$ is a square matrix whose 
rows span $L$ (with respect to the above basis), it is easily checked 
that $L^\star$ is spanned by the rows of ${}^{\text{t}}\! M^{-1}$ (with 
respect to the dual basis). On the contrary if $L$ does not generate 
$V$ as a $\Qp$-vector space, the dual space $L^\star$ contains a
$\Qp$-line; conversely if $L$ contains a $\Qp$-line then $L^\star$
does not generate $V^\star$ over $\Qp$.
\end{rem}

Applying the above formalism to our situation, we find that the 
condition $H \subset d\varphi_v^{-1}(H')$ we want to ensure is 
equivalent to $d\varphi_v^{-1}(H')^\star \subset H^\star$. Our problem 
then reduces to finding the smallest diagonal lattice containing 
$d\varphi_v^{-1}(H')^\star$. By the analysis we have done in the
zealous context, the 
integer $N_i$ we are looking for then appears as the smallest valuation 
of an entry of $i$-th column of a matrix whose rows span the space 
$d\varphi_v^{-1}(H')^\star$ (with respect to the dual basis $(e_1^\star, 
\ldots, e_n^\star)$). Computing $d\varphi_v^{-1}(H')^\star$ using
Remark \ref{rem:duallattice} (and taking care of the possible kernel of 
$d\varphi_v$), we finally find that $N_i$ is the opposite of the 
smallest valuation of an entry of the $i$-th \emph{row} of the matrix:
$$J(\varphi)_v \cdot
\left( \begin{matrix}
p^{-M_1} & & \\ & \ddots & \\ & & p^{-M_m}
\end{matrix}\right)$$
where $J(\varphi)_v$ is the Jacobian matrix of $\varphi$ at $v$ (compare 
with \eqref{eq:jacobianzealous}).

\subsubsection{Everyday life examples revisited}
\label{sssec:diffexamples}

We revisit the examples of \S \ref{sssec:compexamples} in light of the 
theory of $p$-adic precision.

\paragraph{The $p$-power map}

As a training example, let us first examine the computation of the 
$p$-power map (though technically it was not treated in \S 
\ref{sssec:compexamples} but at the end of \S 
\ref{sssec:intervalVSfloat}). Recall that we have seen that raising an 
invertible $p$-adic integer $x$ at the power $p$ gains one digit of 
precision: if $x$ is given at precision $O(p^N)$, one can compute $x^p$ 
at precision $O(p^{N+1})$.

We introduce the function $\varphi : \Qp \to \Qp$ taking $x$ to $x^p$. 
It is apparently of class $C^1$ and its differential at $x$ is the 
multiplication by $\varphi'(x) = p x^{p-1}$. The Jacobian matrix of
$\varphi$ at $x$ is then the $1 \times 1$ matrix whose unique entry
is $p x^{p-1}$. By the results of \S \ref{sssec:preclemmaatwork} 
(see particularly Eq.~\eqref{eq:preclemmazealous}), the optimal 
precision on $x^p$ is the valuation of 
$$(p x^{p-1}) \times p^N = p^{N+1} x^{p-1}.$$
We recover this way the exponent $N{+}1$ thanks to the factor $p$ 
which appears in the Jacobian. Unlike Lemma \ref{lem:ppower},
the method we have just applied here teaches us in addition that
the exponent $N{+}1$ cannot be improved.

\paragraph{Determinant}

We take here $\varphi = \det : M_d(\Qp) \to \Qp$. We endow $M_d(\Qp)$ 
with its canonical basis and let $x_{i,j}$ denote the corresponding 
coordinate functions: for $A \in M_d(\Zp)$, $x_{i,j}(A)$ is the $(i,j)$ 
entry of $A$.
Fix temporarily a pair $(i,j)$. Expanding the determinant according 
to the $i$-th row, we find that $\varphi$ is an affine function of
$x_{i,j}$ whose linear term is $(-1)^{i+j} x_{i,j} \cdot \det G_{i,j}$
where $G_{i,j}$ is the matrix obtained from the ``generic'' matrix
$G = (x_{i,j})_{1 \leq i,j \leq d}$ by erasing the $i$-th row and the 
$j$-th column. Therefore
$\frac{\partial \varphi}{\partial x_{i,j}} = (-1)^{i+j} \det G_{i,j}$.
Evaluating this identity at some matrix $M \in M_d(\Qp)$, we get
the well-known formula:
\begin{equation}
\label{eq:partialdet}
\frac{\partial \varphi}{\partial x_{i,j}}(M) = (-1)^{i+j} \det M_{i,j}
\end{equation}
where $M_{i,j}$ is the matrix obtained from $M$ by erasing the $i$-th
row and the $j$-th column. The Jacobian matrix of $\varphi$ at $M$ is 
then the column matrix (whose rows are indexed by the pairs $(i,j)$
for $1 \leq i,j \leq d$) with entries $(-1)^{i+j} \det M_{i,j}$.
According to the results of \S \ref{sssec:preclemmaatwork} (see 
especially Eq.~\eqref{eq:preclemmazealous}), the optimal precision
for $\det M$, when the $(i,j)$ entry of $M$ is given at precision
$O(p^{N_{i,j}})$, is:
\begin{equation}
\label{eq:optdet}
O\big(p^{N'}\big) \quad \text{for} \quad
N' = \min_{i,j} \big( N_{i,j} + \val(M_{i,j})\big).
\end{equation}
Remember of course that this conclusion is only valid when the
assumptions of the precision Lemma are all fulfilled. Nonetheless,
relying on Proposition \ref{prop:preclemmapoly} (after having noticed that the 
determinant is a multivariate polynomial function), we can further 
write down explicit sufficient conditions for this to hold. These
conditions take a particularly simple form when all the entries of
$M$ are given at the \emph{same} precision $O(p^N)$: in this case,
we find that the precision \eqref{eq:optdet} is indeed the optimal
precision as soon as $N' > \val(\det M)$, \emph{i.e.} as soon as it 
is possible to ensure that $\det M$ does not vanish.

\medskip

The general result above applies in particular to the matrix $M$ 
considered as a running example in \S \ref{sssec:compexamples} (see 
Eq.~\eqref{eq:matrixM}):
\begin{equation}
\label{eq:matrixM2}
\begin{array}{r@{\hspace{0.5ex}}l}
M & = 
\small \left(\begin{array}{rrrr}
\ldots0101110000 & \ldots0011100000 & \ldots1011001000 & \ldots0011000100 \\
\ldots1101011001 & \ldots1101000111 & \ldots0111001010 & \ldots0101110101 \\
\ldots0111100011 & \ldots1011100101 & \ldots0010100110 & \ldots1111110111 \\
\ldots0000111101 & \ldots1101110011 & \ldots0011010010 & \ldots1001100001
\end{array}\right) \in M_4(\Z_2).
\end{array}
\end{equation}
Recall that all the entries of $M$ were given at the same precision
$O(2^{10})$.
A simple computation (using zealous arithmetic) shows that the comatrix 
of $M$ (\emph{i.e.} the matrix whose $(i,j)$ entry is $(-1)^{i+j} \det 
M_{i,j}$) is:
$$\begin{array}{r@{\hspace{0.5ex}}l}
\Com(M) & =
\small \left(\begin{array}{rrrr}
\ldots1111000000 & \ldots0001100000 & \ldots1101000000 & \ldots1001100000 \\
\ldots0111000000 & \ldots0011100000 & \ldots1001000000 & \ldots1011100000 \\
\ldots0111000000 & \ldots0001100000 & \ldots0111000000 & \ldots0011100000 \\
\ldots1010000000 & \ldots0111000000 & \ldots1110000000 & \ldots0011000000
\end{array}\right)
\end{array}$$
We observe that the minimal valuation of its entries is $5$ (it
is reached on the second column), meaning that the optimal precision
on $\det M$ is $O(2^{15})$. Coming back to \S \ref{sssec:compexamples}, 
we see that the interval floating-point arithmetic reached this
accuracy whereas zealous interval arithmetic
missed many digits (without further help).

\subparagraph{Computation of determinants with optimal precision.}

It is worth remarking that one can design a simple algorithm that computes 
the determinant of a $p$-adic matrix at the optimal precision 
\emph{within the framework of zealous arithmetic} when the coefficients 
of the input matrix are all given at the same precision $O(p^N)$. This 
algorithm is based on a variation of the Smith normal form that we state 
now.

\begin{prop}
\label{prop:Smith}
Any matrix $M \in M_d(\Qp)$ admits a factorization of the following
form:
\begin{equation}
\label{eq:Smith}
M = P \cdot 
\left(\begin{matrix} a_1 & & \\ & \ddots & \\ & & a_d \end{matrix}\right)
\cdot Q
\end{equation}
where $P$ and $Q$ are have determinant $\pm 1$, the matrix in the middle 
is diagonal and its diagonal entries satisfy $\val(a_1) \leq \cdots \leq 
\val(a_d)$.
\end{prop}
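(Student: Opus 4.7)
The plan is to prove this by induction on $d$, by running an analogue of Gaussian elimination that uses only elementary operations having determinant $\pm 1$: namely swaps of two rows (or two columns), which have determinant $-1$, and transvections adding a multiple of one row (or column) to another, which have determinant $+1$. Products of such operations yield matrices $P, Q$ with $\det P, \det Q \in \{\pm 1\}$, as required.

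For the inductive step, given $M \in M_d(\Qp)$, I would first pick an entry $m_{i,j}$ of minimal $p$-adic valuation (take it to be $0$ and any value of $v$ if $M = 0$; otherwise it is a finite integer). After swapping row $i$ with row $1$ and column $j$ with column $1$, I can assume $m_{1,1}$ realizes this minimum. Writing $a_1 = m_{1,1}$, the key point is that for every other entry $x$ of the matrix the ratio $x/a_1$ lies in $\Zp$ by minimality of $\val(a_1)$. Hence I may legitimately subtract $(m_{i,1}/a_1)$ times the first row from row $i$ for each $i \geq 2$, and symmetrically for the columns, all via transvections. This clears the first row and column outside position $(1,1)$ and produces a matrix of the form $\begin{pmatrix} a_1 & 0 \\ 0 & M' \end{pmatrix}$ for some $M' \in M_{d-1}(\Qp)$.

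The crucial observation — and the reason the valuation ordering is preserved — is that every entry of $M'$ has valuation at least $\val(a_1)$. Indeed, the $(i,j)$ entry of $M'$ equals $m_{i+1,j+1} - m_{i+1,1} m_{1,j+1}/a_1$, whose two summands have respective valuations $\val(m_{i+1,j+1}) \geq \val(a_1)$ and $\val(m_{i+1,1}) + \val(m_{1,j+1}) - \val(a_1) \geq \val(a_1)$, so the ultrametric inequality gives the claim. Applying the inductive hypothesis to $M'$ yields $M' = P' D' Q'$ with $\det P', \det Q' \in \{\pm 1\}$ and $D' = \Diag(a_2, \ldots, a_d)$ with $\val(a_2) \leq \cdots \leq \val(a_d)$; combined with the above observation, $\val(a_1) \leq \val(a_2)$, and we can conjugate by the block matrices $\mathrm{diag}(1, P')$ and $\mathrm{diag}(1, Q')$ and absorb the initial swaps and transvections to produce the desired factorization of $M$.

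The argument is essentially routine; no step constitutes a real obstacle. The only points to watch are the edge cases (zero rows, zero matrix) — handled by the convention $\val(0) = +\infty$ — and the verification that transvections really can eliminate all off-diagonal entries using only $\Zp$-coefficients, which is exactly guaranteed by the choice of a pivot of minimal valuation.
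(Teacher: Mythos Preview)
Your argument is correct and matches the paper's approach exactly: the paper does not give a formal proof but sketches the same algorithm (choose a pivot of minimal valuation in the whole matrix, move it to the corner by swaps, clear its row and column by $\Zp$-transvections, and recurse without rescaling rows). One point worth tightening: when you apply the inductive hypothesis to $M'$ and then conclude $\val(a_1) \leq \val(a_2)$ from the fact that every entry of $M'$ has valuation at least $\val(a_1)$, note that the proposition as stated only gives $\det P', \det Q' \in \{\pm 1\}$, which by itself does not force $\val(a_2) \geq \min_{i,j} \val(M'_{i,j})$; the clean fix is to carry through the induction the stronger statement that $P$ and $Q$ are products of swaps and $\Zp$-coefficient transvections (hence lie in $\GL_d(\Zp)$), which your construction already provides, so that $D' = (P')^{-1} M' (Q')^{-1}$ inherits the valuation lower bound entrywise.
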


\noindent
The factorization \eqref{eq:Smith} is quite interesting for us for two 
reasons. First of all, it reveals the smallest valuation of an entry
of the comatrix of $M$: it is 
\begin{equation}
\label{eq:optprecdet}
v = \val(a_1) + \val(a_2) + \cdots + \val(a_{d-1}).
\end{equation}
Indeed, taking exterior powers, one proves the entries of $\Com(M)$ 
and $\Com(P^{-1} M Q^{-1})$ span the same $\Zp$-submodule of $\Qp$.
Moreover, since $P^{-1} M Q^{-1}$ is diagonal, its comatrix is easily
computed; we find:
$$\Com(P^{-1} M Q^{-1}) =
\left(\begin{matrix} b_1 & & \\ & \ddots & \\ & & b_d \end{matrix}\right)
\quad \text{with} \quad
b_i = a_1 \cdots a_{i-1} a_{i+1} \cdots a_d$$
(recall that $\Com(A) = \det A \cdot A^{-1}$ if $A$ is invertible)
and we are done. 
The direct consequence of this calculation is the following: if the 
entries of $M$ are all given at precision $O(p^N)$, the optimal 
precision on $\det M$ is $O(p^{N+v})$ (assuming $v < N$).

The second decisive advantage of the factorization \eqref{eq:Smith} is 
that it can be computed without any loss of precision.
The underlying algorithm basically follows the lines of the Hermite
reduction we have presented in \S \ref{sssec:complattices}, except on
the three following points:
\begin{itemize}
\renewcommand{\itemsep}{0pt}
\item instead of choosing the pivot as an entry of smallest valuation 
on the working column, we choose as an entry of smallest valuation on
the whole matrix,
\item we do not only clear the entries behind the pivot but also the
entries located on the right of the pivot,
\item in order to keep the transformation matrices of determinant
$\pm 1$, we do not rescale rows.
\end{itemize}
For example, starting with our favorite matrix $M$ (see 
Eq.~\eqref{eq:matrixM2} above), we first select the $(2,1)$ entry
(which has valuation $0$), we put it on the top left corner and use
it as pivot to clear the entries on the first row and the first
column. Doing so, we obtain the intermediate form:
$$\small \left(\begin{array}{rrrr}
\ldots1101011001 & 0 & 0 & 0 \\
0 & \ldots0001010000 & \ldots0101101000 & \ldots0100010100 \\
0 & \ldots0111101000 & \ldots0101011000 & \ldots1100100000 \\
0 & \ldots1010010000 & \ldots0011100000 & \ldots0110011000
\end{array}\right)$$
and we retain that we have swapped two rows, implying that the
determinant of the above matrix is the opposite to the determinant
of $M$. We now select the $(2,4)$ entry (which as valuation $2$) 
and continue the reduction:
$$\small \left(\begin{array}{rrrr}
\ldots1101011001 & 0 & 0 & 0 \\
0 & \ldots0100010100 & 0 & 0 \\
0 & 0 & \ldots0100011000 & \ldots0101101000 \\
0 & 0 & \ldots0000110000 & \ldots0000110000
\end{array}\right)$$
We observe that we have not lost any precision in this step (all 
the entries of the matrix above are known at precision $O(2^{10})$),
and this 
even if our pivot had positive valuation. It is actually a general
phenomenon due to the fact that we always choose the pivot of
smallest valuation. Continuing this process, we end up with the
matrix:
$$\small \left(\begin{array}{rrrr}
\ldots1101011001 & 0 & 0 & 0 \\
0 & \ldots0100010100 & 0 & 0 \\
0 & 0 & \ldots0100011000 & 0 \\
0 & 0 & 0 & \ldots1101100000
\end{array}\right)$$
Now multiplying the diagonal entries, we find
$\det M = 2^{10} \times ...01101$ with precision $O(2^{15})$!

\medskip

For a general $M$, the same precision analysis works. Indeed, when the 
entries of $M$ are all given at precision $O(p^N)$, the reduction 
algorithm we have sketched above outputs a diagonal matrix whose 
determinant is the same as the one of $M$ and whose diagonal entries 
$a_1, \ldots, a_d$ are all known at precision $O(p^N)$. By Proposition 
\ref{prop:arithinterval} (see also Remark \ref{rem:arithinterval}),
zealous arithmetic can compute the product $a_1 a_2 \cdots a_d$ 
at precision $O(p^{N+w})$ with 
$$w = \val(a_1) + \cdots + \val(a_d) - \max_i \val(a_i).$$
We then recover the optimal precision given by Eq.~\eqref{eq:optprecdet}.

\paragraph{Characteristic polynomial}

The case of characteristic polynomials has many similarities with
that of determinants.
We introduce the function $\varphi : M_d(\Qp) \to \Qp[X]_{< d}$
that maps a matrix $M$ to $\varphi(M) = \det(X I_d - M) - X^d$ where
$I_d$ is the identity matrix of size $d$. The renormalization 
(consisting in subtracting $X^d$) is harmless but needed in order to 
ensure that $\varphi$ takes its values in a vector space (and not an
affine space) and has a surjective differential almost everywhere.
The partial derivatives of $\varphi$ are:
$$\frac{\partial \varphi}{\partial x_{i,j}}(M) = (-1)^{i+j} \cdot
\det (X I_d - M)_{i,j}$$
where $(X I_d - M)_{i,j}$ is the matrix obtained from $X I_d {-} M$ 
by deleting the $i$-th row and the $j$-th column (compare with 
Eq.~\eqref{eq:partialdet}). The Jacobian matrix $J(\varphi)_M$ of 
$\varphi$ at $M$
is then a matrix with $d^2$ rows, indexed by pairs $(i,j)$ with
$1 \leq i,j \leq d$, and $d$ columns whose row with index $(i,j)$
contains the coefficients of the polynomial $(-1)^{i+j} \cdot
\det (X I_d - M)_{i,j}$. 
Using again the results of \S\ref{sssec:preclemmaatwork}, the optimal
precision on each coefficient of the characteristic polynomial of
$M$ can be read off from $J(\varphi)_M$.

\medskip

Let us explore further our running example: the matrix $M$ given by 
Eq.~\eqref{eq:matrixM2} whose entries are all known at precision 
$O(2^{10})$. A direct computation leads to the Jacobian matrix 
displayed on the left of Figure~\ref{fig:jaccharpoly}.
\begin{figure}
{\scriptsize\hfill
$\begin{array}{cc@{}r@{\hspace{1.5em}}r@{\hspace{1.5em}}r@{\hspace{1.5em}}r@{\hspace{0.2ex}}c}
(i,j) && X^3 & X^2 & X & 1 \\
\rule[3ex]{0pt}{\baselineskip}
(1,1) &
\raisebox{0.2ex}{\multirow{16}{*}{$\left(\parbox[c][39ex]{0cm}{}\right.$}}
 &1 & \ldots0110110010 & \ldots0111111000 & \ldots0001000000 &
\raisebox{0.2ex}{\multirow{16}{*}{$\left.\parbox[c][39ex]{0cm}{}\right)$}} \\
(1,2) &&0 & \ldots0011100000 & \ldots1011010100 & \ldots1110100000 \\
(1,3) &&0 & \ldots1011001000 & \ldots1001001000 & \ldots0011000000 \\
(1,4) &&0 & \ldots0011000100 & \ldots1111100100 & \ldots0110100000 \\
(2,1) &&0 & \ldots1101011001 & \ldots1010010000 & \ldots1001000000 \\
(2,2) &&1 & \ldots1110001001 & \ldots1001001100 & \ldots1100100000 \\
(2,3) &&0 & \ldots0111001010 & \ldots0110011000 & \ldots0111000000 \\
(2,4) &&0 & \ldots0101110101 & \ldots0111111100 & \ldots0100100000 \\
(3,1) &&0 & \ldots0111100011 & \ldots1010000000 & \ldots1001000000 \\
(3,2) &&0 & \ldots1011100101 & \ldots1110100000 & \ldots1110100000 \\
(3,3) &&1 & \ldots0011101000 & \ldots1001000100 & \ldots1001000000 \\
(3,4) &&0 & \ldots1111110111 & \ldots1111100100 & \ldots1100100000 \\
(4,1) &&0 & \ldots0000111101 & \ldots0010111000 & \ldots0110000000 \\
(4,2) &&0 & \ldots1101110011 & \ldots1101011000 & \ldots1001000000 \\
(4,3) &&0 & \ldots0011010010 & \ldots1101001000 & \ldots0010000000 \\
(4,4) &&1 & \ldots1010100011 & \ldots0111010000 & \ldots1101000000
\end{array}$
\quad $\stackrel{\text{\tiny Hermite}}{\longrightarrow}$ \quad
$\begin{array}{c@{}r@{\hspace{1.5em}}r@{\hspace{1.5em}}r@{\hspace{1.5em}}r@{\hspace{0.5ex}}c}
& X^3 & X^2 & X & 1 \\
\rule[3ex]{0pt}{\baselineskip}
\raisebox{0.2ex}{\multirow{16}{*}{$\left(\parbox[c][39ex]{0cm}{}\right.$}}
& 1 & 0 & 0 & 0 &
\raisebox{0.2ex}{\multirow{16}{*}{$\left.\parbox[c][39ex]{0cm}{}\right)$}} \\
&{\color{black!50} 0} & 1 & 0 & 0 \\
&{\color{black!50} 0} & {\color{black!50} 0} & 2^2 & 0 \\
&{\color{black!50} 0} & {\color{black!50} 0} & {\color{black!50} 0} & 2^5 \\
&{\color{black!50} 0} & {\color{black!50} 0} & {\color{black!50} 0} & {\color{black!50} 0} \\
&{\color{black!50} 0} & {\color{black!50} 0} & {\color{black!50} 0} & {\color{black!50} 0} \\
&{\color{black!50} 0} & {\color{black!50} 0} & {\color{black!50} 0} & {\color{black!50} 0} \\
&{\color{black!50} 0} & {\color{black!50} 0} & {\color{black!50} 0} & {\color{black!50} 0} \\
&{\color{black!50} 0} & {\color{black!50} 0} & {\color{black!50} 0} & {\color{black!50} 0} \\
&{\color{black!50} 0} & {\color{black!50} 0} & {\color{black!50} 0} & {\color{black!50} 0} \\
&{\color{black!50} 0} & {\color{black!50} 0} & {\color{black!50} 0} & {\color{black!50} 0} \\
&{\color{black!50} 0} & {\color{black!50} 0} & {\color{black!50} 0} & {\color{black!50} 0} \\
&{\color{black!50} 0} & {\color{black!50} 0} & {\color{black!50} 0} & {\color{black!50} 0} \\
&{\color{black!50} 0} & {\color{black!50} 0} & {\color{black!50} 0} & {\color{black!50} 0} \\
&{\color{black!50} 0} & {\color{black!50} 0} & {\color{black!50} 0} & {\color{black!50} 0} \\
&{\color{black!50} 0} & {\color{black!50} 0} & {\color{black!50} 0} & {\color{black!50} 0} 
\end{array}$}
\hfill\null

\caption{The Jacobian of the characteristic polynomial at $M$}
\label{fig:jaccharpoly} 
\end{figure} 
We observe that the minimal valuation of an entry of the column of $X^j$ 
is $0$, $0$, $2$ and $5$ when $j$ is $3$, $2$, $1$ and $0$ respectively. 
Consequently the optimal precision on the coefficients of $X^3$, $X^2$, 
$X$ and on the constant coefficient are $O(2^{10})$, $O(2^{10})$, 
$O(2^{12})$ and $O(2^{15})$ respectively. Coming back to \S 
\ref{sssec:compexamples}, we notice that floating-point arithmetic
found all the correct digits.
We observe moreover that the Hermite normal form of the Jacobian matrix
(shown on the right on Figure~\ref{fig:jaccharpoly}) is diagonal. There
is therefore no diffused digit of precision for this example; in other
words the precision written as
$O(p^{10}) X^3 + O(p^{10}) X^2 + O(p^{12}) X + O(p^{15})$
is sharp.

\begin{rem}
Making more extensive tests, we conclude that $p$-adic floating point
arithemtic was pretty lucky with the above example: in general, it 
indeed sometimes finds all (or almost all) relevant digits but it may
also sometimes not be more accurate than zealous arithmetic. We refer
to \cite{CaRoVa17} for precise statistics thereupon.
\end{rem}

Let us now have a look at the matrix $N = I_4 + M$. Playing the same
game as before, we obtain the Jacobian matrix displayed on the left of
Figure~\ref{fig:jaccharpoly2}.
\begin{figure}
\hfill
{\scriptsize
$\begin{array}{cc@{}r@{\hspace{1.5em}}r@{\hspace{1.5em}}r@{\hspace{1.5em}}r@{\hspace{0.2ex}}c}
(i,j) && X^3 & X^2 & X & 1 \\
\rule[3ex]{0pt}{\baselineskip}
(1,1) &
\raisebox{0.2ex}{\multirow{16}{*}{$\left(\parbox[c][39ex]{0cm}{}\right.$}}
 &1 & \ldots0110101111 & \ldots1010010111 & \ldots1111111001 &
\raisebox{0.2ex}{\multirow{16}{*}{$\left.\parbox[c][39ex]{0cm}{}\right)$}} \\
(1,2)&&0 & \ldots0011100000 & \ldots0100010100 & \ldots0110101100 \\
(1,3)&&0 & \ldots1011001000 & \ldots0010111000 & \ldots0101000000 \\
(1,4)&&0 & \ldots0011000100 & \ldots1001011100 & \ldots1010000000 \\
(2,1)&&0 & \ldots1101011001 & \ldots1111011110 & \ldots1100001001 \\
(2,2)&&1 & \ldots1110000110 & \ldots1100111101 & \ldots0001011100 \\
(2,3)&&0 & \ldots0111001010 & \ldots1000000100 & \ldots0111110010 \\
(2,4)&&0 & \ldots0101110101 & \ldots1100010010 & \ldots0010011001 \\
(3,1)&&0 & \ldots0111100011 & \ldots1010111010 & \ldots0110100011 \\
(3,2)&&0 & \ldots1011100101 & \ldots0111010110 & \ldots1011100101 \\
(3,3)&&1 & \ldots0011100101 & \ldots0001110111 & \ldots0011100011 \\
(3,4)&&0 & \ldots1111110111 & \ldots1111110110 & \ldots1100110011 \\
(4,1)&&0 & \ldots0000111101 & \ldots0000111110 & \ldots0100000101 \\
(4,2)&&0 & \ldots1101110011 & \ldots0001110010 & \ldots1001011011 \\
(4,3)&&0 & \ldots0011010010 & \ldots0110100100 & \ldots1000001010 \\
(4,4)&&1 & \ldots1010100000 & \ldots0010001101 & \ldots0000010010
\end{array}$
\quad $\stackrel{\text{\tiny Hermite}}{\longrightarrow}$ \quad
$\begin{array}{c@{}r@{\hspace{1.5em}}r@{\hspace{1.5em}}r@{\hspace{1.5em}}r@{\hspace{0.5ex}}c}
& X^3 & X^2 & X & 1 \\
\rule[3ex]{0pt}{\baselineskip}
\raisebox{0.2ex}{\multirow{16}{*}{$\left(\parbox[c][39ex]{0cm}{}\right.$}}
& 1 & 0 & 1 & 30 &
\raisebox{0.2ex}{\multirow{16}{*}{$\left.\parbox[c][39ex]{0cm}{}\right)$}} \\
&{\color{black!50} 0} & 1 & 2 & 29 \\
&{\color{black!50} 0} & {\color{black!50} 0} & 2^2 & 28 \\
&{\color{black!50} 0} & {\color{black!50} 0} & {\color{black!50} 0} & 2^5 \\
&{\color{black!50} 0} & {\color{black!50} 0} & {\color{black!50} 0} & {\color{black!50} 0} \\
&{\color{black!50} 0} & {\color{black!50} 0} & {\color{black!50} 0} & {\color{black!50} 0} \\
&{\color{black!50} 0} & {\color{black!50} 0} & {\color{black!50} 0} & {\color{black!50} 0} \\
&{\color{black!50} 0} & {\color{black!50} 0} & {\color{black!50} 0} & {\color{black!50} 0} \\
&{\color{black!50} 0} & {\color{black!50} 0} & {\color{black!50} 0} & {\color{black!50} 0} \\
&{\color{black!50} 0} & {\color{black!50} 0} & {\color{black!50} 0} & {\color{black!50} 0} \\
&{\color{black!50} 0} & {\color{black!50} 0} & {\color{black!50} 0} & {\color{black!50} 0} \\
&{\color{black!50} 0} & {\color{black!50} 0} & {\color{black!50} 0} & {\color{black!50} 0} \\
&{\color{black!50} 0} & {\color{black!50} 0} & {\color{black!50} 0} & {\color{black!50} 0} \\
&{\color{black!50} 0} & {\color{black!50} 0} & {\color{black!50} 0} & {\color{black!50} 0} \\
&{\color{black!50} 0} & {\color{black!50} 0} & {\color{black!50} 0} & {\color{black!50} 0} \\
&{\color{black!50} 0} & {\color{black!50} 0} & {\color{black!50} 0} & {\color{black!50} 0} 
\end{array}$}
\hfill\null

\caption{The Jacobian of the characteristic polynomial at $I_4{+}M$}
\label{fig:jaccharpoly2}
\end{figure}
Each column of this matrix contains an entry of valuation zero. The 
optimal precision on each individual coefficient of the characteristic 
polynomial of $N$ is then no more than $O(p^{10})$. However the Hermite 
reduced form of the Jacobian is now no longer diagonal, meaning that 
diffused digits of precision do appear. One can moreover count them 
using Eq.~\eqref{eq:numberdiffused} (applied to the Hermite normal
form of the Jacobian): we find $7$. This means that the precision
$$O(p^{10}) X^3 + O(p^{10}) X^2 + O(p^{10}) X + O(p^{10})$$
is not sharp; more precisely, after a suitable base change on $\Qp[X]_{< 
d}$, it should be possible to visualize seven more digits. 
Given that $\chi_N(X) = \chi_M(X{-}1)$, this base change is of course
the one which is induced by the change of variable $X \mapsto X{-}1$.

\paragraph{LU factorization}

Given a positive integer $d$, define $L_d(\Zp)$ as the subspace of 
$M_d(\Zp)$ consisting of lower triangular matrices with zero diagonal.
Similarly let $U_d(\Zp)$ be the subspace space of $M_d(\Zp)$ 
consisting of upper triangular matrices. Clearly $L_d(\Zp)$ and 
$U_d(\Zp)$ are $p$-adic vector spaces of respective dimensions 
$\frac{d(d{-}1)}2$ and $\frac{d(d{+}1)}2$. Let $\calU$ be the open 
subset of $M_d(\Zp)$ defined by the non-vanishing of all principal 
minors. We define two mappings:
$$\varphi : \calU \to L_d(\Zp), \, M \mapsto L{-}I_d 
\quad \text{and} \quad 
\psi : \calU \to L_d(\Zp), \, M \mapsto U$$
where $M = LU$ is the (unique) LU factorization of $M$.
In order to compute the differentials of $\varphi$ and 
$\psi$\footnote{These two functions are indeed differentiable. One can 
prove this by noticing for instance that the entries of $L$ and $U$ are 
given by explicit multivariate rational fractions~\cite[\S 1.4]{Ho75}.}, we
simply differentiate the defining relation $M = LU$. We get this
way the relation $dM = dL \cdot U + L \cdot dU$. We rewrite it as:
$$L^{-1} \cdot dM \cdot U^{-1} = 
\big(L^{-1} \cdot dL\big) + \big(dU \cdot U^{-1}\big)$$
and observe that the first summand of the right hand side is strictly lower 
triangular whereas the second summand of upper triangular. We
therefore derive
$dL = L \cdot \text{Lo}\big(L^{-1} \cdot dM \cdot U^{-1}\big)$ and
$dU = \text{Up}\big(L^{-1} \cdot dM \cdot U^{-1}\big) \cdot U$
where $\text{Lo}$ (resp. $\text{Up}$) is the projection on the first
factor (resp. on the second factor) of the decomposition $M_d(\Zp)
= L_d(\Zp) \oplus U_d(\Zp)$. 
The differentials of $\varphi$ and $\psi$ at $M \in \calU$ are 
then given by the linear mappings:
$$\begin{array}{rl}
d \varphi_M : M_d(\Zp) \to L_d(\Zp), &
dM \mapsto L \cdot \text{Lo}\big(L^{-1} \cdot dM \cdot U^{-1}\big) \smallskip \\
d \psi_M : M_d(\Zp) \to U_d(\Zp), &
dM \mapsto \text{Up}\big(L^{-1} \cdot dM \cdot U^{-1}\big) \cdot U 
\end{array}$$
where $L$ and $U$ are the ``L-part'' and the ``U-part'' of the LU 
factorization of $M$ respectively.

\medskip

As an example, take again the matrix $M$ given by 
Eq.~\eqref{eq:matrixM2}. The Jacobian matrix of $\varphi$ at this point
is the matrix displayed on Figure~\ref{fig:jacLU}.
\begin{figure}
\hfill{\tiny
$\begin{array}{cc@{\hspace{1ex}}r@{\hspace{1ex}}r@{\hspace{1ex}}r@{\hspace{2ex}}r@{\hspace{0ex}}r@{\hspace{4ex}}r@{\hspace{2ex}}c}
 && (2,1) & (3,1) & (3,2) & (4,1) & (4,2) & (4,3) \\
\rule[3ex]{0pt}{\baselineskip}
(1,1) &
\raisebox{0.2ex}{\multirow{16}{*}{$\left(\parbox[c][39ex]{0cm}{}\right.$}} &
 \ldots11,\!00110111 & \ldots11,\!01001101 & \ldots1100101\phantom{,\!0} & \ldots00,\!10010011 & \ldots1100010 & \ldots001,\!01\phantom{0} &
\raisebox{0.2ex}{\multirow{16}{*}{$\left.\parbox[c][39ex]{0cm}{}\right)$}} \\
(1,2) && 0\phantom{,\!00000000} & 0\phantom{,\!00000000} & \ldots110101,\!1 & 0\phantom{,\!00000000} & \ldots011111 & \ldots100,\!01\phantom{0} \\
(1,3) && 0\phantom{,\!00000000} & 0\phantom{,\!00000000} & 0\phantom{,\!0} & 0\phantom{,\!00000000} & 0 & \ldots1011,\!1\phantom{00} \\
(1,4) && 0\phantom{,\!00000000} & 0\phantom{,\!00000000} & 0\phantom{,\!0} & 0\phantom{,\!00000000} & 0 & 0\phantom{,\!000} \\
(2,1) && \ldots111010,\!0111\phantom{0000} & \ldots000000\phantom{,\!00000000} & \ldots0111001110\phantom{,\!0} & \ldots000\phantom{,\!00000000} & \ldots11100010 & \ldots0110,\!111 \\
(2,2) && 0\phantom{,\!00000000} & 0\phantom{,\!00000000} & \ldots0100001001\phantom{,\!0} & 0\phantom{,\!00000000} & \ldots0011111 & \ldots1111,\!011 \\
(2,3) && 0\phantom{,\!00000000} & 0\phantom{,\!00000000} & 0\phantom{,\!0} & 0\phantom{,\!00000000} & 0 & \ldots10111,\!01\phantom{0} \\
(2,4) && 0\phantom{,\!00000000} & 0\phantom{,\!00000000} & 0\phantom{,\!0} & 0\phantom{,\!00000000} & 0 & 0\phantom{,\!000} \\
(3,1) && 0\phantom{,\!00000000} & \ldots111010,\!0111\phantom{0000} & \ldots00110100110\phantom{,\!0} & \ldots000\phantom{,\!00000000} & \ldots00000000 & \ldots1111,\!11\phantom{0} \\
(3,2) && 0\phantom{,\!00000000} & 0\phantom{,\!00000000} & \ldots0111011101\phantom{,\!0} & 0\phantom{,\!00000000} & \ldots0000000 & \ldots1000,\!11\phantom{0} \\
(3,3) && 0\phantom{,\!00000000} & 0\phantom{,\!00000000} & 0\phantom{,\!0} & 0\phantom{,\!00000000} & 0 & \ldots11010,\!1\phantom{00} \\
(3,4) && 0\phantom{,\!00000000} & 0\phantom{,\!00000000} & 0\phantom{,\!0} & 0\phantom{,\!00000000} & 0 & 0\phantom{,\!000} \\
(4,1) && 0\phantom{,\!00000000} & 0\phantom{,\!00000000} & 0\phantom{,\!0} & \ldots111010,\!0111\phantom{0000} & \ldots00110100110 & \ldots1010,\!011 \\
(4,2) && 0\phantom{,\!00000000} & 0\phantom{,\!00000000} & 0\phantom{,\!0} & 0\phantom{,\!00000000} & \ldots0111011101 & \ldots0100,\!111 \\
(4,3) && 0\phantom{,\!00000000} & 0\phantom{,\!00000000} & 0\phantom{,\!0} & 0\phantom{,\!00000000} & 0 & \ldots00100,\!01\phantom{0} \\
(4,4) && 0\phantom{,\!00000000} & 0\phantom{,\!00000000} & 0\phantom{,\!0} & 0\phantom{,\!00000000} & 0 & 0\phantom{,\!000} \\ \\
&&\raisebox{0.2ex}{
 \multirow{6}{*}{$\stackrel{\text{\tiny Hermite}}{\longrightarrow}\quad\left(\parbox[c][17ex]{0cm}{}\right.$}}\hspace{2ex}
 2^{-8} & 2^{-8} \times 11 & 0 & 2^{-8} \times 5 & 0 & 0 &
\raisebox{0.2ex}{\multirow{6}{*}{$\left.\parbox[c][17ex]{0cm}{}\right)$}} \\
&& {\color{black!50} 0} & 2^{-4} & 0 & 0 & 0 & 0 \\
&& {\color{black!50} 0} & {\color{black!50} 0} & 2^{-1} & 0 & 0 & 2^{-3} \\
&& {\color{black!50} 0} & {\color{black!50} 0} & {\color{black!50} 0} & 2^{-4} & 0 & 2^{-3} \\
&& {\color{black!50} 0} & {\color{black!50} 0} & {\color{black!50} 0} & {\color{black!50} 0} & 1 & 2^{-3} \\
&& {\color{black!50} 0} & {\color{black!50} 0} & {\color{black!50} 0} & {\color{black!50} 0} & {\color{black!50} 0} & 2^{-2} 
\end{array}$}
\hfill\null

\caption{The Jacobian of the L-part of the LU factorization at $M$}
\label{fig:jacLU}
\end{figure}
Looking at the minimal valuation of the entries of $J(\varphi)_M$ column 
by column, we find the optimal precision for each entry of the matrix
$L$ (defined as the L-part $L$ of the LU factorization of $M$):
\begin{equation}
\label{eq:optprecLU}
\left(\begin{matrix}
{\color{black!50} -} & {\color{black!50} -} & {\color{black!50} -} & {\color{black!50} -} \\
O(2^2) & {\color{black!50} -} & {\color{black!50} -} & {\color{black!50} -} \\
O(2^2) & O(2^9) & {\color{black!50} -} & {\color{black!50} -} \\
O(2^2) & O(2^{10}) & O(2^7) & {\color{black!50} -} \\
\end{matrix}\right)
\end{equation}
(we recall that the entries of $M$ were all initially given at precision 
$O(2^{10})$). Comparing with the numerical results obtained in \S 
\ref{sssec:compexamples} (see Eq.~\eqref{eq:resultLU}), we see that 
floating-point arithmetic, one more time, found all the relevant 
digits.

A closer look at Figure~\ref{fig:jacLU} shows that the lattice 
$d \varphi_M(2^{10} M_4(\Z_2)) = 
2^{10} \cdot d \varphi_M(M_4(\Z_2))$
has exactly $9$ diffused digits (apply Eq.~\eqref{eq:numberdiffused}).
The precision \eqref{eq:optprecLU} is then globally not optimal although 
it is on each entry separately. For example, if $\ell_{i,j}$ denotes the 
$(i,j)$ entry of $L$, the linear combination $\ell_{3,2} - 11 \cdot 
\ell_{3,1}$ can be known at precision $O(2^6)$ although $\ell_{3,2}$ and
$\ell_{3,1}$ cannot be known at higher precision than $O(2^2)$.
It is remarkable that floating-point arithmetic actually ``saw'' 
these diffused digits; indeed, taking the values computed in
floating-point arithmetic for $\ell_{3,2}$ and $\ell_{3,1}$, we compute:
$$\ell_{3,2} - 11 \cdot \ell_{3,1} = \ldots0000010111.$$
It turns out that the last six digits of the latter value are 
correct!

\begin{rem}
\label{rem:precLU}
For a general input $d \times d$ matrix $M$ whose principal minors have 
valuation $v_1, \ldots, v_d$, we expect that zealous arithmetic looses 
about $\Omega(v_1 + \cdots + v_d)$ significant digits (for the 
computation of the LU factorization of $M$ using standard Gaussian
elimination) while floating-point arithmetics looses only $O(\max(v_1, 
\ldots, v_d))$ significant digits. If $M$ is picked at random in the
space $M_d(\Zp)$ (equipped with its natural Haar measure), the former 
is $O(\frac d{p-1})$ on average while the second is $O(\log_p d)$ on
average~\cite{Ca12}.
\end{rem}

\paragraph{Bézout coefficients}

We now move to commutative algebra. We address the question of the 
computation of the Bézout coefficients of two monic polynomials of 
degree $d$ which are supposed to be coprime.
Let $\mathcal U$ be the subset of $\Qp[X]_{< d} \times \Qp[X]_{< d}$ 
consisting of pairs $(\tilde P, \tilde Q)$ for which $P = X^d + \tilde 
P$ and $Q = X^d + \tilde Q$ are coprime. Observe that $\mathcal U$ is defined by
the non-vanishing of some resultant and so is open. We introduce the
function:
$$\begin{array}{rcl}
\varphi : \quad \calU & \longrightarrow & \Qp[X]_{< d} \times \Qp[X]_{< d} \smallskip \\
(\tilde P, \tilde Q) & \mapsto & (U,V) \quad \text{s.t.} \quad UP + VQ = 1
\end{array}$$
for $P = X^d + \tilde P$ and $Q = X^d + \tilde Q$. (Note that $U$
and $V$ are uniquely determined thanks to the conditions on the degree.)
Once again, the differential of $\varphi$ can be easily computed
by differentiating the defining relation $UP + VQ = 1$; indeed doing so,
we immediately get:
\begin{equation}
\label{eq:diffBezoutrel}
\big(dU \cdot P\big) + \big(dV \cdot Q\big) = dR
\end{equation}
for $dR = - U {\cdot} dP - V {\cdot} dQ$. Eq~\eqref{eq:diffBezoutrel} gives
$dU {\cdot} P \equiv dR \pmod Q$ for what we derive $dU \equiv U {\cdot} dR
\pmod Q$. Comparing degrees we find $dU = U {\cdot} dR \text{ mod } Q$.
Similarly $dV = V {\cdot} dR \text{ mod } P$. 
The differential of $\varphi$ at a pair $(\tilde P, \tilde Q)$ is 
then the linear mapping:
$$\begin{array}{rcl}
d \varphi_{(\tilde P, \tilde Q)} :
\quad \Qp[X]_{< d} \times \Qp[X]_{< d} & \longrightarrow & \Qp[X]_{< d} \times \Qp[X]_{< d} \smallskip \\
(dP, dQ) & \mapsto & (dU, dV) = \big(U{\cdot}dR \text{ mod } Q, \, V{\cdot}dR \text{ mod } P\big) \smallskip \\
& & \hspace{5em}\text{where} \quad dR = - U \cdot dP - V \cdot dQ.
\end{array}$$
It is important to note that $d \varphi_{(\tilde P, \tilde Q)}$ is never
injective because it maps $(-V, U)$ to $0$. Consequently, it is never
surjective either and one cannot apply the precision Lemma to it. The
easiest way to fix this issue is to decompose $\varphi$ as $\varphi = 
(\varphi_U, \varphi_V)$ and to apply the precision Lemma to each 
component separately (though this option leads to less accurate results).

\medskip

For the particular example we have studied in \S 
\ref{sssec:compexamples}, namely:
$$\begin{array}{r@{\hspace{0.2ex}}r@{\hspace{0.5ex}}l@{\hspace{0.2ex}}r@{\hspace{0.5ex}}l}
P = X^4 + {}& (\ldots 1101111111) & X^3 
        + {}& (\ldots 0011110011) & X^2 \\
      {}+ {}& (\ldots 1001001100) & X^{\phantom{1}} 
        + {}& (\ldots 0010111010) \medskip \\
Q = X^4 + {}& (\ldots 0101001011) & X^3 
        + {}& (\ldots 0111001111) & X^2 \\
      {}+ {}& (\ldots 0100010000) & X^{\phantom{1}} 
        + {}& (\ldots 1101000111) 
\end{array}$$
the Jacobian matrix we find is shown on Figure~\ref{fig:jacBezout}.
\begin{figure}
\hfill{\tiny
$\begin{array}{r@{\hspace{0.3ex}}lc@{\hspace{1ex}}rrrr@{\hspace{3ex}}|@{\hspace{3ex}}rrrr@{\hspace{2ex}}c}
&&& \multicolumn{4}{c|@{\hspace{3ex}}}{dU} & \multicolumn{4}{c}{dV} \\
\rule[3ex]{0pt}{\baselineskip}
(dP,&dQ) && X^3 & X^2 & X & 1 & X^3 & X^2 & X & 1 \\
\rule[3ex]{0pt}{\baselineskip}
(X^3,&0) &
\raisebox{0.2ex}{\multirow{8}{*}{$\left(\parbox[c][22ex]{0cm}{}\right.$}} &
           \ldots0100111 & \ldots0001000 & \ldots1000000 & \ldots1010000 & \ldots1011001 & \ldots0100000 & \ldots1000000 & \ldots0100000 &
\raisebox{0.2ex}{\multirow{8}{*}{$\left.\parbox[c][22ex]{0cm}{}\right)$}} \\
(X^2,&0) &&\ldots1010000 & \ldots0010111 & \ldots0111000 & \ldots1000000 & \ldots0110000 & \ldots0101001 & \ldots0110000 & \ldots0000000 \\
(X,&0)   &&\ldots1000000 & \ldots0010000 & \ldots1010111 & \ldots0111000 & \ldots1000000 & \ldots1110000 & \ldots1101001 & \ldots0110000 \\
(1,&0)   &&\ldots1111000 & \ldots1101000 & \ldots0011000 & \ldots1010111 & \ldots0001000 & \ldots0111000 & \ldots0001000 & \ldots1001001 \\
\rule[3ex]{0pt}{\baselineskip}
(0,&X^3) &&\ldots1011001 & \ldots1111000 & \ldots0010000 & \ldots1110000 & \ldots0100111 & \ldots1100000 & \ldots1100000 & \ldots1100000 \\
(0,&X^2) &&\ldots1110000 & \ldots0101001 & \ldots0001000 & \ldots0010000 & \ldots0010000 & \ldots0010111 & \ldots0010000 & \ldots0100000 \\
(0,&X)   &&\ldots0010000 & \ldots0100000 & \ldots0011001 & \ldots0001000 & \ldots1110000 & \ldots0100000 & \ldots1100111 & \ldots1010000 \\
(0,&1)   &&\ldots1001000 & \ldots0101000 & \ldots1011000 & \ldots0011001 & \ldots0111000 & \ldots0111000 & \ldots1001000 & \ldots0000111 
\end{array}$}
\hfill\null

\caption{The Jacobian of the ``Bézout function'' at $(P,Q)$}
\label{fig:jacBezout}
\end{figure}
Each column of this matrix contains an entry of valuation zero, meaning 
that the optimal precision on each coefficient of $U$ and $V$ is 
$O(2^{10})$. Observe that it was not reached by interval arithmetic or 
floating-point arithmetic!
Continuing our analysis, we further observe that the lattice generated 
by the $dU$-part (resp. the $dV$-part) of $J(\varphi)_{(\tilde P, \tilde 
Q)}$ has no diffused digit. The precision
$$O(2^{10}) X^3 + O(2^{10}) X^2 + O(2^{10}) X + O(2^{10})$$
on both $U$ and $V$ is then optimal (but the joint precision induced 
on the pair $(U,V)$ is not).

Another option for fixing the default of surjectivity of $d 
\varphi_{(P,Q)}$ is to observe that $\varphi$ in fact takes its value in 
the hyperplane $H$ of $\Qp[X]_{< d} \times \Qp[X]_{< d}$ consisting of 
pairs of polynomials $(P,Q)$ whose coefficients in degree $d{-}1$ agree. 
Restricting the codomain of $\varphi$ to $H$, we then obtain a 
well-defined mapping whose differential is almost everywhere surjective. 
Applying the precision Lemma, we find that the pair $(U,V)$ has $16$ 
diffused digits!

\begin{rem}
A careful study of the precision in Euclidean algorithm is presented 
in~\cite{Ca17}: we prove in this reference that if $P$ and $Q$ are two
monic polynomials of degree $d$, Euclidean algorithm executed within the 
framework of zealous arithmetic (resp. floating-point arithmetic) looses 
$\Omega(v_0 + \cdots + v_{d{-}1})$ (resp. $O(\max(v_0, \ldots, 
v_{d{-}1}))$) digits on each coefficient where $v_j$ is the valuation of 
the $j$-th scalar subresultant of $P$ and $Q$. Moreover, for random 
polynomials, we have by \cite[Corollary~3.6]{Ca17}:
\begin{align*}
\E\big[v_0 + \cdots + v_{d{-}1}\big] & 
\textstyle \geq \frac d{p-1} \smallskip \\
\E\big[\max(v_0, \ldots, v_{d{-}1})\big] & 
\textstyle \leq \log_p d + O\big(\sqrt{\log_p d}\big).
\end{align*}
Compare this result with the case of LU factorization
(see Remark~\ref{rem:precLU}).
\end{rem}

\paragraph{Polynomial evaluation and interpolation}

Recall that the last example considered in \S \ref{sssec:compexamples}
was about evaluation and interpolation of polynomials: precisely, 
starting with a polynomial $P \in \Zp[X]$ of degree $d$, we first
evaluated it at the points $0, 1, \ldots, d$ and then reconstructed
it by interpolation from the values $P(0), P(1), \ldots, P(d)$.
We have observed that this problem seemed to be numerically highly 
unstable: for example, for $d = 19$ and an initial polynomial $P$ given 
at precision $O(2^{10})$, we were only able to reconstruct a few
number of digits and even found not integral coefficients in many 
places (see Figure~\ref{fig:evalinterpol}). We propose here to give
a theoretical explanation of this phenomenon.

Consider the function $\varphi : \Qp[X]_{\leq d} \to \Qp^{d+1}$, $P 
\mapsto (P(0), P(1), \ldots, P(d))$. Note that it is linear, so that $d 
\varphi_P = \varphi$ for all $P$. 
The numerical stability of our evaluation problem is governed by
the lattice
$H = \varphi(p^N \Zp[X]_{\leq d}) = p^N \cdot \varphi(\Zp[X]_{\leq d})$
where $O(p^N)$ is the initial precision we have on each coefficient of
$P$ (assuming for simplicity that it is the same for all coefficients).
Applying $\varphi$ to the standard basis of $\Zp[X]_{\leq d}$, we find
that $H$ is generated by the row vectors of the Vandermonde matrix:
$$M = \left( \begin{matrix}
1 & 1 & 1 & \cdots & 1 \\
0 & 1 & 2 & \cdots & d \\
0 & 1 & 2^2 & \cdots & d^2 \\
\vdots & \vdots & \vdots & & \vdots \\
0 & 1 & 2^d & \cdots & d^d
\end{matrix} \right).$$
According to Eq.~\eqref{eq:numberdiffused}, the number of diffused
digits of $H$ is then equal to the $p$-adic valuation of the
determinant of $M$ whose value is:
$$\det M = \prod_{0 \leq i < j \leq d} (j-i) = 1! \times 2! \times 
\cdots \times d!.$$
In order to estimate its $p$-adic valuation, we recall the following
result.

\begin{prop}[Legendre's formula]
For all positive integer $n$, we have:
$$\val_p(n!) = \Big\lfloor\frac n p\Big\rfloor + \Big\lfloor\frac n {p^2}\Big\rfloor + \cdots +
\Big\lfloor\frac n {p^k}\Big\rfloor + \cdots$$
where $\left\lfloor\cdot\right\rfloor$ is the usual floor function.
\end{prop}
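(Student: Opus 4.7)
The plan is to exploit the multiplicativity of the $p$-adic valuation together with a double-counting argument. First I would write
\[
\val_p(n!) = \val_p\bigl(1 \cdot 2 \cdots n\bigr) = \sum_{m=1}^n \val_p(m),
\]
using property (2) of the $p$-adic valuation recalled in \S\ref{sssec:padicnorm}. The point is then to re-express $\val_p(m)$ itself as a sum of indicator quantities: for any positive integer $m$, one has
\[
\val_p(m) = \#\bigl\{\, k \geq 1 \;:\; p^k \mid m \,\bigr\} = \sum_{k \geq 1} \mathbf{1}_{p^k \mid m},
\]
which is essentially a tautology (the valuation is the number of powers of $p$ dividing $m$, and this is finite since $p^k > m$ forces $\mathbf{1}_{p^k \mid m} = 0$).

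Next I would substitute this expression into the sum above and swap the order of summation, which is legitimate since all terms are nonnegative and only finitely many are nonzero (for $p^k > n$, every indicator vanishes). This yields
\[
\val_p(n!) = \sum_{k \geq 1} \#\bigl\{\, m \in \{1, \ldots, n\} \;:\; p^k \mid m \,\bigr\}.
\]
The main (easy) step left is to identify the inner cardinality: the multiples of $p^k$ lying in the interval $\{1, 2, \ldots, n\}$ are exactly $p^k, 2p^k, \ldots, \lfloor n/p^k \rfloor \cdot p^k$, whence their number equals $\lfloor n/p^k\rfloor$. Substituting this back gives exactly the claimed formula, and the infinite sum is in fact a finite sum with $\lfloor \log_p n\rfloor$ nonzero terms, as noted.

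There is no real obstacle here, since every step is elementary; the only thing to be slightly careful about is the justification for interchanging the two summations, which as explained above is trivial by finiteness. This is a classical argument and fits well in the exposition as a preparation for estimating $\val_p\bigl(1! \cdot 2! \cdots d!\bigr)$ in order to count the diffused digits of precision appearing in the polynomial evaluation/interpolation example.
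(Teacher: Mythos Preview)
Your argument is correct and is the standard double-counting proof of Legendre's formula. The paper itself does not supply a proof: it simply writes ``Exercise.'' Your write-up would therefore serve perfectly well as the intended solution, and there is nothing to compare against.
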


\begin{proof}
Exercise.
\end{proof}

It follows in particular from Legendre's formula that $\val_p(i!) \geq
\frac i p - 1$, from what we derive:
$$\val_p(\det M) \geq \sum_{i=1}^d {\textstyle \big(\frac i p - 1\big)}
= \frac{d(d+1)}{2p} - d.$$
When $d$ is large compared to $p$, the number of diffused digits
is much larger than the 
number of $p$-adic numbers on which these diffused digits ``diffuse'' 
(which are the $d{+}1$ values $P(0), \ldots, P(d)$). They
will then need to have an influence at high 
precision\footnote{Roughly speaking, if we want to dispatch $n$ 
(diffused) digits 
between $m$ places, we have to increase the precision by at least $\lceil
\frac n m\rceil$ digits; this is the pigeonhole principle.}. This is why it is so 
difficult to get a sharp precision on the tuple $(P(0), \ldots, P(d))$. Concretely the 
$n$-th digits of the coefficients of $P$ may influence the $(n+\frac 
d{2p})$-th digit of some linear combination of the $P(i)$'s and 
conversely the $n$-th digits of the $P(i)$'s may influence the $(n-\frac 
d{2p})$-th digits of the coefficients of $P$. In other words, in order 
to be able to recover all the coefficients of $P$ at the initial 
precision $O(p^N)$, one should have used at least $N+\frac d{2p}$ digits 
of precision (in the model of floating-point arithmetic).

\subparagraph{Evaluation at general points.}

Until now, we have only considered evaluation at the first integers. We 
may wonder whether to what extend the instability we have observed above 
is related to this particular points. 
It turns out that it is totally independant and that similar behaviors 
show up with any set of evaluation points. Indeed, let $a_0, \ldots, 
a_d$ be pairwise distinct elements of $\Zp$ and consider the function 
$\varphi : \Qp[X]_{\leq d} \to \Qp^{d+1}$, $P \mapsto (P(a_0), P(a_1), 
\ldots, P(a_d))$.
It is linear and its Jacobian at any point is the following Vandermonde 
matrix:
$$\left( \begin{matrix}
1 & 1 & 1 & \cdots & 1 \\
a_0 & a_1 & a_2 & \cdots & a_d \\
a_0^2 & a_1^2 & a_2^2 & \cdots & a_d^2 \\
\vdots & \vdots & \vdots & & \vdots \\
a_0^d & a_1^d & a_2^d & \cdots & a_d^d
\end{matrix} \right)$$
whose determinant is
$V(a_0, \ldots, a_d) = \prod_{0\leq i < j \leq d} (a_i - a_j)$.
Thanks to Eq.~\eqref{eq:numberdiffused}, the number of diffused digits 
of the lattice $\varphi(p^N \Zp[X]_{\leq d})$ is the valuation of 
$V(a_0, \ldots, a_d)$. The next lemma ensures that this number is always
as large as it was in the particular case we have considered at first
(\emph{i.e.} $a_i = i$ for all $i$).

\begin{lem}
For all $a_0, \ldots, a_d \in \Zp$, we have:
$$\val\big(V(a_0, \ldots, a_d)\big) \geq 
\val\big(1! \times 2! \times \cdots \times d!\big).$$
\end{lem}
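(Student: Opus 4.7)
The plan is to exploit a change of basis from the monomials $1, X, \ldots, X^d$ to the binomial polynomials $\binom{X}{0}, \binom{X}{1}, \ldots, \binom{X}{d}$, which are the natural $p$-adically-integral substitutes for monomials and already made an appearance in Lemma~\ref{lem:mahler}.

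First, I would write $V(a_0, \ldots, a_d) = \det M$, where $M \in M_{d+1}(\Zp)$ is the Vandermonde matrix with entries $M_{i,j} = a_i^j$ (indices running in $\{0, 1, \ldots, d\}$). Next, I would introduce the matrix $N \in M_{d+1}(\Qp)$ defined by $N_{i,j} = \binom{a_i}{j}$. Since $\binom{X}{j} = \frac{1}{j!}\, X^j + (\text{terms of degree} < j)$, the transition matrix $T$ expressing the $\binom{X}{j}$'s in terms of the monomials is upper triangular with diagonal entries $\frac{1}{0!}, \frac{1}{1!}, \ldots, \frac{1}{d!}$. Taking determinants in the relation $N = M \cdot T$ then yields
$$\det M = (1! \cdot 2! \cdots d!) \cdot \det N.$$

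The key remaining step is to establish that $\det N \in \Zp$, which reduces to proving that $\binom{a}{j} \in \Zp$ for every $a \in \Zp$ and every nonnegative integer $j$. For this I would invoke the classical fact that the polynomial function $a \mapsto \binom{a}{j} = \frac{a(a-1)\cdots(a-j+1)}{j!}$ sends $\Z$ into $\Z$, combined with two basic properties of $\Qp$ established earlier: $\Z$ is dense in $\Zp$, and polynomial evaluation is continuous for the $p$-adic topology. Since $\Zp$ is closed in $\Qp$, the image of $\Zp$ under $a \mapsto \binom{a}{j}$ lies in the closure of $\Z$ inside $\Zp$, which is $\Zp$ itself.

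Once $N \in M_{d+1}(\Zp)$ is established, its determinant has nonnegative valuation, and the identity above immediately gives $\val(\det M) \geq \val(1! \cdot 2! \cdots d!)$, which is the claim. The main (and essentially only) subtlety is the verification that binomial coefficients evaluated at $p$-adic integers stay in $\Zp$; everything else is formal linear algebra and the explicit determinant of an upper-triangular matrix.
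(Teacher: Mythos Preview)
Your proof is correct and takes a genuinely different route from the paper's. The paper argues by induction on the cardinality $n=d+1$: it partitions $\{a_0,\ldots,a_d\}$ according to residue classes modulo $p$, notes that each pair within the same class contributes an extra unit of valuation, rescales each class by $p$ to invoke the induction hypothesis, and then minimizes the resulting sum $\sum_i v(n_i)$ over all decompositions $n_1+\cdots+n_p=n$ via a discrete convexity argument on the auxiliary function $v(m)=\binom{m}{2}+\val(1!\cdots(m{-}1)!)$. Your approach is instead a one-line linear-algebra identity: the column operations turning the monomial basis into the binomial basis $\binom{X}{j}$ have determinant $(1!\cdots d!)^{-1}$, and the transformed matrix has entries $\binom{a_i}{j}\in\Zp$, so its determinant is $p$-integral.

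Your argument is shorter and more conceptual, and it immediately yields the integer divisibility statement that the paper records as a separate remark after its proof. The paper's inductive partition-by-residues approach, on the other hand, exposes more of the internal structure --- how the valuation of the Vandermonde accumulates from the clustering of the $a_i$'s modulo successive powers of $p$ --- and would be the natural starting point if one wanted to characterize the equality case or prove sharper estimates.
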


\begin{proof}
Given a finite subset $A \subset \Zp$, we denote by $\nu(A)$ the 
valuation of $V(x_1, \ldots, x_n)$ where the $x_i$'s are the elements of 
$A$. We note that $V(x_1, \ldots, x_n)$ depends up to a sign to the way 
the elements of $A$ are enumerated but $\nu(A)$ depends only on $A$. We
are going to prove by induction on the cardinality $n$ of $A$ that
$\nu(A) \geq \val\big(1! \times 2! \times \cdots \times (n{-}1)!\big)$.

If $A$ has cardinality $2$, the result is obvious. Let us now consider 
a finite subset $A$ of $\Zp$ of cardinality $n$. For $i \in \{1, \ldots,
p\}$, let $A_i$ be the subset of $A$ consisting of elements which are
congruent to $i$ modulo $p$ and write $n_i = \text{Card } A_i$. Define
in addition $B_i$ as the set of $\frac{x-r} p$ for $x$ varying in $A_i$.
Clearly $B_i$ is a subset of $\Zp$ of cardinality $n_i$.
Moreover, we easily check that: 
\begin{equation}
\label{eq:valvdm}
\nu(A) = \sum_{i=1}^p \nu(A_i) 
= \sum_{i=1}^p \left(\frac{n_i \: (n_i{-}1)} 2 + \nu(B_i)\right).
\end{equation}
Define the function $v : \N \to \N$,
$n \mapsto \frac{n(n{-}1)} 2 + \val(1!) + \cdots + \val((n{-}1)!)$.
It follows from the induction hypothesis that $\nu(A) \geq v(n_1) + 
\cdots + v(n_p)$. Observe that $f(n{+}1) - f(n) = n + \val(n!)$ and
deduce from this that $f(n) + f(m) \leq f(n{+}1) + f(m{-}1)$ as soon
as $m \geq n+2$. Applying again and again this inequality and noting
in addition that $n_1 + \cdots + n_p = n$, we end up with:
\begin{equation}
\label{eq:convexityv}
v(n_1) + \cdots + v(n_p) \geq r \cdot v(q{+}1) + (p{-}r) \cdot v(q)
\end{equation}
where $q$ and $r$ are the quotient and the remainder of the Euclidean
division of $n$ by $p$ respectively. 
It is enough to prove that the right hand side of 
Eq.~\eqref{eq:convexityv} is equal to $\val(1!) + \cdots + 
\val((n{-}1)!)$. This follows from Eq.~\eqref{eq:valvdm} applied with
the particular set $A = \{0, 1, \ldots, n{-}1\}$.
\end{proof}

\begin{rem}
The above proof shows in a similar fashion 
that if $a_0, \ldots, a_d$ are integers, then $V(a_0, \ldots, a_d)$
is divisible by $1! \times 2! \times \cdots \times d!$.
\end{rem}

The conclusion of the above analysis is that the 
evaluation-interpolation strategy --- which is used for instance in FFT 
--- cannot be used in the $p$-adic setting without further care. 
Precisely, from this point of view, one should really think of $\Zp$ as 
if it were a ring of characteristic $p$ and reuse all the techniques 
developed in the case of finite fields for avoiding divisions by $p$.

\subsubsection{Newton iteration}
\label{sssec:NewtonC2}

We have already seen that Hensel's Lemma is a powerful tool in the 
$p$-adic context. However, as highlighted in \S 
\ref{sssec:intervalNewton}, it may have a strange behavior regarding 
precision. Below we use the precision Lemma in order to study carefully 
this phenomenon. We also take the opportunity to extend the Newton 
scheme (on which Hensel's Lemma is based) to the framework of 
multivariate functions of class $C^2$.

\paragraph{Functions of class $C^2$ in the $p$-adic setting}

Recall that, given two $\Qp$-vector spaces $E$ and $F$, we denote by
$\calL(E,F)$ the space of linear mappings from $E$ to $F$. Similarly,
Given three $\Qp$-vector spaces $E_1$, $E_2$ and $F$, we denote by 
$\calB(E_1{\times}E_2, F)$ the space of bilinear functions 
$E_1{\times}E_2 \to F$. We recall that there exist canonical 
isomorphisms --- the so-called \emph{curryfication isomorphisms} --- 
between $\calB(E_1{\times}E_2, F)$, $\calL(E_1, \calL(E_2,F))$ and 
$\calL(E_2, \calL(E_1,F))$. When $E_1 = E_2 = E$, we define further 
$\calB^\symm(E{\times}E)$ as the subspace of $\calB(E{\times}E, F)$ 
consisting of \emph{symmetric} bilinear functions.

We recall also that when $E$ and $F$ are endowed with norms, the
space $\calL(E,F)$ inherits the norm operator defined by 
$\Vert f \Vert_{\calL(E,F)} = \sup_{x \in B_E(1)} \Vert f(x) \Vert_F$.
Similarly, we endow $\calB(E_1{\times}E_2,F)$ with the norm
$\Vert \cdot \Vert_{\calB(E_1{\times}E_2,F)}$ defined by
$\Vert b \Vert_{\calB(E_1{\times}E_2,F)} = \sup_{x_1 \in B_{E_1}(1), 
x_2 \in B_{E_2}(1)} \Vert b(x_1,x_2) \Vert_F$. The curryfication
isomorphisms do preserve the norm.

The definition of ``being of class $C^2$'' is inspired from the 
alternative definition of ``being of class $C^1$'' provided by Remark 
\ref{rem:altdefC1multi} (see also Remark \ref{rem:altdefC1uni}).

\begin{deftn}
\label{def:classC2}
Let $E$ and $F$ be two normed finite dimensional vector spaces and let 
$U$ be an open subset of $E$.
A function $f : U \to F$ is of \emph{class $C^2$} if there exist:
\begin{itemize}
\renewcommand{\itemsep}{0pt}
\item a function $df : U \to \calL(E,F)$, $x \mapsto df_x$
\item a function $d^2 f: U \to \calB^\symm(E{\times}E, F)$, $x \mapsto d^2 f_x$
\item a covering $(U_i)_{i \in I}$ of $U$, and
\item for all $i \in I$, a \emph{continuous} real-valued function 
$\varepsilon_i : \R^+ \to \R^+$ vanishing at $0$
\end{itemize}
such that, for all $i \in I$ and all $x,y \in U_i$:
\begin{equation}
\label{eq:defC2}
\textstyle
\Vert f(y) - f(x) - df_x(y{-}x) - \frac 1 2 d^2 f_x(y{-}x, y{-}x) \Vert_F \leq 
\Vert y{-}x \Vert_E^2 \cdot \varepsilon_i\big(\Vert y{-}x \Vert_E\big)
\end{equation}
\end{deftn}

\begin{prop}
Let $E$ and $F$ be two normed finite dimensional vector spaces and let 
$U$ be an open subset of $E$. Let $f : U \to F$ be a function of class 
$C^2$. Then:
\begin{enumerate}[(i)]
\renewcommand{\itemsep}{0pt}
\item
The function $f$ is of class $C^1$ on $U$ and its differential is the
function $df$ of Definition \ref{def:classC2};
\item
the function $df$ is of class $C^1$ on $U$ as well and its 
differential is the function $d^2 f$ (viewed as a element of $\calL(E, 
\calL(E,F))$) of Definition \ref{def:classC2}.
\end{enumerate}
\end{prop}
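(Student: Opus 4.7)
The plan is to derive both statements from the defining inequality \eqref{eq:defC2} by symmetrisation arguments. The key preliminary, and the main obstacle, is to show that $x \mapsto \|d^2 f_x\|_\calB$ is locally bounded on $U$, since Definition~\ref{def:classC2} assumes no a priori regularity of $d^2 f$ as a function of $x$. For this, I would fix $v \in U$ and $U_{i_0}$ containing $v$, choose a ball $B = v + p^n \Zp^d$ small enough that $v + 2 p^n \Zp^d \subset U_{i_0}$, and write \eqref{eq:defC2} at base point $x \in B$ simultaneously for $y = x + h$ and $y = x - h$ (with $\|h\|_E \leq p^{-n}$). Summing these cancels the linear terms $df_x(\pm h)$ by parity and yields
\begin{equation*}
\|f(x+h) + f(x-h) - 2 f(x) - d^2 f_x(h,h)\|_F \leq \|h\|_E^2\, \varepsilon_{i_0}(\|h\|_E).
\end{equation*}
Bounding $\|f\|_F$ uniformly on $v + 2 p^n \Zp^d$ via \eqref{eq:defC2} at $v$, ultrametricity then yields a uniform bound $\|d^2 f_x(h,h)\|_F \leq M'$ for $x \in B$ and $\|h\|_E = p^{-n}$; $\Qp$-homogeneity of $d^2 f_x$ upgrades this to a uniform bound on $\|d^2 f_x\|_\calB$ on $B$ (invoking polarisation to pass from the quadratic form to the symmetric bilinear form).

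Once local boundedness of $d^2 f$ is established, statement (i) is immediate: refining the covering so that $\|d^2 f_x\|_\calB$ is bounded by some $M_i$ on each $U_i$, the ultrametric triangular inequality applied to \eqref{eq:defC2} gives
\begin{equation*}
\|f(y) - f(x) - df_x(y-x)\|_F \leq \|y-x\|_E^2 \cdot \max\big(|1/2|\, M_i,\, \varepsilon_i(\|y-x\|_E)\big)
\end{equation*}
for $x, y \in U_i$, which is of the form $\|y-x\|_E \cdot \varepsilon'_i(\|y-x\|_E)$ with $\varepsilon'_i$ continuous and vanishing at $0$---exactly the $C^1$ characterisation of Remark~\ref{rem:altdefC1multi}.

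For statement (ii), I would write \eqref{eq:defC2} at base point $x$ applied to $y$ and to $y + h$ (with $x, y, y+h$ in a common $U_i$), subtract them, and apply the bilinear identity $d^2 f_x(u+h, u+h) - d^2 f_x(u, u) = 2 d^2 f_x(u, h) + d^2 f_x(h, h)$ with $u = y - x$. Comparing the resulting expansion of $f(y+h) - f(y)$ with \eqref{eq:defC2} at base point $y$ applied to $y + h$ yields
\begin{equation*}
A(h) := (df_y - df_x)(h) - d^2 f_x(y-x, h) = \tfrac{1}{2}[d^2 f_x(h,h) - d^2 f_y(h,h)] + R(h),
\end{equation*}
with $\|R(h)\|_F \leq C\, \max(\|y-x\|_E, \|h\|_E)^2\, \varepsilon_i(\max(\|y-x\|_E, \|h\|_E))$. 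Since $A$ is $\Qp$-linear (hence odd) in $h$ while the bracketed quadratic term is even in $h$, substituting $-h$ and taking the half-difference eliminates the quadratic obstruction, giving $A(h) = \tfrac{1}{2}(R(h) - R(-h))$ and therefore $\|A(h)\|_F \leq C\, \max(\|y-x\|_E, \|h\|_E)^2\, \varepsilon_i(\max(\|y-x\|_E, \|h\|_E))$. Finally I would exploit the linearity of $A$ in $h$ to rescale: for $h \in E$ with $\|h\|_E \leq 1$, pick $\lambda \in \Qp$ with $|\lambda|$ the largest power of $p$ satisfying $\|\lambda h\|_E \leq \|y-x\|_E$; the bound then specialises to $\|A(\lambda h)\|_F \leq C\, \|y-x\|_E^2\, \varepsilon_i(\|y-x\|_E)$, and dividing by $|\lambda| \geq \|y-x\|_E/(p\|h\|_E)$ yields $\|A(h)\|_F \leq C'\, \|y-x\|_E\, \varepsilon_i(\|y-x\|_E)$ uniformly in $\|h\|_E \leq 1$. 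Taking the supremum over such $h$ produces the operator-norm estimate $\|df_y - df_x - d^2 f_x(y-x, \cdot)\|_{\calL(E,F)} \leq C'\, \|y-x\|_E\, \varepsilon_i(\|y-x\|_E)$, which under the curryfication isomorphism is exactly the $C^1$ condition of Remark~\ref{rem:altdefC1multi} for $df$ with differential $d^2 f$.
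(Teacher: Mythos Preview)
Your proof is correct and follows essentially the same architecture as the paper's: bound the quadratic term to get (i), then combine three instances of the $C^2$ inequality to obtain an identity of the form $\ell(h) = b(h,h) + (\text{small})$ with $\ell$ linear and $b$ quadratic, separate the linear and quadratic parts, and rescale $h$ to pass to the operator norm. Two points of difference are worth noting. First, you derive the local boundedness of $\|d^2 f_x\|_\calB$ explicitly from the defining inequality via the second-difference $f(x+h)+f(x-h)-2f(x)$; the paper simply asserts this boundedness from compactness of the refined $U_i$, which is not fully justified since no continuity of $x \mapsto d^2 f_x$ is assumed a priori --- so your treatment is in fact more careful here. (Incidentally, your requirement ``$v + 2p^n\Zp^d \subset U_{i_0}$'' is Archimedean reflex: by ultrametricity, $x \pm h$ already lies in $B$ whenever $x\in B$ and $\|h\|_E \le p^{-n}$, so $B \subset U_{i_0}$ suffices.) Second, to separate $\ell(h)$ from $b(h,h)$ you use the parity substitution $h \mapsto -h$, whereas the paper uses the substitution $h \mapsto (p{+}1)h$, exploiting that $|p{+}1|=1$ while $(p{+}1)^2 - (p{+}1) = p(p{+}1)$ has norm $<1$; both devices accomplish the same cancellation up to harmless constants (a factor $|1/2|$ for yours, a factor $p$ for the paper's).
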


\begin{proof}
We consider the data $df$, $d^2 f$, $U_i$, $\varepsilon_i$ given by 
Definition \ref{def:classC2}. Without loss of generality, we may assume 
that the $U_i$'s are all balls and that the $\varepsilon_i$'s are all 
non-decreasing functions. By this additional assumption, $U_i$ is
compact; the function $x \mapsto \Vert d^2 f_x \Vert_{\calB(E{\times}E,
F)}$ is then bounded on $U_i$ by a constant $C_i$. Therefore
Eq.~\eqref{eq:defC2} implies:
$$\Vert f(y) - f(x) - df_x(y{-}x) \Vert_F \leq 
\Vert y{-}x \Vert_E^2 \cdot \max\big(C_i, \varepsilon_i\big(\Vert y{-}x \Vert_E\big)\big)$$
for all $x,y \in U_i$. The first assertion of the Proposition follows.

We now prove the second assertion. We fix $i \in I$. Let $x,y \in U_i$. 
We set $\delta = \Vert y{-}x \Vert_E$,
\begin{align*}
\ell & = df_x - df_y - d^2 f_x(y-x) \in \calL(E,F) \\
\text{and} \quad
b & = \textstyle \frac 1 2 \big(d^2 f_x - d^2 f_y\big) \in \calB^\symm(E{\times}E, F).
\end{align*}
Observe that, thanks to our assumption on $U_i$ (and ultrametricity), 
the ball $B_E(y,\delta)$ of centre $y$ and radius $\delta$ is included 
in $U_i$. For $z \in B_E(y,\delta)$, we define:
\begin{align*}
\delta_{z,x} & \textstyle 
  = f(z) - f(x) - df_x(z{-}x) - \frac 1 2 d^2 f_x(z{-}x, z{-}x) \smallskip \\
\delta_{z,y} & \textstyle
  = f(z) - f(y) - df_y(z{-}y) - \frac 1 2 d^2 f_y(z{-}y, z{-}y) \smallskip \\
\delta_{y,x} & \textstyle
  = f(y) - f(x) - df_x(y{-}x) - \frac 1 2 d^2 f_x(y{-}x, y{-}x).
\end{align*}
By our assumptions, these three vectors have norm at most $\delta^2 
\cdot \varepsilon_i(\delta)$.
A simple computation using linearity and bilinearity yields 
$\delta_{z,x} - \delta_{z,y} - \delta_{y,x} = \ell(z{-}y) - 
b(z{-}y,z{-}y)$. Consequently, we get the estimation:
$$\Vert \ell(h) - b(h,h) \Vert_F \leq \delta^2 \cdot \varepsilon_i(\delta)$$
for all $h \in E$ with $\Vert h \Vert_E \leq \delta$. Applying it with
$(p{+}1)h$ in place of $h$, we find:
$$\Vert \ell(h) - (p{+}1) \cdot b(h,h) \Vert_F \leq \delta^2 \cdot \varepsilon_i(\delta)$$
and combining now the two above inequalities, we end up with
$\Vert b(h,h) \Vert_F \leq p \delta^2 \cdot \varepsilon_i(\delta)$ and
so $\Vert \ell(h) \Vert_F \leq p \delta^2 \cdot \varepsilon_i(\delta)$
as well. This inequality being true for all $h \in B_E(\alpha)$, we 
derive $\Vert \ell \Vert_{\calL(E,F)} \leq p^2 \delta \cdot 
\varepsilon_i(\delta)$. This proves the differentiability of $df$
together with the fact that its differential is $d^2 f$.
\end{proof}

\paragraph{Hensel's Lemma for functions of class $C^2$}

We want to develop an analogue of Hensel's Lemma (see Theorem 
\ref{th:Hensel}) for a function $f : U \to F$ of class $C^2$ whose 
domain $U$ is an open subset of a finite dimensional $p$-adic vector 
space $E$ and whose codomain $F$ is another $p$-adic vector space of the same 
dimension. For simplicity, we assume further that $U = B_E(1)$ (the
general case is deduced from this one by introducing coverings).
Under this additional assumption, the function $d^2 f$ is bounded, 
\emph{i.e.} there exists a constant $C$ such that
$\Vert d^2 f_x \Vert_{\calB(E{\times}E, F)} \leq C$ for all $x \in B_E(1)$.
Plugging this into Eq.~\eqref{eq:defC2} and possibly enlarging a bit
$C$, we derive the two estimations:
\begin{align}
\forall x, y \in B_E(1), \quad 
& \Vert f(y) - f(x) - df_x(y{-}x) \Vert_F \leq 
  C \cdot \Vert y{-}x \Vert_E^2 \label{eq:Calmost2} \medskip \\
& \Vert df_y - df_x \Vert_{\calL(E,F)} \leq 
  C \cdot \Vert y{-}x \Vert_E \label{eq:Calmost2diff}
\end{align}
which are the key points for making Newton iteration work. 
We consider a point $v \in B_E(1)$ for which:
\begin{equation}
\label{eq:hypHensel}
df_v \text{ is invertible}
\quad \text{and} \quad
\Vert f(v) \Vert_F \cdot \Vert df_v^{-1} \Vert_{\calL(F,E)}^2 
\,<\, C^{-1} \,\leq\, 
\Vert df_v^{-1} \Vert_{\calL(F,E)}.
\end{equation}
We set $r = \big(C \cdot \Vert df_v^{-1} \Vert_{\calL(F,E)}\big)^{-1}$ 
and denote by $V$ the open ball of centre $v$ and radius $r$. Observe
that $r \leq 1$, so that $V \subset B_E(1)$.

\begin{lem}
\label{lem:HenselC2}
Under the assumptions \eqref{eq:Calmost2}, \eqref{eq:Calmost2diff}
and \eqref{eq:hypHensel}, the linear function $df_x$ is invertible
and $\Vert df_x^{-1} \Vert_{\calL(F,E)} = \Vert df_v^{-1} \Vert_{\calL(F,E)}$
for all $x \in V$. Moreover, the Newton iteration mapping 
$\calN : V \to E$, $x \mapsto x - df_x^{-1}(f(x))$
takes its values in $V$ and satisfies:
\begin{equation}
\label{eq:estimNewton}
\Vert f(\calN(x)) \Vert_F \leq
C \cdot \Vert df_v^{-1} \Vert_{\calL(F,E)}^2 \cdot \Vert f(x) \Vert_F^2.
\end{equation}
\end{lem}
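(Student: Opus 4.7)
The plan is to prove the three assertions in order: (i) invertibility of $df_x$ on $V$ with preserved operator norm, (ii) the self-map property $\calN(V) \subset V$, and (iii) the quadratic estimate~\eqref{eq:estimNewton}.

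For (i), I would factor $df_x = df_v \circ (\id_E + A)$ with $A = df_v^{-1} \circ (df_x - df_v)$. Estimate~\eqref{eq:Calmost2diff} together with the definition of $r$ yields $\Vert A \Vert_{\calL(E,E)} < 1$, so submultiplicativity forces $\Vert A^n \Vert \to 0$ and the Neumann series $\sum_{n \geq 0}(-A)^n$ converges in the Banach algebra $\calL(E,E)$. This produces an inverse of $\id_E + A$ whose operator norm is exactly~$1$ by ultrametricity: every term but the identity contributes strictly less than $1$, so the identity dominates. Hence $df_x^{-1}$ exists and $\Vert df_x^{-1} \Vert \leq \Vert df_v^{-1} \Vert$; observing in turn that $v$ lies in the analogous open ball of radius $r_x = (C \Vert df_x^{-1} \Vert)^{-1} \geq r$ around $x$, the symmetric argument gives the reverse inequality.

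For (ii), I would decompose
\[
df_x^{-1}(f(x)) \;=\; df_x^{-1}(f(v)) \,+\, \bigl(df_x^{-1} \circ df_v\bigr)(x-v) \,+\, df_x^{-1}(R),
\]
where $R = f(x) - f(v) - df_v(x-v)$ satisfies $\Vert R \Vert_F \leq C \Vert x-v \Vert_E^2$ by~\eqref{eq:Calmost2}. The middle summand is bounded simply by $\Vert x-v \Vert_E$ thanks to the identity $df_x^{-1} \circ df_v = (\id_E + A)^{-1}$ of operator norm~$1$ established in (i); the other two are estimated routinely through $\Vert df_x^{-1} \Vert = \Vert df_v^{-1} \Vert$. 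Combining with the strict hypotheses $\Vert f(v) \Vert_F < (C \Vert df_v^{-1} \Vert^2)^{-1}$ from \eqref{eq:hypHensel} and $\Vert x-v \Vert_E < r$, each of the three summands turns out strictly smaller than $r$. Two successive applications of ultrametricity then give $\Vert \calN(x) - x \Vert_E < r$ and $\Vert \calN(x) - v \Vert_E < r$.

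Claim (iii) will follow at once: setting $h = -df_x^{-1}(f(x))$, linearity gives $df_x(h) = -f(x)$, so that $f(\calN(x)) = f(x+h) - f(x) - df_x(h)$, whose norm is bounded via~\eqref{eq:Calmost2} by $C \Vert h \Vert_E^2 \leq C \Vert df_v^{-1} \Vert^2 \Vert f(x) \Vert_F^2$ (the inclusion $\calN(x) \in B_E(1)$ needed to legitimately invoke~\eqref{eq:Calmost2} is granted by~(ii), since $r \leq 1$ as a consequence of the second half of~\eqref{eq:hypHensel}). The main obstacle is step~(ii): the naive strategy of bounding $df_v(x-v)$ via $\Vert df_v \Vert \cdot \Vert x-v \Vert_E$ fails because the hypotheses constrain only the \emph{inverse} of $df_v$. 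The rescue is to recognize $df_x^{-1} \circ df_v$ as $(\id_E + A)^{-1}$, whose ultrametric operator norm is pinned at $1$ regardless of $\Vert df_v \Vert$, a characteristically non-archimedean saving with no real-setting counterpart.
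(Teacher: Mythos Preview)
Your proof is correct and follows the same strategy as the paper: Neumann series for the invertibility, an ultrametric three-term split for the self-map property, and the direct quadratic bound for~\eqref{eq:estimNewton}. The one difference is in step~(ii): you expand $f(x)$ about $v$ and must then invoke $\Vert df_x^{-1} \circ df_v\Vert_{\calL(E,E)} = \Vert(\id_E+A)^{-1}\Vert_{\calL(E,E)} = 1$ to handle the middle term, whereas the paper expands $f(v)$ about $x$, so that applying $df_x^{-1}$ to $df_x(v-x)$ yields $(v-x)$ directly and bypasses this extra (though perfectly valid) observation.
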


\begin{proof}
Let $x \in V$. Set $g = df_v - df_x$.
By the inequality \eqref{eq:Calmost2diff}, we have:
$$\Vert g \Vert_{\calL(E,F)} = 
\Vert df_v - df_x \Vert_{\calL(E,F)} \leq C \cdot
\Vert v{-}x \Vert_E < C r = \big(\Vert df_v^{-1} \Vert_{\calL(F,E)}\big)^{-1}.$$
Write
$df_x = df_v - g = df_v \circ (\id_E - df_v^{-1} 
\circ g)$. The above estimation shows that the norm of $df_v^{-1} 
\circ g$ is strictly less than $1$. Therefore the series
$\sum_{n=0}^\infty (df_v^{-1} \circ g)^{\circ n}$ converges to
an inverse of $(\id_E - df_v^{-1} \circ g)$. In particular 
$(\id_E - df_v^{-1} \circ g)$ is invertible and so is $df_x$.
Remark in addition that the norm of $\sum_{n=0}^\infty (df_v^{-1} 
\circ g)^{\circ n}$ is at most $1$ and so
$\Vert df_x^{-1} \Vert_{\calL(F,E)} \leq \Vert df_v^{-1} \Vert_{\calL(F,E)}$.
Reverting the roles of $v$ and $x$, we find even better that equality 
does hold.
Applying now \eqref{eq:Calmost2}, we get:
$$\Vert f(v) - f(x) - df_x(v{-}x) \Vert_F 
  \leq C \cdot \Vert v{-}x \Vert_E^2
  < C r^2 = \frac r {\Vert df_v^{-1} \Vert_{\calL(F,E)}^2}.$$
Applying $df_x^{-1}$, we deduce $\Vert df_x^{-1}(f(v)) - df_x^{-1}(f(x)) 
- (v{-}x) \Vert_F < r$. Notice furthermore that $\Vert v{-}x\Vert_E <
r$ by assumption and $\Vert df_x^{-1}(f(v)) \Vert_F \leq \Vert df_v^{-1} 
\Vert_{\calL(F,E)} \cdot \Vert f(v) \Vert_F < r$.
Consequently we find $\Vert df_x^{-1}(f(x)) \Vert_F < r$ as well, for
what we derive $\calN(x) \in V$. Eq.~\eqref{eq:estimNewton} finally
follows by applying again Eq.~\eqref{eq:Calmost2} with $x$ and 
$\calN(x)$.
\end{proof}

\begin{cor}[Hensel's Lemma for function of class $C^2$]
\label{cor:HenselC2}
Under the assumptions \eqref{eq:Calmost2}, \eqref{eq:Calmost2diff}
and \eqref{eq:hypHensel}, the sequence $(x_i)_{i \geq 0}$ defined by 
the recurrence
$$x_0 = v \quad ; 
\quad x_{i+1} = \calN(x_i) = x_i - df_{x_i}^{-1}(f(x_i))$$
is well defined and converges to $x_\infty \in V$ such that
$f(x_\infty) = 0$.
The rate of convergence is given by:
$$\Vert x_i - x_\infty \Vert_E \leq 
r \cdot \Big(C \cdot \Vert f(v) \Vert_F \cdot \Vert df_v^{-1} \Vert_{\calL(F,E)}^2 \Big)^{2^i} =
\frac {\Big(C \cdot \Vert f(v) \Vert_F \cdot \Vert df_v^{-1} \Vert_{\calL(F,E)}^2 \Big)^{2^i}}
{C \cdot \Vert df_v^{-1} \Vert_{\calL(F,E)}}.$$
Moreover $x_\infty$ is the unique solution in $V$ to the equation $f(x) 
= 0$.
\end{cor}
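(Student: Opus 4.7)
The plan is to leverage Lemma \ref{lem:HenselC2}, which already supplies both the one-step quadratic estimate and the invariance of the open ball $V$ under the Newton operator $\calN$; what remains is to iterate, pass to the limit, and handle uniqueness. Set $\rho = C \cdot \Vert f(v) \Vert_F \cdot \Vert df_v^{-1} \Vert_{\calL(F,E)}^2$, so that hypothesis \eqref{eq:hypHensel} becomes $\rho < 1$. First I would prove by induction on $i$ the conjunction
$$(H_i) : \quad x_i \in V
\quad \text{and} \quad
C \cdot \Vert df_v^{-1} \Vert_{\calL(F,E)}^2 \cdot \Vert f(x_i) \Vert_F \,\leq\, \rho^{2^i}.$$
The base case $(H_0)$ reduces to the definition of $\rho$ together with $x_0 = v \in V$. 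For the inductive step, assuming $(H_i)$, Lemma~\ref{lem:HenselC2} applied at $x_i \in V$ gives $x_{i+1} = \calN(x_i) \in V$ and, by the quadratic bound \eqref{eq:estimNewton}, $\Vert f(x_{i+1})\Vert_F \leq C \Vert df_v^{-1}\Vert^2 \cdot \Vert f(x_i)\Vert_F^2$, which after multiplying both sides by $C \Vert df_v^{-1}\Vert^2$ squares the bound in $(H_i)$ and yields $(H_{i+1})$.

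Second, I would show the sequence is Cauchy and identify its limit. Writing $x_{i+1} - x_i = -df_{x_i}^{-1}(f(x_i))$ and using $\Vert df_{x_i}^{-1}\Vert_{\calL(F,E)} = \Vert df_v^{-1}\Vert_{\calL(F,E)}$ (again from Lemma \ref{lem:HenselC2}) together with $(H_i)$, I obtain
\begin{equation*}
\Vert x_{i+1} - x_i \Vert_E
\,\leq\, \Vert df_v^{-1}\Vert_{\calL(F,E)} \cdot \Vert f(x_i)\Vert_F
\,\leq\, r \cdot \rho^{2^i},
\end{equation*}
where $r = (C \cdot \Vert df_v^{-1}\Vert_{\calL(F,E)})^{-1}$. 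Since $\rho < 1$, this general term tends to zero, so by ultrametricity (the sequential analogue of Corollary \ref{cor:convseries}) the sequence $(x_i)$ is Cauchy, and by Corollary \ref{cor:complete} it converges to some $x_\infty$ lying in the closed ball of radius $r$ around $v$. The ultrametric telescope estimate $\Vert x_i - x_\infty\Vert_E \leq \max_{j \geq i} \Vert x_{j+1} - x_j\Vert_E \leq r \rho^{2^i}$ gives exactly the rate announced in the statement. Since $(H_i)$ forces $\Vert f(x_i)\Vert_F \to 0$ and $f$ is continuous (being of class $C^2$), passing to the limit yields $f(x_\infty) = 0$.

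For uniqueness, suppose $y \in V$ satisfies $f(y) = 0$. I would apply the $C^2$ estimate \eqref{eq:Calmost2} with $x = x_\infty$ and the pair $(x_\infty, y)$; since $f(y) = f(x_\infty) = 0$, this collapses to
\begin{equation*}
\Vert df_{x_\infty}(y - x_\infty)\Vert_F \,\leq\, C \cdot \Vert y - x_\infty\Vert_E^2.
\end{equation*}
Lemma \ref{lem:HenselC2} ensures $df_{x_\infty}$ is invertible with $\Vert df_{x_\infty}^{-1}\Vert_{\calL(F,E)} = \Vert df_v^{-1}\Vert_{\calL(F,E)}$; applying $df_{x_\infty}^{-1}$ then gives $\Vert y - x_\infty\Vert_E \leq r^{-1} \cdot \Vert y - x_\infty\Vert_E^2$. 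Either $y = x_\infty$, or dividing by $\Vert y - x_\infty\Vert_E$ forces $\Vert y - x_\infty\Vert_E \geq r$; but both $y$ and $x_\infty$ lie in the open ball $V$ of radius $r$ around $v$, so ultrametricity yields $\Vert y - x_\infty\Vert_E < r$, a contradiction.

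The main obstacle is not any single hard estimate but rather keeping the constants $C$, $r$, and $\rho$ in lockstep so that the induction $(H_i)$, the one-step displacement bound, and the final convergence rate line up exactly as stated; once Lemma \ref{lem:HenselC2} is in hand, the argument is a transparent ultrametric copy of the polynomial case of Theorem \ref{th:Hensel}.
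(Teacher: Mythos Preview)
Your proof is correct and follows essentially the same route as the paper: define $\rho$, iterate Lemma~\ref{lem:HenselC2} to get the quadratic decay $\Vert f(x_i)\Vert_F \leq (C\Vert df_v^{-1}\Vert^2)^{-1}\rho^{2^i}$, deduce $\Vert x_{i+1}-x_i\Vert_E \leq r\rho^{2^i}$, pass to the limit, and handle uniqueness via \eqref{eq:Calmost2} at $x_\infty$. The only cosmetic difference is that you package the induction as an explicit conjunction $(H_i)$, and you should note that your rate estimate at $i=0$ actually places $x_\infty$ in the \emph{open} ball $V$ (not just the closed one), which is what licenses applying Lemma~\ref{lem:HenselC2} at $x_\infty$ in the uniqueness step.
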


\begin{proof}
We define $\rho = C \cdot 
\Vert f(v) \Vert_F \cdot \Vert df_v^{-1} \Vert_{\calL(F,E)}^2$
By Lemma \eqref{lem:HenselC2}, all the $x_i$'s lie in $V$ and:
\begin{equation}
\label{eq:normfxi}
\Vert f(x_{i+1}) \Vert_F \leq 
C \cdot \Vert df_v^{-1} \Vert_{\calL(F,E)}^2 \cdot \Vert f(x_i) \Vert_F^2
\end{equation}
for all $i$. By induction, we derive $\Vert f(x_i) \Vert_F \leq
\big(C \cdot \Vert df_v^{-1} \Vert_{\calL(F,E)}^2\big)^{-1} 
\cdot \rho^{2^i}$. Therefore:
\begin{equation}
\label{eq:diffxi}
\Vert x_{i+1} - x_i \Vert_E =
\Vert df_{x_i}^{-1}(f(x_i)) \Vert_E \leq
\Vert df_v^{-1} \Vert_{\calL(F,E)} \cdot \Vert f(x_i) \Vert_F \leq
r \cdot \rho^{2^i}.
\end{equation}
The sequence $(x_i)_{i \geq 0}$ is a Cauchy sequence and therefore 
converges to some $x_\infty$ for which $f(x_\infty) = 0$.
Eq.~\eqref{eq:diffxi} implies moreover the announced rate of convergence 
together with the fact that $x_\infty$ belongs to $V$.
It then only remains to prove the uniqueness of the solution to the 
equation $f(x) = 0$ in $V$. For 
this, assume that $y \in V$ satisfies $f(y) = 0$. 
Instantiating Eq.~\eqref{eq:Calmost2} with $x = x_\infty$, we obtain
$\Vert df_{x_\infty}(y{-}x_\infty) \Vert_F \leq C \cdot 
\Vert y{-}x_\infty \Vert_E^2$. Hence:
$$\Vert y{-}x_\infty \Vert_E 
= \Vert df_{x_\infty}^{-1}\big(df_{x_\infty}(y{-}x_\infty)\big) \Vert_E 
\leq C \cdot \Vert df_v^{-1} \Vert_{\calL(F,E)} \cdot
  \Vert y{-}x_\infty \Vert_E^2 
= r^{-1} \cdot \Vert y{-}x_\infty \Vert_E^2.$$
Since moreover $\Vert y - x_\infty \Vert_E$ has to be strictly less
than $r$ by our assumptions, we derive $y = x_\infty$ and uniqueness
follows.
\end{proof}

\begin{rem}
\label{rem:HenselC2}
Hensel's Lemma is stable in the sense that its conclusion remains
correct for a sequence $(x_i)_{i \geq 0}$ satisfying the weaker
recurrence:
$$\quad x_{i+1} = x_i - df_{x_i}^{-1}(f(x_i))\, +\, (\text{some
 small perturbation})$$
as soon as the perturbation is small enough to continue to ensure 
that the estimation \eqref{eq:normfxi} holds.
\end{rem}

\paragraph{Precision}

In practice, the function $f$ of which we want to find a root is often 
not exact but given with some uncertainty; think typically of the case
where $f$ is a polynomial (of given degree) with coefficients in $\Qp$
given at some finite precision.
In order to model this, we introduce $C^2(B_E(1),F)$, the set 
of functions $f : B_E(1) \to F$ of class $C^2$. We endow it with the
$C^2$-norm defined by
$$\Vert f \Vert_{C^2} = \max \big( 
\Vert f \Vert_\infty,
\Vert df \Vert_\infty,
\Vert d^2f \Vert_\infty\big)$$
where the infinite norm of a function is defined as usual as the
supremum of the norms of its values.
We consider in addition a finite dimensional subspace $\calF$ of 
$C^2(B_E(1),F)$. The uncertainty on $f$ will be modeled by some lattice 
in $\calF$.

We fix $v \in B_E(1)$ together with two positive real numbers $C$ and 
$r$. We assume $r \leq 1$. Let $V$ be the open ball in $E$ of centre $v$ 
and radius $r$; clearly $V \subset B_E(1)$.
Let $\calU$ be the open subset of $\calF$ consisting of function $f$
for which Eqs.~\eqref{eq:Calmost2}, \eqref{eq:Calmost2diff} and 
\eqref{eq:hypHensel} hold and $\Vert df_v^{-1} \Vert_{\calL(F,E)}
= \frac r C$. By Hensel's Lemma (Corollary \ref{cor:HenselC2}), any
$f \in \calU$ has a unique zero in $V$. The function:
$$\begin{array}{rcl}
\calZ : \quad \calU & \longrightarrow & V \\
f & \mapsto & x \quad \text{s.t.} \quad x\in V \text{ and } f(x) = 0
\end{array}$$
is then well defined. In the idea of applying the precision Lemma, we
study its differentiability.

\begin{lem}
\label{lem:diffZ}
The function $\calZ$ is differentiable on $\calU$. 
Moreover, its differential at $f \in \calU$ is the linear mapping 
$d\calZ_f : \varphi \mapsto - df_{\calZ(f)}^{-1}(\varphi(\calZ(f)))$.
\end{lem}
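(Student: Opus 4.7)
The plan is to establish the differentiability of $\calZ$ essentially as a $p$-adic version of the implicit function theorem, applied to the equation $f(x) = 0$. The main tool is the $C^2$ estimate \eqref{eq:Calmost2}, which provides a quadratic control on the error when linearizing $f$, together with Hensel's Lemma (Corollary~\ref{cor:HenselC2}) itself, which guarantees that the zero of a perturbation $f + \varphi$ exists and is unique in $V$ provided $\varphi$ is sufficiently small.

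First I would fix $f \in \calU$, set $x = \calZ(f)$, and note that by openness of $\calU$ in $\calF$ (which follows from continuity of the data entering conditions~\eqref{eq:Calmost2}, \eqref{eq:Calmost2diff} and~\eqref{eq:hypHensel} with respect to the $C^2$-norm), there exists $\eta > 0$ such that $f + \varphi \in \calU$ for all $\varphi \in \calF$ with $\Vert \varphi \Vert_{C^2} \leq \eta$. For such $\varphi$, set $y = \calZ(f + \varphi) \in V$, so that $f(y) = -\varphi(y)$. Applying \eqref{eq:Calmost2} with $x$ and $y$ and using $f(x) = 0$ gives
\[
\bigl\Vert df_x(y-x) + \varphi(y) \bigr\Vert_F \leq C \cdot \Vert y-x \Vert_E^2.
\]
Applying $df_x^{-1}$, whose norm equals $\frac r C$ by hypothesis, and using $\Vert \varphi(y) \Vert_F \leq \Vert \varphi \Vert_{C^2}$, I obtain
\[
\bigl\Vert (y-x) + df_x^{-1}(\varphi(y)) \bigr\Vert_E \leq \tfrac 1 r \cdot \Vert y-x \Vert_E^2.
\]

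The next step is to derive a first crude bound $\Vert y-x \Vert_E = O(\Vert \varphi \Vert_{C^2})$. From the previous inequality and the ultrametric triangular inequality,
\[
\Vert y-x \Vert_E \leq \max\bigl(\tfrac r C \cdot \Vert \varphi \Vert_{C^2},\; \tfrac 1 r \cdot \Vert y-x \Vert_E^2\bigr).
\]
Since $y \in V$, we have $\Vert y-x \Vert_E < r$, so $\tfrac 1 r \cdot \Vert y-x\Vert_E^2 < \Vert y-x\Vert_E$; hence the maximum is forced to be the first term and $\Vert y-x \Vert_E \leq \tfrac r C \Vert \varphi \Vert_{C^2}$.

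Finally, I would replace $\varphi(y)$ by $\varphi(x)$ in the quadratic estimate: since $\varphi$ is of class $C^1$ with $\Vert d\varphi \Vert_\infty \leq \Vert \varphi \Vert_{C^2}$, we have $\Vert \varphi(y) - \varphi(x) \Vert_F \leq \Vert \varphi \Vert_{C^2} \cdot \Vert y-x \Vert_E = O(\Vert \varphi \Vert_{C^2}^2)$. Injecting this into the displayed inequality above yields
\[
\bigl\Vert (y-x) + df_x^{-1}(\varphi(x)) \bigr\Vert_E = O(\Vert \varphi \Vert_{C^2}^2),
\]
which is exactly the statement that $\calZ$ is differentiable at $f$ with differential $\varphi \mapsto -df_x^{-1}(\varphi(x))$. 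The linearity and continuity of this differential in $\varphi$ are clear from the formula. The main technical obstacle is not conceptual but bookkeeping: making sure that the quadratic error terms genuinely absorb into $o(\Vert \varphi \Vert_{C^2})$, which the preliminary crude bound $\Vert y-x \Vert_E = O(\Vert \varphi \Vert_{C^2})$ precisely ensures.
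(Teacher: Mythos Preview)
Your argument is correct and takes a genuinely different route from the paper's. The paper defines $y = x - df_x^{-1}(\varphi(x))$ as the \emph{first-order approximation} to the new zero, bounds $\Vert g(y)\Vert_F$ (with $g = f+\varphi$) using \eqref{eq:Calmost2} and \eqref{eq:Calmost2diff}, and then invokes the convergence rate of Corollary~\ref{cor:HenselC2} to control $\Vert y - \calZ(g)\Vert_E$. You instead set $y = \calZ(f+\varphi)$ directly, apply \eqref{eq:Calmost2} to $f$ at $x$ and $y$, and run an ultrametric bootstrapping: first the crude bound $\Vert y-x\Vert_E = O(\Vert\varphi\Vert_{C^2})$ (using that $x,y\in V$ forces $\Vert y-x\Vert_E < r$, so the quadratic term cannot dominate), then the refined estimate after replacing $\varphi(y)$ by $\varphi(x)$. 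Your route is more self-contained in that it avoids re-invoking the Hensel machinery; the paper's route is shorter because that machinery is already in place.

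Two small points worth tightening. First, the Lipschitz bound $\Vert \varphi(y)-\varphi(x)\Vert_F \leq \Vert d\varphi\Vert_\infty\,\Vert y-x\Vert_E$ does not come for free from the $p$-adic $C^1$ definition (the $\varepsilon$-term in \eqref{eq:defC1multi2} could in principle exceed $\Vert d\varphi\Vert_\infty$); it does hold up to a uniform constant because $\calF$ is finite-dimensional, and that is all you need for the $O(\Vert\varphi\Vert_{C^2}^2)$ conclusion. Second, there is a harmless constant slip: with $\Vert df_x^{-1}\Vert = r/C$ the bound after applying $df_x^{-1}$ is $r\,\Vert y-x\Vert_E^2$, not $\tfrac1r\,\Vert y-x\Vert_E^2$; the bootstrapping still works since $r\leq 1$ and $\Vert y-x\Vert_E<r$ give $r\Vert y-x\Vert_E^2 < \Vert y-x\Vert_E$.
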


\begin{proof}
The lemma can be seen as an application of the $p$-adic implicit 
functions Theorem (see~\cite[Proposition~4.3]{Sc11}). Below, we give 
a direct proof avoiding it.
Let $f, g \in \calU$. We set $x = \calZ(f)$, $\varphi = g{-}f$
and $y = x - df_x^{-1}(\varphi(x))$.
Since $f(x)$ vanishes, Eq.~\eqref{eq:Calmost2} reads:
\begin{equation}
\label{eq:diffZ1}
\Vert g(y) - (\id_E - dg_x \circ df_x^{-1})(\varphi(x)) \Vert_F \leq
C \cdot \Vert y{-}x \Vert_E^2 \leq 
C \cdot \Vert df_x^{-1} \Vert_{\calL(F,E)}^2 \cdot \Vert \varphi \Vert_\infty^2.
\end{equation}
From the identity $\id_E - dg_x \circ df_x^{-1} = (df_x - dg_x) \circ 
df_x^{-1}$, we deduce moreover that 
\begin{equation}
\label{eq:diffZ2}
\Vert \id_E - dg_x \circ df_x^{-1}\Vert_{\calL(E,E)} \leq
\Vert df_x^{-1} \Vert_{\calL(F,E)} \cdot 
\Vert df_x - dg_x \Vert_{\calL(E,F)} \leq \Vert df_x^{-1} \Vert_{\calL(F,E)}
\cdot \Vert d \varphi \Vert_\infty.
\end{equation}
Combining \eqref{eq:diffZ1} and \eqref{eq:diffZ2}, we derive
$\Vert g(y) \Vert_F \leq \Vert df_x^{-1} \Vert_{\calL(F,E)}
\cdot \Vert \varphi \Vert_{C^2}^2$. Corollary \ref{cor:HenselC2}
then implies 
$\Vert y - \calZ(g) \Vert_E \leq \Vert df_x^{-1} \Vert_{\calL(F,E)}^2
\cdot \Vert \varphi \Vert_{C^2}^2 =  \Vert df_v^{-1} \Vert_{\calL(F,E)}^2
\cdot \Vert \varphi \Vert_{C^2}^2$, which proves the lemma.
\end{proof}

Applying the precision Lemma, we end up with the next corollary.

\begin{cor}
If the uncertainty on $f$ is given by a (sufficiently rounded and small) 
lattice $\calH \subset \calU$, then the optimal precision on $x = \calZ(f)$ 
is $df_x^{-1} \circ \ev_x(\calH)$ where $\ev_x : \calH \to F$ is the 
evaluation morphism at $x$.
\end{cor}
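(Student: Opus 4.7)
The plan is to apply the precision Lemma (Theorem~\ref{theo:preclemma}) directly to the function $\calZ : \calU \to V$ at the point $f$, using Lemma~\ref{lem:diffZ} for the formula of the differential. Under the implicit assumption that $\ev_x|_{\calF}$ is surjective onto $F$ (which is necessary for the statement to make sense and is satisfied in all reasonable situations, e.g. when $\calF$ contains the constant functions), the conclusion $\calZ(f+\calH) = \calZ(f) + d\calZ_f(\calH)$ will translate, after substituting the explicit expression of $d\calZ_f$, into precisely the announced formula $x + df_x^{-1} \circ \ev_x(\calH)$ (the minus sign from Lemma~\ref{lem:diffZ} being absorbed since $\calH$ is a lattice, hence stable under negation).

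First, I would verify the hypotheses of the precision Lemma. The differentiability of $\calZ$ at every point of $\calU$ is already contained in Lemma~\ref{lem:diffZ}, together with the formula $d\calZ_g(\varphi) = -dg_{\calZ(g)}^{-1}(\varphi(\calZ(g)))$. The surjectivity of $d\calZ_f$ at the base point $f$ follows from the fact that $df_x^{-1}: F \to E$ is a $\Qp$-linear isomorphism (it is invertible by Corollary~\ref{cor:HenselC2}) composed with $\ev_x$ restricted to $\calF$, which we assume to be surjective.

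Second, I need to upgrade the pointwise differentiability of $\calZ$ to the class $C^1$ property required by Theorem~\ref{theo:preclemma}. For this I would show that the map $g \mapsto d\calZ_g \in \calL(\calF, E)$ is continuous on a suitable neighborhood of $f$. This reduces to proving that $g \mapsto \calZ(g)$ is continuous on $\calU$ (which follows from the effective estimate $\Vert \calZ(g) - y \Vert_E \leq \Vert df_v^{-1} \Vert^2 \cdot \Vert g \Vert_{C^2}^2$ proved at the end of Lemma~\ref{lem:diffZ}, applied with $y$ an approximate zero obtained by one Newton step) together with the continuity of $(g,x) \mapsto dg_x^{-1} \circ \ev_x$ on its domain, which is straightforward from the $C^2$ regularity of the elements of $\calF$ and the fact that inversion is continuous on the invertible linear maps. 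To promote continuity of $d\calZ$ to the uniform estimate~\eqref{eq:defC1} characterizing class $C^1$, one combines these ingredients with the quadratic error estimate already produced in the proof of Lemma~\ref{lem:diffZ}.

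Third, once the precision Lemma is applicable, choose a lattice $\calH \subset \calF$ which is sufficiently rounded and small (in the sense of Definition~\ref{def:roundedsmall}) so that $f + \calH \subset \calU$ and the hypotheses of Theorem~\ref{theo:preclemma} are met. The conclusion $\calZ(f + \calH) = \calZ(f) + d\calZ_f(\calH)$ then rewrites as $\calZ(f+\calH) = x - df_x^{-1}(\ev_x(\calH)) = x + df_x^{-1}(\ev_x(\calH))$, which is the optimal precision claim.

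The main obstacle will be the verification of the class $C^1$ property of $\calZ$: Lemma~\ref{lem:diffZ} only delivers differentiability at each point, and one needs the uniform local estimate~\eqref{eq:defC1} in order to invoke the precision Lemma. This is essentially a packaging of the continuity of $g \mapsto (\calZ(g), dg_{\calZ(g)}^{-1})$ together with the quadratic estimate already hidden in the proof of Lemma~\ref{lem:diffZ}, but it is the only technical point that is not immediate from what has been stated.
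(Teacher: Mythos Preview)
Your proposal is correct and follows exactly the approach the paper takes: the paper's entire proof is the single sentence ``Applying the precision Lemma, we end up with the next corollary,'' so you have simply unpacked what that application requires. Your observation that Lemma~\ref{lem:diffZ} only establishes differentiability (not class $C^1$) and that this gap must be bridged to invoke Theorem~\ref{theo:preclemma} is a genuine technical point the paper leaves implicit; your sketch of how to fill it via continuity of $g \mapsto (\calZ(g), dg_{\calZ(g)}^{-1})$ together with the quadratic estimate from the proof of Lemma~\ref{lem:diffZ} is the right idea.
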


\noindent
It is quite instructive to compare this result with the optimal
precision we get after a single iteration of the Newton scheme.
In order to do so, we introduce the map:
$$\begin{array}{rcl}
\calN : \quad \calU \times V & \longrightarrow & V \\
(f,x) & \mapsto & f(x) - df_x^{-1}(f(x))
\end{array}$$
which is well defined thanks to Lemma~\ref{lem:HenselC2}. It is moreover 
clear that $\calN$ is differentiable since it appears as a composite of 
differentiable functions.
A straightforward (but tedious) calculation shows that its differential
at $(f,x)$ is given by:
$$d \calN_{(f,x)} : (\varphi, \xi) \mapsto
-df_x^{-1}\big(\varphi(x)\big) + df_x^{-1}\big(d^2 f_x(\xi, f(x))\big)
+ df_x^{-1}\big( d\varphi_x(f(x))\big).$$
In particular, we observe that if $f(x) = 0$, the last two terms
vanish as well, so that the linear mapping $d \calN_{(f,x)}$ does 
not depend on the second variable $\xi$ and agrees with $d \calZ_f$.
In the language of precision, this means that the optimal precision 
on $\calZ(f)$ is governed by the last iteration of the Newton scheme.

In practice, this result suggests the following strategy: \emph{in order 
to compute $\calZ(f)$, we start with some $v$ satisfying the 
requirements of Hensel's Lemma (Corollary \ref{cor:HenselC2}), we 
iterate the Newton scheme \emph{without taking care (so much) of 
precision} until the obtained approximation looks correct and we finally 
perform a last iteration reintroducing the machinery for tracking 
precision.}
By Remark~\ref{rem:HenselC2}, the omission of precision tracking is 
harmless while the equality $d \calZ_f = d \calN_{(f, \calZ(f))}$ shows 
(under mild assumptions on the lattice $\calH$) that the precision we 
will get at the end (using the above strategy) will be very good because 
the loss of precision will just come from the last iteration and so will 
be limited. 

This behavior is very well illustrated by the example of the 
computation of square roots detailed in \S \ref{sssec:intervalNewton}.
(We then encourage the reader to read it and study it again in light of 
the discussion of this paragraph.)
More recently Lairez and Vaccon applied this strategy for the 
computation of the solutions of $p$-adic differential equations with 
separation of variables~\cite{LaVa16}; they managed to improve this way a
former algorithm by Lercier and Sirvent for computing isogenies between 
elliptic curves in positive characteristic~\cite{LeSi08}.

\subsection{Lattice-based methods for tracking precision}
\label{ssec:latticearith}

Until now, we have explained how the precision Lemma can be used for 
finding the optimal precision and analyzing this way the stability of 
algorithms. It turns out that the precision Lemma is also useful for 
\emph{stabilizing} algorithms, \emph{i.e.} for producing a stable 
procedure by altering slightly another given procedure which might be 
highly unstable.

\subsubsection{The method of adaptive precision}
\label{sssec:adaptiveprecision}

We place ourselves in the following general setting. Let 
\texttt{Phi} be a routine that computes a mathematical function $\varphi 
: U \to V$ whose domain $U$ is an open subset of a finite dimensional 
$\Qp$-vector space $E$ (\emph{i.e.} \texttt{Phi} takes as input a tuple 
of $p$-adic numbers but may possibly fails for particular instances 
lying a closed subset) and whose codomain $V$ is an open subset of
another finite dimensional $\Qp$-vector space (\emph{i.e.} \texttt{Phi} 
outputs a tuple of $p$-adic numbers as well).
We assume that $f$ is of class $C^1$ on $U$. We moreover fix an input $x 
\in U$ for which the differential $df_x$ is surjective.

\begin{rem}
While the $C^1$-assumption is usually harmless, the surjectivity
assumption is often more serious. 
For instance, observe that it implies that the dimension of $F$ 
(\emph{i.e.} the size of the output) is not greater than the dimension 
of $E$ (\emph{i.e.} the size of the input). Clearly there exist many 
interesting algorithms that do not satisfy this requirement and 
therefore do not fit into our framework.
One can always however workaround this issue as follows. We write the 
space of outputs $F$ as a direct sum $F = F_1 \oplus \cdots \oplus F_m$ 
and decompose the procedure \texttt{Phi} accordingly, \emph{i.e.} for 
each $i$, we introduce the algorithm $\texttt{Phi}_i$ that outputs only 
the $i$-th part of $\texttt{Phi}$. Clearly $\texttt{Phi}_i$ is modeled 
by the mathematical function $\varphi_i = \pr_i \circ \varphi$ where 
$\pr_i : F \to F_i$ is the canonical projection. As a consequence 
$d\varphi_{i,x} = \pr_i \circ d \varphi_x$; if the $F_i$'s are small 
enough, it will then be unlikely that one of them is not surjective.
For instance, in the special case where the $F_i$'s are all lines,
the writing $F = F_1 \oplus \cdots \oplus F_m$ corresponds to the
choice of a basis of $F$ and the surjectivity of the $d\varphi_{i,x}$'s
is equivalent to the fact that each column of the Jacobian matrix has
a nonzero entry (which is clearly much weaker than requiring its
surjectivity).
\end{rem}

Our aim is to compute $\varphi(x)$ using the procedure \texttt{Phi}; we 
would like moreover to be sharp in terms of precision and get proved results. We 
have seen that neither zealous arithmetic, nor floating-point arithmetic 
can ensure these two requirements at the same time: zealous arithmetic is 
usually not sharp (see \S \ref{sssec:compexamples}, \S 
\ref{sssec:diffexamples} for many examples) while floating-point 
arithmetic is not proved.
The case of lazy/relaxed arithmetic is a bit aside but it is not quite 
satisfactory either. Indeed recall that in the lazy approach, we are 
fixing a target precision; the output of \texttt{Phi} will then be sharp 
by design. The issue is elsewhere and comes from the fact that the 
lazy/relaxed machinery will generally compute the input $x$ at a higher 
precision than needed, consuming then more resources than necessary (see 
\S \ref{sssec:preclemmaatwork} for a theoretical discussion about this
and/or \S \ref{sssec:somos} below for a concrete example).

\paragraph{Context of zealous arithmetic: precision on inputs}

We assume that the input $x$ is given at some precision which is 
represented by a lattice $H$ in $E$: if the $i$-th coordinate of $x$ is 
given at precision $O(p^{N_i})$, the lattice $H$ is the diagonal lattice
$$H = p^{N_1} \Zp \oplus p^{N_2} \Zp \oplus \cdots \oplus
p^{N_d} \Zp \quad \text{(with $d = \dim E$)}$$
(see also \S \ref{sssec:preclemmaatwork}). We underline nevertheless 
that non diagonal lattices $H$ (corresponding to precision data with
diffused digits, see Definition \ref{def:diffuseddigits}) are also 
permitted. We assume that $H$ is nice enough so that the precision 
Lemma applies, giving:
\begin{equation}
\label{eq:adapt}
\varphi(x+H) = \varphi(x) + d\varphi_x(H).
\end{equation}

\subparagraph{Separation of approximation and precision.}

The formula above strongly suggests to split the computation of 
$\varphi(x{+}H)$ into two independent parts corresponding to the two 
summands. Concerning precision, we have to compute the lattice $d 
\varphi_x(H)$. 
One option for this 
is to rely on automatic differentiation techniques~\cite{BaBrChDi00,Ne10}; 
this approach however often leads to costly computations and 
really affects the complexity of our algorithm. 
For this reason, alternative strategies are often preferable (when 
available). The simplest one is probably to precompute $d \varphi_x$ by 
hand (we have seen in \S \ref{sssec:diffexamples} that it is tractable 
in many situations) and plug the obtained result into our algorithm.
On the other hand, we observe that $d \varphi_x(H)$ is a lattice in $F$ 
and is therefore encoded by a $\dim F \times \dim F$ matrix, which might 
be quite a large object (compare with $\dim F$ which is the size of 
the output). Therefore, if we do not need to be \emph{extremely} careful
on precision, we could prefer replacing $d \varphi_x(H)$ by a slightly
larger lattice $H_\max$ which is more easily representable, \emph{e.g.}
$H_\max$ could be a diagonal lattice.

For now on, we assume that we have at our disposal a lattice $H_\max$ 
containing $d \varphi_x(H)$ and we focus on the computation of the first 
term of Eq.~\eqref{eq:adapt}, that is the approximation.
Since $d \varphi_x(H)$ is stable under addition, 
Eq.~\eqref{eq:adapt} yields $\varphi(x+H) = y + d\varphi_x(H) 
\subset y + H_\max$ for any $y$ lying in $\varphi(x{+}H)$. Using now
that $H_\max$ is stable under addition as well, we end up with 
$\varphi(x+H) \subset y + H_\max$ as soon as $y \in \varphi(x{+}H) + 
H_\max$.
To conclude, it is then enough to 
compute the value of \emph{any} element of $\varphi(x{+}H)$ at precision 
$H_\max$. For doing so, we can rely on zealous arithmetic: we increase 
sufficiently the precision on $x$ (picking arbitrary values for digits) 
so that $\varphi(x)$ can be computed at precision at least $H_\max$ and 
we output $\varphi(x)$ at precision $H_\max$. 
In more conceptual terms, what we do is to choose an element $x' \in 
x{+}H$ together with a diagonal lattice $H' \subset H$ having the 
property that zealous arithmetic computes $y$ and $H'_\zealous$ 
such that $f(x'+H') \subset y + H'_\zealous$ and 
$H'_\zealous \subset H_\max$. We thus have:
$$y \, \in \, \varphi(x'+H') + H'_\zealous 
\, \subset \, \varphi(x+H) + H_\max$$
as wanted (see also Figure~\ref{fig:adaptive1}).
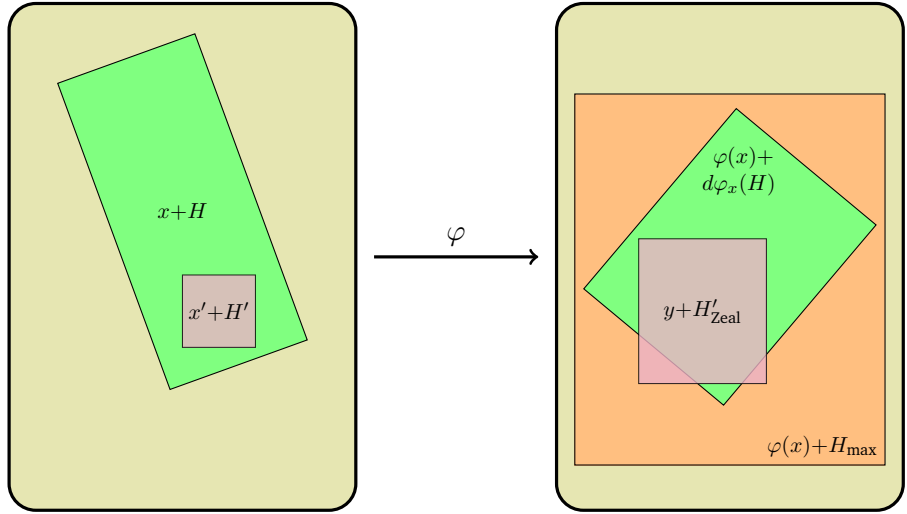
\begin{figure}
\hfill
\begin{tikzpicture}[scale=1.2]

\begin{scope}[beige, rounded corners=10pt]
\fill (0.1,0.2) rectangle (3.9,5.8);
\fill (6.1,0.2) rectangle (9.9,5.8);
\end{scope}

\draw[->,very thick] (4.1,3)--(5.9,3)
  node[midway,above] { $\varphi$ };

\begin{scope}[xshift=2cm,yshift=3.5cm,rotate=20]
\draw[black,fill=green!50] (-0.8,-1.8) rectangle (0.8,1.8);
\node[scale=0.75] at (0,0) { $x{+}H$ };
\end{scope}
\draw[black,opacity=0.8,fill=purple!30] (2,2.8) rectangle (2.8,2);
\node[scale=0.75] at (2.4,2.4) { $x'{+}H'$ };

\begin{scope}[xshift=8cm,yshift=3cm]
\draw[black,fill=orange!50] (-1.7,-2.3) rectangle (1.7,1.8);
\node[above left,scale=0.75] at (1.7,-2.3) { $\varphi(x){+}H_\max$ };
\draw[black,fill=green!50,rotate=-40] (-1,-1.3) rectangle (1,1.3);
\node[scale=0.75] at (0.15,1.1) { $\varphi(x){+}{}$ };
\node[scale=0.75] at (0.1,0.8) { $d\varphi_x(H)$ };
\draw[black,opacity=0.8,fill=purple!30] (-1,-1.4) rectangle (0.4,0.2);
\node[scale=0.75] at (-0.3,-0.6) { $y{+}H'_\zealous$ };
\end{scope}

\begin{scope}[very thick, rounded corners=10pt]
\draw (0.1,0.2) rectangle (3.9,5.8);
\draw (6.1,0.2) rectangle (9.9,5.8);
\end{scope}
\end{tikzpicture}
\hfill \null

\caption{The method of adaptive precision: first attempt}
\label{fig:adaptive1}
\end{figure}
These ideas lead to the following implementation:

\begin{lstlisting}
    def Phi_stabilized_v1(x):
        lift x(*\textrm{ at precision }$H'$*)
        y = Phi(x)    # (*\color{comment}\textrm{here }$\tty$\textrm{ is computed at precision }$H'_{\zealous}$*)
        return y(*\textrm{ at precision} $H_{\max}$*)
\end{lstlisting}

\noindent
The method presented above works but has a serious drawback: in 
practice, if often happens that the precision encoded by $H'$ 
is very high, leading then to very costly computations. In the next 
paragraph, we explain a strategy for dealing with this issue.

\subparagraph{Viewing \textrm{\tt Phi} as a sequence of steps.}

Roughly speaking, the problem with the above approach comes from the 
fact that the precision is tracked using interval arithmetic all along 
the execution of \texttt{Phi}. In order to avoid this, the idea consists 
in decomposing \texttt{Phi} into several steps; we then let interval 
arithmetic do the job of tracking precision within each individual step 
but we readjust the precision (using informations coming from the 
precision Lemma) each time we switch from one step to the next one. 
Concretely the readjustment is made by applying the method of the
previous paragraph to each individual step.

Let us now go further into the details. As said above, we view the routine 
\texttt{Phi} as a finite sequence of steps $\texttt{Step}_1, \ldots, 
\texttt{Step}_n$. Let $\sigma_i : U_{i-1} \to U_i$ be the mathematical 
function modeling the $i$-th step; its codomain $U_i$ is the space of 
outputs of the $i$-th step, that is the space of ``active'' variables 
after the execution of $i$-th step. Mathematically, it is an open 
subset in a finite dimensional $\Qp$-vector space $E_i$. For $i \in \{0, 
\ldots, n\}$, we set $\varphi_i = \sigma_i \circ \cdots \circ \sigma_1$ 
and $x_i = \varphi_i(x)$; they are respectively the function modeling 
the execution of the first $i$ steps of \texttt{Phi} and the state of 
the memory after the execution of the $i$-th step of \texttt{Phi} on the 
input $x$.
In what follows, we always assume that $d\varphi_{i,x}$ is 
surjective\footnote{This assumption is rather restrictive but really 
simplifies the discussion.} and, even more, that the precision Lemma 
applies with $\varphi_i$, $x$ and $H$, meaning that, for all $i$:
\begin{equation}
\label{eq:adapt2}
\varphi_i(x+H) = x_i + d \varphi_{i,x}(H).
\end{equation}
We define $H_i = d \varphi_{i,x}(H)$ and we assume that we
are given a ``simple'' lattice $H_{i,\max}$ containing $H_i$.
However, for the application we have in mind, this datum will not be 
sufficient; indeed, we shall need to be \emph{extremely} careful with
precision at intermediate levels. In order to do so \emph{without having 
to manipulate the ``complicated'' lattice $H_i$}, we assume that we are 
given in addition another ``simple'' lattice $H_{i,\min}$ which is 
\emph{contained} in $H_i$. With these notations, the stabilized version
of \texttt{Phi} takes the following schematic form:

\begin{lstlisting}[label={algo:phistabzealousv2}]
    def Phi_stabilized_v2(x):
        (*$\tty_0$*) = x
        for (*$i$*) in (*$1, 2, \ldots, n$*):
            lift (*$\tty_{i-1}$\textrm{ at enough precision}*)
            (*$\tty_i$*) = (*$\texttt{Step}_i(\tty_{i-1})$*)  # (*\color{comment}\textrm{here we want }$\tty_i$\textrm{ to be computed at precision at least }$H_{i,\min}$*)
        return (*$\tty_n$\textrm{ at precision} $H_{n,\max}$*)
\end{lstlisting}

\noindent
The locution ``enough precision'' in the above code means that we
want to ensure that zealous arithmetics is able to compute $\tty_i$ 
at precision $H_{i,\min}$. Let $H'_{i-1}$ be a lattice encoding such
an acceptable precision. Rigorously, it can be defined as follows. We
set $y_0 = x$ and for $i \in \{1,\ldots, n\}$, we define inductively 
$H'_{i-1}$, $y_i$ and $H'_{i,\zealous}$ by requiring that the routine 
$\texttt{Step}_i$ called on the input $y_{i-1}$ given at precision 
$H'_{i-1}$ computes $y_i$ at precision $H'_{i,\zealous}$ with 
$H'_{i,\zealous} \subset H_{i,\min}$ (see Figure~\ref{fig:adaptive2}). 
\begin{figure}
\hfill
\begin{tikzpicture}[scale=1.2]

\begin{scope}[beige, rounded corners=10pt]
\fill (0.1,0.2) rectangle (3.9,5.8);
\fill (5.1,0.2) rectangle (8.9,5.8);
\end{scope}

\draw[->,very thick] (-0.9,3)--(-0.1,3)
  node[midway,above] { $\sigma_{i-1}$ };
\draw[->,very thick] (4.1,3)--(4.9,3)
  node[midway,above] { $\sigma_i$ };
\draw[->,very thick] (9.1,3)--(9.9,3)
  node[midway,above] { $\sigma_{i+1}$ };

\begin{scope}[xshift=2cm,yshift=3cm,rotate=10]
\draw[black,fill=green!50] (-1.3,-2.3) rectangle (1.3,2.3);
\node[scale=0.75] at (0,1.9) { $x_{i-1}{+}H_{i-1}$ };
\end{scope}

\draw[black,fill=red!50] (1.2,1) rectangle (3,4);
\node[scale=0.75] at (2.1,3.6) { $y_{i-1}{+}H_{i-1,\min}$ };
\draw[black,opacity=0.8,fill=purple!30] (1.5,1.5) rectangle (2.9,2.5);
\node[scale=0.75] at (2.2,2) { $y_{i-1}{+}H'_{i-1}$ };

\begin{scope}[xshift=7cm,yshift=3cm]
\draw[black,fill=green!50,rotate=-40] (-1,-1.3) rectangle (1,1.3);
\node[scale=0.75] at (0.15,1.1) { $x_i{+}H_i$ };
\draw[black,fill=red!50] (-0.8,-0.8) rectangle (0.6,0.4);
\node[scale=0.75] at (-0.1,0.1) { $y_i{+}H_{i,\min}$ };
\draw[black,opacity=0.8,fill=purple!30] (-0.7,-0.7) rectangle (0.5,-0.3);
\node[scale=0.75] at (-0.1,-0.5) { $y_i{+}H'_{i,\zealous}$ };
\end{scope}

\begin{scope}[very thick, rounded corners=10pt]
\draw (0.1,0.2) rectangle (3.9,5.8);
\draw (5.1,0.2) rectangle (8.9,5.8);
\end{scope}
\end{tikzpicture}
\hfill \null

\caption{The method of adaptive precision: second attempt}
\label{fig:adaptive2}
\end{figure}
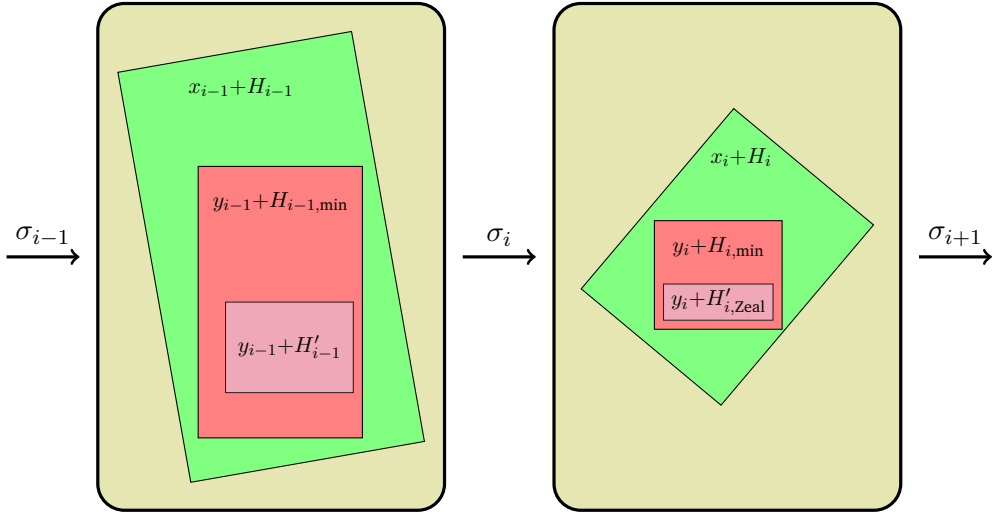
Of course the value $y_i$ we have just defined corresponds 
exactly to the variable $\tty_i$ of the procedure 
\texttt{Phi\_stabilized}. The following lemma is the key for proving the 
correctness of the algorithm \texttt{Phi\_stabilized\_v2}.

\begin{lem}
\label{lem:adaptcorrect}
For all $i \in \{1,\ldots,n\}$, we have $y_i \in x_i + H_i$.
\end{lem}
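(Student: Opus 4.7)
My plan is to prove the lemma by induction on $i$, with the key trick being that we only need to track the specific value $y_{i-1}$ (the centre of the zealous interval) through $\sigma_i$, and compare the result against two things simultaneously: the exact image $\sigma_i(y_{i-1})$, which lies in a shifted version of $H_i$ by the precision Lemma, and the zealously computed $y_i$, which lies close to $\sigma_i(y_{i-1})$ by the correctness of zealous arithmetic. Combining these two inclusions via the fact that $H_i$ is a $\Zp$-module (hence closed under addition) will give the result.

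The base case $i=0$ is immediate: by convention $y_0 = x = x_0$ and $0 \in H_0$. For the inductive step, assume $y_{i-1} \in x_{i-1} + H_{i-1}$. Since $H_{i-1}$ is a subgroup of $E_{i-1}$, this is equivalent to $y_{i-1} \in x_{i-1} + H_{i-1} = \varphi_{i-1}(x+H)$, where the last equality is precisely the precision Lemma \eqref{eq:adapt2} applied to $\varphi_{i-1}$. Applying $\sigma_i$ to both sides and using $\varphi_i = \sigma_i \circ \varphi_{i-1}$ together with \eqref{eq:adapt2} at level $i$, I obtain
$$\sigma_i(y_{i-1}) \in \sigma_i\bigl(\varphi_{i-1}(x+H)\bigr) = \varphi_i(x+H) = x_i + H_i.$$

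On the other hand, by construction $H'_{i-1}$ is chosen so that zealous arithmetic run on $\texttt{Step}_i$ with input of precision $H'_{i-1}$ produces an output $y_i$ at precision $H'_{i,\zealous}$; in particular, the exact image $\sigma_i(y_{i-1})$ of the centre $y_{i-1}$ lies in $y_i + H'_{i,\zealous}$. Since we assumed $H'_{i,\zealous} \subset H_{i,\min} \subset H_i$, this gives $y_i \in \sigma_i(y_{i-1}) + H'_{i,\zealous} \subset \sigma_i(y_{i-1}) + H_i$. Combining with the previous inclusion and using once more that $H_i$ is a $\Zp$-submodule closed under addition yields $y_i \in x_i + H_i + H_i = x_i + H_i$, closing the induction.

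No step looks like a serious obstacle; the only mildly subtle point is to resist the temptation to try to propagate the whole enclosure $y_{i-1} + H'_{i-1}$ through $\sigma_i$ (which would require $H'_{i-1} \subset H_{i-1}$, something the algorithm deliberately does \emph{not} guarantee). Tracking only the actual value $y_{i-1}$ avoids this issue and is precisely what makes the adaptive strategy work: the lattices $H'_{i-1}$ and $H_{i-1}$ play different roles — one governs the zealous sub-computation, the other the global precision accounting — and they are linked only through the chain of inequalities $H'_{i,\zealous} \subset H_{i,\min} \subset H_i$.
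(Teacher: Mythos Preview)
Your proof is correct and follows essentially the same approach as the paper: induction on $i$, using \eqref{eq:adapt2} twice to show $\sigma_i(y_{i-1}) \in x_i + H_i$, then using the zealous inclusion $\sigma_i(y_{i-1}) \in y_i + H'_{i,\zealous} \subset y_i + H_i$ to conclude. Your closing remark about why one must track the point $y_{i-1}$ rather than the whole enclosure $y_{i-1} + H'_{i-1}$ is a nice piece of commentary that the paper does not make explicit.
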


\begin{proof}
We argue by induction on $i$. The initialization is obvious given that 
$y_0 = x = x_0$. We now assume that $y_{i-1} \in x_{i-1} + H_{i-1}$. 
Applying $\sigma_i$, we derive
\begin{equation}
\label{eq:adapt2step}
\sigma_i(y_{i-1}) \in \sigma_i(x_{i-1} + H_{i-1}) = 
\sigma_i \circ \varphi_{i-1}(x + H) = \varphi_i (x+H) = x_i + H_i
\end{equation}
thanks to Eq.~\eqref{eq:adapt2} applied twice. Moreover, by
construction, we know that $\sigma_i(y_{i-1}) \in y_i + H'_{i,\zealous}
\subset y_i + H_{i,\min} \subset y_i + H_i$. As a consequence the
difference $\sigma_i(y_{i-1}) - y_i$ lies in the lattice $H_i$; this
can be rewritten again as $y_i \in \sigma_i(y_{i-1}) + H_i$. Combining
with Eq.~\eqref{eq:adapt2step}, we get $y_i \in x_i + H_i$ as desired.
\end{proof}

\noindent
We derive from Lemma~\ref{lem:adaptcorrect} that $x_n + H_n = y_n +
H_n$. Combining this equality with Eq.~\eqref{eq:adapt2}, we obtain
$\varphi(x+H) = x_n + H_n = y_n + H_n \subset y_n + H_{\max,n}$.
This inclusion exactly means that \texttt{Phi\_stabilized} called on 
the input $x$ given at precision $H$ outputs an approximation of 
$\varphi(x)$ at precision $H_{\max,n}$; in other words, the algorithm 
\texttt{Phi\_stabilized} is correct.

\smallskip

The main advantage of the algorithm \texttt{Phi\_stabilized\_v2} 
(compared to \texttt{Phi\_stabilized\_v1}) is that the precision encoded 
by the lattices $H'_{i-1}$ are usually acceptable (compared to the 
precision encoded by $H'$). Actual computations are then carried out
at a reasonable precision and then consume moderated resources.

\begin{rem}
Interestingly, observe that the spirit of algorithm 
\texttt{Phi\_stabilized\_v2} is close to the behavior of $p$-adic 
floating-point arithmetic; precisely, in $p$-adic floating-point 
arithmetic, we are decomposing \texttt{Phi} as a sequence of elementary 
steps (each of them consists of a single arithmetic operation) and the 
lattices $H_{i,\min}$ are chosen in such a way that the number of 
significand digits of each $p$-adic variable is kept constant. For 
this reason, the method of adaptive precision can be also used to 
derive \emph{proofs} on the results obtained \emph{via} $p$-adic 
floating-point arithmetic.
\end{rem}

\paragraph{Context of lazy/relaxed arithmetic: target precision}

We recall that the question addressed in the context of lazy/relaxed 
arithmetic is formulated as follows: we fix a target precision on the 
output and we want to avoid as much as possible the computation with
unnecessary digits of the 
input, that are the digits that do not influence the output at the 
requested precision. For simplicity, it is convenient to assume that the 
output of \texttt{Phi} consists of a single $p$-adic 
number\footnote{This assumption is harmless since one can always reduce 
to this case by projection on each coordinate.}, that is $F = \Qp$. The 
target precision is then given by a single integer $N$.

We recall from \S \ref{sssec:preclemmaatwork} that, if $H'$ is a lattice 
in $E$, then the condition:
\begin{equation}
\label{eq:adaptlazy}
\varphi(x+H') \subset \varphi(x) + p^N \Zp
\end{equation}
ensures that the knowledge of $x$ at precision $H'$ is enough to compute 
$\varphi(x)$ at precision $O(p^N)$. 
Moreover, assuming that $H'$ is nice enough so that the precision Lemma 
applies, the inclusion \eqref{eq:adaptlazy} is equivalent to $H' \subset 
d \varphi_x^{-1}(p^N \Zp)$. We draw the attention of the reader to the
fact that $d \varphi_x^{-1}(p^N \Zp)$ itself is usually \emph{not} a 
lattice because $d \varphi_x^{-1}$ often has a nontrivial kernel.
Anyway, 
the above analysis gives a theoretical answer to the question we are 
interested in: the unnecessary digits on the inputs are those lying in 
some nice enough lattice contained in $d\varphi_x^{-1}(p^N \Zp)$.

Mimicking what we have done before in the zealous context, we can use the 
above result to write down a first stabilized version of \texttt{Phi}. 
Set $H = d\varphi_x^{-1} (\Zp)$ and choose a lattice $H_\min$ contained 
in $H$. Clearly $p^N H_\min$ is a lattice contained in $d\varphi_x^{-1} 
(p^N \Zp)$. Moreover, examining closely the statement of the precision 
Lemma, we find that there exists an integer $N_0$ such that:
\begin{equation}
\label{eq:adaptlazyv1}
\varphi(x + p^N H) = \varphi(x) + p^N d \varphi_x(H) \subset
\varphi(x) + p^N \Zp
\end{equation}
for all $N \geq N_0$. The first stabilized version of \texttt{Phi}
then can be described as follows.

\begin{lstlisting}
    def Phi_stabilized_v1(x):
        def Phi_of_x(N):
            xapp = x((*$p^{\max(N,N_0)} H$*))    # (*\color{comment}\textrm{Evaluation of $\ttx$ at precision $p^{\min(N,N_0)} H$}*)
            return Phi(xapp) % (*$p^N$*)
        return Phi_of_x
\end{lstlisting}

\begin{rem}
In the code above, the variable \texttt{x} is a tuple of lazy $p$-adic 
numbers (as described in \S \ref{ssec:lazy}), not a tuple of relaxed 
$p$-adic numbers. Making the method of adaptive precision work within 
the framework of relaxed arithmetic is possible but tedious; for 
simplicity, we shall omit these technicalities and restrict 
ourselves to lazy $p$-adic numbers in this course.

The attentive reader has certainly noticed that the value $N_0$ has 
appeared explicitly in the function \texttt{Phi\_stabilized\_v1} above. 
This implies that we need to precompute it. This has to be done by hand 
\emph{a priori} by explicating the constants in the precision Lemma in 
the special case we are considering; we do not hide that this may 
require some effort (through general techniques for that are 
available~\cite{CaRoVa14,CaRoVa15}).
\end{rem}

Eq.~\eqref{eq:adaptlazyv1} shows that the nested function 
\texttt{Phi\_of\_x} returns the correct answer.
The main benefit of \texttt{Phi\_stabilized\_v1} is that it asks for the 
computation of the input at sharp precision. Nevertheless, it carries 
all the computations with exact rational numbers and so still consumes 
a lot of resources.

In order to get rid of this disadvantage, we can follow the strategy 
already used with success in the zealous context: we decompose 
\texttt{Phi} into $n$ steps $\texttt{Step}_1, \ldots, \texttt{Step}_n$. 
Again we call $\sigma_i : U_{i-1} \to U_i$ the mathematical function 
modeling $\texttt{Step}_i$. We set $\varphi_i = \sigma_i \circ \cdots 
\circ \sigma_1$, $x_i = \varphi_i(x)$ and introduce moreover the 
function $\psi_i = \sigma_n \circ \cdots \circ \sigma_{i+1}$. Clearly 
$\varphi = \psi_i \circ \varphi_i$ for all $i$. Interestingly, observe 
that the relation $d \varphi_x = d \psi_{i,x_i} \circ d \varphi_{i,x}$ 
implies that all the differentials $d \psi_{i,x_i}$'s are surjective 
without any further assumption. For each $i \in \{0,\ldots, n\}$, we 
define $H_i = d \psi_{i,x_i}^{-1} (\Zp)$ (which is generally \emph{not} 
a lattice) and choose a lattice $H_{i,\min}$ contained in $H_i$. The 
optimized form of the stabilized version of \texttt{Phi} then writes as 
follows.

\begin{lstlisting}
    def Phi_stabilized_v2(x):
        def Phi_of_x(N):
            (*$\tty_0$*) = x((*$p^N H_{0,\min}$*))   # (*\color{comment}\textrm{Evaluation of $\ttx$ at precision $p^N H_{0,\min}$}*)
            for (*$i$*) in (*$1, 2, \ldots, n$*):
                (*$\tty_i$*) = (*$\texttt{Step}_i(\tty_{i-1})$*) % (*$p^N H_{i,\min}$*)
            return (*$\tty_n$*) % (*$p^N$*)
        return Phi_of_x
\end{lstlisting}

\noindent
Roughly speaking, the above optimization allows us to ``do the modulo'' 
after each step, avoiding then the explosion of the size of the 
intermediate values.

\begin{prop}
\label{prop:adaptlazy}
There exists an integer $N_0$ (which can be made explicit with some
effort) such that the nested function \textrm{\tt Phi\_of\_x} outputs a
correct answer on the inputs $N \geq N_0$.
\end{prop}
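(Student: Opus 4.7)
My plan is to reduce the proof to a single application of the precision Lemma (Theorem~\ref{theo:preclemma}) to the full function $\varphi$ at the point $x$. The first step is to observe that, as written, the intermediate computations $y_i = \sigma_i(y_{i-1})$ are exact, so that an immediate induction on $i$ gives $y_i = \varphi_i(y_0)$ for all $i$, and in particular $y_n = \varphi(y_0)$ (recall that $\psi_n = \id$, hence $\varphi_n = \varphi$). Any implicit truncation at intermediate steps — motivated by the ``do the modulo'' remark preceding the statement — could be absorbed by running the same argument on each $\sigma_i$ individually and using the inclusions $d\sigma_{i+1,x_i}(H_i) \subset H_{i+1}$ coming from the chain rule $d\psi_{i,x_i} = d\psi_{i+1,x_{i+1}} \circ d\sigma_{i+1,x_i}$. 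The initial value $y_0$ is, by construction of the truncation, a representative of $x$ satisfying $y_0 \in x + p^N H_{0,\min}$; the correctness of the algorithm on input $N$ therefore reduces to the inclusion $\varphi(y_0) \in \varphi(x) + p^N \Zp$.

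To establish this inclusion, I would apply the precision Lemma to $\varphi$ at $x$ with the lattice $L_N = p^N H_{0,\min}$. Since $H_{0,\min}$ is a fixed lattice, it is $\rho$-rounded and $r$-small for some $\rho \in (0,1]$ and $r > 0$; by scaling, $L_N$ is $\rho$-rounded and $p^{-N}r$-small. The precision Lemma provides some $\delta > 0$ depending on $\rho$, $\varphi$ and $x$ (but not on $N$) such that, for every $r' < \delta$, every $\rho$-rounded, $r'$-small lattice $H$ satisfies $\varphi(x+H) = \varphi(x) + d\varphi_x(H)$. Choosing any $N_0$ large enough that $p^{-N_0} r < \delta$ guarantees that this equality applies to $L_N$ for all $N \geq N_0$, giving
\begin{equation*}
\varphi(y_0) \,\in\, \varphi\bigl(x + p^N H_{0,\min}\bigr) \,=\, \varphi(x) + p^N \, d\varphi_x(H_{0,\min}).
\end{equation*}

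To conclude, it suffices to invoke the inclusion $H_{0,\min} \subset H_0 = d\varphi_x^{-1}(\Zp)$, which is built into the choice of $H_{0,\min}$. Applying $d\varphi_x$ gives $d\varphi_x(H_{0,\min}) \subset \Zp$, whence $y_n = \varphi(y_0) \in \varphi(x) + p^N \Zp$, as required.

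The logical structure of the argument is very light; the genuine difficulty, and the main obstacle to making the proposition useful in practice, lies in turning it into an effective statement, \emph{i.e.}~in exhibiting $N_0$ explicitly. The precision Lemma as stated is non-constructive, and recovering an effective $\delta$ requires unpacking its proof in each concrete situation. For $\varphi$ built from arithmetic operations, the quantitative variant Proposition~\ref{prop:preclemmapoly} directly supplies such a bound; more refined tools (Newton polygons, \emph{etc.}) are needed when $\varphi$ involves locally analytic pieces or Hensel-like lifting, and we refer to \cite{CaRoVa14,CaRoVa15} for these refinements.
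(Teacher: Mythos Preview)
Your argument is correct under the reading that the intermediate steps $y_i = \texttt{Step}_i(y_{i-1})$ are carried out exactly on rational numbers, and in that case it is pleasantly shorter than the paper's: a single application of the precision Lemma to $\varphi$ at $x$ suffices, whereas the paper applies it once per step. The paper, however, explicitly models the algorithm \emph{with} truncation after each step --- this is the content of the ``do the modulo'' remark, and the proof sketch accordingly works with sequences $y_{N,i} \in \sigma_i(y_{N,i-1}) + p^N H_{i,\min}$. Your hand-wave about absorbing these truncations by ``running the same argument on each $\sigma_i$'' does not quite go through: the precision Lemma requires surjectivity of the differential, and nothing guarantees that $d\sigma_{i,x_{i-1}}$ is surjective. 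The paper sidesteps this by applying the precision Lemma not to the step maps $\sigma_i$ but to the \emph{tail} maps $\psi_i = \sigma_n \circ \cdots \circ \sigma_{i+1}$, whose differentials are automatically surjective by the chain rule $d\varphi_x = d\psi_{i,x_i} \circ d\varphi_{i,x}$; it then proves by induction on $i$ that $\psi_i(y_{N,i}) \equiv \varphi(x) \pmod{p^N}$. This is precisely what makes the proposition hold without any surjectivity assumption on the $d\varphi_{i,x}$'s (as stressed in the remark following the statement), a feature your step-by-step variant would lose.
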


\begin{rem}
We underline that Proposition~\ref{prop:adaptlazy} does not assume the 
surjectivity of the differentials $d \varphi_{i,x}$; recall that this 
assumption was unavoidable in the case of zealous arithmetic and 
appeared as a serious limitation of the method. We have to mention 
however that the analysis of the algorithm \texttt{Phi\_stabilized\_v2} 
is easier under this additional hypothesis (see \S \ref{sssec:somos} 
below for a concrete example).
\end{rem}

\begin{proof}[Sketch of the proof of Proposition~\ref{prop:adaptlazy}]
Given an integer $N$, we let $(y_{N,i})_{0 \leq i \leq n}$ be a 
sequence such that $y_{N,0} = x$ and $y_{N,i} = \sigma_i(y_{N,i-1}) +
p^N H_{i,\min}$.
The value $y_{N,i}$ corresponds to the value of the variable $\tty_i$ 
computed by the function \texttt{Phi\_of\_x} called on the input $N$. 
Moreover, for all fixed $i$, the sequence $(y_{N,i})_{N \geq 0}$ 
converges to $x_i$ when $N$ goes to infinity. Since $\psi_i$ is a 
function of class $C^1$, this implies that $d\psi_{i,y_{N,i}}$ converges 
to $d\psi_{i,x_i}$ when $N$ goes to infinity. From this, we derive that
$d \psi_{i,y_{N,i}}(H_{i,\min}) = d \psi_{i,x_i}(H_{i,\min})$
for $N$ large enough. On the other hand, analyzing closely the proof of the precision 
Lemma, we find that there exists an integer $N_0$ such that:
\begin{align}
\psi_i\big(y_{N,i} + p^N H_{i,\min}\big) 
& = \psi_i(y_{N,i}) + d \psi_{i,y_{N,i}}\big(p^N H_{i,\min}\big) \nonumber \\
& = \psi_i(y_{N,i}) + p^N d \psi_{i,x_i}\big(H_{i,\min}\big)
\label{eq:adaptlazyv2}
\end{align}
for all $i$ and all $N \geq N_0$. We claim that \textrm{\tt Phi\_of\_x} 
outputs a correct answer (\emph{i.e.} that $y_{N,n} \equiv \varphi(x) 
\pmod{p^N}$) whenever $N \geq N_0$. More precisely, we are going to prove by
induction on $i$ that $\psi_i(y_{i,N}) \equiv \varphi(x) \pmod{p^N}$
for $i \in \{0, 1, \ldots, n\}$ and $N \geq N_0$. This is obvious
when $i = 0$ given that $\psi_0 = \varphi$ and $y_{0,N} = x$. Let us
now assume $\psi_{i-1}(y_{i-1,N}) \equiv \varphi(x) \pmod{p^N}$. 
Noting than $\psi_i \circ \sigma_i = \psi_{i-1}$, we derive from
$\sigma_i(y_{N,i-1}) \in y_{N,i} + p^N H_{i,\min}$ that:
\begin{align*}
\psi_{i-1}(y_{N,i-1}) \in \psi_i\big(y_{N,i} + p^N H_{i,\min}\big)
& = \psi_i(y_{N,i}) + p^N d \psi_{i,x_i}\big(H_{i,\min}\big) \\
& \subset \psi_i(y_{N,i}) + p^N \Zp
\end{align*}
using Eq.~\eqref{eq:adaptlazyv2}.
Consequently $\psi_i(y_{N,i}) \equiv \psi_{i-1}(y_{N,i-1}) 
\pmod{p^N}$ and we are done thanks to the induction hypothesis.
\end{proof}

\subsubsection{Example: The Somos 4 sequence}
\label{sssec:somos}

The method of adaptive precision (presented in \S 
\ref{sssec:adaptiveprecision}) has been used with success in several 
concrete contexts: the computation of the solutions of some $p$-adic 
differential equations~\cite{LaVa16} and the computation of 
\textsc{gcd}'s and Bézout coefficients \emph{via} the usual extended 
Euclidean algorithm~\cite{Ca17}.
Below, we detail another example, initially pointed out by Buhler and 
Kedlaya~\cite{BuKe12}, which is simpler (it actually looks like a toy 
example) but already shows up all the interesting phenomena: it is the 
evaluation of the $p$-adic Somos 4 sequences.

\paragraph{The Somos 4 sequence: definition, properties and calculations}

The Somos 4 sequences were first introduced in~\cite{So89} by Somos.
It is a recursive sequence defined by the recurrence:
$$\begin{array}{l}
u_1 = a \quad ; \quad 
u_2 = b \quad ; \quad 
u_3 = c \quad ; \quad 
u_4 = d \medskip \\
\displaystyle
u_{n+4} = \frac{u_{n+1} u_{n+3} + u_{n+2}^2}{u_n} 
\quad \text{for} \quad n \geq 1
\end{array}$$
where $a$, $b$, $c$ and $d$ are given initial values.
\begin{figure}
\tiny \hfill
$\begin{array}{r@{\hspace{0.5ex}}r@{\hspace{0.2ex}}l}
u_5 = & 
\frac 1 a &  \big(c^{2} + b d\big)
\medskip \\
u_6 = & 
\frac 1{ab} &  \big(c^{3} + b c d + a d^{2}\big)
\medskip \\
u_7 = & 
\frac 1{a^2 bc} &  \big(b c^{4} + 2 b^{2} c^{2} d + a c^{3} d + b^{3} d^{2} + a b c d^{2} + a^{2} d^{3}\big)
\medskip \\
u_8 = & 
\frac 1{a^3b^2cd} &  \big(b^{2} c^{6} + a c^{7} + 3 b^{3} c^{4} d + 3 a b c^{5} d + 3 b^{4} c^{2} d^{2} + 3 a b^{2} c^{3} d^{2} + 2 a^{2} c^{4} d^{2} + b^{5} d^{3} + a b^{3} c d^{3} + 3 a^{2} b c^{2} d^{3} + a^{2} b^{2} d^{4} + a^{3} c d^{4}\big)
\medskip \\
u_9 = & 
\frac 1{a^3b^2c^2d} &  \big(b^{2} c^{8} + a c^{9} + 4 b^{3} c^{6} d + 3 a b c^{7} d + 6 b^{4} c^{4} d^{2} + 6 a b^{2} c^{5} d^{2} + 3 a^{2} c^{6} d^{2} + 4 b^{5} c^{2} d^{3} + 7 a b^{3} c^{3} d^{3} + 6 a^{2} b c^{4} d^{3} + b^{6} d^{4} \\
& & \hspace{5mm}
{}+ 3 a b^{4} c d^{4} + 5 a^{2} b^{2} c^{2} d^{4} + 3 a^{3} c^{3} d^{4} + 2 a^{2} b^{3} d^{5} + 3 a^{3} b c d^{5} + a^{4} d^{6}\big)
\medskip \\
u_{10} = & 
\frac 1{a^5b^3c^3d^2} & \big(b^{4} c^{10} + 2 a b^{2} c^{11} + a^{2} c^{12} + 5 b^{5} c^{8} d + 11 a b^{3} c^{9} d + 6 a^{2} b c^{10} d + 10 b^{6} c^{6} d^{2} + 24 a b^{4} c^{7} d^{2} + 17 a^{2} b^{2} c^{8} d^{2} + 4 a^{3} c^{9} d^{2} \\
& \phantom{\frac 1{a^5b^3c^3d^2}} & \hspace{5mm}
{}+ 10 b^{7} c^{4} d^{3} + 26 a b^{5} c^{5} d^{3} + 28 a^{2} b^{3} c^{6} d^{3} + 15 a^{3} b c^{7} d^{3} + 5 b^{8} c^{2} d^{4} + 14 a b^{6} c^{3} d^{4} + 27 a^{2} b^{4} c^{4} d^{4} + 24 a^{3} b^{2} c^{5} d^{4} \\
& \phantom{\frac 1{a^5b^3c^3d^2}} & \hspace{5mm}
{}+ 6 a^{4} c^{6} d^{4} + b^{9} d^{5} + 3 a b^{7} c d^{5} + 14 a^{2} b^{5} c^{2} d^{5} + 19 a^{3} b^{3} c^{3} d^{5} + 12 a^{4} b c^{4} d^{5} + 3 a^{2} b^{6} d^{6} + 6 a^{3} b^{4} c d^{6} + 9 a^{4} b^{2} c^{2} d^{6} \\
& \phantom{\frac 1{a^5b^3c^3d^2}} & \hspace{5mm}
{}+ 4 a^{5} c^{3} d^{6} + 3 a^{4} b^{3} d^{7} + 3 a^{5} b c d^{7} + a^{6} d^{8}\big)
\end{array}$
\hfill \null

\caption{The first terms of the generic SOMOS 4 sequence}
\label{fig:somos}
\end{figure}
The first values of the generic Somos 4 sequence (that is the Somos
4 sequence where the initial values are left as indeterminates) are
shown on Figure~\ref{fig:somos}.
We observe a quite surprising property: the denominators of the first 
$u_i$'s are all monomials. We insist on the fact that this behavior
is \emph{a priori} not expected at all. For instance the definition of
$u_9$ involves a division by $u_5$ but a miraculous simplification
occurs.
This phenomenon is in fact general. Fomin and Zelevinsky proved in 
\cite{FoZe02} (as an application of their theory of cluster algebras) 
that the Somos 4 sequence exhibits the \emph{Laurent phenomenon}: for all $n$, 
the term $u_n$ of the generic Somos 4 sequence is a Laurent polynomial
in $a$, $b$, $c$, $d$, that is a polynomial in $a$, $b$, $c$, $d$ and
their inverses.

We address the question of the computation of the $n$-th term (for $n$ 
large) of a Somos 4 sequence whose initial values $a$, $b$, $c$ and $d$ 
are invertible elements of $\Zp$. At least two options are available: 
(1)~we unroll the recurrence starting from the given initial values 
and (2)~we precompute the $n$-th term of the generic Somos 4 
sequence and, in a second time, we plug the values of $a$, $b$, $c$ and 
$d$. Concretely, they correspond to the following codes respectively:

\begin{lstlisting}
    def somos_option1(a,b,c,d,n):     # (*\color{comment}\textrm{We assume }$n \geq 5$*)
        x,y,z,t = a,b,c,d
        for i in (*$1, \, 2, \, \ldots, \, \texttt{n}{-}4$*):
            x,y,z,t = y, z, t, (y*t + z*z)/x
        return t
\end{lstlisting}

\begin{lstlisting}
    def somos_option2(a,b,c,d,n):     # (*\color{comment}\textrm{We assume }$n \geq 5$*)
        X,Y,Z,T = A,B,C,D             # (*\color{comment}\textrm{{\tt A}, {\tt B}, {\tt C}, {\tt D} are formal variables}*)
        for i in (*$1, \, 2, \, \ldots, \, \texttt{n}{-}4$*):
            X,Y,Z,T = Y, Z, T, (Y*T + Z*Z)/X
        return T(A=a, B=b, C=c, D=d)
\end{lstlisting}

The second option has of course a terrible complexity because the size 
of the terms of the generic Somos 4 sequence grows very rapidly (see 
Figure~\ref{fig:somos}). However, if we are mostly interested in 
numerical stability, this option is rather attractive; indeed, we deduce 
from the Laurent phenomenon that if the initial values $a$, $b$, $c$ and 
$d$ (which are supposed to be invertible in $\Zp$) are given at 
precision $O(p^N)$ then all the next terms will be computed at precision 
$O(p^N)$ as well. 
On the contrary, the first option may lead to quite important losses of
precision. 
\begin{figure}
\renewcommand{\arraystretch}{1.2}
\hfill \small
$\begin{array}{|c|p{2.6cm}|p{2.6cm}|p{2.6cm}||p{2.6cm}|}
\cline{2-5} 
\multicolumn{1}{c|}{\phantom{\Big(}}
& \hfill\text{Laurent method}\hfill\null 
& \hfill\text{Zealous arith.}\hfill\null 
& \hfill\text{Float. arith.}\hfill\null 
& \hfill\text{Relaxed arith.}\hfill\null \\
\hline
u_1 & \hfill$\ldots0000000001$ & \hfill$\ldots0000000001$ &  \hfill$\ldots0000000001$ & \hfill --- \hfill\null \\
u_2 & \hfill$\ldots0000000001$ & \hfill$\ldots0000000001$ &  \hfill$\ldots0000000001$ & \hfill --- \hfill\null \\
u_3 & \hfill$\ldots0000000001$ & \hfill$\ldots0000000001$ &  \hfill$\ldots0000000001$ & \hfill --- \hfill\null \\
u_4 & \hfill$\ldots0000000001$ & \hfill$\ldots0000000001$ &  \hfill$\ldots0000000001$ & \hfill --- \hfill\null \\
u_5 & \hfill$\ldots0000000010$ & \hfill$\ldots0000000010$ &  \hfill$\ldots0000000010$ & {\scriptsize precision on $u_1$:} \hfill $10$ \\
u_6 & \hfill$\ldots0000000011$ & \hfill$\ldots0000000011$ &  \hfill$\ldots0000000011$ & {\scriptsize precision on $u_1$:} \hfill $10$ \\
u_7 & \hfill$\ldots0000000111$ & \hfill$\ldots0000000111$ &  \hfill$\ldots0000000111$ & {\scriptsize precision on $u_1$:} \hfill $10$ \\
u_8 & \hfill$\ldots0000010111$ & \hfill$\ldots0000010111$ &  \hfill$\ldots0000010111$ & {\scriptsize precision on $u_1$:} \hfill $10$ \\
u_9 & \hfill$\ldots0000111011$ &  \hfill$\ldots000111011$ &  \hfill$\ldots0000111011$ & {\scriptsize precision on $u_1$:} \hfill $11$ \\
u_{10}
    & \hfill$\ldots0100111010$ &  \hfill$\ldots100111010$ & \hfill$\ldots10100111010$ & {\scriptsize precision on $u_1$:} \hfill $11$ \\
u_{50} 
    & \hfill$\ldots0000011010$ &          \hfill$\ldots0$ & \hfill$\ldots11000011010$ & {\scriptsize precision on $u_1$:} \hfill $19$ \\
u_{54} 
    & \hfill$\ldots0100101001$ &  \hfill \textsc{boom} \hfill\null &  \hfill$\ldots1100101001$ & {\scriptsize precision on $u_1$:} \hfill $20$ \\
u_{500} 
    & \hfill$\ldots1111110010$ &  \hfill --- \hfill\null  & \hfill$\ldots01111110010$ & {\scriptsize precision on $u_1$:} \hfill $109$ \\
\hline
\end{array}$
\hfill\null

\caption{The Somos 4 sequence with initial values $1, 1, 1, 1 \in \Z_2$}
\label{fig:somos1111}
\end{figure}
\begin{figure}
\renewcommand{\arraystretch}{1.2}
\hfill \small
$\begin{array}{|c|p{2.6cm}|p{2.6cm}|p{2.6cm}||p{2.6cm}|}
\cline{2-5}
\multicolumn{1}{c|}{\phantom{\Big(}}
& \hfill\text{Laurent method}\hfill\null 
& \hfill\text{Zealous arith.}\hfill\null 
& \hfill\text{Float. arith.}\hfill\null 
& \hfill\text{Relaxed arith.}\hfill\null \\
\hline
u_1 & \hfill$\ldots0000000001$ & \hfill$\ldots0000000001$ &  \hfill$\ldots0000000001$ & \hfill --- \hfill\null \\
u_2 & \hfill$\ldots0000000001$ & \hfill$\ldots0000000001$ &  \hfill$\ldots0000000001$ & \hfill --- \hfill\null \\
u_3 & \hfill$\ldots0000000001$ & \hfill$\ldots0000000001$ &  \hfill$\ldots0000000001$ & \hfill --- \hfill\null \\
u_4 & \hfill$\ldots0000000011$ & \hfill$\ldots0000000011$ &  \hfill$\ldots0000000011$ & \hfill --- \hfill\null \\
u_5 & \hfill$\ldots0000000100$ & \hfill$\ldots0000000100$ &  \hfill$\ldots000000000100$ & {\scriptsize precision on $u_1$:} \hfill $10$ \\
u_6 & \hfill$\ldots0000001101$ & \hfill$\ldots0000001101$ &  \hfill$\ldots0000001101$ & {\scriptsize precision on $u_1$:} \hfill $10$ \\
u_7 & \hfill$\ldots0000110111$ & \hfill$\ldots0000110111$ &  \hfill$\ldots0000110111$ & {\scriptsize precision on $u_1$:} \hfill $10$ \\
u_8 & \hfill$\ldots0111010111$ & \hfill$\ldots0111010111$ &  \hfill$\ldots0111010111$ & {\scriptsize precision on $u_1$:} \hfill $10$ \\
u_9 & \hfill$\ldots0111101111$ & \hfill$\ldots11101111$ &  \hfill$\ldots1111101111$ & {\scriptsize precision on $u_1$:} \hfill $12$ \\
u_{10}
    & \hfill$\ldots0000010010$ & \hfill$\ldots00010010$ &  \hfill$\ldots11000010010$ & {\scriptsize precision on $u_1$:} \hfill $12$ \\
u_{11} 
    & \hfill$\ldots1000111001$ & \hfill$\ldots00111001$ &  \hfill$\ldots0000111001$ & {\scriptsize precision on $u_1$:} \hfill $12$ \\
u_{12}
    & \hfill$\ldots0011111101$ & \hfill$\ldots11111101$ &  \hfill$\ldots0011111101$ & {\scriptsize precision on $u_1$:} \hfill $12$ \\
u_{13}
    & \hfill$\ldots0000110101$ & \hfill$\ldots00110101$ &  \hfill$\ldots0000110101$ & {\scriptsize precision on $u_1$:} \hfill $12$ \\
u_{14}
    & \hfill$\ldots1101010011$ & \hfill$\ldots1010011$ &  \hfill$\ldots1101010011$ & {\scriptsize precision on $u_1$:} \hfill $13$ \\
u_{15}
    & \hfill$\ldots0000000000$ & \hfill$\ldots0000000$ &  \hfill$0$ & {\scriptsize precision on $u_1$:} \hfill $13$ \\
u_{16}
    & \hfill$\ldots0101011101$ & \hfill$\ldots1011101$ &  \hfill$\ldots0101011101$ & {\scriptsize precision on $u_1$:} \hfill $13$ \\
u_{17}
    & \hfill$\ldots1001101011$ & \hfill$\ldots1101011$ &  \hfill$\ldots1001101011$ & {\scriptsize precision on $u_1$:} \hfill $13$ \\
u_{18}
    & \hfill$\ldots0011110011$ & \hfill$\ldots1110011$ &  \hfill$\ldots0011110011$ & {\scriptsize precision on $u_1$:} \hfill $13$ \\
u_{19}
    & \hfill$\ldots0000000111$ & \hfill\textsc{boom}\hfill\null &  \hfill\textsc{boom}\hfill\null & {\scriptsize precision on $u_1$:} \hfill $23$ \\
\hline
\end{array}$
\hfill\null

\caption{The Somos 4 sequence with initial values $1, 1, 1, 3 \in \Z_2$}
\label{fig:somos1113}
\end{figure}

The table of Figure~\ref{fig:somos1111} shows the results obtained for 
the particular Somos 4 sequence initialized with $a = b = c = d = 1 \in 
\Z_2$ using various methods. The first column corresponds to the second 
option discussed above; as expected, no losses of precision have to be 
reported. On the second column (resp. the third column), we have 
displayed the results obtained by unrolling the recurrence using the 
algorithm \texttt{somos\_option1} within the framework of zealous 
arithmetic (resp. $p$-adic floating-point arithmetic). We observe that 
the precision decreases rapidly in the second column. For example, one 
digit of precision is lost when computing $u_9$ because of the division 
by $u_5$ which has valuation $1$. These losses of precision continue 
then regularly and, at some point, we find a value which becomes 
indistinguishable from $0$. Dividing by it then produces an error and 
the computations cannot continue. The third column exhibits much better
results though certain digits are still wrong (compare with the first
column).
Finally, the last column concerned lazy/relaxed arithmetic; it shows the 
number of digits of $u_1$ that have been asked for in order to compute 
the term $u_n$ at precision $O(2^{10})$. We observe that this number
increases regularly in parallel with the losses of precision observed
in the zealous context. For instance, the relaxed technology needs one 
more digit of precision on $u_1$ to compute $u_9$ while the zealous
approach loses one digit of precision on $u_9$. There phenomena have of 
course a common explanation: they are both due to the division by $u_5$ 
which has valuation $1$.

The Somos 4 sequence starting with $(a, b, c, d) = (1,1,1,3)$ is even 
more unstable (see Figure~\ref{fig:somos1113}); indeed its $15$-th term 
is divisible by $2^{10}$ and so vanishes at the precision $O(2^{10})$ we 
are considering. Zealous arithmetic and floating-point arithmetic are 
both puzzled after this underflow and crash on the computation of 
$u_{19}$ (which requires a division by $u_{15}$).

\paragraph{Stabilization}

We now apply the theory of adaptive precision, trying to stabilize 
this way the algorithm \texttt{somos\_version1}. The decomposition of 
the routine \texttt{somos\_version1} into steps in straightforward: each 
step corresponds to an interaction of the for loop. They are all modeled 
by the same mathematical function:
$$\begin{array}{rcl}
\sigma : \qquad \Qp^4 & \longrightarrow & \Qp^4 \smallskip \\
(x,y,z,t) & \mapsto & \Big(y,z,t,\frac{yt+z^2}x\Big).
\end{array}$$

\begin{rem}
Of course $\sigma$ is not defined at the points of the form $(0,y,z,t)$. 
In the sequel, it will be nevertheless more convenient to regard 
$\sigma$ as a partially defined function over $\Qp^4$ (than as a 
well-defined function over $\Qp^\star \times \Qp^3$).
\end{rem}

\subparagraph{Differential computations.}

For $i \geq 0$, we set $\varphi_i = \sigma \circ \cdots \circ \sigma$
($i$ times). Clearly $\sigma$ and the $\varphi_i$'s are all functions
of class $C^1$ on their domain of definition. The Jacobian matrix of 
$\sigma$ at the point $(x,y,z,t)$ is easily seen to be:
$$J(\sigma)_{(x,y,z,t)} = 
\left( \begin{array}{c@{\hspace{4ex}}c@{\hspace{4ex}}c@{\hspace{2.5ex}}c}
0 & 0 & 0 & -\frac{yt+z^2}{x^2} \smallskip \\
1 & 0 & 0 & \frac t x \smallskip \\
0 & 1 & 0 & \frac{2z} x \smallskip \\
0 & 0 & 1 & \frac y x 
\end{array} \right).$$
The computation of the Jacobian matrix of $\varphi_i$ is more 
complicated. It is however remarkable that we can easily have access to 
its determinant. Indeed, observe that the determinant of $J(\sigma) 
_{(x,y,z,t)}$ is equal to $\frac {t'} x$ where $t' = \frac{yt+z^2}x$ is 
the last coordinate of $\sigma(x,y,z,t)$. By the chain rule, we then
deduce that:
\begin{equation}
\label{eq:detjacsomos}
\det J(\varphi_i)_{(a,b,c,d)} = 
\frac{u_5}{u_1} \cdot \frac {u_6}{u_2} \cdots \frac{u_{i+4}}{u_i} =
\frac{u_{i+1} \cdot u_{i+2} \cdot u_{i+3} \cdot u_{i+4}}{u_1 \cdot u_2 \cdot u_3 \cdot u_4} =
\frac{u_{i+1} \cdot u_{i+2} \cdot u_{i+3} \cdot u_{i+4}}{abcd}
\end{equation}
where the $u_i$'s are the terms of the Somos 4 sequence initialized 
by $(u_1, u_2, u_3, u_4) = (a,b,c,d)$.
Apart from that, the matrix $J(\varphi_i)_{(a,b,c,d)}$ has another 
remarkable property: if $a,b,c,d$ are invertible in $\Zp$, all the 
entries of $J(\varphi_i)_{(a,b,c,d)}$ lie in $\Zp$; indeed they can
be obtained alternatively as the partial derivatives of the terms of 
the generic Somos 4 sequence in which only divisions by $a$, $b$,
$c$ and $d$ occur. As a consequence, we derive the double inclusion:
\begin{align}
\label{eq:latticesomos}
& p^{v(i)} \cdot \Zp^4 \, \subset \, 
  d \varphi_{i,(a,b,c,d)}\big(\Zp^4\big) \, \subset \, \Zp^4\\
\text{with} \quad &
  v(i) = \val(u_{i+1}) + \val(u_{i+2}) + \val(u_{i+3}) + \val(u_{i+4})
\nonumber
\end{align}
which is the key for making the method of adaptive precision work.

\begin{lem}
\label{lem:somosvi}
For all $i$, there is at most one non invertible element among four
consecutive terms of the Somos 4 sequence.
In particular:
$$v(i) = \max\big(\val(u_{i+1}), \val(u_{i+2}), \val(u_{i+3}),
\val(u_{i+4})\big).$$
\end{lem}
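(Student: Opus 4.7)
My plan is to prove the first assertion by strong induction on $n$, showing that among $u_n, u_{n+1}, u_{n+2}, u_{n+3}$ at most one term has positive valuation. The base case $n = 1$ is given by hypothesis since $a, b, c, d$ are invertible in $\Zp$. For the inductive step, I will use the Somos recurrence in the rewritten form
\[
u_n \cdot u_{n+4} = u_{n+1} \cdot u_{n+3} + u_{n+2}^2
\]
combined with the Laurent phenomenon of Fomin--Zelevinsky (already cited earlier) which guarantees that each $u_m$ is a Laurent polynomial in $a,b,c,d$ with integer coefficients, and therefore lies in $\Zp$ whenever $a,b,c,d$ are units.

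Assuming the property for the window $(u_n, u_{n+1}, u_{n+2}, u_{n+3})$, I split into cases according to which (if any) of these four terms has positive valuation. In the case where one of $u_{n+1}, u_{n+2}, u_{n+3}$ has positive valuation while the other three of the window are units, I use ultrametricity on the right-hand side of the recurrence: exactly one of the two summands $u_{n+1} u_{n+3}$ and $u_{n+2}^2$ has valuation zero (the one not involving the non-invertible term, given that $u_{n+2}^2$ needs $u_{n+2}$ and $u_{n+1}u_{n+3}$ needs both others), so the sum has valuation zero, and dividing by the unit $u_n$ gives $\val(u_{n+4}) = 0$. Hence the shifted window $(u_{n+1}, \ldots, u_{n+4})$ contains exactly one non-invertible term. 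In the remaining cases (either all four of $u_n, \ldots, u_{n+3}$ are units, or only $u_n$ itself is non-invertible), $u_{n+4}$ could \emph{a priori} be non-invertible, but then $u_{n+1}, u_{n+2}, u_{n+3}$ are all units, so at worst $u_{n+4}$ alone fails to be invertible in the new window. The Laurent phenomenon is what makes this case harmless: it guarantees $u_{n+4} \in \Zp$ so that $\val(u_{n+4}) \geq 0$ is a meaningful statement.

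The second assertion then follows immediately: if at most one of $u_{i+1}, u_{i+2}, u_{i+3}, u_{i+4}$ has positive valuation while the others vanish, then the sum of the four valuations coincides with the maximum of the four valuations.

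The main obstacle, I expect, is the sliding-window bookkeeping in the inductive step, specifically handling the subcase where $u_n$ itself is the non-invertible term. In that subcase, the right-hand side $u_{n+1} u_{n+3} + u_{n+2}^2$ is a sum of two units and nothing forces its valuation to equal $\val(u_n)$; consequently $\val(u_{n+4})$ can be arbitrarily large. This is fine for the statement of the lemma (since $u_n$ has dropped out of the window), but it is worth stressing because it explains the numerically observed behavior in Figures \ref{fig:somos1111}--\ref{fig:somos1113}, where runs of non-invertible terms do occur but are always separated by at least three invertible terms.
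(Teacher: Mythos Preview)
Your proof is correct and follows essentially the same approach as the paper's: both arguments reduce to showing, via the recurrence $u_n u_{n+4} = u_{n+1} u_{n+3} + u_{n+2}^2$ and ultrametricity, that a non-invertible term preceded by three units is followed by three units. The paper's proof simply states this key claim and leaves the sliding-window induction implicit, whereas you spell out the case analysis and make explicit the (necessary) appeal to the Laurent phenomenon to ensure $u_{n+4} \in \Zp$ in the subcase where $u_n$ is the non-unit; this is a point the paper's sketch glosses over.
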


\begin{proof}
It is enough to prove that if $\val(u_{i-3}) = \val(u_{i-2}) = 
\val(u_{i-1}) = 0$ and $\val(u_i) > 0$, then $\val(u_{i+1}) = 
\val(u_{i+2}) = \val(u_{i+3}) = 0$. This can be derived easily from
the defining formula of the Somos 4 sequence.
\end{proof}

\begin{ex*}
As an instructive example, we take the values $(a,b,c,d) = 
(1,1,1,3)$ (given at precision $O(2^{10})$) and $i = 14$ corresponding 
to the annoying situation reported on Figure~\ref{fig:somos1113} where a 
term of the Somos sequence vanishes at the working precision.
An explicit computation yields:
$$\small J(\varphi_{14})_{(1,1,1,3)} = 
\left(\begin{array}{rrrr}
\ldots0001110110 & \ldots0100010110 & \ldots1110111100 & \ldots0111110000 \\
\ldots1100101001 & \ldots1110001010 & \ldots1101010100 & \ldots0111111101 \\
\ldots0001001100 & \ldots0101001110 & \ldots1010110101 & \ldots0011101100 \\
\ldots0000000111 & \ldots0100100101 & \ldots1011100010 & \ldots0101011110
\end{array}\right).$$
According to the results of \S \ref{sssec:preclemmaatwork}, the $j$-th 
column of this matrix (for $1 \leq j \leq 4$) encodes the loss/gain of
precision on the computation of $u_{j+14}$. Precisely, we observe that
each column of $J(\varphi_{14})_{(1,1,1,3)}$ contains an element of 
valuation zero, meaning that the optimal precision on $u_{15}$, 
$u_{16}$, $u_{17}$ and $u_{18}$ separately is $O(2^{10})$. In 
particular, we cannot decide whether $u_{15}$ actually vanishes or 
does not. This sounds quite weird because we were able in any case to 
compute $u_{19}$ using algorithm \texttt{somos\_option1}. The point is 
that the formula given the value of $u_{19}$ has the shape $u_{19} = 
\frac{\text{numerator}}{u_{15}}$ where the numerator is divisible by 
$u_{15}$. The approach based on the Laurent phenomenon views this
simplification and is therefore able to compute $u_{19}$ even when
$u_{15}$ vanishes!

The analysis of the optional precision on the quadruple $(u_{15}, 
u_{16}, u_{17}, u_{18})$ is interesting as well. However, according to 
Eq.~\eqref{eq:detjacsomos}, the determinant of $J(\varphi_{14})_{(1,1, 
1,3)}$ vanishes at the working precision; we cannot then certify its 
surjectivity and it seems that we are in a deadlock. In order to go 
further, we increase a bit the precision on the initial values: for now 
on, we assume that $a$, $b$, $c$ and $d$ (whose values are $1$, $1$, $1$ 
and $3$ respectively) are given at precision $O(2^N)$ with $N > 10$. 
The computation then shows that the determinant of $J(\varphi_{14})_ 
{(1,1,1,3)}$ has valuation $10$. 
Thanks to Eq.~\eqref{eq:numberdiffused} that the quadruple $(u_{15}, 
u_{16}, u_{17}, u_{18})$ has $10$ diffused digits of precision. We
can even be more precise by computing the Hermite normal form of 
$J(\varphi_{14})_{(1,1,1,3)}$, which is:
$$\left( \begin{array}{c@{\hspace{4ex}}c@{\hspace{4ex}}c@{\hspace{4ex}}c}
1 & 0 & 0 & 179 \\
0 & 1 & 0 & 369 \\
0 & 0 & 1 & 818 \\
0 & 0 & 0 & 2^{10} \\
\end{array}\right)$$
meaning that the particular linear combination $179 \: u_{15} + 369 \: u_{16} 
+ 818 \: u_{17} - u_{18}$ could in principle be determined at precision 
$O(2^{N+10})$. Using this, it becomes possible to determine $u_{19}$ at
precision $O(2^N)$ (while a naive computation will lead to precision
$O(2^{N-10})$ because of the division by $u_{15}$).
\end{ex*}

\subparagraph{Explicit precision Lemma.}

Another task we have to do in order to use the theory of adaptive
precision is to make explicit the constants in the precision Lemma
for the particular problem we are studying. In the case of the Somos
4 sequence, the result we need takes the following form.

\begin{prop}
\label{prop:preclemmasomos}
We keep the above notations and assumptions\footnote{In particular,
we continue to assume that the initial values $a$, $b$, $c$ and $d$
are invertible in $\Zp$.}.
For all $N > v(i)$, we have:
$$\varphi_i\big((a,b,c,d) + p^N \Zp^4\big) =
(u_{i+1}, u_{i+2}, u_{i+3}, u_{i+4}) + 
p^N \: d \varphi_{i,(a,b,c,d)}\big(\Zp^4\big).$$
\end{prop}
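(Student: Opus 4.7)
The plan is to adapt the proof of Proposition~\ref{prop:preclemmapoly} to the slightly larger class of Laurent polynomial maps with monomial denominators. The crucial input is the Laurent phenomenon of Fomin and Zelevinsky~\cite{FoZe02}, which asserts that each coordinate of $\varphi_i$ has the form $L=P/M$ with $P\in\Z[a,b,c,d]$ and $M$ a monomial. Since $(a,b,c,d)\in(\Zp^\times)^4$, both $M(a,b,c,d)$ and every nearby value $M((a,b,c,d)+h)$ (for $h\in p\Zp^4$) are units of $p$-adic norm $1$.

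The key technical step is to upgrade the polynomial estimate of Lemma~\ref{lem:multipolyC1} to the Laurent setting: for every $L=P/M$ as above and every $h\in p\Zp^4$,
$$\big|L\big((a,b,c,d)+h\big) - L(a,b,c,d) - dL_{(a,b,c,d)}(h)\big| \leq \Vert h\Vert_\infty^2.$$
This is obtained by expanding $P((a,b,c,d)+h)$ via Taylor's formula (its Taylor coefficients lie in $\Zp$, as in the proof of Lemma~\ref{lem:polyC2}) and $M((a,b,c,d)+h)^{-1}$ via the geometric series $M(a,b,c,d)^{-1}\sum_{n\geq0}(-u)^n$ with $u\in p\Zp$; multiplying and grouping all contributions of order $\geq 2$ in $h$ yields the bound by ultrametricity. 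Applied coordinate-wise, it follows that for every $h\in p^N\Zp^4$,
\begin{equation}
\label{eq:planquadTaylor}
\varphi_i\big((a,b,c,d)+h\big) \in (u_{i+1},\ldots,u_{i+4}) + d\varphi_{i,(a,b,c,d)}(h) + p^{2N}\Zp^4.
\end{equation}

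For the inclusion ``$\subset$'', let $h\in p^N\Zp^4$. The hypothesis $N>v(i)$ gives $2N\geq N+v(i)+1$, so the error term in~\eqref{eq:planquadTaylor} lies in $p^{N+v(i)}\Zp^4$, which by Eq.~\eqref{eq:latticesomos} is contained in $p^N\cdot d\varphi_{i,(a,b,c,d)}(\Zp^4)$; the linear term $d\varphi_{i,(a,b,c,d)}(h)$ trivially belongs to this same lattice. For the reverse inclusion, write an arbitrary target as $y=p^N D(y')$ with $D=d\varphi_{i,(a,b,c,d)}$ and $y'\in\Zp^4$. Mimicking the surjectivity part of the proof of Theorem~\ref{theo:preclemma}, construct a sequence $(h_n)\subset p^N\Zp^4$ by $h_0 = p^N y'$ and $h_{n+1}=h_n+z_n$, where $z_n\in\Zp^4$ solves $D(z_n)=\delta_n$ for $\delta_n = y - \bigl(\varphi_i((a,b,c,d)+h_n) - (u_{i+1},\ldots,u_{i+4})\bigr)$; such a $z_n$ exists (and has $\Vert z_n\Vert_\infty \leq p^{v(i)}|\delta_n|$) by Eq.~\eqref{eq:latticesomos}. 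The estimate~\eqref{eq:planquadTaylor} applied to $h_n$ and $h_{n+1}$ forces $|\delta_{n+1}| \leq p^{v(i)-N}|\delta_n|$, which thanks to $N>v(i)$ is at most $p^{-1}|\delta_n|$; the sequence $(h_n)$ is therefore Cauchy, and its limit $h_\infty\in p^N\Zp^4$ satisfies the required equality.

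The main obstacle is the Laurent Taylor estimate stated in the second paragraph, which requires a careful bookkeeping of the geometric expansion of the monomial denominator and the simultaneous Taylor expansion of the numerator. All other ingredients are specializations to our setting of what already appears in the proofs of Proposition~\ref{prop:preclemmapoly} and Theorem~\ref{theo:preclemma}.
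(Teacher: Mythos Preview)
Your overall strategy coincides with the paper's: both adapt Proposition~\ref{prop:preclemmapoly} to Laurent polynomials, and the paper indeed factors $\varphi_i$ as a genuine polynomial map precomposed with $\iota:(x,y,z,t)\mapsto(x,y,z,t,x^{-1},y^{-1},z^{-1},t^{-1})$, which is morally your ``geometric series for $M^{-1}$'' idea. Your inclusion ``$\subset$'' is fine.

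There is, however, a real gap in your surjectivity argument. The estimate you prove is a \emph{two-point} Taylor bound centered at the fixed point $v=(a,b,c,d)$:
\[
\bigl\Vert \varphi_i(v+h)-\varphi_i(v)-D(h)\bigr\Vert \leq \Vert h\Vert^2,
\qquad D=d\varphi_{i,v}.
\]
From this alone you cannot deduce the claimed contraction $|\delta_{n+1}|\leq p^{v(i)-N}|\delta_n|$. Writing $f(h)=\varphi_i(v+h)-\varphi_i(v)-D(h)$, your identity gives $\delta_{n+1}=f(h_n)-f(h_{n+1})$, and all your estimate yields is $\Vert f(h_n)\Vert,\Vert f(h_{n+1})\Vert\leq p^{-2N}$; this is a fixed bound, not one that shrinks with $\Vert z_n\Vert$, so the sequence $(\delta_n)$ need not go to~$0$. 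What the Newton iteration actually requires (compare Eq.~\eqref{eq:mainlemma} in the proof of Theorem~\ref{theo:preclemma}) is a \emph{three-point} estimate with the differential still taken at $v$:
\[
\bigl\Vert \varphi_i(v_2)-\varphi_i(v_1)-D(v_2-v_1)\bigr\Vert
\;\leq\;\max\bigl(\Vert v_1-v\Vert,\Vert v_2-v\Vert\bigr)\cdot\Vert v_2-v_1\Vert,
\]
applied with $v_1=v+h_n$, $v_2=v+h_{n+1}$. This is exactly the Laurent extension of Lemma~\ref{lem:multipolyC1} that the paper establishes via the factorization through $\iota$; once you have it, your claimed bound $|\delta_{n+1}|\leq p^{-N}\Vert z_n\Vert\leq p^{v(i)-N}|\delta_n|$ follows immediately. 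Equivalently, you could salvage your version by (i) stating the two-point estimate at an \emph{arbitrary} base point with unit coordinates and (ii) bounding $\Vert d\varphi_{i,v+h_n}-D\Vert\leq\Vert h_n\Vert$; but neither step is present in your write-up, and together they amount to the paper's three-point lemma.
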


\begin{proof}
The proof is a slight generalization (to the case of multivariate 
\emph{Laurent} polynomials) of Proposition \ref{prop:preclemmapoly}. 
In fact, the only missing input is the generalization of Lemma 
\ref{lem:multipolyC1} to our setting. Concretely, we then just have 
to establish that: 
\begin{equation}
\label{eq:multilaurentC1}
\Vert \varphi_i(v_2) - \varphi_i(v_1) - d\varphi_{i,v}(v_2{-}v_1) \Vert 
  \leq \max
  \big(\Vert v_1{-}v \Vert, \Vert v_2{-}v \Vert \big) \cdot \Vert v_2{-}v_1 \Vert
\end{equation}
for all integers $i$ and
all vectors $v, v_1, v_2 \in \Zp^4$ whose all coordinates are
invertible. (Here $\Vert \cdot \Vert$ denotes as usual the infinite
norm.) For doing so, we write $\varphi_i$ as the compositum
$\varphi_i = \tilde \varphi_i \circ \iota$ where $\iota$ is the
(partially defined) embedding:
$$\iota : \Qp^4 \to \Qp^4, \quad (x,y,z,t) \mapsto (x,y,z,t, x^{-1},
y^{-1}, z^{-1}, t^{-1})$$
and $\tilde \varphi_i : \Qp^8 \to \Qp^4$ is a function whose each
coordinate is given by a multivariate polynomial with coefficients 
in $\Zp$. We set $w = \iota(v)$, $w_1 = \iota(v_1)$, $w_2 = \iota
(v_2)$ and compute:
\begin{align*}
\varphi_i(v_2) - \varphi_i(v_1) - d\varphi_{i,v}(v_2{-}v_1)
  & = \tilde \varphi_i(w_2) - \tilde \varphi_i(w_1) - d\tilde \varphi_{i,w}
\circ d \iota_v (v_2{-}v_1) \\
  & = \tilde \varphi_i(w_2) - \tilde \varphi_i(w_1) - d\tilde \varphi_{i,w} (w_2{-}w_1) \\
  & \hspace{1.5cm} {}+ d\tilde \varphi_{i,w}( w_2 - w_1 - d \iota_v (v_2{-}v_1)\big).
\end{align*}
From the shape of $\tilde \varphi_i$, we deduce that the norm of its
differential $d\tilde \varphi_{i,w}$ is at most $1$. Therefore:
\begin{align}
&\Vert \varphi_i(v_2) - \varphi_i(v_1) - d\varphi_{i,v}(v_2{-}v_1) \Vert \smallskip \nonumber \\
&\hspace{1cm} \leq \max\big(
  \Vert \tilde \varphi_i(w_2) - \tilde \varphi_i(w_1) - d\tilde \varphi_{i,w} (w_2{-}w_1) \Vert, \,
  \Vert w_2 - w_1 - d \iota_v (v_2{-}v_1) \Vert\big) \smallskip \nonumber \\
&\hspace{1cm} \leq \max\big(
  \Vert w_1{-}w \Vert {\cdot} \Vert w_2{-}w_1 \Vert,\,
  \Vert w_2{-}w \Vert {\cdot} \Vert w_2{-}w_1 \Vert,\,
  \Vert w_2{-}w_1 - d \iota_v (v_2{-}v_1) \Vert\big).
  \label{eq:multilaurentC1step}
\end{align}
the last inequality coming from Lemma \ref{lem:multipolyC1} applied to a 
shift of the function $\tilde \varphi_i$. Given three invertible 
elements $x, x_1, x_2$ in $\Zp$, we have:
$$\left|\frac 1{x_1} - \frac 1{x_2} - \Big(\frac{-1}{x^2}\Big) \cdot (x_1{-}x_2) \right| 
= \left| \frac{(x_1{-}x_2) \cdot (x_1 x_2 - x^2)}{x^2 x_1 x_2} \right|
= |x_1{-}x_2| \cdot |x_1 x_2 - x^2|.$$
Writing $x_1 x_2 - x^2 = x (x_1 - x) + x_1 (x_2 - x)$ shows
that $|x_1 x_2 - x^2| \leq \max\big(|x_1{-}x|, |x_2{-}x|\big)$. Thus:
$$\Vert w_2{-}w_1 - d \iota_v (v_2{-}v_1) \Vert \leq
\Vert v_2{-}v_1 \Vert \cdot \max\big(\Vert v_1{-}v \Vert, \,
\Vert v_2{-}v \Vert\big).$$
Finally, a straightforward computation shows that $\Vert w_1{-}w \Vert = 
\Vert v_1{-}v \Vert$, $\Vert w_2{-}w \Vert = \Vert v_2{-}v \Vert$ and 
$\Vert w_2{-}w_1 \Vert = \Vert v_2{-}v_1 \Vert$. Inserting these inputs
into Eq.~\eqref{eq:multilaurentC1step}, we get Eq.~\eqref{eq:multilaurentC1} 
as desired.
\end{proof}

\subparagraph{Context of zealous arithmetic.}

We fix a positive integer $N$. We assume that the initial values $a$, 
$b$, $c$ and $d$ are given at precision $O(p^N)$. The corresponding 
lattice is $H = p^n \Zp^4$. By Proposition \ref{prop:preclemmasomos}, 
the precision Lemma applies with $\varphi_i$, $x$ and $H$ as soon as $N 
> v(i)$.

We now follow step by step the method of adaptive precision (see \S 
\ref{sssec:adaptiveprecision}). 
We first set $H_i = d\varphi_{i,(a,b,c,d)}(H) = p^N \: 
d\varphi_{i,(a,b,c,d)} (\Zp^4)$. We then have to exhibit two ``simple'' 
lattices $H_{i,\min}$ and $H_{i,\max}$ that approximates $H_i$ from 
below and from above respectively. The answer is given by the inclusions 
\eqref{eq:latticesomos}: we take $H_{i,\min} = p^{N+v(i)} \Zp^4$ and 
$H_{i,\max} = p^N \Zp^4$. Concerning the lattice $H'_{i-1}$, it can be any 
lattice for which the following requirement holds: if the quadruple 
$(x,y,z,t)$ is known at precision $H'_{i-1}$, then zealous 
arithmetic is able to compute $\sigma(x,y,z,t)$ at precision 
$H'_{i,\min} = p^{N+v(i)} \Zp^4$. Since the division by $x$ induces
a loss of precision of at most $\val(x)$ digits, we can safely take
$H'_{i-1} = p^{N+v(i)+\val(u_i)} \Zp^4$.
Instantiating the routine \texttt{Phi\_stabilized\_v2} (page 
\pageref{algo:phistabzealousv2}) to our setting, we end up with the following stabilized 
version of the procedure \texttt{somos\_option1}.

\begin{lstlisting}
    def somos_stabilized(a,b,c,d,n):
        # (*\color{comment}\textrm{We assume }$\texttt{n} \geq 5$*)
        # (*\color{comment}a,b,c,d\textrm{ are \emph{units} in $\Zp$ given at precision $O(p^N)$}*)
        x,y,z,t = a,b,c,d
        for i in (*$1, \, 2, \, \ldots, \, \texttt{n}{-}4$*):
            u = (y*t + z*z)/x   # (*\color{comment}\textrm{Precomputation at small precision}*)
            v = (*$\val_p$*)(y) + (*$\val_p$*)(z) + (*$\val_p$*)(t) + (*$\val_p$*)(u)
            if v(*${}\geq N$*): raise PrecisionError
            lift x,y,z,t (*\textrm{at precision $O(p^{N+\texttt{v}+\val_p(\ttx)})$}*)
            x,y,z,t = y, z, t, (y*t + z*z)/x
        return t (*\textrm{at precision $O(p^N)$}*)
\end{lstlisting}

\noindent
This algorithm always returns a correct answer at precision $O(p^N)$. 
It might fail --- and then it raises an error --- if the precision on the 
entries is not sufficient; by Lemma \ref{lem:somosvi}, this occurs 
exactly when one term of the Somos 4 sequence we are computing is 
indistinguishable from $0$. One possible solution in that case is to 
increase arbitrarily the precision on $a$, $b$, $c$ and $d$, rerun the 
algorithm and finally truncate the obtained result back to precision 
$O(p^N)$.

\subparagraph{Context of lazy/relaxed arithmetic.}

The literal translation to the lazy world of the stabilized algorithm 
we have designed just above is:

\begin{lstlisting}
    def somos_stabilized(a,b,c,d,n):
        # (*\color{comment}\textrm{We assume }$\texttt{n} \geq 5$*)
        # (*\color{comment}a,b,c,d\textrm{ are lazy \emph{invertible} $p$-adic numbers}*)
        def nth_term(N):
            x,y,z,t = a(N),b(N),c(N),d(N)
            for i in (*$1, \, 2, \, \ldots, \, \texttt{n}{-}4$*):
                u = (y*t + z*z) / x
                v = (*$\val_p$*)(y) + (*$\val_p$*)(z) + (*$\val_p$*)(t) + (*$\val_p$*)(u)
                if v(*${}\geq N$*): raise PrecisionError
                x,y,z,t = y % (*$p^{\texttt{N+v}}$*), z % (*$p^{\texttt{N+v}}$*), t % (*$p^{\texttt{N+v}}$*), u % (*$p^{\texttt{N+v}}$*)
            return t % (*$p^{\texttt N}$*)
        return nth_term
\end{lstlisting}

\begin{rem}
Instead of raising an error if $N$ is too small, it is more clever to
rerun the computation at higher precision. We can implement this idea
simply by replacing
the statement ``\texttt{{\color{blue} raise} PrecisionError}'' by
``\texttt{{\color{blue} return} ntn\_term(2*N)}'' in the code above.
\end{rem}

\noindent 
The correctness of this algorithm is proved similarly to the case of 
zealous arithmetic by applying the precision Lemma to the function 
$\varphi_i$ (see Proposition \ref{prop:preclemmasomos}). We notice, in 
particular, that we do not need here Proposition \ref{prop:adaptlazy}; 
this is nice because making explicit the constant $N_0$ of 
Proposition \ref{prop:adaptlazy} is tedious. This simplification is
due to the fact that the differential $d \varphi_{i,(a,b,c,d)}$ is
surjective (actually even bijective).


\addcontentsline{toc}{section}{\refname}

\end{document}